\documentclass[11pt]{article}

\usepackage{amsfonts, amssymb, amsmath, amsthm}
\usepackage[letterpaper,margin=0.9in]{geometry}
\usepackage{graphicx, hyperref, caption, subcaption, xcolor, verbatim, titlesec, booktabs, multirow, enumitem, tocloft, bbm, mathrsfs}
\usepackage{microtype}
\setlength{\emergencystretch}{2em}
\graphicspath{{../}{./}}
\usepackage{mathtools}
\usepackage{bm}
\usepackage[sans]{dsfont}
\usepackage[scr=rsfs,cal=boondox]{mathalpha}
\hypersetup{pdfborder = {0 0 0},colorlinks=true,allcolors=blue}
\usepackage{natbib}
\usepackage{placeins}
\usepackage{threeparttable}
\usepackage[normalem]{ulem}
% Set underline

\usepackage{threeparttable}
\usepackage{float}

\newcommand\independent{\protect\mathpalette{\protect\independenT}{\perp}}
\def\independenT#1#2{\mathrel{\rlap{$#1#2$}\mkern2mu{#1#2}}}

% \mathtoolsset{showonlyrefs,showmanualtags}

%%% TREE DEFINITIONS
\def\treeT{\mathsf{T}}
\def\nodet{\mathsf{t}}

\def\be{\mathbf{e}}

\def\bx{\mathbf{x}}

\def\bX{\mathbf{X}}

\def\X{\mathcal{X}}

\def\E{\mathbb{E}}
\def\P{\mathbb{P}}

\def\reals{\mathbb{R}}

\newcommand{\Indicator}{\mathds{1}}

%%%%%%%%%%%%%%%%%%%%%%%%%%%%%%%%%%%%%%%%%%%%%%
%%                                          %%
%% Uncomment next line to change            %%
%% the type of equation numbering           %%
%%                                          %%
%%%%%%%%%%%%%%%%%%%%%%%%%%%%%%%%%%%%%%%%%%%%%%
%\numberwithin{equation}{section}
%%%%%%%%%%%%%%%%%%%%%%%%%%%%%%%%%%%%%%%%%%%%%%
%%                                          %%
%% For Axiom, Claim, Corollary, Hypothezis, %%
%% Lemma, Theorem, Proposition              %%
%% use \theoremstyle{plain}                 %%
%%                                          %%
%%%%%%%%%%%%%%%%%%%%%%%%%%%%%%%%%%%%%%%%%%%%%%
%\theoremstyle{plain}
\newtheorem{definition}{Definition}

\newtheorem{assumption}{Assumption}

\newtheorem{theorem}{Theorem}%[section]

%%% REMARK
\theoremstyle{definition}
\newtheorem{remark_tmp}{Remark}

%%%%%%%%%%%%%%%%%%%%%%%%%%%%%%%%%%%%%%%%%%%%%%
%%                                          %%
%% For Assumption, Definition, Example,     %%
%% Notation, Property, Remark, Fact         %%
%% use \theoremstyle{remark}                %%
%%                                          %%
%%%%%%%%%%%%%%%%%%%%%%%%%%%%%%%%%%%%%%%%%%%%%%
\theoremstyle{remark}
%\newtheorem{definition}[theorem]{Definition}

%%%%%%%%%%%%%%%%%%%%%%%%%%%%%%%%%%%%%%%%%%%%%%
%% Please put your definitions here:        %%
%%%%%%%%%%%%%%%%%%%%%%%%%%%%%%%%%%%%%%%%%%%%%%

%%%%%%%%%%%%%%%%%%%%%%%%%%%%%%%%%%%%%%%%%%%%%%
%% Please put your definitions here:        %%
%%%%%%%%%%%%%%%%%%%%%%%%%%%%%%%%%%%%%%%%%%%%%%

%%%%%%%%%%%%%%%%%%%%%%%%%%%%%%%%%%%%%%%%%%%%%%
%% Preamble                                 %%
%%%%%%%%%%%%%%%%%%%%%%%%%%%%%%%%%%%%%%%%%%%%%%

% % % % % % % % % % % % % % Title Page % % % % % % % % % % % % % %
\title{Accuracy Limits of Causal Trees for Individualized Treatment Effects}
\author{Matias D. Cattaneo\\[0.35em] Princeton University \and
        Jason M. Klusowski\\[0.35em] Princeton University \and
        Ruiqi (Rae) Yu\\[0.35em] Princeton University}
%%%%%%%%%%%%%%%%%%%%%%%%%%%%%%%%%%%%%%%%%%%%%%
%% BEGIN DOC                                %%
%%%%%%%%%%%%%%%%%%%%%%%%%%%%%%%%%%%%%%%%%%%%%%

\begin{document}

\maketitle

\begin{abstract}
    Recursive decision trees are widely used to estimate heterogeneous causal treatment effects in experimental and observational studies. These methods are typically implemented using CART-type recursive partitioning, with splitting criteria designed to identify variation in treatment effects across covariate-defined subgroups. We study causal tree estimators based on adaptive recursive partitioning and establish lower bounds on their estimation accuracy. The class we analyze includes versions with and without sample splitting, based on common treatment effect and squared-error splitting criteria. Even in a constant-effect benchmark with randomized treatment assignment, causal trees constructed via standard CART-type splitting rules can have uniform-norm errors that decrease more slowly than any power of the sample size. The underlying mechanism is that greedy recursive partitioning selects highly imbalanced splits with nonvanishing probability, producing terminal nodes containing very few observations and leading to large estimation variance. We further show that sample splitting, often called ``honesty,'' does not remove this limitation. As a consequence, causal tree estimators may converge arbitrarily slowly uniformly over the covariate space. At the same time, these estimators can have small integrated mean squared error, showing that average accuracy can mask local inaccuracy. Our results also clarify the role of balanced partition assumptions in existing theoretical guarantees for causal forests and related ensemble methods.

\end{abstract}

\textit{\small Keywords: recursive partitioning, decision trees, causal inference, heterogeneous treatment effects}

\clearpage

\section{Introduction}\label{sec: Introduction}

Recursive decision trees have become a popular tool for estimating heterogeneous causal effects in both experimental and observational settings. These methods adapt the classical CART (Classification and Regression Tree) algorithm \citep{breiman1984} to causal inference by modifying the splitting criterion and, in some implementations, using sample splitting to separate tree construction from treatment effect estimation. Their simplicity and interpretability have made tree and recursive partitioning methods common in academic research and applied practice.

Examples include regression trunk methods and interaction tree methods for treatment-by-covariate interaction discovery \citep{dusseldorp2004regression,su2008interaction,su2009subgroup}; recursive partitioning methods for treatment subgroup identification, including differential effect search \citep{lipkovich2011subgroup}, virtual twin subgroup search \citep{foster2011subgroup}, qualitative interaction trees \citep{dusseldorp2014qualitative}, marginal tree methods \citep{kang2012tree}, GUIDE treatment effect trees and model-based subgroup procedures \citep{loh2015regression,seibold2016model}; and the honest causal tree framework \citep{Athey-Imbens_2016_PNAS}. Related extensions address instrumental variables \citep{bargagli2020causaltreeiv,wang2022instrumental}, survival or censored outcomes \citep{zhang2017survival}, observational data with heterogeneity and fitness balancing \citep{zhang2018balancing}, causal interaction trees for observational data \citep{yang2022causalinteraction}, and conditional inference trees \citep{venkatasubramaniam2022conditional}.

Despite their widespread use, the fundamental statistical properties of causal tree estimators and their associated inference procedures remain poorly understood for canonical CART-type implementations. Many existing analyses obtain positive guarantees by imposing balance or regularity conditions on the tree growing process. This paper studies heterogeneous treatment effect estimators based on adaptive recursive partitioning and establishes theoretical lower bounds on their estimation accuracy. The results show that standard CART-type recursive partitioning need not deliver uniform accuracy for heterogeneous treatment effect estimation.

Our main finding is that causal tree estimators constructed via standard CART-type greedy splitting can have uniform-norm errors that decrease more slowly than any power of $n$ under basic conditions, where $n$ denotes the sample size. This produces arbitrarily slow uniform convergence, even though average integrated error can remain small. Thus, good average performance can mask poor pointwise accuracy for individualized or subgroup-level decisions. This limitation arises even in the simplest setting in which the true treatment effect is constant over the feature space.

The mechanism behind this phenomenon is intrinsic to greedy recursive partitioning. CART-type splitting rules select highly imbalanced splits with nonvanishing probability, thereby generating terminal nodes with very few observations. Because treatment effects within each node are estimated by local averaging, these small cells induce large estimation variance and prevent the estimator from concentrating uniformly around the true conditional average treatment effect.

One way to address this small cell phenomenon is to regularize tree construction. In practice, this often takes the form of minimum node-size requirements. Such modifications may reduce finite-sample variance, but they also alter the class of feasible partitions of the procedure. When the conditional average treatment effect (CATE) is not locally constant, larger minimum node-size requirements can introduce approximation bias, and this tradeoff can be more pronounced under conditional heteroskedasticity. More aggressive regularization therefore changes the adaptivity that makes CART attractive. For this reason, in practice, causal tree methods are typically regularized through fixed or lightly tuned minimum node-size hyperparameters rather than through carefully tuned sample-size-dependent hyperparameters that grow with $n$, as would be required to address the kind of convergence issues raised here.

At present, however, the statistical behavior of CART-type estimators under such algorithmic regularization remains only partially understood in general settings where the partition is selected adaptively from the data. For this reason, regularization alone does not provide a theoretically satisfactory resolution of the phenomenon we document unless one can formally characterize the resulting bias and variance tradeoff and establish valid inference for the resulting estimator. Moreover, even apart from these statistical concerns, it has been argued that imbalanced splits can be algorithmically beneficial for deeper trees because they preserve sample size after a bad split and allow the tree to recover downstream \citep{ishwaran2015effect}.

Another frequently discussed modification of adaptive recursive partitioning methods is sample splitting, also known as \emph{honesty}. By constructing the tree on one subsample and estimating treatment effects on an independent subsample, sample splitting separates partition selection from within leaf estimation and has been proposed as a way to reduce overfitting and facilitate inference \citep{Athey-Imbens_2016_PNAS}. Honesty yields a modest improvement in the achievable convergence rate, but our results show that it does not remove the small cell mechanism behind the lower bounds. Even under honest sample splitting, causal tree estimators can have worst-case uniform errors that decrease more slowly than any power of $n$ under basic conditions.

Our findings also have implications for theoretical guarantees established for quantile regression forests \citep{Meinshausen_2006_JMLR}, honest causal forests \citep{wager-Athey_2018_JASA}, generalized random forests \citep{athey2019generalized}, and random forest ensembles for Boolean interaction recovery \citep{Behr-Wang-Li-Yu_2022_PNAS}. Existing polynomial rate analyses of forest estimators often rely on conditions ensuring that each constituent tree generates approximately balanced partitions. In a common formulation, $\alpha$-regularity requires each split to place at least an $\alpha$ fraction of the observations in the parent node into each child node. These assumptions facilitate theoretical analysis and define a more constrained class of tree-building rules than canonical CART-type greedy splitting, which can select highly imbalanced splits with nonvanishing probability. As a result, guarantees for causal forests that rely on approximately uniform partitions or $\alpha$-regularity apply to a different algorithmic setting than standard CART-type splitting procedures. See Section~\ref{sec: Discussion} for further discussion.

From an inferential perspective, the same mechanism underlying our lower bounds also creates a challenge for subgroup-level inference based on causal trees and their ensembles. In regions where terminal nodes contain very few observations, the effective sample size may fail to increase with the overall sample size. In such settings, classical Gaussian approximations and standard error formulas need not be valid. Establishing distributional approximations for CART-based causal tree or forest estimators without imposing approximately uniform partition conditions therefore remains an open theoretical problem.

Before introducing the formal setup, it is useful to briefly summarize the intuition behind our results. Recursive decision trees select splits by optimizing a data-dependent criterion over many candidate partitions of the covariate space. When the underlying conditional expectation function is locally flat, as in the constant treatment effect model considered in this paper, the splitting criterion is driven primarily by stochastic fluctuations in the data. As a consequence, with nonvanishing probability the optimal split occurs near the boundary of a parent node, producing highly imbalanced child nodes. Because treatment effects within each node are estimated by local averaging, such splits generate regions with very few observations, leading to large estimation variance in those parts of the covariate space. For deeper trees, later refinements subdivide the imbalanced region rather than removing the initial small-cell problem, so at least one terminal descendant can retain enough of the fluctuation to prevent uniform convergence of the estimator.

\subsection{Contributions and Related Literature}

This paper makes three main contributions. First, we establish lower bounds on the uniform convergence rate of causal tree estimators constructed via CART-type recursive partitioning. These results show that such estimators can have uniform-norm errors that decrease more slowly than any power of $n$ under basic conditions, even in settings where the underlying treatment effect is constant. Second, we compare causal tree estimators with and without sample splitting. The lower bounds persist under honest sample splitting, which removes only a slowly varying $\sqrt{\log\log n}$ factor. Third, we clarify the implications of these findings for tree-based causal inference more broadly, including commonly used regularization strategies and theoretical guarantees for causal forests that rely on approximately uniform partitions or $\alpha$-regularity conditions. Our analysis shows how these assumptions differ from canonical CART-type implementations and highlights the resulting challenges for valid inference based on tree-generated partitions.

Our work complements theoretical work on random forests \citep{scornet2015}, recursive partitioning estimators \citep{chi2022asymptotic,klusowski2024large,mazumder2024convergence}, and partitioning-based regression \citep{cattaneo2024convergence}, and contributes to a growing body of formal limitations for tree-based methods.
For example, \cite{tan2022cautionary} demonstrated that regression trees are inefficient at estimating additive structure, regardless of the optimization strategy employed. \cite{tan2024computational} established that the mixing time of Bayesian Additive Regression Trees (BART) \citep{chipman2010bart} can increase with the training sample size. \citet{tan2024statistical} showed that adaptive regression trees with Boolean covariates may require exponentially many samples in the dimension and can be inconsistent in high-dimensional settings. The Supplemental Appendix also develops companion lower bounds for standard CART regression trees. Those regression results clarify the relationship between our lower-bound framework and earlier large-sample analyses of CART decision stumps, including \citet{Buhlmann-Yu_2002_AOS} and \citet{Banerjee-McKeague_2007_AOS}.

The proofs rely on new probabilistic arguments concerning the behavior of adaptive recursive partitioning procedures. The Supplemental Appendix develops nonasymptotic approximations for suprema of partial sums and related Gaussian processes, combining high-dimensional central limit theorems, Gaussian comparison inequalities, and strong approximation techniques from \cite{chernozhukov2017central}, \cite{chernozhukov2022improved}, \cite{csorgo1981strong}, \cite{csorgo1997limit}, \cite{el2009transductive}, \cite{horvath1993maximum}, \cite{jaeschke2003survey}, \cite{latala2017royen}, \cite{nazarov2003maximal}, \cite{petrov2007lower}, \cite{shorack1976inequalities}, \cite{skorski_2023}, and \cite{zhdanov2022high}. As a technical byproduct of our analysis, we also identify and correct an error in \cite{eicker1979asymptotic}.

\subsection{Organization}

The remainder of the paper is organized as follows. Section \ref{sec: Setup} introduces the causal tree estimators and the data generating framework studied in the paper. Section \ref{sec:Assumptions} presents the assumptions underlying our theoretical analysis. Section \ref{sec: Main Results} establishes lower bounds on the uniform convergence rate of causal tree estimators and provides complementary results on their integrated mean squared error. Section \ref{sec: Discussion} discusses the implications of our results for recursive partitioning methods, causal forests, and inference procedures. Section \ref{sec: Numerical Evidence} presents Monte Carlo and empirical resampling evidence illustrating the practical implications of the theoretical results. Section~\ref{sec: Conclusion} concludes. Additional theoretical results, an overview of the proof strategy, and all technical proofs are reported in the Supplemental Appendix.

\section{Setup}\label{sec: Setup}

We introduce the class of causal tree estimators studied in the paper. These estimators combine three components
(i) a within node estimator of the conditional average treatment effect (CATE),
(ii) a recursive partitioning rule used to construct the tree, and 
(iii) a data usage scheme determining whether sample splitting is employed.
Different choices of these components lead to the family of estimators analyzed in this paper.

The available data $\mathcal{D} = \{(y_i, d_i, \bx^\top_i): i=1,2,\dots,n\}$ is a random sample, where $y_i$ is an outcome variable, $\bx_i = (x_{i,1}, \ldots, x_{i,p})^\top$ is a vector of pretreatment covariates, and $d_i$ is a binary treatment indicator. Employing standard potential outcomes notation \citep[see, e.g.,][]{Hernan-Robins_2020_Book}, we assume that $y_i = y_i(1)d_i + y_i(0)(1-d_i)$, where $y_i(1)$ and $y_i(0)$ denote the potential outcomes under treatment and control, respectively. In experimental settings the treatment assignment is independent of both the potential outcomes and the covariates, that is, $(y_i(0),y_i(1),\bx_i^\top) \independent d_i$.

The parameter of interest is the conditional average treatment effect (CATE) function
\begin{align*}
    \tau(\bx) = \E\big[ y_i(1) - y_i(0) \mid \bx_i = \bx \big],
\end{align*}
which captures how treatment effects vary with observable pretreatment covariates. In experimental settings, the CATE function is identifiable because
\[
    \tau(\bx) = \E\Bigg[y_i \frac{d_i-\xi}{\xi(1-\xi)} \;\Bigg\vert\; \bx_i = \bx \Bigg] = \E\big[y_i \mid d_i=1,\; \bx_i = \bx \big] - \E\big[y_i \mid d_i=0,\; \bx_i = \bx \big],
\]
where the probability of treatment assignment $\xi=\mathbb{P}(d_i=1)$ is known by virtue of the known randomization mechanism. The first equality represents the CATE as a single conditional expectation, $\E[\tilde y_i \mid \bx_i=\bx]$, where
\begin{align*}
    \tilde y_i = y_i \frac{d_i-\xi}{\xi(1-\xi)}
\end{align*}
is the transformed outcome. The second equality expresses the CATE as the difference of two conditional expectation functions based on observed data.

Traditional semiparametric approaches estimate heterogeneous treatment effects by replacing these conditional expectations with nonparametric estimators. However, such methods may perform poorly in high-dimensional settings or when the structure of the regression functions (e.g., sparsity or additive separability) is unknown. Recursive partitioning offers a different strategy: it constructs covariate-defined subgroups and estimates treatment effects within the resulting terminal nodes.

Across the methods cited above, three components recur: a terminal node treatment effect estimator, a recursive partitioning rule that searches for treatment effect heterogeneity, and a data usage scheme governing whether the same observations are used for partition selection and effect estimation.

We study six causal tree estimators obtained by crossing three construction and estimation rules with two data usage schemes. The three rules are IPW-based variance maximization, DIM-based variance maximization, and SSE minimization; the two data usage schemes are no sample splitting and honest sample splitting. These choices cover the transformed outcome, treatment contrast, local model fit, and sample splitting designs used in causal tree implementations and related recursive partitioning procedures.

\subsection{CATE Estimator}\label{sec:CATE Estimator}

Leveraging these identification results, recursive partitioning methods construct terminal node treatment effects using pseudo outcomes \citep{foster2011subgroup}, treated and control contrasts \citep{su2009subgroup}, heterogeneity and fit criteria for observational data \citep{zhang2018balancing}, inverse-probability-weighted or doubly robust subgroup effect estimators \citep{yang2022causalinteraction}, or local treatment effect models \citep{loh2015regression,seibold2016model}. We analyze the following two CATE estimators based on a tree $\treeT$ and a dataset $\mathcal{D}_\tau$. Sections \ref{sec:Tree Construction} and \ref{sec:Sample Splitting} discuss specific choices of $\treeT$ and $\mathcal{D}_\tau$, respectively. Let $\Indicator(\cdot)$ be the indicator function.

\begin{definition}[CATE Estimators]\label{def:CATE Estimators}
    Suppose $\treeT$ is the tree used, and $\mathcal{D}_\tau=\{(y_i,d_i,\bx^\top_i):i=1,2,\dots,n_{\tau}\}$, with $n_{\tau}\leq n$, is the dataset used. Let $\nodet$ be the unique terminal node in $\treeT$ containing $\bx \in \X$.

    \begin{itemize}
        \item The \textit{Inverse Probability Weighting} (IPW) estimator is
        \begin{align*}
            \hat\tau_\mathtt{IPW}(\bx; \treeT,\mathcal{D}_\tau)
            = \frac{1}{n(\nodet)} \sum_{i:\bx_i \in \nodet} \tilde y_i,
        \end{align*}
        where $n(\nodet) = \sum_{i=1}^{n_{\tau}} \Indicator(\bx_i \in \nodet)$ is the ``local'' sample size. We set $\hat\tau_\mathtt{IPW}(\bx; \treeT,\mathcal{D}_\tau) = 0$ whenever $n(\nodet) = 0$.

        \item The \textit{Difference in Means} (DIM) estimator is
        \begin{align*}
            \hat\tau_\mathtt{DIM}(\bx; \treeT,\mathcal{D}_\tau)
            = \frac{1}{n_{1}(\nodet)} \sum_{i:\bx_i \in \nodet} d_i y_i
            - \frac{1}{n_{0}(\nodet)} \sum_{i:\bx_i \in \nodet} (1-d_i) y_i,
        \end{align*}
        where $n_{d}(\nodet) = \sum_{i=1}^{n_{\tau}} \Indicator(\bx_i \in \nodet,\; d_i = d)$, for $d=0,1$, are the ``local'' sample sizes. We set $\hat\tau_\mathtt{DIM}(\bx; \treeT,\mathcal{D}_\tau) = 0$ whenever $n_0(\nodet) = 0$ or $n_1(\nodet) = 0$.
    \end{itemize}
\end{definition}

Both estimators, $\hat\tau_\mathtt{IPW}(\bx; \treeT,\mathcal{D}_\tau)$ and $\hat\tau_\mathtt{DIM}(\bx; \treeT,\mathcal{D}_\tau)$, rely on localization near $\bx$. The tree construction $\treeT$ forms a partition of the support of the covariates $\X$, and estimation of $\tau(\bx)$ uses only observations with covariates $\bx_i$ belonging to the cell in the partition covering $\bx\in\mathcal{X}$. Therefore, given a tree (or partition), both estimators can be represented as nonparametric partitioning-based estimates of $\tau(\bx)$. See \cite{Gyorfi-etal_2002_book}, \cite{Cattaneo-Farrell-Feng_2020_AOS}, \cite{Cattaneo-Feng-Shigida_2026_AOS}, and references therein.

Since the estimators $\hat\tau_\mathtt{IPW}(\bx; \treeT,\mathcal{D}_\tau)$ and $\hat\tau_\mathtt{DIM}(\bx; \treeT,\mathcal{D}_\tau)$ output a constant fit for all $\bx$ within each terminal node of $\treeT$ (or cell in the partition), we define
\begin{align*}
    \hat\tau_l(\nodet; \treeT,\mathcal{D}_\tau) = \hat\tau_l(\bx; \treeT,\mathcal{D}_\tau),
    \qquad
    l \in \{\mathtt{IPW},\mathtt{DIM}\},
    \qquad
    \bx\in\nodet,
\end{align*}
for all terminal nodes $\nodet$ of $\treeT$.

\subsection{Tree Construction}\label{sec:Tree Construction}

An axis-aligned recursive decision tree is a predictive model that makes decisions by repeatedly splitting the data into subsets based on both outcome and covariate values. At each node, the algorithm selects the feature and threshold that best separate the data according to some criterion (e.g., squared error, Gini impurity, or entropy), and this process continues recursively until a stopping condition is met (e.g., maximum depth or pure terminal nodes). See \cite{berk2020statistical}, \cite{Zhang-Singer_2010_Book}, and references therein.

The most popular implementation of recursive decision trees is via the CART algorithm, which proceeds in a top-down, greedy manner through recursive binary splitting. Given a dataset $\mathcal{D}_{\treeT} = \{(y_i,d_i,\bx^\top_i):i=1,2,\dots,n_{\treeT}\}$, with $n_{\treeT}\leq n$, a parent node $\nodet$ in the tree (i.e., a region in $\mathcal{X}$) is divided into two child nodes, $\nodet_{\mathtt{L}}$ and $\nodet_{\mathtt{R}}$, by minimizing the sum of squares error (SSE),
\begin{align} \label{eq:CART-see}
    \min_{1\leq j\leq p} \;\min_{\beta_{\mathtt{L}},\beta_{\mathtt{R}},\varsigma \in \reals}
    \sum_{\bx_i \in \nodet} \big(y_{i} - \beta_{\mathtt{L}} \Indicator(x_{ij} \leq \varsigma) - \beta_{\mathtt{R}} \Indicator(x_{ij} > \varsigma) \big)^2,
\end{align}
where the solution yields estimates $(\hat\beta_{\mathtt{L}}, \hat\beta_{\mathtt{R}}, \hat \varsigma, \hat{\jmath})$ for the two child-node fitted values, the split point, and the split direction, respectively. Because splits occur along values of a single covariate, the induced partition of $\mathcal{X}$ is a collection of hyperrectangles. The resulting refinement of $\nodet$ produces child nodes $\nodet_{\mathtt{L}} = \{\bx \in \nodet : \be_{\hat{\jmath}}^\top \bx \leq \hat \varsigma\} $ and $\nodet_{\mathtt{R}} = \{\bx \in \nodet : \be_{\hat{\jmath}}^\top \bx > \hat \varsigma\}$.

The normal equations imply that $\hat\beta_{\mathtt{L}} = \frac{1}{n(\nodet_{\mathtt{L}})}\sum_{\bx_i \in \nodet_{\mathtt{L}}} y_i$ and $\hat\beta_{\mathtt{R}} = \frac{1}{n(\nodet_{\mathtt{R}})}\sum_{\bx_i \in \nodet_{\mathtt{R}}} y_i$, the respective sample means after splitting the parent node at $\be_{\hat{\jmath}}^\top \bx = \hat \varsigma$. These child nodes become new parent nodes at the next level of the tree construction, and the procedure continues recursively until a desired depth $K$ is reached. A maximal decision tree of depth $K$ iterates this construction until either (i) a node contains a single data point $(y_i, \bx^\top_i)$ or (ii) all input values $\bx_i$ and/or all response values $y_i$ within the node are the same, although in general recursive tree constructions need not split every parent node.

Treatment effect trees adapt the CART algorithm by changing the split objective from overall outcome prediction to treatment effect heterogeneity, treatment-by-covariate interaction, or local treatment model fit. The methods cited above connect to the splitting criteria studied here in two main ways.

One line of work uses candidate splits to create child nodes with different estimated treatment effects. Interaction trees select splits using treatment-by-split interactions \citep{su2008interaction,su2009subgroup}, differential effect search uses subgroup treatment effect contrasts \citep{lipkovich2011subgroup}, and qualitative interaction trees target qualitative treatment effect differences \citep{dusseldorp2014qualitative}. Causal trees \citep{Athey-Imbens_2016_PNAS} and survival causal trees \citep{zhang2017survival} optimize treatment effect fit using estimated child-node effects. Instrumental variable tree and forest procedures \citep{bargagli2020causaltreeiv,wang2022instrumental}, observational data procedures that balance heterogeneity and fitness \citep{zhang2018balancing}, and causal interaction tree procedures for observational data \citep{yang2022causalinteraction} modify the child-node effect estimator, weighting scheme, or objective to accommodate the relevant assignment setting.

A second line of work grows trees by fitting local models that include treatment terms and then splitting to improve model fit or expose instability in treatment effect parameters. This includes regression trunk methods \citep{dusseldorp2004regression}, marginal trees for observational data \citep{kang2012tree}, GUIDE treatment effect trees \citep{loh2015regression}, and model-based subgroup procedures \citep{seibold2016model}. Related pseudo-outcome and virtual twin procedures use generated treatment effect outcomes \citep{foster2011subgroup}, and conditional inference treatment effect procedures use formal tests to decide where treatment effects vary \citep{venkatasubramaniam2022conditional}. We focus on the following three CART-type criteria, which represent canonical split construction principles in the literature.

\begin{definition}[Tree Construction]\label{def:Tree Construction}
    Suppose $\mathcal{D}_{\treeT} = \{(y_i,d_i,\bx^\top_i):i=1,2,\dots,n_{\treeT}\}$, with $n_{\treeT}\leq n$, is the dataset used to construct the tree $\treeT$. There is a unique node $\nodet_0 = \mathcal{X}$ at initialization, and child nodes are generated by iterative axis-aligned splitting of the parent node based on either of the following two rules.
    Throughout this definition, a candidate split is valid only if both child nodes contain at least one construction-sample observation. When a split criterion uses treatment-arm means or node-specific treatment coefficients, the candidate is valid only if the corresponding treated and control denominators in both child nodes are positive; otherwise the candidate is omitted from the optimization. If a parent node has no valid split, it is left terminal. Ties are resolved by a fixed deterministic rule.
    
    \begin{itemize}
        \item \textit{Variance Maximization}. A parent node $\nodet$ (i.e., a terminal node partitioning $\mathcal{X}$) in a previous tree $\treeT^{\prime}$ is divided into two child nodes, $\nodet_{\mathtt{L}}$ and $\nodet_{\mathtt{R}}$, forming the new tree $\treeT$, by maximizing
        \begin{align}\label{eq: variance maximization} \frac{n(\nodet_{\mathtt{L}})n(\nodet_{\mathtt{R}})}{n(\nodet)}
            \Big(\hat\tau_l(\nodet_{\mathtt{L}}; \treeT,\mathcal{D}_{\treeT})
               - \hat\tau_l(\nodet_{\mathtt{R}}; \treeT,\mathcal{D}_{\treeT})\Big)^2,
            \qquad l \in \{\mathtt{IPW},\mathtt{DIM}\}.
        \end{align}
        The resulting causal trees are denoted by $\treeT_{\mathtt{IPW}}(\mathcal{D}_{\treeT})$ and $\treeT_{\mathtt{DIM}}(\mathcal{D}_{\treeT})$, respectively.
        
        \item \textit{SSE Minimization}. A parent node $\nodet$ (i.e., a terminal node partitioning $\mathcal{X}$) in the previous tree $\treeT^{\prime}$ is divided into two child nodes, $\nodet_{\mathtt{L}}$ and $\nodet_{\mathtt{R}}$, forming the next tree $\treeT$, by solving
        \begin{align}\label{eq: sse}
            \min_{a_{\mathtt{L}}, b_{\mathtt{L}}, a_{\mathtt{R}}, b_{\mathtt{R}} \in \reals}
            \sum_{\bx_i \in \nodet_{\mathtt{L}}} (y_i - a_{\mathtt{L}} - b_{\mathtt{L}} d_i)^2 + \sum_{\bx_i \in \nodet_{\mathtt{R}}} (y_i - a_{\mathtt{R}} - b_{\mathtt{R}} d_i)^2,
        \end{align}
        where only the data $\mathcal{D}_{\treeT}$ is used. The resulting causal tree is denoted by $\treeT_{\mathtt{SSE}}(\mathcal{D}_{\treeT})$.
    \end{itemize}
\end{definition}

We use the variance maximization criterion in \eqref{eq: variance maximization} to represent splitting rules based on treatment effect contrasts. It differs from the original CART criterion \eqref{eq:CART-see} in that it explicitly chooses the split with the largest weighted squared difference between the estimated treatment effects in the two child nodes. This criterion is closest to treatment-effect fit rules that evaluate candidate splits using estimated effects in the candidate child nodes. Interaction tree methods use treatment-by-split interactions \citep{su2008interaction,su2009subgroup}, differential effect search uses subgroup treatment effect contrasts \citep{lipkovich2011subgroup}, and qualitative interaction procedures target qualitative subgroup treatment differences \citep{dusseldorp2014qualitative}. Instrumental variable tree and forest procedures \citep{bargagli2020causaltreeiv,wang2022instrumental}, observational data procedures that balance heterogeneity and fitness \citep{zhang2018balancing}, and causal interaction tree procedures for observational data \citep{yang2022causalinteraction} keep the recursive search over candidate splits, but modify the local effect estimator, weighting scheme, or objective for the relevant assignment setting.

For the $\mathtt{IPW}$ estimator, this rule is exactly equivalent to applying the CART criterion in \eqref{eq:CART-see} to the transformed outcome $\tilde y_i$. This transformed outcome satisfies $\E[\tilde y_i \mid \bx_i=\bx]=\tau(\bx)$ for all $\bx \in \mathcal X$, and thus CART operates on an outcome whose conditional mean equals the CATE. The $\mathtt{DIM}$ estimator follows the same child-node contrast logic using treated and control differences in means.

We use SSE minimization to represent splitting rules based on local treatment model fit. For each candidate split, it fits separate regressions of the outcome on a treatment indicator in the two child nodes and chooses the split that most improves residual fit. This is closest to procedures that combine recursive partitioning with treatment-specific or node-specific regression models.

Regression trunk methods combine regression trees with treatment-covariate regression models \citep{dusseldorp2004regression}, marginal trees for observational data use likelihood-based regression trees to make treatment effects and propensities more homogeneous within leaves \citep{kang2012tree}, and GUIDE treatment effect trees fit local models and use split tests to identify treatment effect variation \citep{loh2015regression}. Model-based subgroup procedures are related because they fit treatment models and split when treatment parameters are unstable across covariates \citep{seibold2016model}. Conditional inference treatment effect methods use a more explicitly test-based implementation, so they are best viewed as related procedures for deciding where treatment effects vary rather than exact objective function matches \citep{venkatasubramaniam2022conditional}. Supplemental Appendix, Section SA-3.3.1, derives the representation of \eqref{eq: sse} as a sum of treated and control variance gains.

Each of the causal recursive tree constructions leads to a distinct data-driven partition of $\mathcal{X}$. A key observation underlying our analysis is that these recursive procedures do not generate approximately uniform partitions of the covariate space, and thus known results in the nonparametric partitioning-based estimation literature \citep{Gyorfi-etal_2002_book,Cattaneo-Farrell-Feng_2020_AOS,Cattaneo-Feng-Shigida_2026_AOS} are not applicable. The Supplemental Appendix considers other recursive partitioning constructions, including the standard CART algorithm and variants thereof.

\subsection{Sample Splitting}\label{sec:Sample Splitting}

The final ingredient of the causal tree estimators concerns the data used at each stage of their construction. Sample splitting separates the data used to construct the partition from the data used to estimate treatment effects within leaves. In causal tree terminology, this design is often called ``honesty'' and has been proposed as a way to reduce overfitting and facilitate inference \citep{Athey-Imbens_2016_PNAS}. Interaction tree methods \citep{su2008interaction,su2009subgroup}, differential effect search \citep{lipkovich2011subgroup}, virtual twin subgroup search \citep{foster2011subgroup}, and qualitative interaction trees \citep{dusseldorp2014qualitative} instead follow the broader CART tradition of selecting subgroups and estimating subgroup effects within one recursive partitioning workflow, with pruning, cross-validation, resampling, or testing-based rules used to control complexity and stability. We use no sample splitting and honesty as descriptive labels for these two data usage schemes.

To elucidate the relative merits of sample splitting, we consider two distinct scenarios (i) no sample splitting, where the same data is used throughout (as the original CART procedure is often implemented); and (ii) sample splitting, where two independent datasets are used, one for tree construction and the other for CATE estimation. Formally, we consider the following data usages and resulting treatment effect estimators.

\begin{definition}[Sample Splitting and Estimators]\label{def:Sample Splitting}
    Recall Definition \ref{def:CATE Estimators} and Definition \ref{def:Tree Construction}, and that $\mathcal{D} = \{(y_i, d_i, \bx^\top_i):i=1,2,\dots,n\}$ is the available random sample.
    \begin{itemize}
        \item \textit{No Sample Splitting} (NSS). The dataset $\mathcal{D}$ is used for both the tree construction and the treatment effect estimation, that is, $\mathcal{D}_{\treeT} = \mathcal{D}$ and $\mathcal{D}_{\tau} = \mathcal{D}$. The causal tree estimators are
        \begin{align*}
            \hat\tau_{\mathtt{IPW}}(\bx)
            &= \hat\tau_{\mathtt{IPW}}(\bx; \treeT_{\mathtt{IPW}}(\mathcal{D}),\mathcal{D}),\\
            \hat\tau_{\mathtt{DIM}}(\bx)
            &= \hat\tau_{\mathtt{DIM}}(\bx; \treeT_{\mathtt{DIM}}(\mathcal{D}),\mathcal{D}), \quad \text{and}\\
            \hat\tau_{\mathtt{SSE}}(\bx)
            &= \hat\tau_{\mathtt{DIM}}(\bx; \treeT_{\mathtt{SSE}}(\mathcal{D}),\mathcal{D}).
        \end{align*}
    
        \item \textit{Honesty} (HON). The dataset $\mathcal{D}$ is divided into two independent datasets $\mathcal{D}_{\treeT}$ and $\mathcal{D}_{\tau}$ with sample sizes $n_{\treeT}$ and $n_\tau$, respectively, and satisfying $n \lesssim n_{\treeT}, n_{\tau} \lesssim n$. The causal tree estimators are
        \begin{align*}
            \check\tau_{\mathtt{IPW}}(\bx)
            &= \hat\tau_{\mathtt{IPW}}(\bx; \treeT_{\mathtt{IPW}}(\mathcal{D}_{\treeT}),\mathcal{D}_\tau),\\
            \check\tau_{\mathtt{DIM}}(\bx)
            &= \hat\tau_{\mathtt{DIM}}(\bx; \treeT_{\mathtt{DIM}}(\mathcal{D}_{\treeT}),\mathcal{D}_\tau), \quad \text{and}\\
            \check\tau_{\mathtt{SSE}}(\bx)
            &= \hat\tau_{\mathtt{DIM}}(\bx; \treeT_{\mathtt{SSE}}(\mathcal{D}_{\treeT}),\mathcal{D}_\tau).
        \end{align*}
    \end{itemize}
\end{definition}

The no sample splitting and sample splitting data usages are commonly encountered in the literature, and thus our results will speak directly to theoretical, methodological and empirical work relying on these designs. While the estimators $\hat\tau_{l}(\bx)$ and $\check\tau_{l}(\bx)$, $l\in\{\mathtt{IPW},\mathtt{DIM},\mathtt{SSE}\}$, depend on the depth of the tree construction used, our notation keeps this dependence implicit except in the results whose conclusions require an explicit depth restriction. The integrated mean square bounds below keep the depth $K$ explicit, and may therefore be read along sequences $K=K_n$.

\section{Assumptions}\label{sec:Assumptions}

The following assumption describes the data generating process used throughout the analysis.

\begin{assumption}[Data Generating Process]\label{assump: dgp}
    $\mathcal{D} = \{(y_i,d_i,\bx^\top_i): 1 \leq i \leq n\}$ is a random sample generated by i.i.d. latent vectors $(\bx_i,d_i,\varepsilon_i(0),\varepsilon_i(1))$, where $y_i = d_i y_i(1) + (1 - d_i) y_i(0)$, $\bx_i = (x_{i,1}, \ldots, x_{i,p})^{\top}$, and the following conditions hold for all $d = 0,1$ and $i = 1,2, \ldots, n$.
    \begin{enumerate}[label=\emph{(\roman*)}]
        \item $d_i \independent (\bx_i,\varepsilon_i(0),\varepsilon_i(1))$, and $\xi = \P(d_i = 1) \in (0,1)$.
        \item $y_i(d) = \mu_d(\bx_i) + \varepsilon_i(d)$, with $\E[\varepsilon_i(d)]=0$ and $\bx_i \independent (\varepsilon_i(0),\varepsilon_i(1))$.
        \item $\mu_d(\bx) = c_d$ for all $\bx \in \X$, where $c_d$ is some constant and $\X$ is the support of $\bx_i$. 
        \item $x_{i,1}, \ldots, x_{i,p}$ are independent and continuously distributed. 
        \item There exists $\alpha > 0$ such that  $\E[\exp(\lambda\varepsilon_i(d))] < \infty$ for all $|\lambda| < 1/\alpha $ and $\E[\varepsilon_i^2(d)] > 0$.
    \end{enumerate}
\end{assumption}

Assumption \ref{assump: dgp}(i) corresponds to a simple randomized experiment with treatment probability $\xi \in (0,1)$. Assumption \ref{assump: dgp}(ii) specifies a canonical regression representation for the potential outcomes, where $\mu_d(\bx)$ denotes the conditional mean function and the potential-outcome error vector is jointly independent of $\bx_i$; arbitrary dependence between $\varepsilon_i(0)$ and $\varepsilon_i(1)$ is allowed unless stated otherwise. Assumption \ref{assump: dgp}(iii) imposes a constant treatment effect, $\tau = c_1 - c_0$, across the covariate space. Assumption \ref{assump: dgp}(iv) requires the covariates to be independent and continuously distributed. Because recursive decision trees are invariant to monotone transformations of the covariates, this condition can be replaced without loss of generality by assuming that $\bx_i$ is uniformly distributed on $\mathcal{X}=[0,1]^p$. Throughout the asymptotic results, $p$ is fixed. Finally, Assumption \ref{assump: dgp}(v) requires the potential outcome errors to be subexponential, or equivalently, to satisfy a Bernstein moment condition.

Because our goal is to establish lower bounds on the estimation accuracy of the causal tree estimators defined in Definition \ref{def:Sample Splitting}, it suffices to consider the constant treatment effect model in Assumption \ref{assump: dgp}. The constant function belongs to all commonly studied smoothness classes, including H\"older classes and functions with bounded total variation. Consequently, lower bounds established under this model immediately extend to larger classes of data generating processes. Formally, for any estimator $\hat\tau(\bx)$ and any class of distributions $\mathcal{P}$ containing the distribution $\mathbb P_1$ satisfying Assumption \ref{assump: dgp},
\begin{align*}
    \sup_{\P\in\mathcal{P}}\P\Big( \sup_{\bx \in \mathcal{X}}|\hat\tau(\bx) - \tau(\bx)| > \epsilon \Big)
    \geq \P_1\Big(\sup_{\bx \in \mathcal{X}}|\hat\tau(\bx) - \tau(\bx)| > \epsilon\Big),
\end{align*}
for all $\epsilon > 0$, and for any data generating class $\mathcal{P}$ that includes the distribution $\P_1$ satisfying Assumption \ref{assump: dgp}. In fact, the constant treatment effect model is a canonical case to consider in causal inference.

Assumption \ref{assump: dgp} also removes issues related to smoothing (or misspecification) bias, covariate-driven heteroskedasticity, and heavy-tailed errors. In particular, because the CATE function $\tau(\bx)$ is constant over $\mathcal{X}$, the results below are not driven by the usual boundary or smoothing bias arising in nonparametric estimation. The unbiasedness lemma in the Supplemental Appendix gives exact empty-cell bias expressions for all six estimators. For honest estimators, no residual symmetry condition is needed because the final estimation fold is independent of the selected partition. For no-sample-splitting IPW, the corresponding sufficient condition is central symmetry, conditional on the treatment assignments, of the transformed residuals $\tilde\varepsilon_i=\tilde y_i-\tau$; for no-sample-splitting DIM and SSE estimators, the corresponding sufficient condition is deterministic tie-breaking and joint central symmetry of the potential-outcome errors. These auxiliary symmetry conditions are not additional assumptions for the main lower-bound theorems; they are used only for the exact unbiasedness statements. Under these conditions, the estimators are exactly unbiased whenever the relevant terminal-node denominators are positive, and otherwise their bias is only the empty-cell convention bias described in the supplement.

Consequently, the lower bounds established below are not driven by bias. Instead, they arise because adaptive recursive tree constructions can generate highly imbalanced partitions with nonvanishing probability, producing terminal nodes that contain very few observations. These small cells lead to large estimation variance in some regions of $\mathcal X$, which ultimately prevents the estimator from achieving polynomial convergence rates.

Finally, the constant treatment effect model can also be interpreted as a local approximation to smooth heterogeneous treatment effect functions. Recursive partitioning estimators approximate $\tau(\bx)$ using piecewise constant functions over the tree partition, which corresponds to a Haar basis representation. Our results therefore extend to shrinking neighborhoods of smooth functions around the constant function when the signal to noise ratio is sufficiently small.

\section{Main Results}\label{sec: Main Results}

The following theorem establishes our main lower bound on the uniform accuracy of causal tree estimators.

\begin{theorem}[Uniform Accuracy]\label{thm: Uniform Accuracy}
    Suppose Assumption~\ref{assump: dgp} holds and the underlying causal tree has depth at most an integer $K \geq 1$, possibly depending on $n$, and fix $b\in(0,1)$. Then the following conclusions hold for each $l \in \{\mathtt{IPW},\mathtt{DIM},\mathtt{SSE}\}$.
    \begin{itemize}
        \item \textit{No Sample Splitting (NSS).} Also assume that $2^K\log^2 n = o\big(n^{b/4}\sqrt{\log\log n}\big)$ if $l\in\{\mathtt{DIM},\mathtt{SSE}\}$. Then, there exist positive constants $C_1$ and $C_2$ such that
        \begin{align*}
            \liminf_{n\to\infty}
            \P\Big( \sup_{\bx \in \mathcal{X}} \big| \hat\tau_{l}(\bx) - \tau(\bx) \big| \geq C_1 n^{-b/2}\sqrt{\log\log n} \Big) \geq C_2 b,
        \end{align*}
        where $C_1$ and $C_2$ only depend on $p$ and the distribution of $(y_i(0), y_i(1), d_i)$.

        \item \textit{Honesty (HON).} Also assume that $\rho \leq n_{\treeT}/n_{\tau} \leq \rho^{-1}$ for some $\rho\in(0,1)$. Then, there exist positive constants $C_3$ and $C_4$ such that
        \begin{align*}
            \liminf_{n\to\infty}
            \P\Big( \sup_{\bx \in \mathcal{X}} \big| \check\tau_{l}(\bx) - \tau(\bx) \big| \geq C_3 n^{-b/2} \Big) \geq C_4 b,
        \end{align*}
        where $C_3$ and $C_4$ only depend on $\rho$, $p$, and the distribution of $(y_i(0), y_i(1), d_i)$.
    \end{itemize}
\end{theorem}

The constants in Theorem~\ref{thm: Uniform Accuracy} do not depend on $K$ or $n$. Section SA-1 of the Supplemental Appendix maps Theorem \ref{thm: Uniform Accuracy} to the estimator-specific source results, and Section SA-1.2 gives the common proof strategy. The analysis studies suprema of partial sums and related Gaussian processes.

The estimator-specific source results in the supplement give sharper constants in several special cases, including a $1/e$ probability constant for the transformed-outcome IPW tree and for depth-one DIM. For SSE and for deeper NSS-DIM and NSS-SSE trees, additional transfer steps lead to generic positive constants. The rate condition in the NSS bullet is trivially satisfied for fixed-$K$ tree constructions, and is not needed for NSS-IPW or HON procedures.

The theorem is established in a deliberately favorable benchmark: treatment assignment is randomized with known probability, the CATE is constant, errors have light tails, and the splitting rules are canonical CART-type greedy procedures without balance restrictions or minimum leaf-size requirements beyond nonempty child nodes. Thus, the lower bounds are not driven by treatment effect complexity, confounding, or smoothing bias.

Theorem \ref{thm: Uniform Accuracy} establishes lower bounds on the uniform convergence rate of the six causal tree estimators introduced in Section \ref{sec: Setup}, with the stated depth condition for the deeper no sample splitting DIM and SSE trees. It therefore isolates the shared CART-type recursive partitioning mechanism across the estimator, splitting rule, and data usage choices considered here, rather than a feature of a single implementation. For procedures without sample splitting, the estimators $\hat\tau_{\mathtt{IPW}}(\bx)$, $\hat\tau_{\mathtt{DIM}}(\bx)$, and $\hat\tau_{\mathtt{SSE}}(\bx)$ need not achieve a uniform convergence rate of order $n^{-b/2}\sqrt{\log\log n}$ for any $b>0$ in the regimes covered by the theorem. In particular, these estimators converge more slowly than any polynomial rate in $n$, implying that their accuracy must deteriorate in some regions of the covariate space $\mathcal{X}$.

Sample splitting, often called ``honesty,'' decouples model selection from treatment effect estimation. The second result in Theorem \ref{thm: Uniform Accuracy} analyzes the corresponding honest causal tree estimators, $\check\tau_{\mathtt{IPW}}(\bx)$, $\check\tau_{\mathtt{DIM}}(\bx)$, and $\check\tau_{\mathtt{SSE}}(\bx)$. The theorem shows that these estimators also need not achieve polynomial convergence rates. Sample splitting improves the attainable rate only by removing the slowly varying $\sqrt{\log\log n}$ factor.

Theorem \ref{thm: Uniform Accuracy} therefore identifies a limitation of adaptive decision tree methods when the goal is to estimate heterogeneous treatment effects uniformly over the covariate space. In contrast, the same estimators can achieve favorable estimation accuracy when their performance is measured on average over $\mathcal X$, as shown by the following result. Let $F_{\bX}(\bx) = \P(\bx_i\leq \bx)$.

\begin{theorem}[Integrated Mean Square Accuracy]\label{thm: Mean Square Accuracy}
    Suppose Assumption~\ref{assump: dgp} holds and the underlying causal tree has depth at most an integer $K \geq 1$, possibly depending on $n$. Then the following conclusions hold for each $l \in \{\mathtt{IPW},\mathtt{DIM},\mathtt{SSE}\}$.
    \begin{itemize}
        \item \textit{No Sample Splitting (NSS).} There exists a positive constant $C_1$ such that
        \begin{align*}
            \E \Big[ \int_\mathcal{X} \big| \hat\tau_{l}(\bx) - \tau(\bx) \big|^2 dF_{\bX}(\bx) \Big]
            \leq C_1 \frac{2^K \log^4(n) \log(np)}{n},
        \end{align*}
        where $C_1$ only depends on $p$ and the distribution of $(y_i(0), y_i(1), d_i)$.

        \item \textit{Honesty (HON).} If $\rho \leq n_\treeT / n_\tau \leq \rho^{-1}$ for some $\rho\in (0,1)$, then there exists a positive constant $C_2$ such that
        \begin{align*}
            \E \Big[ \int_\mathcal{X} \big| \check\tau_{l}(\bx) - \tau(\bx) \big|^2 dF_{\bX}(\bx) \Big]
            \leq C_2 \frac{2^K \log^5(n)}{n},
        \end{align*}
        where $C_2$ only depends on $\rho$, $p$, and the distribution of $(y_i(0), y_i(1), d_i)$.
    \end{itemize}
\end{theorem}

The constants in Theorem~\ref{thm: Mean Square Accuracy} do not depend on $K$ or $n$, and the displayed bounds are explicit in $K$. Consequently, the bounds imply integrated mean square consistency along any sequence $K=K_n$ satisfying $2^{K_n}\log^4(n)\log(np)/n\to0$ for no sample splitting and $2^{K_n}\log^5(n)/n\to0$ for honest sample splitting.

The expectation bounds in Theorem \ref{thm: Mean Square Accuracy} follow from the supplemental source results mapped at the opening of Supplemental Appendix, Section SA-1, and build on ideas from \cite{Gyorfi-etal_2002_book} and \cite{klusowski2024large}. Importantly, the result applies only under Assumption~\ref{assump: dgp}, that is, when the CATE function is constant. The purpose of this theorem is to highlight that, even in the same setting where uniform convergence is slow, causal decision trees can still achieve favorable performance in an integrated mean squared sense. It remains an open question whether near-optimal mean square convergence rates can be achieved over larger classes of functions by adaptive decision trees constructed using CART-type procedures.

An interpretation of the contrast between Theorem \ref{thm: Uniform Accuracy} and Theorem \ref{thm: Mean Square Accuracy} relates to the often discussed tension between causal inference and prediction in machine learning. Adaptive causal trees may perform poorly pointwise and still perform well on average over the covariate space. Thus, average accuracy does not imply reliable pointwise estimation. Methods that perform well on average can still be unreliable when the task requires accurate estimates at specific covariate values. Consequently, adaptive recursive partitioning should be used with caution for heterogeneous prediction or causal inference tasks where pointwise or subgroup-level accuracy is important.

From a technical perspective, the results in Theorem \ref{thm: Mean Square Accuracy} are new in the context of causal tree estimation, particularly for the formal comparison between no sample splitting and honest implementations. Supplemental Appendix, Section SA-1.3, gives probability versions of these integrated error bounds under the same assumptions, with the same rates $2^K\log^4(n)\log(np)/n$ for no sample splitting and $2^K\log^5(n)/n$ for honest sample splitting under the stated sample-size balance condition.

\section{Discussion}\label{sec: Discussion}

This section interprets the theoretical implications of Theorems \ref{thm: Uniform Accuracy} and \ref{thm: Mean Square Accuracy} and related results for adaptive regression trees established in the Supplemental Appendix.

\subsection{Decision Stumps}

The generation of highly unbalanced cells in adaptive recursive partitioning has been recognized since the early development of CART and is often referred to as the \emph{end cut preference}. Informally, when the signal is weak relative to sampling noise, the empirical splitting criterion may be optimized by thresholds located near the boundary of the parent node.

For example, in the context of standard CART regression without sample splitting, \citet[Theorem 11.1]{breiman1984} and \citet[Theorem 4]{ishwaran2015effect} showed that in one dimension ($p=1$), for each $\delta \in (0, 1)$, the first split has an end-cut preference: one child node contains at most a $\delta$ fraction of the sample with probability approaching one. If translated directly to CATE estimation, this result would rule out a uniform error bound at any fixed multiple of the nearly parametric scale $n^{-1/2}\sqrt{\log\log(n)}$, i.e., for any $C>0$,
\begin{align*}
    \liminf_{n\to\infty} \P\Big(\sup_{x\in\mathcal{X}} \big|\hat\tau_l(x) - \tau(x)\big|
                             \geq C\sigma n^{-1/2}\sqrt{\log\log(n)}\Big) = 1.
\end{align*}
In contrast, our results hold for every fixed $p \geq 1$ and precisely characterize the regions of the support $\mathcal{X}$ where the pointwise rates of estimation are slower than any polynomial in $n$; see the decision-stump lower bounds for CART, IPW, DIM, and SSE in the Supplemental Appendix. Thus, existing theoretical results do not by themselves reveal the limitations of causal trees for pointwise estimation. Moreover, our analysis covers settings with sample splitting and shows that the lower bounds persist under this data usage design. Finally, our results apply to causal tree constructions that differ from, and are more complex than, standard CART regression trees.

\subsection{Deeper Trees, Multivariate Covariates, and the Location of Small Cells}

Our theoretical results show that, under Assumption \ref{assump: dgp}, the first split of an adaptive decision tree generates a small child cell with nonvanishing probability. When this child cell is later refined, its terminal descendants partition a region that already has limited mass. The lower-bound argument shows that at least one terminal descendant can retain enough of the initial fluctuation to force slow uniform convergence. This phenomenon becomes more pronounced as the covariate dimension increases ($p>1$), precisely the regime in which tree-based methods are often employed to detect treatment effect heterogeneity.

Importantly, once earlier splits have isolated a parent node, later refinements can occur anywhere within that parent node and along any coordinate direction. As a consequence, adaptive tree constructions can generate anisotropic hyperrectangular cells with extremely small sample sizes in geometrically complex parts of the covariate space $\mathcal{X}\subseteq\mathbb{R}^p$, rather than only as simple one-dimensional end intervals.

\subsection{Regularization and Bias}

A natural response to the small cell phenomenon is to regularize the tree construction in order to prevent highly imbalanced splits. For instance, the algorithm may impose minimum node-size constraints or incorporate penalties designed to discourage overfitting. Such modifications can reduce estimation variance, but they also alter the class of admissible partitions.

Adaptive tree procedures can generate small cells because local refinement is useful for reducing approximation bias, or because sampling noise drives an imbalanced split. These two sources are difficult to separate in empirical applications. Our theoretical analysis isolates the variance mechanism by focusing on constant treatment effects. When treatment effects vary across the covariate space, regularization rules designed to eliminate small cells may introduce approximation bias and change overall convergence behavior. Supplemental Appendix, Section SA-1.4, gives additional discussion and a threshold example.

\subsection{\texorpdfstring{$\alpha$-Regularity}{alpha-Regularity} and Causal Random Forests}

Theorem \ref{thm: Uniform Accuracy} complements existing positive results by showing what can happen when the balance restrictions used in those analyses are removed. In particular, prior analyses of honest causal trees and forests \citep{wager-Athey_2018_JASA} assume that each split allocates a fixed proportion of observations to both child nodes, that is, $n(\nodet_{\mathtt{L}}) \geq \alpha\, n(\nodet)$ and $n(\nodet_{\mathtt{R}}) \geq \alpha \, n(\nodet)$, where $\alpha \in (0, 1/2]$. This condition, known as $\alpha$-regularity, rules out the highly imbalanced splits that drive our lower bounds. Related balance restrictions also appear in quantile regression forests \citep{Meinshausen_2006_JMLR}, generalized random forests \citep{athey2019generalized}, and work on Boolean interaction recovery from random forest ensembles \citep{Behr-Wang-Li-Yu_2022_PNAS}.

In particular, $\alpha$-regularity may substantially alter the adaptive behavior of recursive partitioning. A balanced tree may require several successive splits to approximate a small subgroup that an unrestricted CART split could isolate immediately. Supplemental Appendix, Section SA-1.5, gives a threshold example illustrating this point.

For this reason, in practice, causal tree methods are often regularized through fixed or lightly tuned minimum node-size hyperparameters, rather than through carefully tuned sample-size-dependent hyperparameters that grow with $n$, as would be required to address the kind of convergence issues raised here; implementation examples are discussed in Supplemental Appendix, Section SA-1.4. Consequently, convergence guarantees derived under balance conditions apply to a more constrained algorithm than the CART-type procedures commonly used in empirical work. Extending such guarantees to canonical implementations would require additional regularization that modifies the estimator and introduces further bias and variance tradeoffs.

\subsection{Implications for Inference}

Theorem \ref{thm: Uniform Accuracy} also has direct implications for statistical inference based on adaptive causal trees. Because recursive partitioning generates highly imbalanced cells with nonvanishing probability, the effective sample size within some regions of the covariate space need not increase with the overall sample size. As a consequence, standard distributional approximations for (``honest'') causal tree estimators may fail to hold even after appropriate centering and scaling. In particular, Gaussian asymptotic approximations can break down in regions where terminal nodes contain only a small number of observations.

A related way to view the issue is through the weights that an estimator places on outcomes. \citet{knaus2024treatment} studies such outcome weights for treatment effect estimators, including generalized random forests, and shows that implementation choices affect their properties.

This phenomenon can make commonly used inference procedures based on asymptotic normality unreliable. For example, confidence intervals of the form $\hat\tau_l(\bx) \pm z_\alpha \cdot \text{Sd.Err.}(\hat\tau_l(\bx))$ or $\check\tau_l(\bx) \pm z_\alpha \cdot \text{Sd.Err.}(\check\tau_l(\bx))$, where $z_\alpha$ denotes the usual quantile of the standard Gaussian distribution and $\text{Sd.Err.}(\cdot)$ is a standard error estimator, need not provide asymptotically valid coverage for $\tau(\bx)$ over many regions of $\mathcal{X}\subseteq\mathbb{R}^p$.

\section{Numerical Evidence}\label{sec: Numerical Evidence}

\subsection{Simulations}

We first illustrate the implications of Theorem~\ref{thm: Uniform Accuracy} in controlled designs. Figure~\ref{fig:rmse-grid-p2} reports the pointwise root mean squared error, $\{\mathbb{E}[(\hat\tau_{\ell}(\bx)-\tau)^2]\}^{1/2}$ and $\{\mathbb{E}[(\check\tau_{\ell}(\bx)-\tau)^2]\}^{1/2}$, for $\ell\in\{\mathtt{IPW},\mathtt{DIM},\mathtt{SSE}\}$ in a bivariate design ($p=2$). The estimates are based on $2,000$ Monte Carlo replications with $\tau=\mu_0=\mu_1=0$, $\varepsilon_i(0),\varepsilon_i(1)\stackrel{\text{i.i.d.}}{\sim}\mathsf{Normal}(0,1)$, $n=1,000$, and $\bx_i\stackrel{\text{i.i.d.}}{\sim}\mathsf{Uniform}([0,1]^2)$. For each of the six causal tree estimators shown in the figure, we vary the tree depth over $K\in\{1,\dots,5\}$. The companion univariate design is reported in Supplemental Appendix, Section SA-1.7.

Two patterns are visible. First, for any fixed depth $K$, pointwise RMSE is smallest near the center of the covariate space and increases as $\bx$ approaches the boundary. This pattern is the finite-sample counterpart of the small cell phenomenon established in Supplemental Appendix, Section SA-1.2. Boundary splits create highly imbalanced terminal nodes, thereby reducing the effective sample size available for local averaging. Second, for any fixed evaluation point, RMSE increases with tree depth. Deeper trees create more terminal node boundaries, so a larger share of evaluation points is affected by the same local imbalance that drives the decision stump lower bound.

\begin{figure}[H]
    \centering
    \begin{subfigure}[t]{0.31\textwidth}
        \centering
        \includegraphics[width=\linewidth]{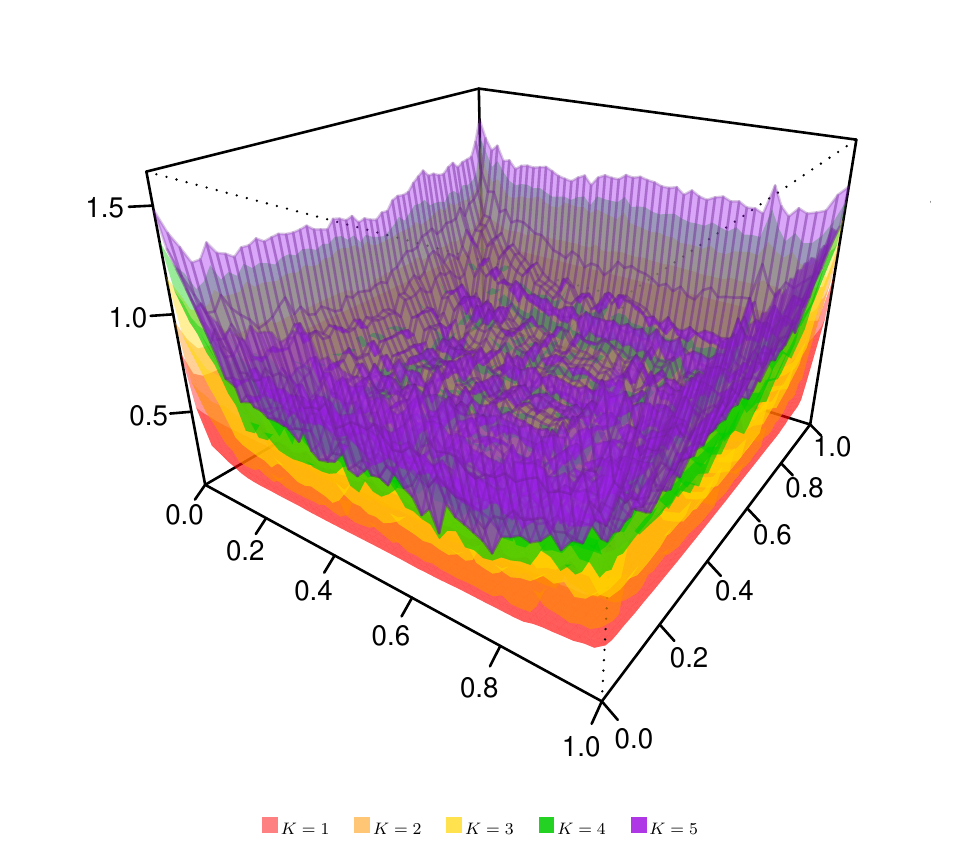}
        \caption{NSS-IPW}
    \end{subfigure}\hfill
    \begin{subfigure}[t]{0.31\textwidth}
        \centering
        \includegraphics[width=\linewidth]{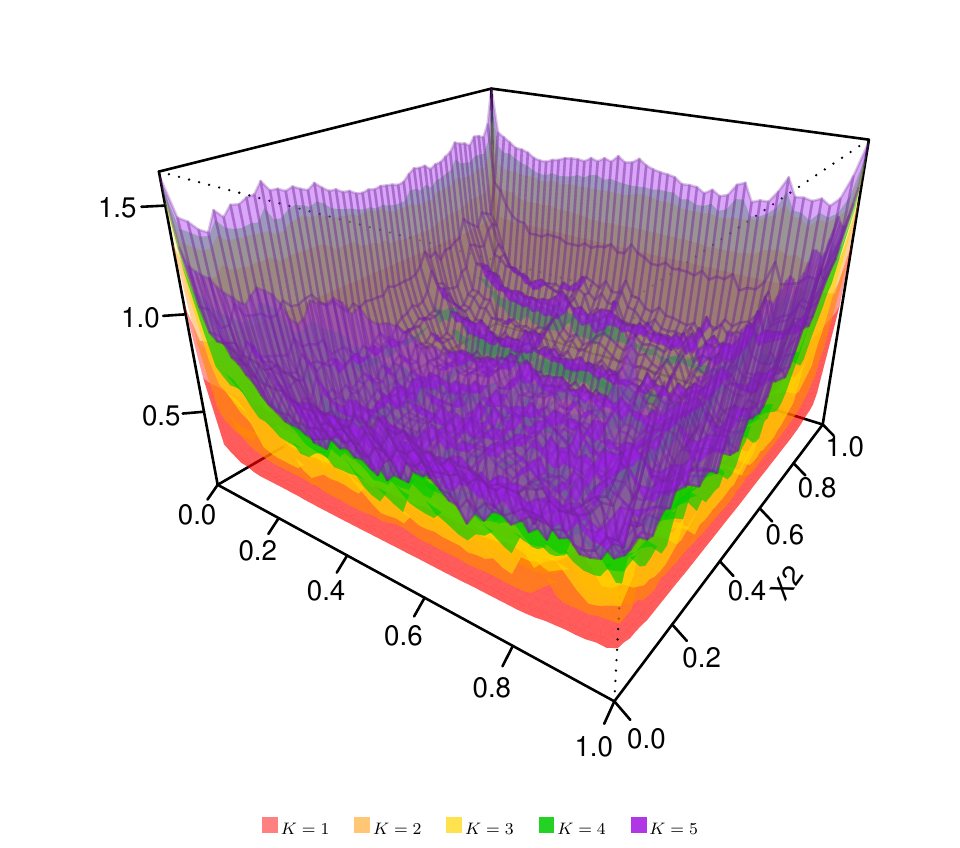}
        \caption{NSS-DIM}
    \end{subfigure}\hfill
    \begin{subfigure}[t]{0.31\textwidth}
        \centering
        \includegraphics[width=\linewidth]{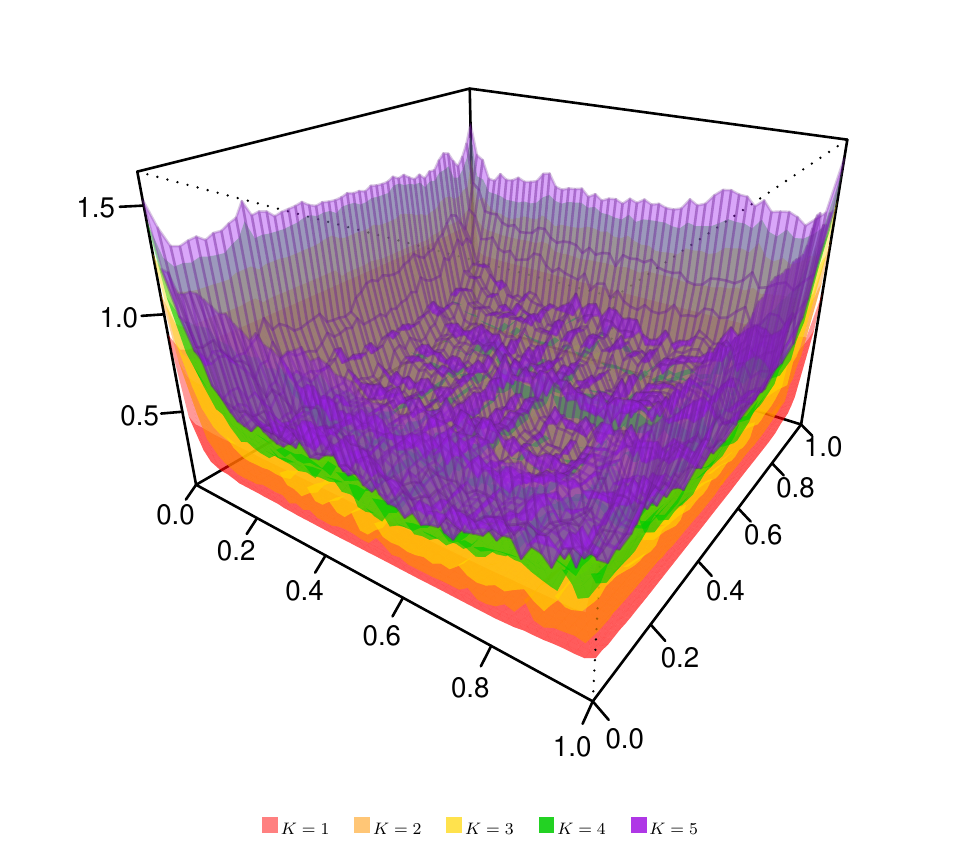}
        \caption{NSS-SSE}
    \end{subfigure}

    \vspace{0.3em}

    \begin{subfigure}[t]{0.31\textwidth}
        \centering
        \includegraphics[width=\linewidth]{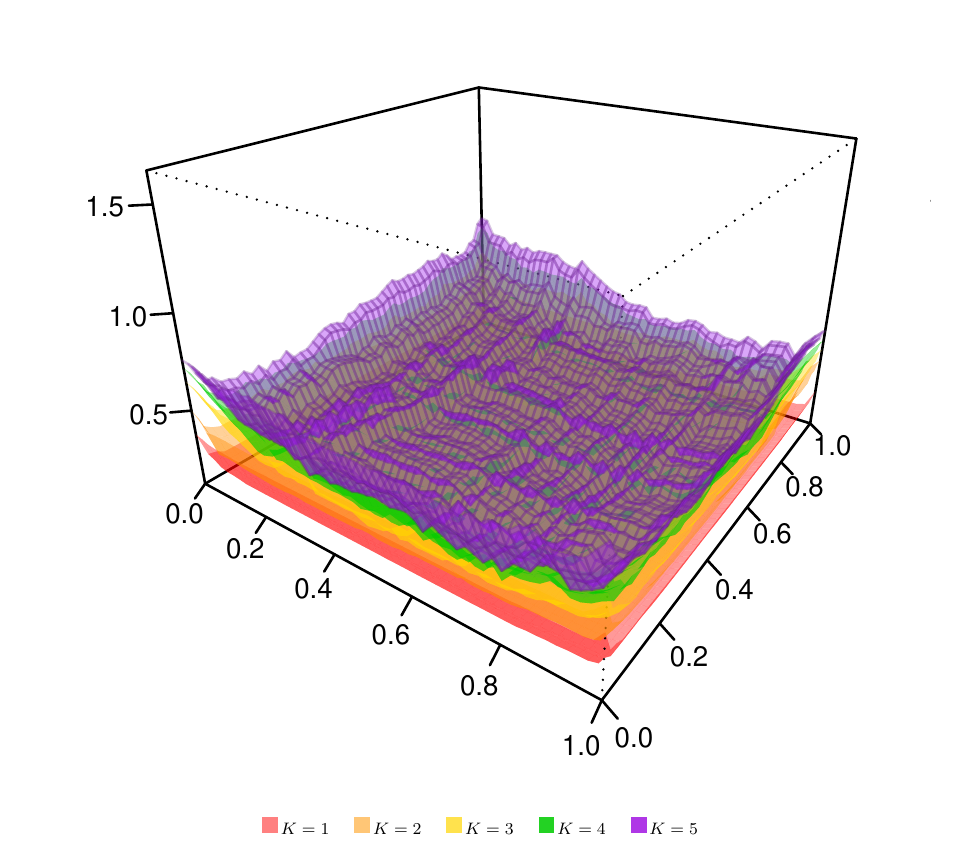}
        \caption{HON-IPW}
    \end{subfigure}\hfill
    \begin{subfigure}[t]{0.31\textwidth}
        \centering
        \includegraphics[width=\linewidth]{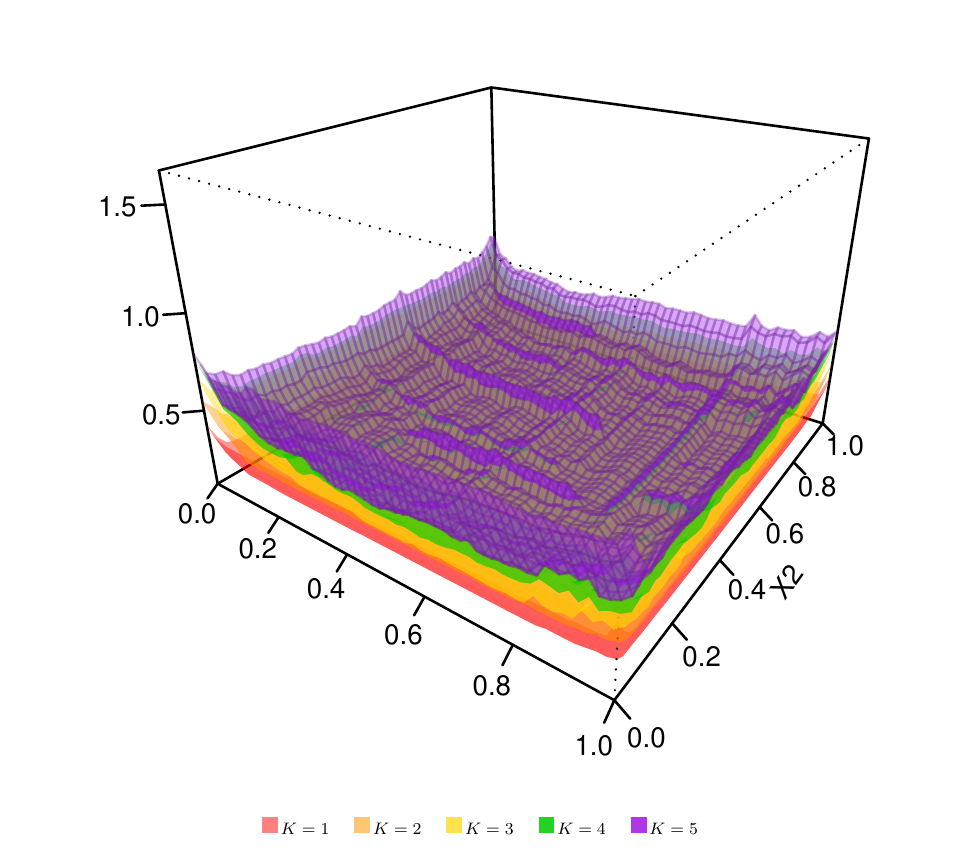}
        \caption{HON-DIM}
    \end{subfigure}\hfill
    \begin{subfigure}[t]{0.31\textwidth}
        \centering
        \includegraphics[width=\linewidth]{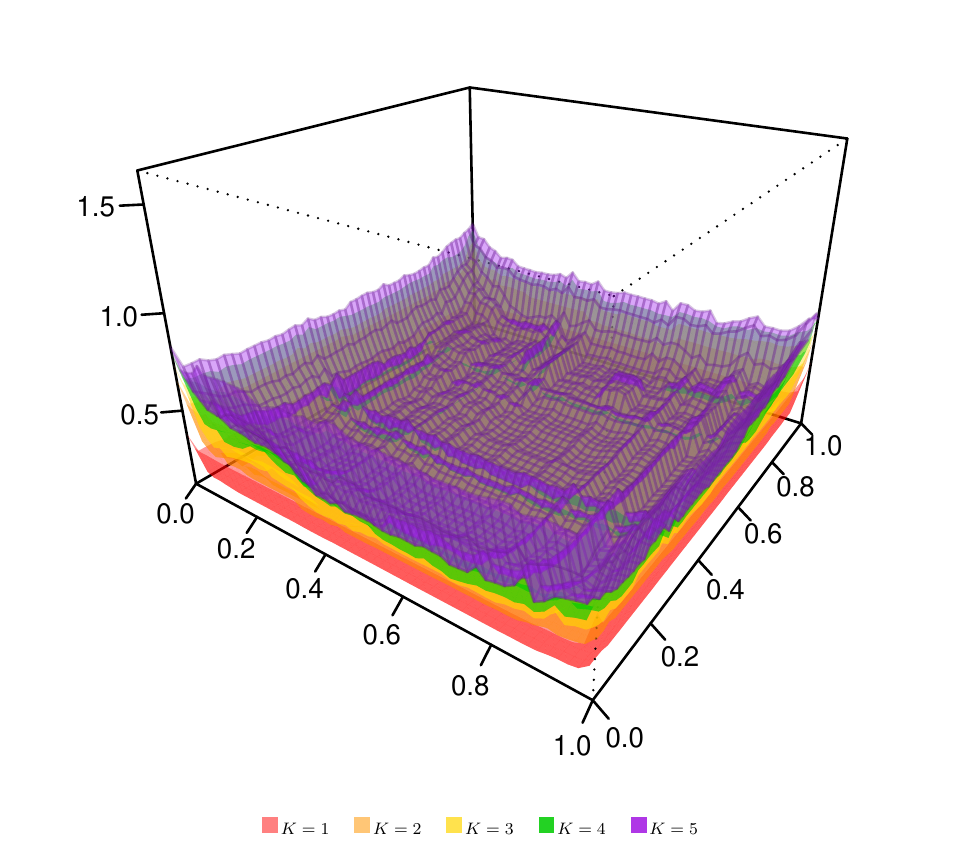}
        \caption{HON-SSE}
    \end{subfigure}
    \caption{Synthetic Monte Carlo evidence for the bivariate design ($p=2$). Each panel reports pointwise root mean squared error (RMSE) over the covariate support $[0,1]^2$ for tree depths $K=1,2,\dots,5$. Rows compare estimators without sample splitting (NSS) and with honest sample splitting (HON); columns compare inverse probability weighting (IPW), difference in means (DIM), and squared error (SSE) splitting criteria. RMSE is lowest near the center of the support and increases toward the boundary, where adaptive splitting is most likely to create small terminal cells. Results are based on $2,000$ Monte Carlo replications.}
    \label{fig:rmse-grid-p2}
\end{figure}

\subsection{Empirical JTPA Resampling}

We next use an empirical resampling design based on the National Job Training Partnership Act (JTPA) Study. The JTPA study was a randomized evaluation of Title II-A employment and training services for economically disadvantaged adults and out-of-school youths in 16 local service delivery areas. The study design and main findings are reported in \citet{Bloom-etal_1997_JHR}.

The resampling exercise preserves empirical features of the JTPA covariates but removes the original covariate-outcome association within each treatment arm by permuting covariates separately among treated and control units. This checks whether the small-cell mechanism remains visible under discrete mass points and uneven empirical support. Supplemental Appendix, Section SA-1.6, gives the preprocessing, thinning, and permutation details.

In Figure~\ref{fig:jtpa_p2_panel}, RMSE again increases where the induced terminal nodes have limited empirical support. This occurs across splitting criteria and sample splitting schemes. The companion one-covariate design is reported in Supplemental Appendix, Section SA-1.7, and displays the same boundary pattern. The sparse-support behavior seen in the Monte Carlo designs therefore also appears in the JTPA resampling exercise.

\begin{figure}[H]
\centering
\captionsetup{font=small}

\begin{subfigure}[t]{0.31\textwidth}
    \centering
    \includegraphics[width=\linewidth]{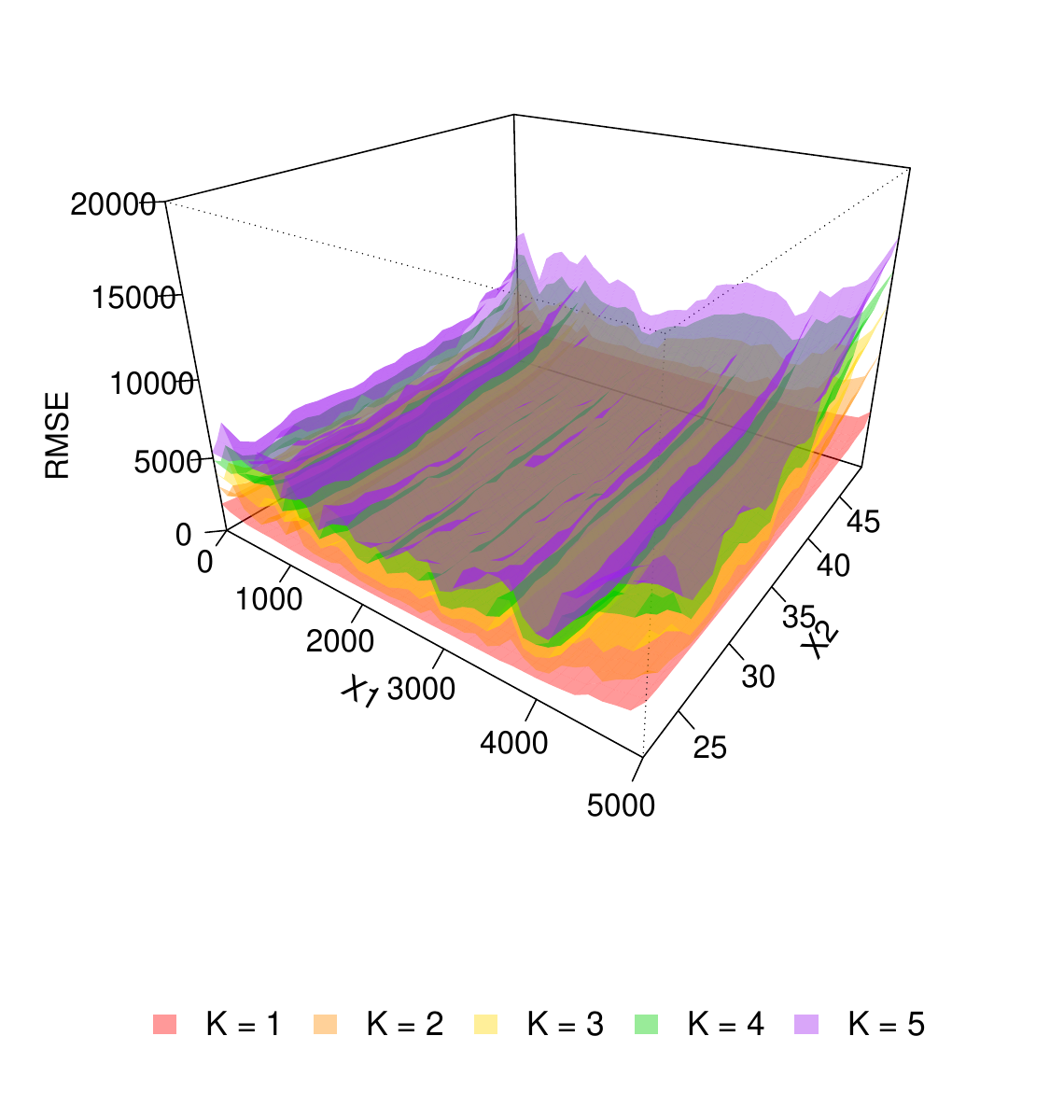}
    \caption{NSS-IPW}
\end{subfigure}\hfill
\begin{subfigure}[t]{0.31\textwidth}
    \centering
    \includegraphics[width=\linewidth]{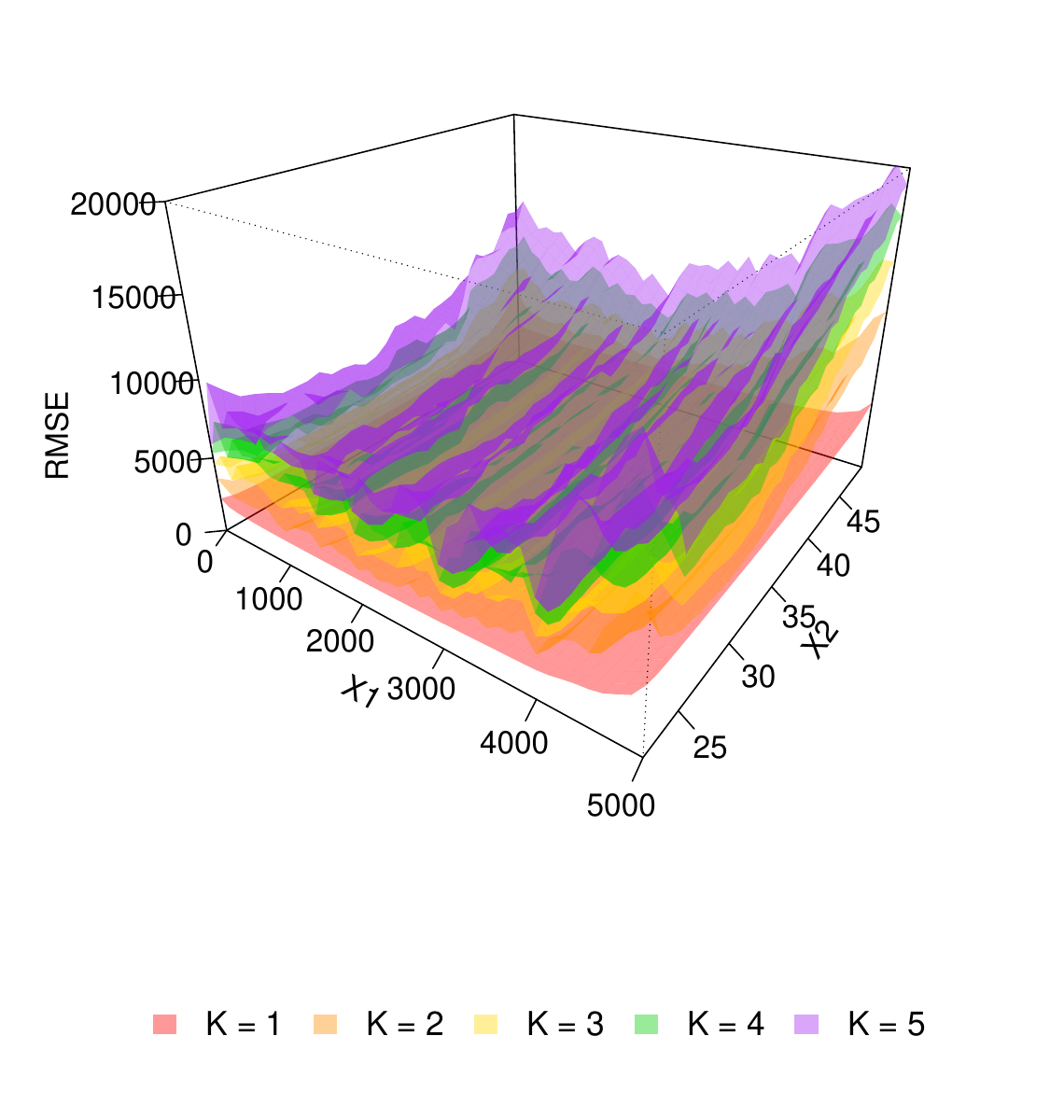}
    \caption{NSS-DIM}
\end{subfigure}\hfill
\begin{subfigure}[t]{0.31\textwidth}
    \centering
    \includegraphics[width=\linewidth]{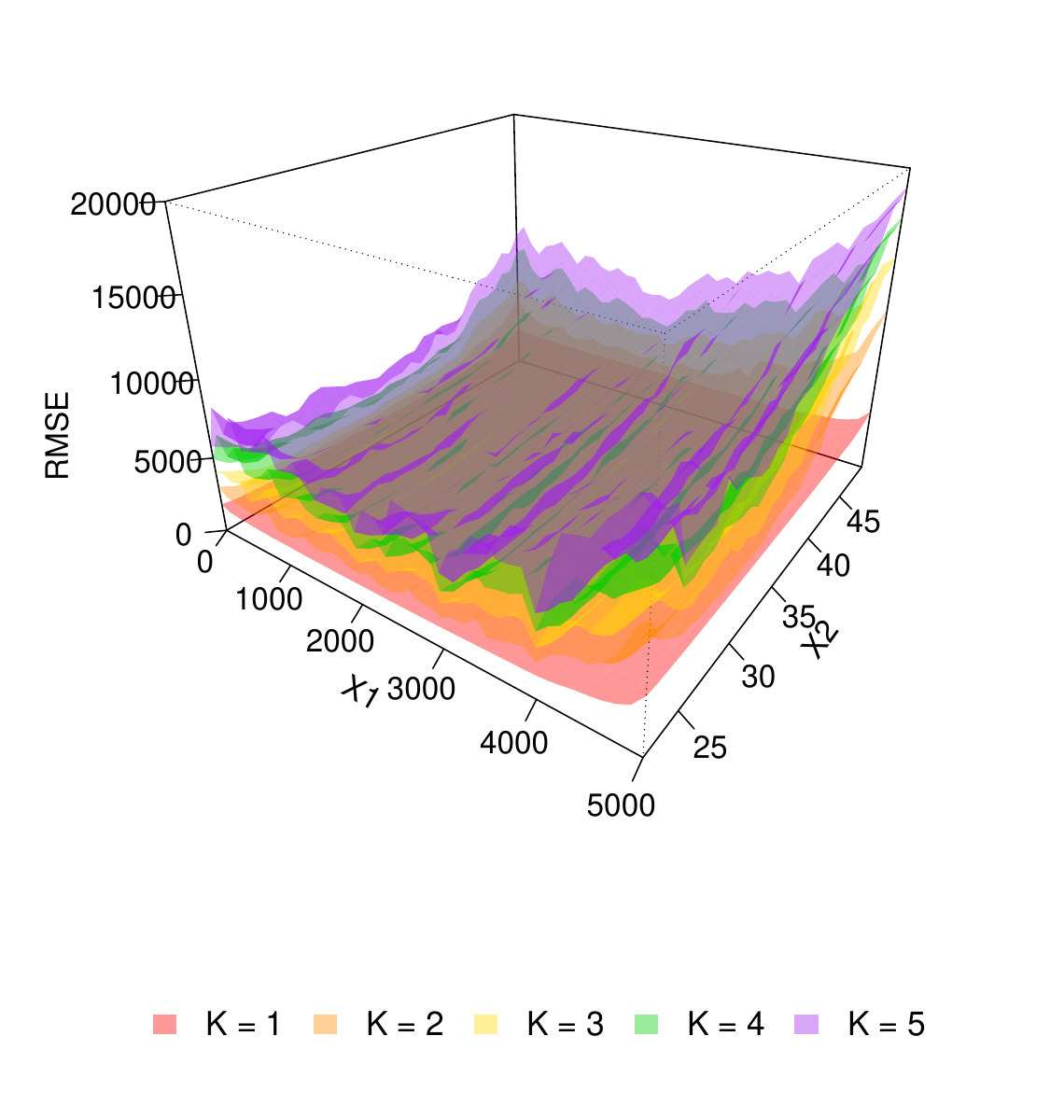}
    \caption{NSS-SSE}
\end{subfigure}

\vspace{0.3em}

\begin{subfigure}[t]{0.31\textwidth}
    \centering
    \includegraphics[width=\linewidth]{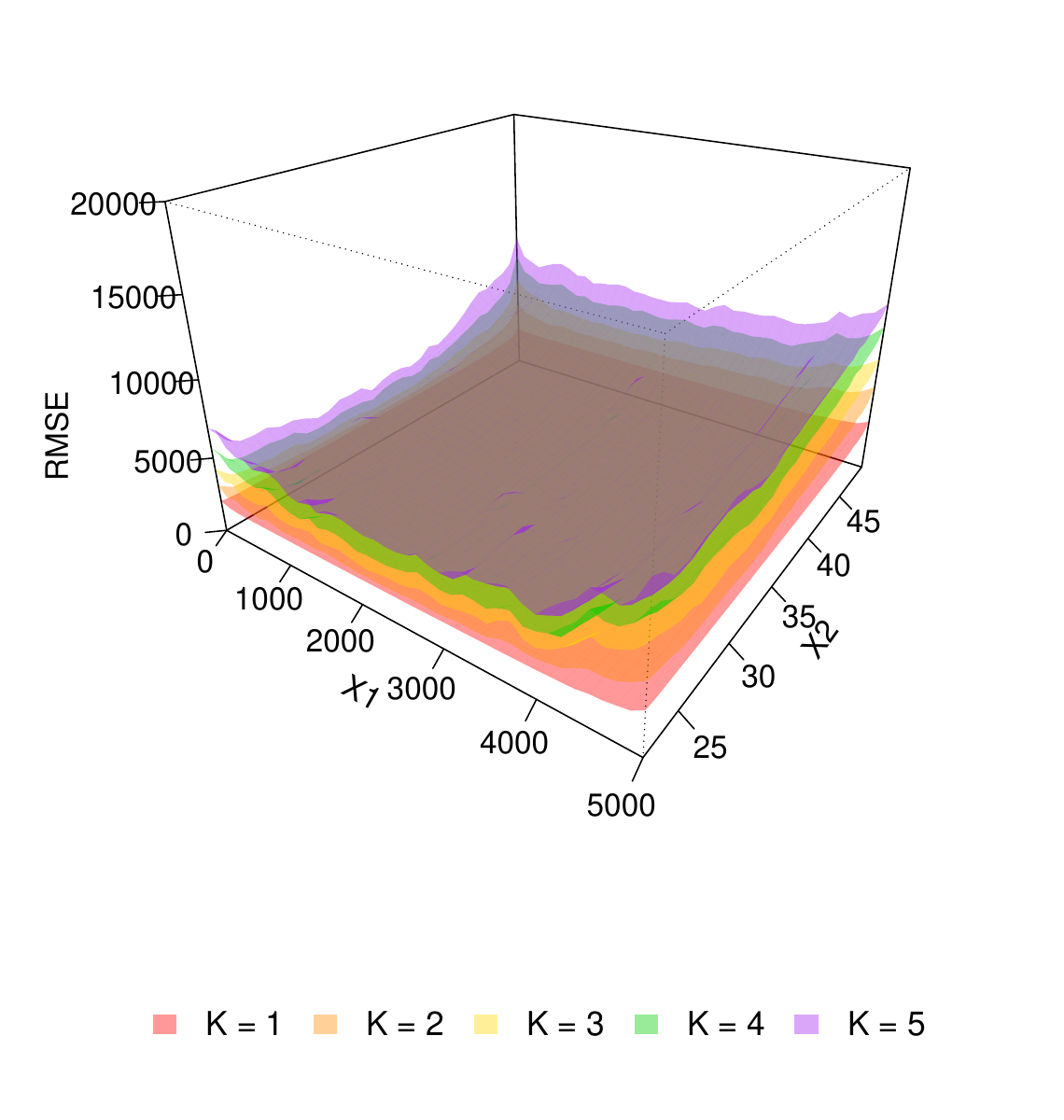}
    \caption{HON-IPW}
\end{subfigure}\hfill
\begin{subfigure}[t]{0.31\textwidth}
    \centering
    \includegraphics[width=\linewidth]{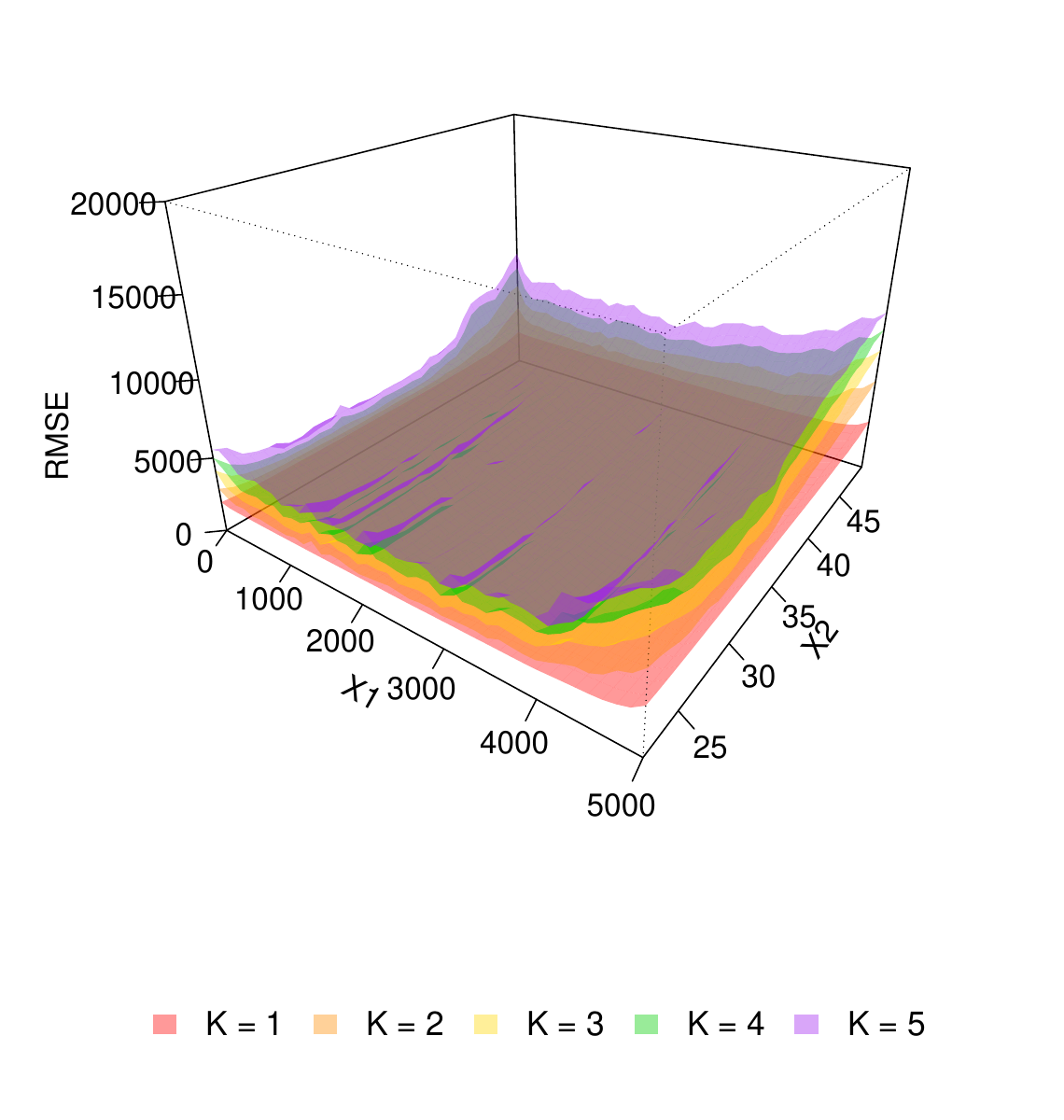}
    \caption{HON-DIM}
\end{subfigure}\hfill
\begin{subfigure}[t]{0.31\textwidth}
    \centering
    \includegraphics[width=\linewidth]{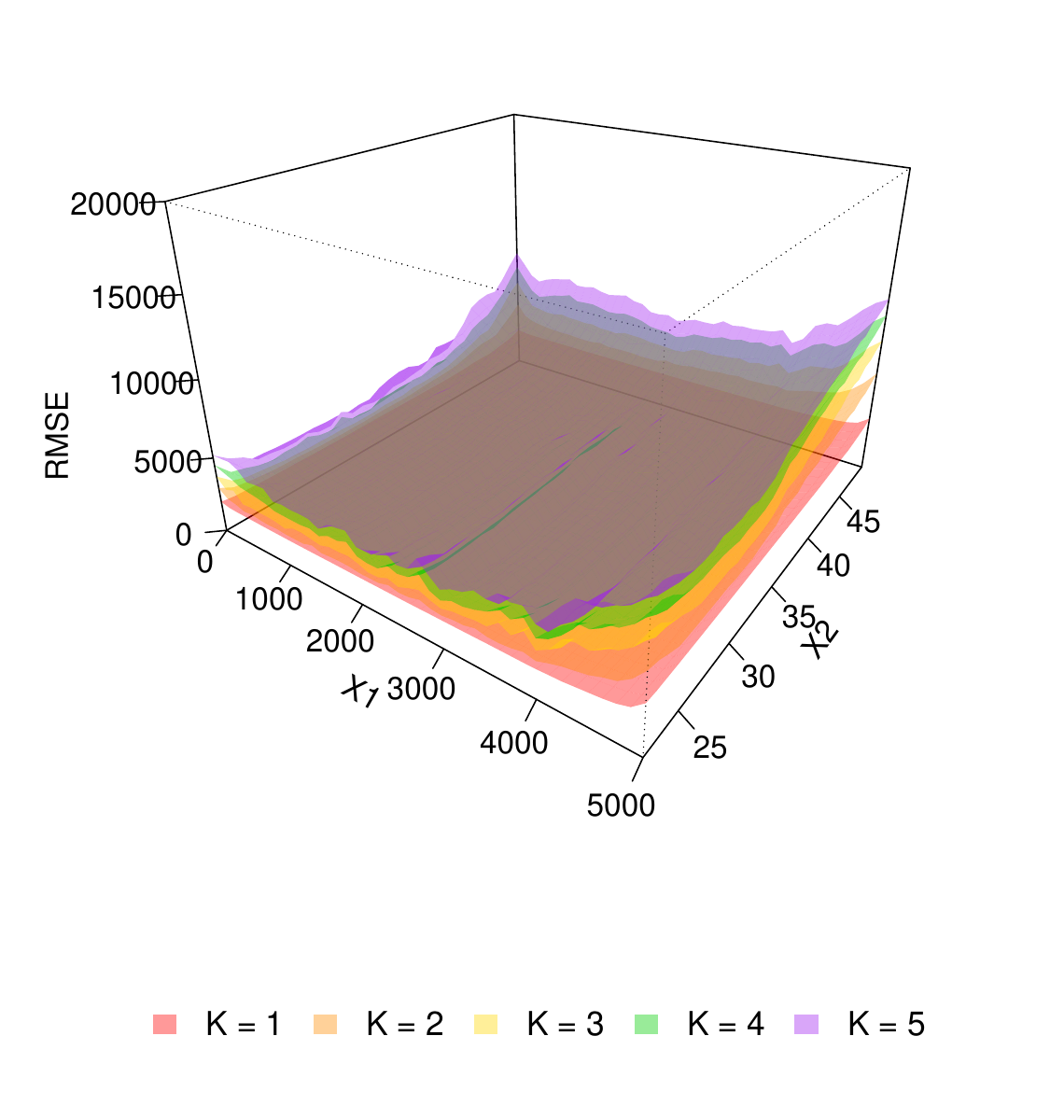}
    \caption{HON-SSE}
\end{subfigure}

\caption{Empirical JTPA resampling evidence for the two-covariate design ($p=2$). Panels report pointwise RMSE over the empirical joint covariate support after jointly permuting the covariate pair within treatment arms, for depths $K=1,\dots,5$. Rows compare NSS and HON; columns compare IPW, DIM, and SSE. RMSE rises in sparse regions of the support, matching the small-cell pattern in the simulations.}
\label{fig:jtpa_p2_panel}
\end{figure}

\section{Conclusion}\label{sec: Conclusion}

The results show that adaptive CART-type causal trees can have sharply different average and pointwise behavior. Even in a constant-effect benchmark, greedy splitting can create small terminal cells with nonvanishing probability. As a result, causal trees may have favorable integrated mean squared error while remaining unreliable at some covariate values, a distinction that matters for individualized decisions, subgroup conclusions, and inference.

Regularization is therefore central when causal trees are used for individualized treatment effect estimation or inference. Minimum node-size rules, balance restrictions, pruning, and related modifications can limit small cells, but they also change the estimator and introduce bias-variance tradeoffs and tuning-parameter choices. Existing positive theory often imposes balance or regularity conditions to obtain guarantees, rather than deriving them from canonical greedy CART splitting. Developing theory for practically regularized, adaptively selected causal tree partitions remains an important direction for future work.

The paper and Supplemental Appendix make this mechanism precise for transformed-outcome, difference-in-means, and squared-error splitting rules, with and without sample splitting. The proofs develop nonasymptotic approximations for adaptive split criteria, connect them to suprema of partial sums and Gaussian processes, transfer boundary split fluctuations through deeper trees, and provide companion regression-tree results and related technical corrections.

\FloatBarrier

\section{Acknowledgments}

The authors thank Benjamin Budway, Max Farrell, Boris Hanin, Felix Hoefer, Michael Jansson, Joowon Klusowski, Boris Shigida, Jantje S\"onksen, Jennifer Sun, Rocio Titiunik, and Kevin Zhang for comments. A previous version of this paper circulated under the title ``The Honest Truth About Causal Trees: Accuracy Limits for Heterogeneous Treatment Effect Estimation'' (arXiv:2509.11381) and contained additional technical results concerning $\bX$-adaptive recursive partitioning; see \citet{Devroye-etal2013_book} for related methods.

\section{Funding}

Cattaneo gratefully acknowledges support from the National Science Foundation through grants SES-2019432, DMS-2210561, SES-2241575, and SES-2342226; the National Institute for Food and Agriculture (NIFA) through grant 2024-67023-42704; and the John Simon Guggenheim Memorial Foundation through a 2026 Guggenheim Fellowship.
Klusowski gratefully acknowledges support from the National Science Foundation through NSF CAREER grant DMS-2239448 and from the Alfred P. Sloan Foundation through a Sloan Research Fellowship.

\bibliographystyle{plainnat}
\bibliography{bib}

\end{document}

% --- supplement: CKY_2026_CausalTrees--Supplement.tex ---

\maketitle

\setlength{\cftsecnumwidth}{3.6em}
\setlength{\cftsubsecnumwidth}{4.4em}
\setlength{\cftsubsubsecnumwidth}{5.2em}
\tableofcontents

\clearpage

%========================
% Supplemental appendix numbering.
\setcounter{section}{0}
\setcounter{subsection}{0}
\setcounter{subsubsection}{0}
\setcounter{equation}{0}
\setcounter{theorem}{0}
\setcounter{definition}{0}
\setcounter{assumption}{0}
\renewcommand{\thesection}{SA-\arabic{section}}
\renewcommand{\thesubsection}{\thesection.\arabic{subsection}}
\renewcommand{\thesubsubsection}{\thesubsection.\arabic{subsubsection}}
\renewcommand{\theequation}{SA-\arabic{equation}}
\renewcommand{\thetheorem}{SA-\arabic{theorem}}
\renewcommand{\thelemma}{\thetheorem}
\renewcommand{\thecoro}{\thetheorem}
\renewcommand{\thedefinition}{SA-\arabic{definition}}
\renewcommand{\theassumption}{SA-\arabic{assumption}}

%========================
\section{Overview and Omitted Details}\label{sec:overview}
%========================

This supplement contains the proofs for the results in the main paper, together with additional theoretical results and omitted details. We start with regression estimation in Section~\ref{sa-sec:main}, showing that the standard CART decision tree estimator of a constant conditional mean suffers from slow uniform convergence rates. In Section~\ref{sa-sec:causal}, we study the causal effect estimators discussed in the main paper: the inverse probability weighting (IPW), difference in means (DIM), and sum of squared errors (SSE) estimators are considered in Sections~\ref{sa-sec: ipw causal}, \ref{sa-sec: reg causal}, and \ref{sa-sec: sse causal}, respectively. The main paper results are connected to this supplement as follows.

\begin{itemize}
    \item \textbf{Proof of Theorem 1}: The conclusions follow from Corollary~\ref{sa-coro: uniform minimax ipw}, Corollary~\ref{sa-coro: honest output ipw}, Theorem~\ref{sa-thm: uniform minimax rates regression}, Theorem~\ref{sa-thm: honest output reg}, Theorem~\ref{sa-thm: uniform minimax rates regression fit}, and Theorem~\ref{sa-thm: honest output reg fit}. The deeper NSS-DIM and NSS-SSE conclusions use the depth condition stated in Theorem 1 of the main paper; the IPW, honest, and depth one source results do not. The sharper probability constant $1/e$ for no sample splitting is available for the IPW transformed outcome tree and for the depth one DIM result. For SSE, the split location result has the same $1/e$ split index constant, but transferring a large bivariate SSE split fluctuation to the terminal CATE contrast uses an additional directional transfer constant. For deeper NSS-DIM and NSS-SSE trees, later recursive refinements of the selected root child introduce treatment fraction remainder terms, so the final theorem records a generic positive probability constant for those cases.

    \item \textbf{Proof of Theorem 2}: The conclusions follow from Corollary~\ref{sa-coro: L2 consistency NSS ipw}, Corollary~\ref{sa-coro: L2 consistency honest ipw}, Theorem~\ref{sa-thm: L2 consistency NSS reg}, Theorem~\ref{sa-thm: L2 consistency honest reg}, Theorem~\ref{sa-thm: L2 consistency NSS fit}, and Theorem~\ref{sa-thm: L2 consistency honest fit}.

    \item \textbf{Correction to Eicker (1979)}: The correction to \citet[Theorem 5]{eicker1979asymptotic}, including the corrected Darling--Erd\"os result used in the proofs, is stated and proved in Section~\ref{sec: Appendix Correction}.
\end{itemize}

In Theorem 1 of the main paper, the constants for no sample splitting are denoted by $C_1,C_2$ and the honest constants by $C_3,C_4$. The statements below use local constant notation for each estimator, often again denoted $C_1,C_2$; when those statements concern honest estimators, their local constants correspond to the constants $C_3,C_4$ in the main paper. The only cases in which we retain the sharper $1/e$ probability factor for the CATE error lower bound are the transformed outcome IPW tree and the depth one DIM statement. For SSE, the split index theorem has the sharp $1/e$ probability factor, but the CATE error lower bound uses a generic positive directional transfer factor. The deeper NSS-DIM and NSS-SSE results use the refinement transfer with random arm weights in Lemma~\ref{sa-lem:refinement-transfer-causal}, together with the depth condition stated in Theorem 1 of the main paper.

In addition, this supplement presents the following additional results and omitted details.
\begin{itemize}
    \item Probability analogues of the integrated mean squared error bounds in Theorem 2; see Section~\ref{sa-sec:high-probability-L2}.
    \item Technical details on the proof strategy for Theorem 1, regularization and small cells, $\alpha$ regularity and small subgroups, the JTPA resampling design, and companion numerical figures; see Sections~\ref{sec: Proof Strategy of Theorem 1}, \ref{sa-sec:regularization-details}, \ref{sa-sec:alpha-details}, \ref{sa-sec:jtpa-details}, and \ref{sa-sec:companion-figures}.
    \item Regression and causal tree unbiasedness results, together with squared T statistic estimators; see Sections~\ref{sa-sec:reg-unbiased}, \ref{sa-sec: tstat estimators}, and \ref{sa-sec: unbiased}.
\end{itemize}

\subsection{Notation}\label{sa-sec:notation}

The following notation is used throughout the supplement.

\begingroup
\begin{itemize}%[leftmargin=*]
    \item \textbf{Sets.}
$\mathbb{R}$ is the set of real numbers and $\mathbb{N}$ the positive integers.  
For $n\in\mathbb{N}$ we write $[n]=\{1,\dots,n\}$.

    \item \textbf{Vectors and matrices.}
Boldface lowercase letters (e.g. $\bx$) denote column vectors, and boldface uppercase letters (e.g.\ $\mathbf{A}$) denote matrices.
For a vector $\bx$, its $i$-th component is $x_i$; for a matrix $\mathbf{A}$, its $(i,j)$-th entry is $A_{ij}$. Denote by $\be_j$ the $j$-th unit vector.

    \item \textbf{Norms.}
For $\bx\in\mathbb{R}^d$, define $\|\bx\| = (\sum_{i=1}^{d} x_i^{2})^{1/2}$, and $\|\bx\|_\infty = \max_{i\le d}|x_i|$. For a matrix $A \in \reals^{m \times n}$, the operator norm is $\norm{A} = \sup_{\norm{\bx} = 1} \norm{A\bx}$, and the max norm is $\norm{A}_{\max} = \max_{1 \leq i \leq m, 1 \leq j \leq n} |A_{ij}|$. For a bounded measurable function $g$,
$\|g\|_\infty = \sup_{x}|g(x)|$. For the covariate vector $\bx_i$, let $P_X$ denote its marginal distribution and let $F_{\bX}$ denote its distribution function. For a random variable $Z$ with distribution $P_Z$, denote the population $L_2$ norm by $\norm{Z} = (\int \lVert z \rVert^2 d P_Z(z))^{1/2}$; and given a random sample $\dataD = \{Z_1, \cdots, Z_n\}$, denote the empirical $L_2$ norm by $\norm{Z}_{\dataD} = (n^{-1}\sum_{i = 1}^n \lVert Z_i \rVert^2)^{1/2}$.

    \item \textbf{Asymptotics.}
For real sequences, $a_n \ll b_n$ (or $a_n = o(b_n)$) means $\limsup_{n\to\infty} |a_n|/|b_n| = 0$, while $|a_n| \lesssim |b_n|$ (or $a_n = O(b_n)$) means there exist constants $C$ and $N > 0$ such that $|a_n| \leq C |b_n|$ for all $n > N$. For sequences of random variables, $a_n = o_{\P}(b_n)$ means $\operatorname{plim}_{n \rightarrow \infty}|a_n|/|b_n| = 0$, while $|a_n| \lesssim_{\P} |b_n|$ means $\limsup_{M \rightarrow \infty} \limsup_{n \rightarrow \infty} \P(|a_n/b_n| \geq M) = 0$.
Throughout the supplement, the covariate dimension $p$ is fixed. Generic positive constants may therefore depend on $p$, in addition to the distributional quantities explicitly listed in each statement; sharp constants such as the displayed $1/e$ factors are stated separately.

    \item \textbf{Other.}
$\Indicator(\cdot)$ denotes the indicator function. For two random variables $X$ and $Y$, $X \independent Y$ means $X$ and $Y$ are independent. For $x \in \reals$, $\lfloor x \rfloor$ and $\lceil x \rceil$ denote the floor and ceiling of $x$, respectively. $\mathsf{N}(\boldsymbol{\mu}, \boldsymbol{\Sigma})$ denotes the Gaussian distribution with mean $\boldsymbol{\mu}$ and covariance matrix $\boldsymbol{\Sigma}$. $\mathsf{Beta}(\alpha, \beta)$ denotes the Beta distribution with parameters $(\alpha, \beta)$. A stochastic process $\{B(t), 0 \leq t \leq 1\}$ is a Brownian bridge if $B$ is a continuous Gaussian process with $\E[B(t)] = 0$ and $\E[B(t)B(s)] = \min\{t,s\} - ts$.
When real sequences such as $r_n$ or $s_n$ are used as split index cutoffs, inequalities such as $r_n\leq k\leq n-r_n$ mean $\lceil r_n\rceil\leq k\leq n-\lceil r_n\rceil$; this harmless rounding convention is suppressed below.
When a split index optimization is written as an optimization over $k\in[n]$, the endpoint $k=n$ is not a valid split and the optimization is understood over the valid split indices $1\leq k<n$, together with any validity restrictions specific to the estimator.

    \item \textbf{Boundary sequences.}
Given $a\in(0,1)$ and a deterministic sequence $\eta_n\downarrow 0$, define
\[
    \mathcal{X}_n(a,\eta_n)
    =
    \left\{\bx\in[0,1]^p:
    x_j \leq \eta_n n^{a-1}
    \text{ or }
    1-x_j \leq \eta_n n^{a-1}
    \text{ for some }j\in[p]\right\}.
\]
\end{itemize}

\endgroup

\subsection{Proof Strategy of Theorem 1}\label{sec: Proof Strategy of Theorem 1}

The key idea behind the proof is that greedy recursive partitioning tends to select highly imbalanced splits with probability bounded away from zero. In particular, even in the simplest case of a decision stump (a tree of depth one), the optimal split often occurs near the boundary of the covariate space, producing child nodes with very small sample sizes. These small cells lead to large estimation variance in some regions of $\mathcal{X}$, which ultimately prevents causal tree estimators from achieving polynomial uniform convergence rates.

Underlying our theoretical results are several technical properties of decision stumps, and hence trees of depth one. For each tree splitting criterion and sample splitting design, we first analyze the probabilistic behavior of the split location at the root node. This analysis characterizes the regions of the covariate space $\mathcal{X}$ where the first split is most likely to occur and determines the effective sample sizes of the resulting child nodes.

Our results show that, with probability bounded away from zero, the optimal split concentrates near the boundary of the parent node (a cell in the partition of $\mathcal{X}$). As a consequence, one of the child nodes may contain only a very small number of observations. This phenomenon arises at the very first step of the recursive tree construction and ultimately drives the slow uniform convergence rate. More precisely, let $\hat{\imath} = n(\nodet_L)$ and $\hat{\jmath}$ be the CART split index and split variable at the root node, respectively, for $l \in \{\mathtt{IPW},\mathtt{DIM},\mathtt{SSE}\}$. The construction without sample splitting and the honest construction use the same root split criterion; in the honest case, that criterion is computed on the construction fold, whose sample size is comparable to $n$. For each $ a,b \in (0, 1) $ with $ a < b $ and $j \in \{1, 2, \dots, p\}$, we establish that
\begin{align}\label{sa-eq:causal-split-range}
     \liminf_{n\to\infty} \mathbb{P}\big( n^{a} \leq \hat{\imath} \leq n^{b},\, \hat{\jmath} = j \big) \geq \frac{b-a}{2pe},
     \qquad
     \liminf_{n\to\infty} \mathbb{P}\big( n-n^{b} \leq \hat{\imath} \leq n-n^{a},\, \hat{\jmath} = j \big) \geq \frac{b-a}{2pe}.
\end{align}
Thus the left and right boundary split regions each have probability bounded away from zero for every coordinate.

The slow uniform convergence rate of the decision stump estimator arises because the optimal split tends to concentrate near the boundary of the support, producing highly imbalanced partitions. In such cases, one child node contains only a small number of construction observations, making the corresponding local average estimator highly variable. Relation \eqref{sa-eq:causal-split-range} quantifies this phenomenon: for each coordinate $j=1,\dots,p$ and each $b\in(0,1)$, summing the two boundary events and letting $a\downarrow0$ gives probability at least $b/(pe)$ that one of the child cells $\{ \bx \in \mathcal{X}: x_j \leq \hat \varsigma \} $ or $\{ \bx \in \mathcal{X}: x_j > \hat \varsigma \} $ is highly anisotropic and contains at most $n^b$ construction observations. Consequently, with probability bounded away from zero, the estimator exhibits arbitrarily slow convergence in some region of $\mathcal{X}$. For deeper NSS trees, these insights extend through the terminal descendants of the imbalanced root child. For ordinary regression and IPW transformed outcome trees, the relevant root child average is exactly a convex combination of terminal descendant averages. For deeper NSS-DIM and NSS-SSE trees, a refinement transfer argument based on the random treatment-arm weights shows that the same rate is inherited by a terminal descendant under the moderate depth condition in Theorem 1 of the main paper. For honest trees, the selected construction-fold small cell is evaluated on an independent estimation fold; an occupancy argument and finite-arm anti-concentration transfer the construction-fold small-cell event to the final honest estimator.

The core of the proof studies the tree construction as the maximizer of the split criteria in \eqref{sa-eq: variance maximization} and \eqref{sa-eq: sse}, indexed by the split location and the covariate coordinate. The analysis relies on nonasymptotic high-dimensional central limit theorems, Gaussian comparison inequalities, Gaussian process embeddings, Darling--Erd\"os-type extreme value theory, and empirical process techniques \citep{el2009transductive,petrov2007lower,shorack1976inequalities,skorski_2023}. The argument proceeds in four main steps.

\emph{Step 1: Split Criterion Approximation}.
Using empirical process techniques, we establish an asymptotic equivalence between the split criterion underlying each causal tree estimator and the split criterion of a standard regression tree employing CART. For $l=\DIM$ and $l=\IPW$, this corresponds to a regression tree applied to the transformed outcomes $y_i \frac{d_i-\xi}{\xi(1-\xi)}$. For $l=\SSE$, after centering within treatment arms, the approximating process is the sum of two independent split criterion processes, one based on $\frac{d_i}{\xi}\varepsilon_i(1)$ for treated units and one based on $\frac{1-d_i}{1-\xi}\varepsilon_i(0)$ for control units; the arm-specific constants cancel from the split contrasts within each treatment arm. A truncation argument removes extremely small or large split indices where empirical process approximations are less reliable \citep[Theorem A.4.1]{csorgo1997limit}.

\emph{Step 2: Conditional Gaussian Approximation}.
Conditional on the ordering of the covariates, the square root of the split criterion process can be approximated by a Gaussian process with the same conditional covariance structure. For $l=\IPW$ and $l=\DIM$, the split criterion can be written as a sum of i.i.d. high-dimensional random vectors indexed by split location and coordinate. Applying the high-dimensional central limit theorem of \citep[Theorem 2.1]{chernozhukov2017central}, we obtain a Gaussian approximation conditional on the ordering. Because of the structure of the split criterion, a high-dimensional central limit theory over hyperrectangles suffices. For $l=\SSE$, the treated and control components are stacked into a higher-dimensional vector and a central limit theory for convex sets \citep[Proposition 3.1]{chernozhukov2017central} is employed.

\emph{Step 3: Unconditional Gaussian Approximation}.
When $p>1$, different covariate coordinates induce different orderings of the observations. We therefore show that the conditional Gaussian process from Step 2 is close to an unconditional Gaussian process in which splits across different coordinates are asymptotically uncorrelated. This implies asymptotic independence of the corresponding subprocesses and reduces the problem to studying the maximization of the split criterion along a single coordinate. The approximation is established using a Gaussian comparison inequality \citep[Proposition 2.1]{chernozhukov2022improved} together with bounds on the difference between the conditional and unconditional covariance matrices. For $l=\IPW$ and $l=\DIM$, the argument follows directly from a high-dimensional central limit theory for hyperrectangles. For $l=\SSE$, additional approximation error is controlled using Nazarov's inequality \citep{nazarov2003maximal}.

\emph{Step 4: Lower bound on imbalanced split probability}.
The unconditional Gaussian processes obtained in Step 3 correspond to the squared norm of a univariate ($l\in\{\IPW,\DIM\}$) or a weighted quadratic form of a bivariate ($l=\SSE$) Ornstein--Uhlenbeck process, with a one-to-one transformation between split index for the tree and time for the O-U process \citep{csorgo1981strong,jaeschke2003survey}. The Darling--Erd\"os theorem \citep{eicker1979asymptotic,horvath1993maximum} and its weighted quadratic form analogue \citep{zhdanov2022high} then characterize the distribution of the maximum of this process over an interval. Combining this result with the Gaussian correlation inequality \citep[Remark 3(i)]{latala2017royen} yields the lower bound in \eqref{sa-eq:causal-split-range}, which in turn determines the effective sample sizes of the child nodes.

The remaining arguments use these insights either by tracking terminal descendants of the imbalanced root child for NSS estimators or, for honest estimators, by conditioning on the selected partition and using the independent estimation fold. The descendant-tracking step is exact for regression and transformed outcome IPW averages, and it uses the random treatment-arm weights for deeper NSS-DIM and NSS-SSE trees.

\subsection{Probability Bounds for Integrated Error}\label{sa-sec:high-probability-L2}

Theorem 2 in the main text reports expectation bounds for integrated mean squared error. The probability analogues are stated estimator by estimator in the results below and hold under the same assumptions as Theorem 2; in particular, $K$ is the same depth upper bound as in Theorem 2 and the constants do not depend on $K$ or $n$. For no sample splitting, Corollary~\ref{sa-coro: L2 consistency NSS ipw}, Theorem~\ref{sa-thm: L2 consistency NSS reg}, and Theorem~\ref{sa-thm: L2 consistency NSS fit} imply that, for $l\in\{\mathtt{IPW},\mathtt{DIM},\mathtt{SSE}\}$,
\begin{align*}
   \limsup_{n\to \infty} \P\Bigg(
   \int_\mathcal{X} \big| \hat\tau_{l}(\bx) - \tau(\bx) \big|^2 dF_{\bX}(\bx)
   \geq C_l\frac{2^K \log^4(n) \log(np)}{n}
   \Bigg) =0,
\end{align*}
where $C_l$ depends only on the distributional quantities appearing in the corresponding result. For honest sample splitting, Corollary~\ref{sa-coro: L2 consistency honest ipw}, Theorem~\ref{sa-thm: L2 consistency honest reg}, and Theorem~\ref{sa-thm: L2 consistency honest fit} imply that
\begin{align*}
   \limsup_{n\to \infty} \P\Bigg(
   \int_\mathcal{X} \big| \check\tau_{l}(\bx) - \tau(\bx) \big|^2 dF_{\bX}(\bx)
   \geq C_l\frac{2^K \log^5(n)}{n}
   \Bigg) =0,
\end{align*}
provided that $\rho \leq n_{\treeT}/n_{\tau}\leq \rho^{-1}$ for some $\rho\in(0,1)$, as in Theorem 2; in this honest case, $C_l$ may also depend on $\rho$.
Consequently, the bounds imply integrated $L_2$ consistency along any sequence $K=K_n$ satisfying
\[
    \frac{2^{K_n}\log^4(n)\log(np)}{n}\to0
    \qquad\text{for no sample splitting,}
    \qquad
    \frac{2^{K_n}\log^5(n)}{n}\to0
    \qquad\text{for honest sample splitting.}
\]

\subsection{Regularization and Small Cells}\label{sa-sec:regularization-details}

Minimum node-size constraints and related penalties can reduce the variance created by small cells, but they can also change the adaptive behavior of the procedure. Such constraints are common in causal tree implementations \citep[Online Appendix]{Athey-Imbens_2016_PNAS} and related software \citep{Athey-Imbens_2026_causalTree,Xu-Shinkre-Brand_2026_htetree}. Adaptive trees create small cells for two reasons. First, local refinement can be useful when the conditional mean or treatment effect has sharp structure. Second, sampling noise can generate highly imbalanced splits even when the target function is locally flat. In applications, these two mechanisms are difficult to distinguish from the data alone.

For example, the \texttt{causalTree} documentation defines \texttt{minsize} as requiring at least \texttt{minsize} treated and \texttt{minsize} control observations in each leaf, with default \texttt{minsize}=2 and example code setting \texttt{minsize}=20 \citep[Online Appendix]{Athey-Imbens_2016_PNAS}; see also \citet{Athey-Imbens_2026_causalTree}. The \texttt{htetree} package uses default \texttt{minsize}=20 in its high-level causal tree, IPW, matching, and forest wrappers \citep{Xu-Shinkre-Brand_2026_htetree}. These implementation choices illustrate how minimum node-size regularization enters applied causal tree procedures through user-facing tuning parameters.

The distinction matters because a rule that removes small cells may also remove splits that help the tree recover useful structure after an earlier poor split. For example, consider the threshold CATE function
\[
    \tau(\bx) = \Indicator(x_1 \le 1/2).
\]
Suppose the tree first makes a poor split on an irrelevant coordinate, say $x_2 = 0.05$. This split carries no information about the treatment effect, but it leaves most observations in one descendant node, including the signal region $\{x_1 \le 1/2\}$. The algorithm can still split this large descendant node and recover the threshold in $x_1$. A minimum node-size requirement can force the first poor split to divide the sample more evenly, leaving less sample size on every downstream branch. Thus, imbalanced splits can be harmful for pointwise estimation and still play a useful algorithmic role in deeper trees \citep{ishwaran2015effect}.

\subsection{\texorpdfstring{$\alpha$ Regularity}{alpha Regularity} and Small Subgroups}\label{sa-sec:alpha-details}

$\alpha$ regularity requires every split to leave at least an $\alpha$ fraction of the parent node in each child node. This condition is used in positive analyses of honest causal trees and forests \citep{wager-Athey_2018_JASA} and related forest methods \citep{Meinshausen_2006_JMLR,athey2019generalized,Behr-Wang-Li-Yu_2022_PNAS}. It rules out the small cells that drive the lower bounds in the main text, but it can also limit the ability of a tree to isolate small subgroups.

To see this, consider
\[
    \tau(\bx) = \Indicator(x_1 \le \eta), \qquad \eta \in (0,1),
\]
with $\bx=(x_1,\ldots,x_p)^\top$ uniformly distributed on $[0,1]^p$. At the population level, an unrestricted CART procedure can recover this function with a single split on $x_1$ at $x_1=\eta$. If $\eta<\alpha$, an $\alpha$-regular tree cannot make that split at the root. It must instead approximate the threshold through a sequence of finer splits, requiring at least
\[
    \left\lceil \log_{1/\alpha}(1/\eta) \right\rceil
\]
successive splits along $x_1$. This reduces parsimony and weakens variable selection adaptivity. In finite samples, especially when $p$ is large relative to $n$, repeated refinement along one coordinate creates additional chances for spurious splits on irrelevant covariates. Balanced trees may still approximate such structures in integrated loss, but the resulting partitions need not recover the underlying subgroup itself.

\subsection{JTPA Resampling Design}\label{sa-sec:jtpa-details}

The empirical resampling exercise in the main text uses data from the National Job Training Partnership Act Study. Applicants were randomly assigned either to a program group eligible for JTPA services or to a control group that was not eligible for those services for 18 months after assignment, with later employment and earnings measured using followup surveys and administrative records \citep{Bloom-etal_1997_JHR}.

The resampling exercise preserves empirical features of the JTPA covariates but removes the original covariate-outcome association within each treatment arm. We first preprocess the covariate support to reduce the influence of large mass points in preprogram earnings. After restricting attention to the target range of $X_1$, we group observations by their exact $X_1$ value and randomly thin each mass point without replacement. In the one-covariate design, if a mass point contains $n_g$ observations, we retain $\min\{\max(1,\lfloor 0.5 n_g \rfloor),30\}$ units. In the two-covariate design, we retain $\min\{\max(1,\lfloor 0.2 n_g \rfloor),200\}$ units.

For the Monte Carlo resampling step, outcomes $Y$ and treatment assignments $D$ are held fixed, and covariates are permuted without replacement within each treatment arm. In the one-covariate design, $X_1$ is permuted separately among treated and control units. In the two-covariate design, the pair $(X_1,X_2)$ is permuted jointly within each treatment arm. This construction preserves the within-arm marginal distribution of $X_1$ in the one-covariate design and the within-arm joint distribution of $(X_1,X_2)$ in the two-covariate design. The permutation step removes the original association between covariates and outcomes within each treatment arm.

\subsection{Companion One-Dimensional Numerical Figures}\label{sa-sec:companion-figures}

Figures~\ref{sa-fig:rmse-grid-p1} and \ref{sa-fig:jtpa_p1_panel} report the one-dimensional companion designs for the numerical evidence in the main text. They display the same qualitative pattern as the bivariate figures: pointwise RMSE is lowest in the interior of the support and rises near regions where adaptive splitting creates terminal cells with limited empirical support.

\begin{figure}[H]
    \centering
    \begin{subfigure}[t]{0.31\textwidth}
        \centering
        \includegraphics[width=\linewidth]{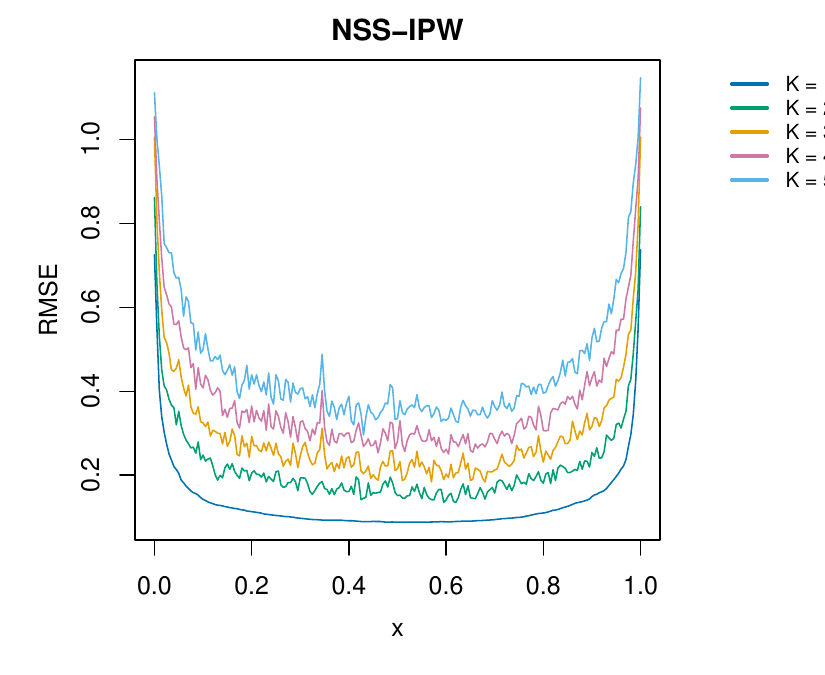}
        \caption{NSS-IPW}
    \end{subfigure}\hfill
    \begin{subfigure}[t]{0.31\textwidth}
        \centering
        \includegraphics[width=\linewidth]{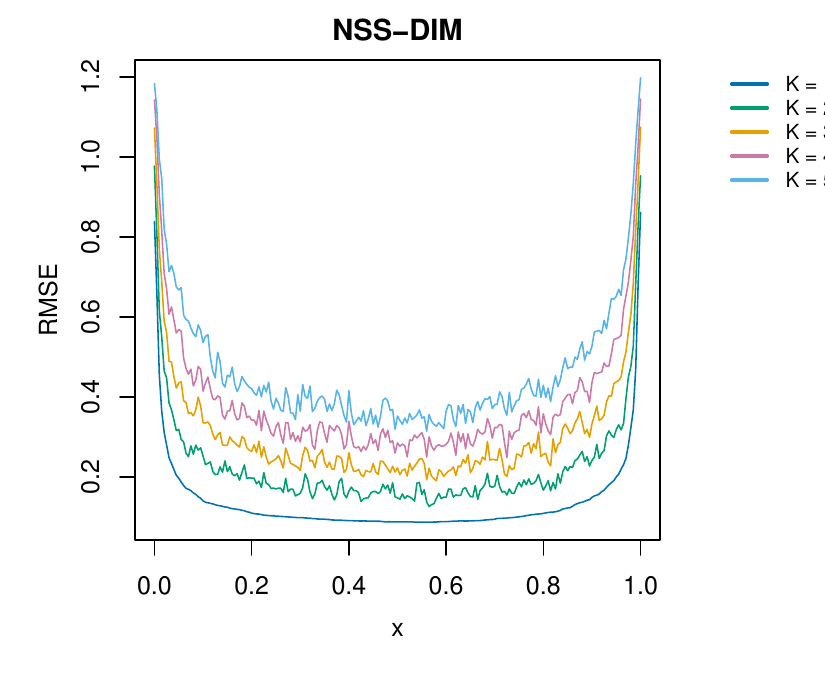}
        \caption{NSS-DIM}
    \end{subfigure}\hfill
    \begin{subfigure}[t]{0.31\textwidth}
        \centering
        \includegraphics[width=\linewidth]{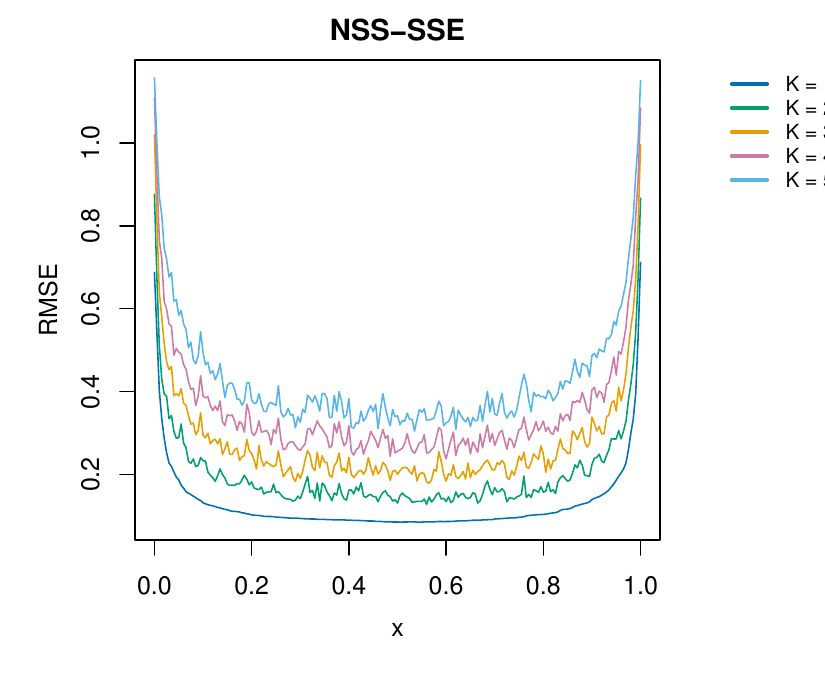}
        \caption{NSS-SSE}
    \end{subfigure}

    \vspace{0.3em}

    \begin{subfigure}[t]{0.31\textwidth}
        \centering
        \includegraphics[width=\linewidth]{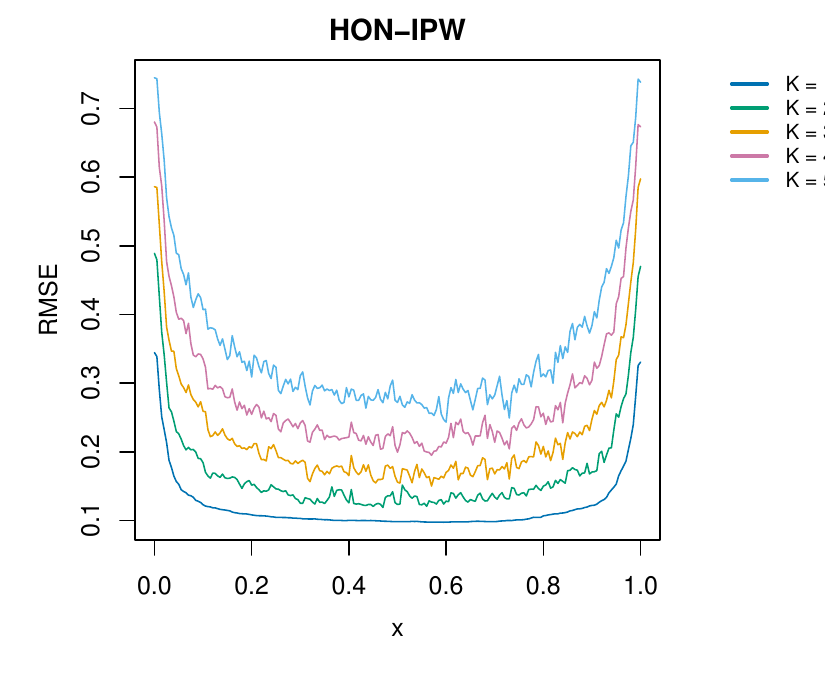}
        \caption{HON-IPW}
    \end{subfigure}\hfill
    \begin{subfigure}[t]{0.31\textwidth}
        \centering
        \includegraphics[width=\linewidth]{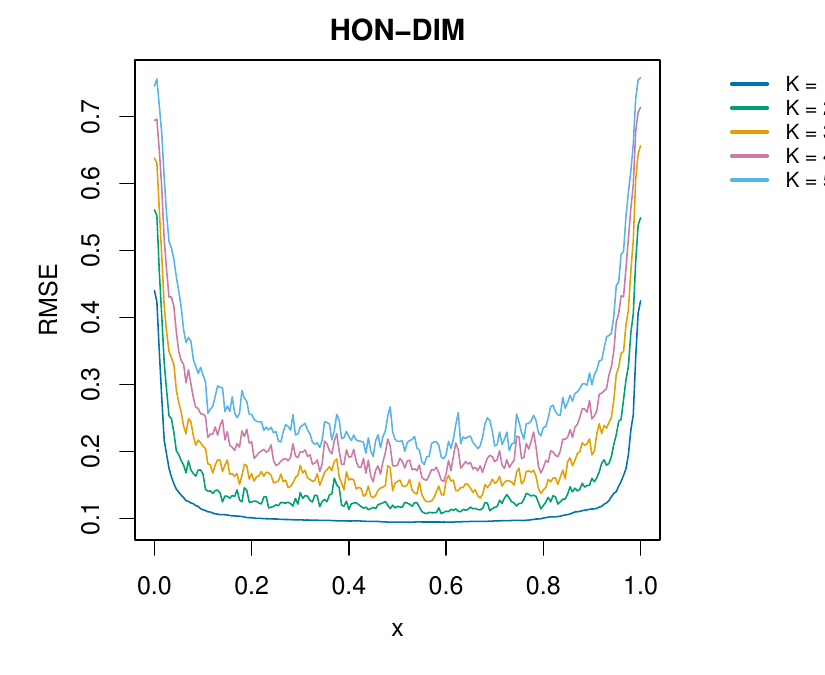}
        \caption{HON-DIM}
    \end{subfigure}\hfill
    \begin{subfigure}[t]{0.31\textwidth}
        \centering
        \includegraphics[width=\linewidth]{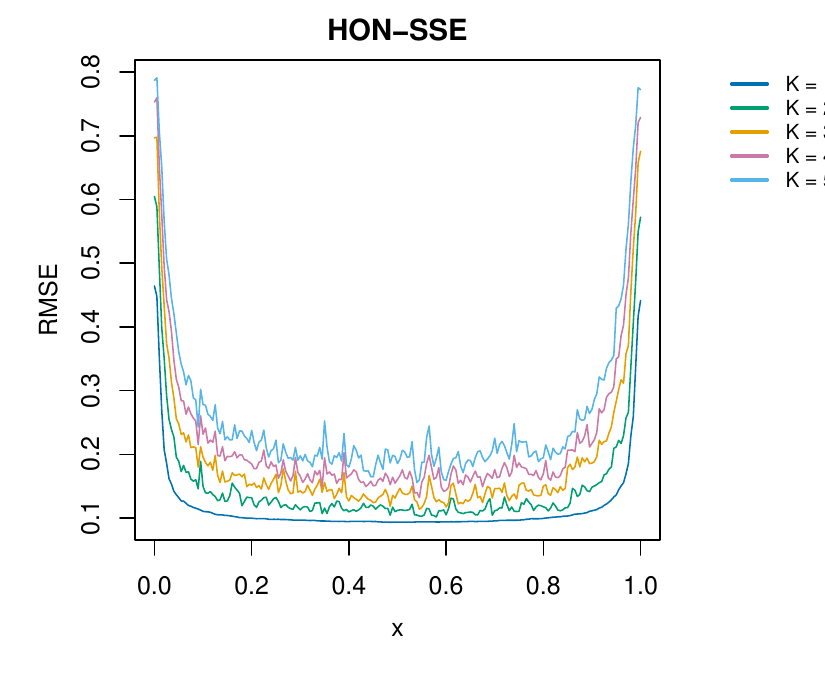}
        \caption{HON-SSE}
    \end{subfigure}
    \caption{Synthetic Monte Carlo evidence for the univariate design ($p=1$). Each panel reports pointwise root mean squared error (RMSE) over the covariate support $[0,1]$ for tree depths $K=1,2,\dots,5$. Rows compare estimators without sample splitting (NSS) and with honest sample splitting (HON); columns compare inverse probability weighting (IPW), difference in means (DIM), and squared error (SSE) splitting criteria. RMSE is smallest in the interior of the support and rises near the boundary, matching the small cell mechanism in the theory. Results are based on $2,000$ Monte Carlo replications.}
    \label{sa-fig:rmse-grid-p1}
\end{figure}

\begin{figure}[H]
\centering
\captionsetup{font=small}

\begin{subfigure}[t]{0.31\textwidth}
    \centering
    \includegraphics[width=\linewidth]{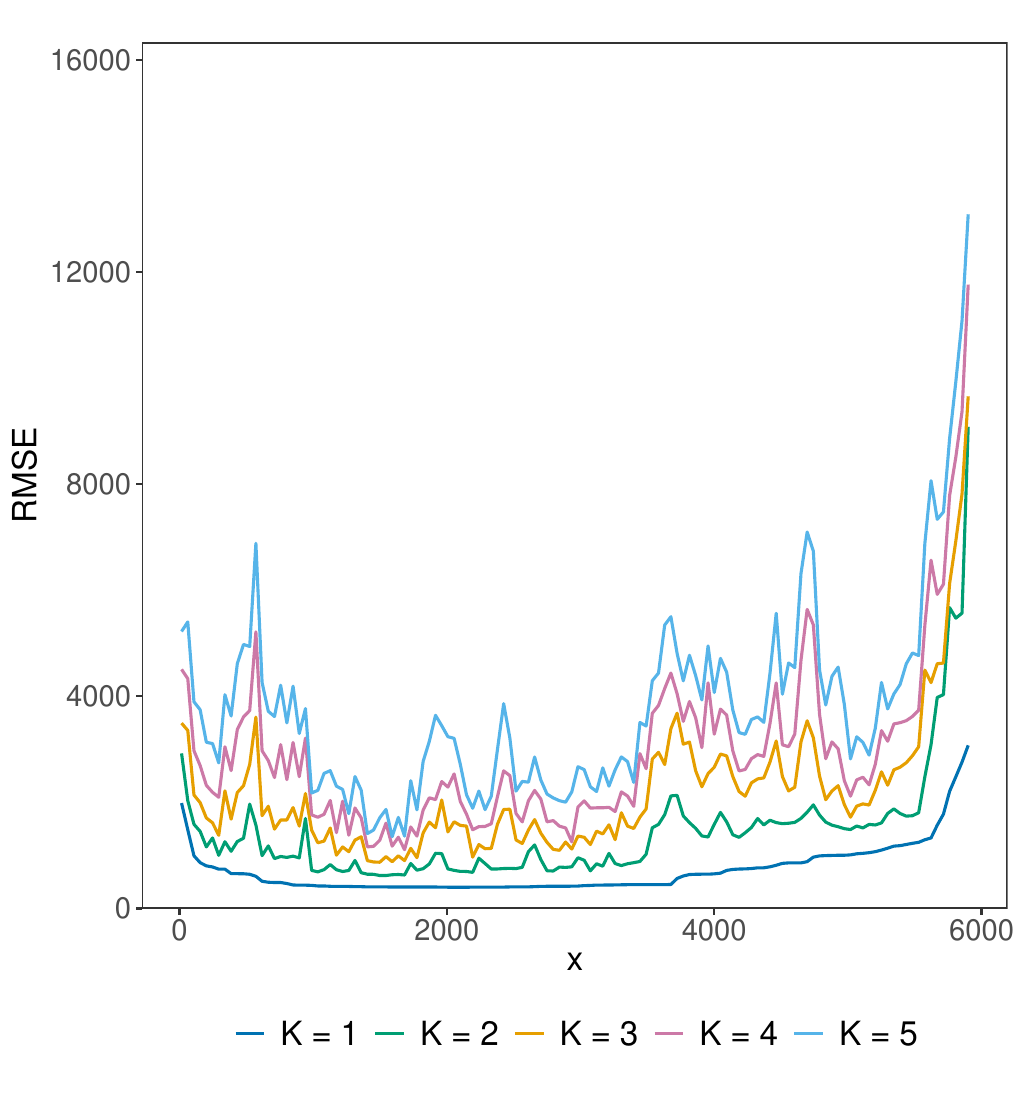}
    \caption{NSS-IPW}
\end{subfigure}\hfill
\begin{subfigure}[t]{0.31\textwidth}
    \centering
    \includegraphics[width=\linewidth]{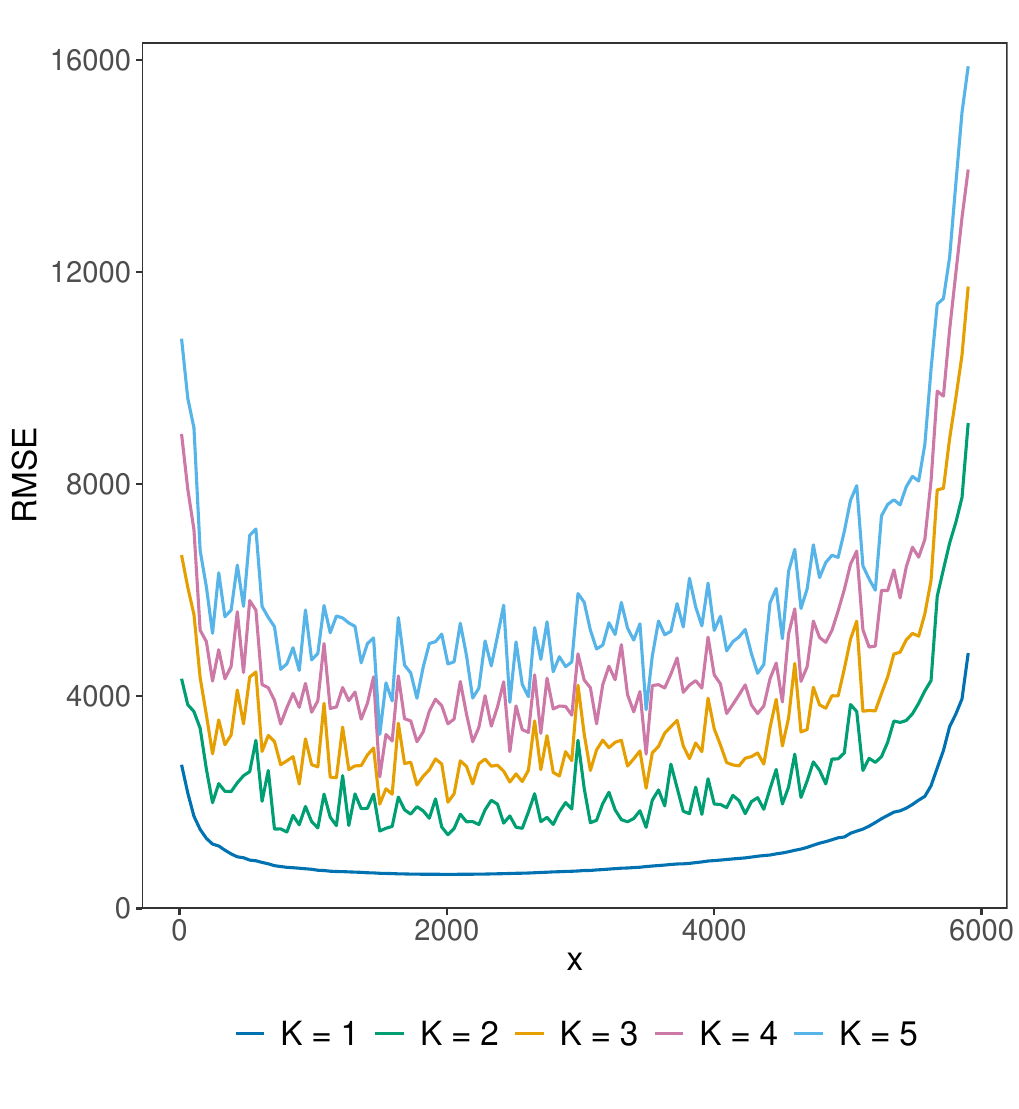}
    \caption{NSS-DIM}
\end{subfigure}\hfill
\begin{subfigure}[t]{0.31\textwidth}
    \centering
    \includegraphics[width=\linewidth]{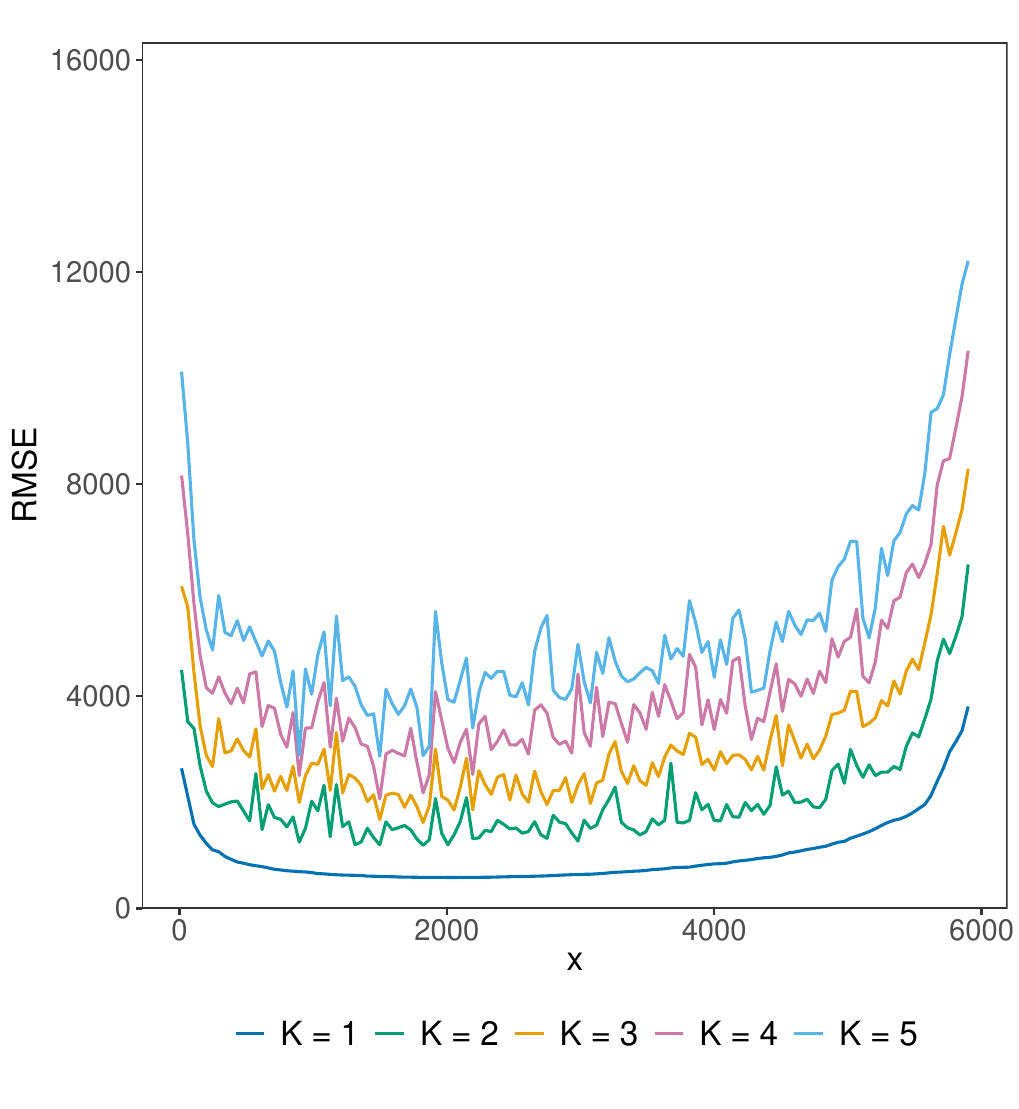}
    \caption{NSS-SSE}
\end{subfigure}

\vspace{0.3em}

\begin{subfigure}[t]{0.31\textwidth}
    \centering
    \includegraphics[width=\linewidth]{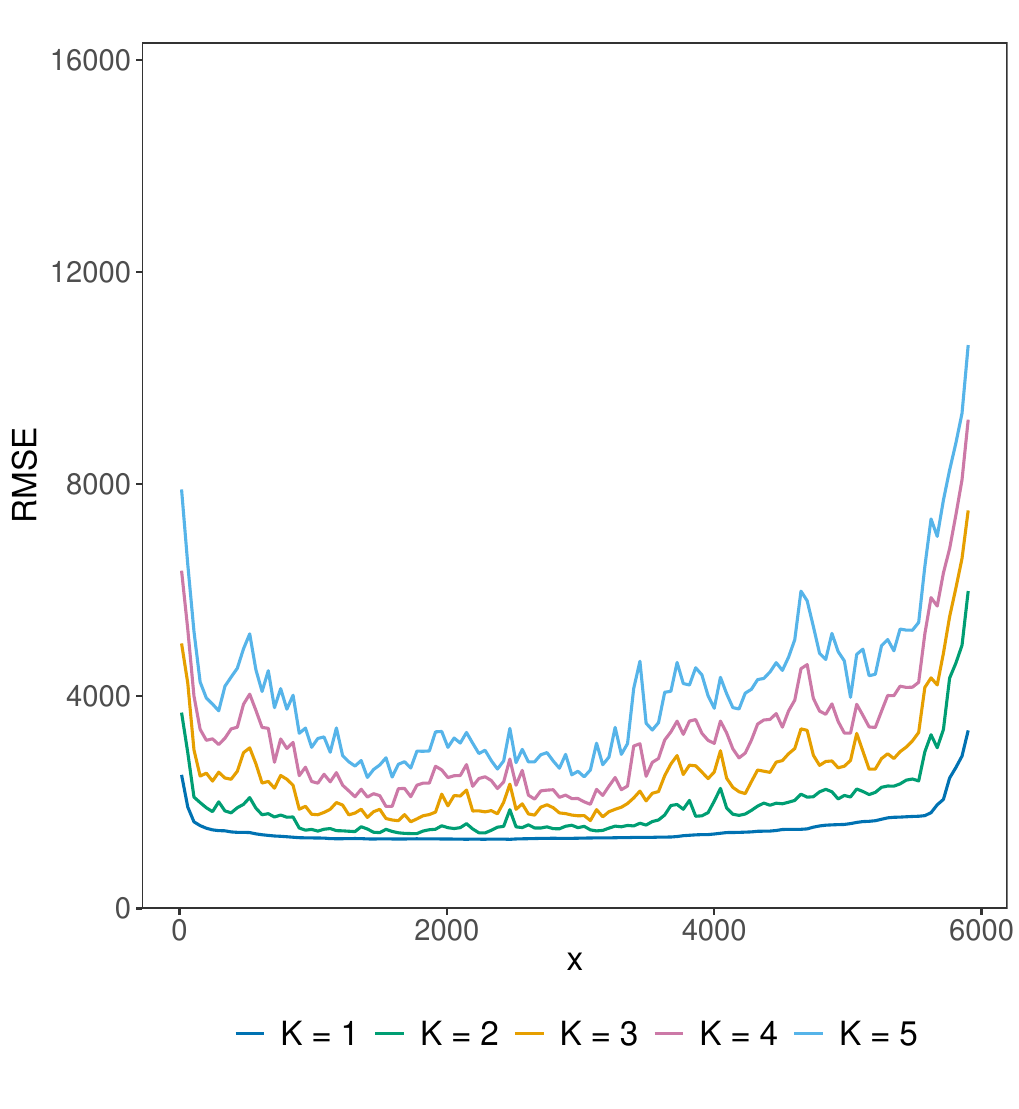}
    \caption{HON-IPW}
\end{subfigure}\hfill
\begin{subfigure}[t]{0.31\textwidth}
    \centering
    \includegraphics[width=\linewidth]{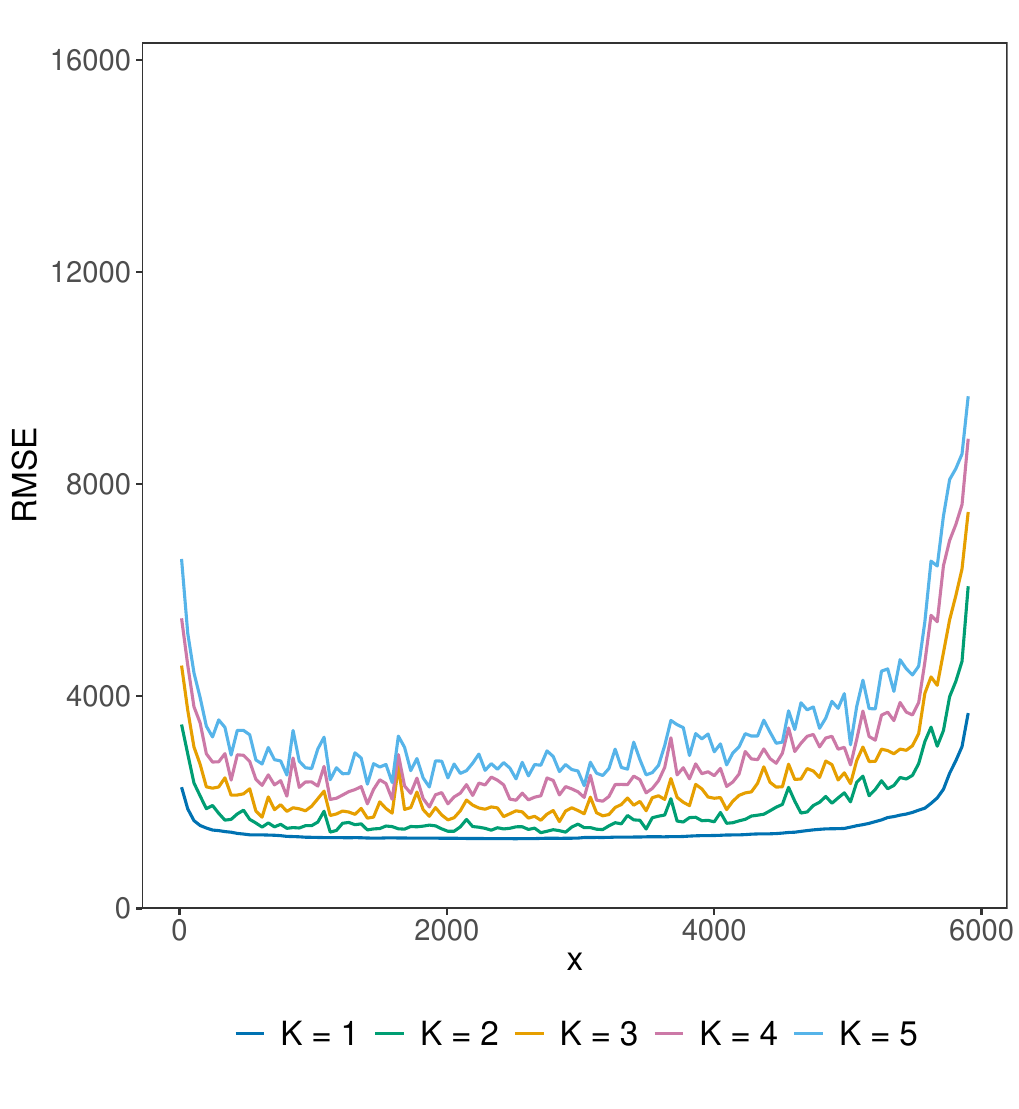}
    \caption{HON-DIM}
\end{subfigure}\hfill
\begin{subfigure}[t]{0.31\textwidth}
    \centering
    \includegraphics[width=\linewidth]{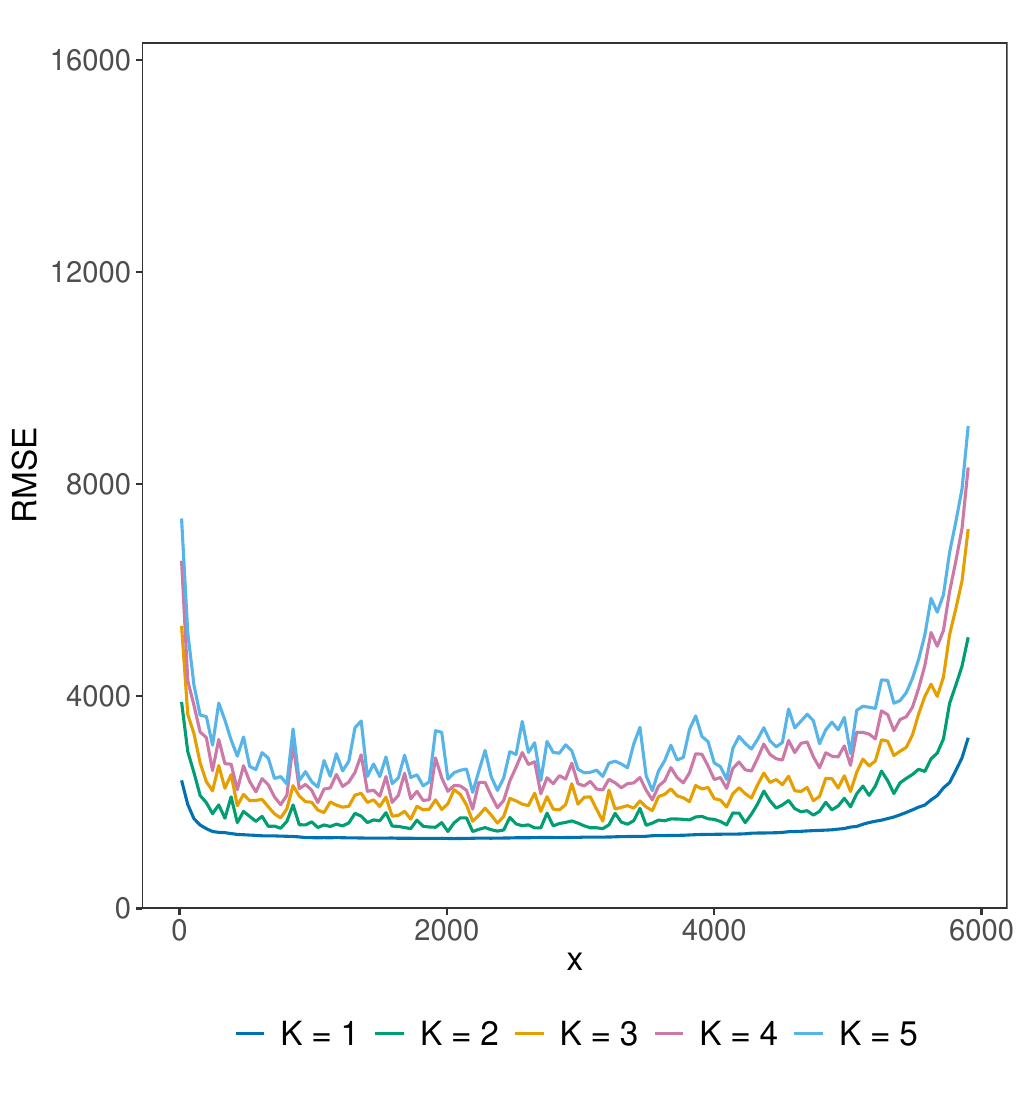}
    \caption{HON-SSE}
\end{subfigure}

\caption{Empirical JTPA resampling evidence for the one-covariate design ($p=1$). Panels report pointwise RMSE over the empirical covariate support after permuting the covariate within treatment arms, for depths $K=1,\dots,5$. Rows compare NSS and HON; columns compare IPW, DIM, and SSE. RMSE rises in regions with limited empirical support, matching the small-cell pattern in the simulations.}
\label{sa-fig:jtpa_p1_panel}
\end{figure}

\section{Regression Estimation}\label{sa-sec:main}

This section is self contained, and substantially improves on the results reported in \cite{Cattaneo-Klusowski-Tian_2022_arXiv}. The results presented herein are of independent interest in regression estimation settings, and they also offer a gentle introduction to the more technically involved results discussed in Section~\ref{sa-sec:causal}.

These regression results also clarify the relationship between our framework and earlier large-sample analyses of CART decision stumps. In the univariate setting, \citet{Buhlmann-Yu_2002_AOS} and \citet{Banerjee-McKeague_2007_AOS} studied empirical split minimizers and established cube-root type behavior under smoothness and identification conditions on the regression function. Our lower bounds concern a different, but complementary, regime: the constant regression function belongs to standard smoothness classes, but it removes the population-level signal that identifies an interior split. In that no-signal case, the empirical split criterion is driven by stochastic fluctuations, and the selected split can concentrate near the boundary with nonvanishing probability. Thus the results below do not contradict the classical cube-root analyses; instead, they show why the identification conditions in those analyses are essential and why uniform guarantees over broad function classes cannot be inferred from pointwise analyses away from flat cases.

Consider the canonical regression model where the observed data $\{(y_i,\bx_i^T) : i = 1, 2, \dots n\}$ is a random sample satisfying 
\begin{equation} \label{eq:model}
y_i = \mu(\bx_i)+ \varepsilon_i, \qquad \mathbb{E}[\varepsilon_i \mid \bx_i]=0, \qquad \mathbb{E}\big[\varepsilon_i^2 \mid \bx_i\big]=\sigma^2(\bx_i),
\end{equation}
with $\bx_i = (x_{i1}, x_{i2}, \dots, x_{ip})^T$ a vector of $p$ covariates taking values on some support set $\mathcal{X}$.

\begin{assumption}[Location Regression Model]\label{sa-ass:DGP}
    $\dataD = \{(y_i,\bx_i^T) : 1 \leq i \leq n\}$ is generated by i.i.d. latent pairs $(\bx_i,\varepsilon_i)$ satisfying Equation~\eqref{eq:model} and the following conditions for all $i = 1,2, \cdots, n$:
    \begin{enumerate}
        \item $y_i = \mu(\bx_i)+ \varepsilon_i$, with $\mathbb{E}[\varepsilon_i | \bx_i]=0$ and $\bx_i \independent \varepsilon_i$.
        \item $ \mu(\bx) = c$ for all $\bx \in \mathcal{X}\subseteq\mathbb{R}^p$, where $c$ is some constant.
        \item $x_{i,1}, \ldots, x_{i,p}$ are independent and continuously distributed.
        \item There exists $\alpha > 0$ such that  $\E[\exp(\lambda\varepsilon_i)] < \infty$ for all $|\lambda| < 1/\alpha $ and $\sigma^2 = \E[\varepsilon_i^2] > 0$. 
    \end{enumerate}
\end{assumption}

Because recursive tree splits are invariant to strictly increasing coordinatewise transformations, statements and proofs involving $\mathcal{X}=[0,1]^p$ or the lower corner $\mathbf{0}$ are understood after applying the marginal probability integral transforms to the covariates. Thus, without loss of generality, the covariates may be treated as uniformly distributed on $[0,1]^p$ for those arguments.
Throughout the asymptotic statements below, the covariate dimension $p$ is treated as fixed. We retain the $\log(np)$ factors in the upper bounds to record the dependence on the number of candidate split coordinates.

To illustrate the CART estimation strategy, given any tree $\treeT$, the CART estimator is as follows:

\begin{definition}[CART Estimate]\label{sa-defn: cart outcome}
Suppose $\treeT$ is the tree used, and $\mathcal{D}_{\mu}=\{(y_i, \bx^\top_i):i=1,2,\dots,n_{\mu}\}$, with $n_{\mu}\leq n$, is the dataset used. Let $\nodet$ be the unique terminal node in $\treeT$ containing $\bx \in \X$. The CART estimator is
\begin{align*}
            \hat\mu(\bx; \treeT,\mathcal{D}_{\mu})
            = \frac{1}{n(\nodet)} \sum_{i:\bx_i \in \nodet}  y_i,
\end{align*}
where $n(\nodet) = \sum_{i=1}^{n_{\mu}} \Indicator(\bx_i \in \nodet)$ is the ``local'' sample size. In case $n(\nodet) = 0$, take $\hat\mu(\bx; \treeT,\mathcal{D}_{\mu}) = 0$.
\end{definition}

\begin{definition}[Tree Construction]\label{sa-defn: cart construction}
    Given a dataset $\mathcal{D}_{\treeT} = \{(y_i,\bx^\top_i):i=1,2,\dots,n_{\treeT}\}$, with $n_{\treeT}\leq n$, a parent node $\nodet$ in the tree (i.e., a region in $\mathcal{X}$) is divided into two child nodes, $\nodet_{\mathtt{L}}$ and $\nodet_{\mathtt{R}}$, by minimizing the sum of squared errors (SSE),
\begin{align} \label{eq:sse}
    \min_{1\leq j\leq p} \min_{\beta_{\mathtt{L}},\beta_{\mathtt{R}},\varsigma \in \reals}
    \sum_{\bx_i \in \nodet} \big(y_{i} - \beta_{\mathtt{L}} \Indicator(x_{ij} \leq \varsigma) - \beta_{\mathtt{R}} \Indicator(x_{ij} > \varsigma) \big)^2,
\end{align}
where $(\beta_{\mathtt{L}}, \beta_{\mathtt{R}}, \varsigma, j)$ denote the two child-node outputs, split point, and split direction, respectively. A candidate split is valid only if both child nodes contain at least one construction-sample observation; invalid candidates are omitted from the optimization. If a parent node has no valid split, it is left terminal. Ties are resolved by a fixed deterministic rule. The resulting CART tree is denoted by $\treeT(\mathcal{D}_{\treeT})$.
\end{definition}

\begin{definition}[Sample Splitting]\label{sa-defn: cart sample splitting}
    Recall Definition \ref{sa-defn: cart outcome} and Definition \ref{sa-defn: cart construction}, and that $\mathcal{D} = \{(y_i, \bx^\top_i):i=1,2,\dots,n\}$ is the available random sample.
    \begin{itemize}
        \item \textit{No Sample Splitting} (NSS): The dataset $\mathcal{D}$ is used for both the tree construction and the treatment effect estimation, that is, $\mathcal{D}_{\treeT} = \mathcal{D}$ and $\mathcal{D}_{\mu} = \mathcal{D}$. The CART tree estimator is
        \begin{align*}
            \hat\mu^{\mathtt{NSS}}(\bx)
            &= \hat\mu(\bx; \treeT(\mathcal{D}),\mathcal{D}).
        \end{align*}
    
        \item \textit{Honesty} (HON): The dataset $\mathcal{D}$ is divided into two independent datasets $\mathcal{D}_{\treeT}$ and $\mathcal{D}_{\mu}$ with sample sizes $n_{\treeT}$ and $n_{\mu}$, respectively, and satisfying $n \lesssim n_{\treeT}, n_{\mu} \lesssim n$. The CART tree estimator is
        \begin{align*}
            \hat{\mu}^{\mathtt{HON}}(\bx)
            &= \hat{\mu}(\bx; \treeT(\mathcal{D}_{\treeT}),\mathcal{D}_{\mu}).
        \end{align*}
    \end{itemize}
\end{definition}

\subsection{No Sample Splitting}

We start from the no sample splitting ($\mathtt{NSS}$) case, and characterize the location of the first split. 

\subsubsection*{Decision Stumps}

For each variable $ j = 1, 2, \dots, p $, let $\pi_j$ be the permutation such that $ x_{\pi_{j}(i),j} $ is nondecreasing in the index $ i = 1, 2, \dots, n$.
Then, minimizing Equation~\eqref{eq:sse} can equivalently be recast as maximizing the \emph{impurity gain}:
\begin{equation*}
\begin{aligned}
& \sum_{\bx_i \in \bt}\big(y_i-\overline y_{\bt}\big)^2 - \sum_{\bx_i \in \bt}\big(y_i - \overline y_{\bt_L} \Indicator(\bx_{i} \in \bt_L) - \overline y_{\bt_R} \Indicator(\bx_i \in \bt_R)\big)^2
 \\
 & \qquad = \frac{\Big(\frac{1}{\sqrt{n(\bt)}}\sum_{\bx_i \in \bt_L} (y_i-\mu) - \frac{n(\bt_L)}{n(\bt)}\frac{1}{\sqrt{n(\bt)}}\sum_{\bx_i \in \bt} (y_i-\mu) \Big)^2}{(n(\bt_L)/n(\bt))(1-n(\bt_L)/n(\bt))},
 \end{aligned}
\end{equation*}
where $\bar{y}_{\bt} = n(\bt)^{-1} \sum_{\bx_i \in \bt} y_i \Indicator(\bx_i \in \bt)$.
This is also equivalent to maximizing the \textit{conditional variance given the split}:
\begin{align*}
    \frac{n(t_L)n(t_R)}{n(\bt)}\big(\overline y_{\bt_L}-\overline y_{\bt_R}\big)^2.
\end{align*}

We start by considering the case when the tree is depth one ($K=1$), i.e., a decision stump. Then optimization objectives are equivalent to choosing a splitting coordinate $\hat{\jmath}$, and a splitting index $\hat{\imath}$ such that
\begin{align*}
    \bt_L = \{\bu \in \X: \bu_{\hat{\jmath}} \leq x_{\pi_{\hat{\jmath}}(\hat{\imath}), \hat{\jmath}}\}, \qquad \bt_R = \{\bu \in \X: \bu_{\hat{\jmath}} > x_{\pi_{\hat{\jmath}}(\hat{\imath}), \hat{\jmath}}\}.
\end{align*}

The tree output can then be written as
\begin{equation*}
    \mustump(\bx) = \begin{cases}
     \bar{y}_{\bt_L}, \quad \bx \in \bt_L\\
     \bar{y}_{\bt_R}, \quad \bx \in \bt_R
    \end{cases}\hspace{-3mm},
\end{equation*}
where $x_{\hat{\jmath}}$ denotes the value of the $\hat{\jmath}$-th component of $\bx$.

The following theorem characterizes the regions of the support $\mathcal{X}$ where the first CART split index $ \hat{\imath} $, at the root node, has probability bounded away from zero. As a consequence, the theorem also characterizes the effective sample size of the resulting cells (recall the data is ordered so that the split point is $x_{\pi_{\hat{\jmath}}(\hat{\imath}),\hat{\jmath}}$ and hence $ \hat{\imath} = \#\{\bx_i : x_{i\hat{\jmath}} \leq x_{\pi_{\hat{\jmath}}(\hat{\imath}),\hat{\jmath}} \} $).

\begin{theorem}[Imbalanced Splits]\label{sa-thm:master}
   Suppose Assumption \ref{sa-ass:DGP} holds, and let $(\hat{\imath}, \hat{\jmath})$ be the CART split index and split direction at the root node.
    For each $ a,b \in (0, 1) $ with $ a < b $, and $\ell \in [p]$, we have
    \begin{equation} \label{eq:split_range}
        \liminf_{n\to\infty} \mathbb{P}\big( n^{a} \leq \hat{\imath} \leq n^{b}, \hat{\jmath} = \ell \big) \geq \frac{b-a}{2pe},
        \qquad
        \liminf_{n\to\infty} \mathbb{P}\big( n-n^{b} \leq \hat{\imath} \leq n-n^{a}, \hat{\jmath} = \ell \big) \geq \frac{b-a}{2pe},
    \end{equation}
    which implies
    \begin{align*}
         \liminf_{n\to\infty} \mathbb{P}\big( n^{a} \leq \hat{\imath} \leq n^{b} \big) \geq \frac{b-a}{2 e},
         \qquad
         \liminf_{n\to\infty} \mathbb{P}\big( n-n^{b} \leq \hat{\imath} \leq n-n^{a} \big) \geq \frac{b-a}{2 e}.
    \end{align*}
\end{theorem}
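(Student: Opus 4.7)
The plan is to reduce the root split problem under the constant-mean null to a statement about the argmax location of a stationary Ornstein--Uhlenbeck process, and then bound that location probability via extreme-value theory for Gaussian processes. Under Assumption~\ref{sa-ass:DGP} and using \eqref{eq:Delta}, the impurity gain at split $i$ along coordinate $j$ depends on the data only through rank-reindexed error partial sums $S_{i,j} := \sum_{k=1}^{i} \varepsilon_{\pi_j(k)}$, namely
\[
\Delta_j(i) \;=\; \frac{(n\,S_{i,j} - i\,S_{n,j})^{2}}{n\,i\,(n-i)}.
\]
Independence $\bx_i \independent \varepsilon_i$ makes $\{\varepsilon_{\pi_j(k)}\}_{k=1}^{n}$ iid mean-zero sub-exponential for each $j$, while the $p$ rearrangements are exchangeable across $j$; hence $\P(\hat\jmath=\ell)=1/p$ and the problem reduces to proving $\liminf_{n}\P(\hat\imath\in[n^a,n^b])\ge(b-a)/(2e)$ for a single coordinate. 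The right-tail equality in \eqref{eq:split_range} will follow from the $i \leftrightarrow n+1-i$ symmetry of the uniformly random rank permutation, which preserves $\Delta_j(i)$ after swapping left and right children.

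The next step is a functional invariance principle. I would apply KMT/Hungarian-type strong approximation, legitimized by the sub-exponential moment hypothesis in Assumption~\ref{sa-ass:DGP}.4, to couple the rescaled partial-sum process $B_n^{(j)}(t) := (S_{\lfloor nt \rfloor,j} - t\,S_{n,j})/(\sigma\sqrt{n})$ to a standard Brownian bridge $B^{(j)}$ with uniform error of order $(\log n)/\sqrt{n}$, which is negligible on $i \ge n^a$ for any $a>0$. Then I would apply the time change $s = \log(t/(1-t))$, under which $Y^{(j)}(s) := B^{(j)}(t)/\sqrt{t(1-t)}$ is a stationary OU process with covariance $\E[Y^{(j)}(s)Y^{(j)}(s')] = e^{-|s-s'|/2}$ and marginal $\mathsf{N}(0,1)$, and one checks by direct algebra that
\[
\Delta_j(\lfloor nt\rfloor)/\sigma^{2} \;=\; Y^{(j)}(s)^{2} + o_{\P}(1)
\]
uniformly on $t \in [n^{a-1}, 1-n^{a-1}]$. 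The index window $[n^a, n^b]$ maps onto an $s$-window of length $(b-a)\log n$ near the left endpoint of the effective domain $s \in [-T_n, T_n]$ with $T_n \asymp (1-a)\log n$, and its right-tail mirror lies symmetrically at the right endpoint.

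The last step is to lower-bound the probability that the argmax of $Y^{(j)}(s)^2$ over $[-T_n, T_n]$ falls into the designated end-window of length $(b-a)\log n$. By Pickands-type extreme-value theory for stationary Gaussian processes, upcrossings of $Y^{(j)2}$ above a level $u_n^{2} \sim 2\log T_n$ are asymptotically Poisson with intensity proportional to Lebesgue measure on $[-T_n, T_n]$, so the location of the global argmax is asymptotically uniform on that window; an explicit evaluation of the Pickands constant for the OU covariance kernel $e^{-|s|/2}$ (whose local scaling exponent is $\alpha=1$ with prefactor $1/2$) pins down the proportionality constant and delivers the asserted $(b-a)/(2e)$ lower bound. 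Combining with coordinate exchangeability $\P(\hat\jmath=\ell)=1/p$ finishes the proof. The hardest part, and the step I expect to require the most care, is exactly this argmax-location argument: weak convergence of $Y^{(j)}$ does not on its own control the argmax over an $s$-interval whose length $\asymp \log n$ is itself growing with $n$, so one needs quantitative tightness toward the two boundaries together with the sharp Pickands extreme-value asymptotic; a naïve Slepian comparison or union bound would lose the factor $1/e$.
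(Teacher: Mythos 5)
Your reduction of the split criterion to a standardized partial-sum process and the recasting of that process as a stationary Ornstein--Uhlenbeck process under the time change $s = \log(t/(1-t))$ is exactly the skeleton the paper uses, and your observation about the $i \leftrightarrow n+1-i$ symmetry for the right-tail equality is also correct. However, there are three concrete gaps in the argument.

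\textbf{The reduction to a single coordinate is incomplete.} Exchangeability across the $p$ coordinates does give $\P(\hat\imath\in[n^a,n^b],\hat\jmath=\ell)=\tfrac1p\,\P(\hat\imath\in[n^a,n^b])$, but the event $\{\hat\imath\in[n^a,n^b]\}$ when $p\ge 2$ is the location of the \emph{overall} argmax over all $p$ processes, not the argmax of a single OU process. The $p$ rank-reordered partial-sum processes share the same noise $\varepsilon_1,\dots,\varepsilon_n$ and are genuinely dependent, so one must show that this dependence vanishes asymptotically before any single-coordinate extreme-value argument can be invoked. This is precisely why the paper inserts two nontrivial steps — a non-Gaussian-to-Gaussian coupling via a high-dimensional CLT for rectangles and a Gaussian-to-Gaussian comparison that bounds the cross-coordinate covariances by $O(r_n^{-1/2})$ — before factorizing into one-dimensional OU probabilities. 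Your proposal silently assumes this factorization.

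\textbf{The Pickands-argmax-uniformity step is both off in its constant and fragile at the relevant scale.} First, the effective time domain is $s\in[-\log(n-1),\log(n-1)]$ (so $T_n\asymp\log n$, not $(1-a)\log n$); asymptotic uniformity of the argmax would therefore yield probability $\tfrac{(b-a)\log n}{2\log n}\to\tfrac{b-a}{2}$, which is \emph{stronger} than the asserted $\tfrac{b-a}{2e}$. The factor $1/e$ in the theorem does not come from a Pickands constant at all — in the paper it arises as $(1-\tfrac{b-a}{2})^{2/(b-a)-1}\ge e^{-1}$ after an explicit optimization over the level $z^*$ in a lower bound built from the Gaussian correlation inequality and the Darling--Erd\H{o}s theorem. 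That lower-bound route is chosen precisely because the exact argmax law is inaccessible: the coupling error to the Brownian bridge is only $o_{\P}((\log\log n)^{-1/2})$ on the standardized process, while the fluctuations of $\sup|U(s)|$ around its mean $\sqrt{2\log\log n}$ are \emph{also} of order $(\log\log n)^{-1/2}$ (this is the normalization in the Darling--Erd\H{o}s statement). So the coupling error lives on exactly the scale that determines where the argmax lands, and a uniform-argmax conclusion would need to be shown to survive this perturbation — which is the step you correctly identify as hardest, but which the Pickands machinery does not resolve on its own.

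\textbf{The claimed KMT rate for the standardized process is too optimistic.} A KMT coupling gives $\|B_n-B\|_\infty=O((\log n)/\sqrt n)$ for the unnormalized bridge; after dividing by $\sqrt{t(1-t)}$, the error on $t\ge n^{a-1}$ blows up to $O((\log n)\,n^{-a/2})$ and on the full range $[1/n,1-1/n]$ it diverges. The paper instead relies on \citet[Eqn.~A.4.37]{csorgo1997limit}, which directly controls the error in the \emph{maximum} of the standardized process at the rate $o_\P((\log\log n)^{-1/2})$. You cannot simply restrict attention to $k\ge n^a$, since $\hat\imath$ is the global argmax over $k\in[n]$.

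In short, your plan captures the correct probabilistic object (a stationary OU process on a domain of length $\asymp\log n$), but it misattributes the source of the $1/e$, omits the asymptotic-independence step across coordinates, and does not provide a mechanism robust to the coupling error — the Gaussian correlation inequality combined with the (corrected) Darling--Erd\H{o}s limit, rather than a Pickands/Poisson-upcrossings argument, is what makes the argmax-location bound go through.
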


\begin{theorem}[Convergence Rates for Decision Stumps]\label{sa-thm:rates}
    Suppose Assumption \ref{sa-ass:DGP} holds. Suppose the CART tree has depth $K = 1$. Then for any $a,b \in (0, 1)$ with $ a < b $, we have
    \begin{equation}\label{eq:master_rate_constant}
        \liminf_{n\to\infty} \mathbb{P}\Bigg(\sup_{\bx\in\mathcal{X}}|\mustump(\bx) - \mu| \geq \sigma n^{-b/2}\sqrt{(2+o(1))\log\log(n)}\Bigg) \geq \frac{b}{e},
    \end{equation}
    and, under the probability integral transform normalization described after Assumption~\ref{sa-ass:DGP}, for any deterministic sequence $\eta_n\downarrow0$,
    \begin{equation} \label{eq:master_rate_constant2}
        \liminf_{n\to\infty} \inf_{\bx\in \mathcal{X}_n(a,\eta_n)} \mathbb{P}\Big(|\mustump(\bx) - \mu| \geq \sigma n^{-b/2}\sqrt{(2+o(1))\log\log(n)}\Big) \geq \frac{b-a}{2 p e}.
    \end{equation}
\end{theorem}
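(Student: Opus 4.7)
The plan is to combine the imbalanced-split probability from Theorem~\ref{sa-thm:master} with a Darling--Erdős / LIL-type lower bound on the partial sums of $\varepsilon_i$ reindexed by the coordinate order statistics. Since $\mu$ is constant by Assumption~\ref{sa-ass:DGP}, $\mustump(\bx) - \mu = \bar y_{\bt_\bx} - \mu = n(\bt_\bx)^{-1}\sum_{\bx_i \in \bt_\bx}\varepsilon_i$, so all error comes from noise averaging in the cell containing $\bx$. For each direction $\ell \in [p]$, define $S_k^{(\ell)} = \sum_{i=1}^k \varepsilon_{\pi_\ell(i)}$; because $\bx \independent \varepsilon$, the reindexed sequence is i.i.d., mean-zero, sub-exponential with variance $\sigma^2$. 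Then $|\bar y_{\bt_L} - \mu| = |S_{\hat\imath}^{(\hat\jmath)}|/\hat\imath$ and $|\bar y_{\bt_R} - \mu| = |S_n^{(\hat\jmath)} - S_{\hat\imath}^{(\hat\jmath)}|/(n-\hat\imath)$.

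For the uniform bound~\eqref{eq:master_rate_constant}: Theorem~\ref{sa-thm:master} applied for every $a \in (0, b)$ gives $\liminf_n \P(\hat\imath \leq n^b) \geq (b-a)/(2e)$, hence $\liminf_n \P(\hat\imath \leq n^b) \geq b/(2e)$; the symmetric bound $\liminf_n \P(\hat\imath \geq n - n^b) \geq b/(2e)$ is analogous. These events are disjoint for $b < 1$ and $n$ large, so $\liminf_n \P(E_1) \geq b/e$ where $E_1 = \{\min(\hat\imath, n - \hat\imath) \leq n^b\}$. A Darling--Erdős theorem for sub-exponential partial sums (a scalar version of Remark~\ref{sa-remark: darling erdos}) yields $\max_{1 \leq k \leq n^b}|S_k^{(\ell)}|/\sqrt{k} \geq \sigma\sqrt{2\log\log n \cdot (1-o(1))}$ with probability converging to $1$ for each $\ell$; a union bound over the $p$ coordinates defines a simultaneous event $E_2$ with $\P(E_2) \to 1$. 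On $E_1 \cap E_2$, the optimality of $\hat\imath$ for $G(k)$ in~\eqref{eq:Delta}, together with the reduction $G(k) = n\,|S_k^{(\hat\jmath)}|^2/k \cdot (1 + o_\P(1))$ valid uniformly on $k \leq n^b$ (since $(k/n)S_n^{(\hat\jmath)} = O_\P(k/\sqrt n)$ is negligible relative to $|S_k^{(\hat\jmath)}|$ of order $\sqrt{k\log\log n}$), gives $|S_{\hat\imath}^{(\hat\jmath)}|^2/\hat\imath \geq \max_{k \leq n^b}|S_k^{(\hat\jmath)}|^2/k \cdot (1 - o_\P(1)) \geq 2\sigma^2 \log\log n \cdot (1 - o(1))$. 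Dividing by $\sqrt{\hat\imath} \leq n^{b/2}$ delivers $|\bar y_{\bt_S} - \mu| \geq \sigma n^{-b/2}\sqrt{(2+o(1))\log\log n}$ on the smaller cell $\bt_S$, and the supremum inherits this lower bound with liminf probability $\geq b/e$.

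For the pointwise bound~\eqref{eq:master_rate_constant2} with $\bx_i \sim \mathsf{Uniform}([0,1]^p)$ and $\bx \in \mathcal{X}_n$: select $j = j(\bx)$ with $x_j = o(n^{a-1})$ (the case $1 - x_j = o(n^{a-1})$ is symmetric via the right-imbalance event). Condition on $\{\hat\jmath = j\} \cap \{\hat\imath \in [n^a, n^b]\}$: the split threshold is the $\hat\imath$-th uniform order statistic, which is $\mathsf{Beta}(\hat\imath, n+1-\hat\imath)$-distributed, centered at $\hat\imath/(n+1) \geq n^{a-1}/2$ with standard deviation $O(\sqrt{\hat\imath}/n) = o(n^{a-1})$ uniformly on this event. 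Since $x_j = o(n^{a-1})$, the conditional probability that $\bx$ lies in the left (smaller) cell tends to $1$. Intersecting with $E_2$ and rerunning the previous argument yields the claimed pointwise lower bound; its liminf probability is controlled via Theorem~\ref{sa-thm:master} applied to the coordinate $\ell = j(\bx)$.

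The main obstacle is linking the CART argmax $\hat\imath$ to the envelope of $\max_k |S_k^{(\ell)}|/\sqrt{k}$: $\hat\imath$ maximizes the Brownian-bridge functional $G$, not $|S_k|/\sqrt{k}$, so one must verify that the two functionals agree up to a $1+o_\P(1)$ factor uniformly on $k \in [1, n^b]$. The cleanest route either (i) embeds $S_{\lfloor e^t\rfloor}/\sqrt{e^t}$ into an Ornstein--Uhlenbeck process via Skorokhod and invokes Remark~\ref{sa-remark: darling erdos} on the time window $[0, b\log n]$, or (ii) directly compares $G(k)$ with $n|S_k|^2/k$ in the imbalanced regime $k \ll n$, which is the simpler strategy followed here.
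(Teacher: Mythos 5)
Your proposal follows the same high-level structure as the paper's proof: combine the imbalanced-split probability from Theorem~\ref{sa-thm:master} with a Darling--Erd\H{o}s estimate on the partial sums to force a large noise average in the short cell. The pointwise argument for~\eqref{eq:master_rate_constant2} is also essentially the paper's (though the paper simply invokes that the $\hat\imath$-th order statistic is, with probability tending to one, at least $n^{a-1}$ when $\hat\imath\geq n^a$, rather than passing through the Beta standard deviation; either route is fine since what matters is concentration of the order statistic around its mean, not that the standard deviation is $o(n^{a-1})$).

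The place where your argument has a real gap is exactly the one you flag in your final paragraph, and the ``simpler strategy (ii)'' you describe does not in fact resolve it. The claim that
\[
G(k) \;=\; \frac{|S_k|^2}{k}\bigl(1+o_\P(1)\bigr) \quad\text{uniformly on } k\le n^b
\]
is false: for a fixed $k$ the process $S_k$ is only of size $\sqrt{k}$ (and can be near zero), while the correction $(k/n)S_n = O_\P(k/\sqrt{n})$ need not be negligible relative to $|S_k|$ at such $k$; the multiplicative error can be arbitrarily large where $S_k$ is small. Your parenthetical justification (``$|S_k|$ of order $\sqrt{k\log\log n}$'') holds only at the Darling--Erd\H{o}s extremizer, not uniformly, and also note the extra factor of $n$ in your $G(k)$ is a typo (the correct normalization gives $G(k)\approx |S_k|^2/k$). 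The argument can be salvaged by a two-step bootstrap: (i) first lower-bound $G(\hat\imath)=\max_k G(k)\ge G(k^\ast)$ where $k^\ast$ is the extremizer of $|S_k|^2/k$ on $[1,n^b]$, and there the cross term is negligible, giving $G(\hat\imath)\gtrsim 2\sigma^2\log\log n$; (ii) this forces $|S_{\hat\imath}|/\sqrt{\hat\imath}$ to be large, after which the cross term $S_{\hat\imath}S_n/n$ is indeed $o_\P(|S_{\hat\imath}|^2/\hat\imath)$. The paper avoids all of this with an exact additive identity: since
\[
\frac{S_k^2}{k}+\frac{(S_n-S_k)^2}{n-k} = G(k) + \frac{S_n^2}{n},
\]
the CART split $\hat\imath$ also maximizes the left-hand side, so
\[
\frac{S_{\hat\imath}^2}{\hat\imath} = \max_k\Bigl[\frac{S_k^2}{k}+\frac{(S_n-S_k)^2}{n-k}\Bigr] - \frac{(S_n-S_{\hat\imath})^2}{n-\hat\imath},
\]
and the subtracted term is controlled (on the event $\hat\imath\le n^b$, hence $n-\hat\imath\geq n/2$) by the maximum over the balanced half, which is $O_\P(1)=o_\P(\log\log n)$. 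This yields the lower bound without any multiplicative approximation and without a bootstrap. I would recommend replacing your uniform approximation claim with this identity; otherwise your proof of~\eqref{eq:master_rate_constant} does not go through as written.
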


\subsubsection*{Deep Trees}

We will show that the imbalanced split issue is inherited from decision stumps by any finite recursive tree with at least one split; the depth may vary with $n$.

\begin{theorem}[Convergence Rates for Deep Trees]\label{sa-thm: uniform minimax}
Suppose Assumption \ref{sa-ass:DGP} holds, and the CART tree has depth at least one. Then for any $b \in (0, 1) $, we have
$$\liminf_{n\to\infty}\mathbb{P}\Bigg(\sup_{\bx\in\mathcal{X}}|\mudeep(\bx) - \mu| \geq \sigma n^{-b/2}\sqrt{(2+o(1))\log\log(n)}\Bigg) \geq b/e.$$
\end{theorem}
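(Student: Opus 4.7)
The plan is to reduce the deep-tree statement to the decision-stump result in Theorem~\ref{sa-thm:rates} via a monotonicity argument on the sup-norm. Under NSS we have $\mathcal{D}_{\treeT} = \mathcal{D}_{\mu} = \mathcal{D}$, and the iterative construction of Definition~\ref{sa-defn: cart construction} makes the first split of $\mudeep$ coincide with the stump split of (\ref{eq:stump}). Thus the root-level partition $(\bt_L, \bt_R)$ underlying $\mudeep$ is exactly the pair analyzed in Theorems~\ref{sa-thm:master} and~\ref{sa-thm:rates}; every subsequent split of the deep tree merely refines this partition inside each side.

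The key step is to show that further refinement can only enlarge the sup-norm deviation, namely $\sup_{\bx\in\X}|\mudeep(\bx)-\mu| \geq \sup_{\bx\in\X}|\mustump(\bx)-\mu|$. Let $\{\nodet_k\}$ index the terminal nodes of the deep tree contained in $\bt_L$ (an identical argument handles $\bt_R$). Since $\bar y_{\bt_L} = \sum_k \frac{n(\nodet_k)}{n(\bt_L)} \bar y_{\nodet_k}$ is a convex combination, the triangle inequality gives
\begin{align*}
|\bar y_{\bt_L} - \mu| \leq \sum_k \frac{n(\nodet_k)}{n(\bt_L)}|\bar y_{\nodet_k} - \mu| \leq \max_k |\bar y_{\nodet_k} - \mu| = \sup_{\bx \in \bt_L}|\mudeep(\bx) - \mu|.
\end{align*}
Combining the analogous bound on $\bt_R$ yields $\sup_{\bx}|\mustump(\bx)-\mu| = \max(|\bar y_{\bt_L}-\mu|,|\bar y_{\bt_R}-\mu|) \leq \sup_{\bx}|\mudeep(\bx)-\mu|$, as desired.

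With this domination in hand the theorem follows immediately: the event appearing in (\ref{eq:master_rate_constant}) for the stump is contained in the corresponding event for $\mudeep$, so its probability is at least that of the stump event, and Theorem~\ref{sa-thm:rates} supplies the $b/e$ lower bound on the $\liminf$. The step that requires the most care is the monotonicity argument itself, together with a brief sanity check that the terminal cells of the NSS deep tree are nonempty so that Definition~\ref{sa-defn: cart outcome} produces genuine sample averages instead of the fallback value $0$; this is automatic because the splitting rule (\ref{eq:sse}) never creates an empty child cell. No additional probabilistic input beyond the stump analysis is needed.
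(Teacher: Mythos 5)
Your proposal is correct, and the monotonicity step you identify is exactly the right ingredient. Since CART is greedy, the root split of the deep tree coincides with the stump split of Theorem~\ref{sa-thm:rates}; and because $\bar y_{\bt_L}$ is a sample-size--weighted convex combination of the refined leaf means inside $\bt_L$, the triangle inequality gives $|\bar y_{\bt_L}-\mu|\le \sup_{\bx\in\bt_L}|\mudeep(\bx)-\mu|$, and combining with $\bt_R$ yields $\sup_{\bx}|\mustump(\bx)-\mu|\le\sup_{\bx}|\mudeep(\bx)-\mu|$ almost surely. Hence the event in~\eqref{eq:master_rate_constant} is contained in the event to be bounded and Theorem~\ref{sa-thm:rates} applies verbatim.

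The paper's own proof is phrased differently: it asserts the imbalance of the split index "at the $K_n$-th split" and then invokes "the same argument as Part~1" of Theorem~\ref{sa-thm:rates}. Read literally, this works at the deepest level, where the under-populated child is a genuine terminal leaf, so no convexity step is needed---but one then has to relate the effective sample size of the parent at depth $K_n-1$ to $n$ when re-running Part~1 (the local $\log\log$ factor matches $\log\log n$ only if that sample size stays polynomial in $n$). Your route reasons entirely at the root, where the sample size is exactly $n$, at the small cost of one elementary convexity inequality, which makes the reduction to the stump explicit and self-contained and removes any bookkeeping of cell sizes along the tree. Both proofs exploit the same underlying mechanism---an imbalanced split manufactures an under-sampled cell whose average is noisy---so the probabilistic content is identical; the difference is purely in how the deterministic reduction is carried out. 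One minor note: your worry about empty terminal cells is unnecessary, because the convex combination representing $\bar y_{\bt_L}$ involves only non-empty leaves (which carry all the weight $n(\nodet_k)/n(\bt_L)$), while an empty leaf, where the estimator defaults to $0$, can only enlarge the sup-norm deviation and hence cannot break the inequality.
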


Therefore, decision trees grown with CART methodology need not converge faster than any polynomial in $n$, when uniformity over the full support of the data $\mathcal{X}$, and over possible data generating processes, is of interest.

However, for the $L_2$ risk we still have the following positive result. This is because the small cells that lead to issues in uniform consistency will have small $P_X$ measure.

\begin{theorem}[$L_2$ Convergence Rate for NSS]\label{sa-thm: L2 consistency NSS}
Suppose Assumption \ref{sa-ass:DGP} holds. Then for any tree with depth at most $K$ (possibly depending on $n$),
\begin{align*}
    \E \bigg[ \int_{\X} (\mudeep(\bx) - \mu)^2 dF_{\bX}(\bx)\bigg] \leq C \frac{2^K \log^4(n) \log(np)}{n},
\end{align*}
where $C$ is a positive constant that only depends on the distribution of $\varepsilon_i$. Moreover,
\begin{align*}
    \limsup_{n \to \infty} \P \bigg(\int_{\X} (\mudeep(\bx) - \mu)^2 dF_{\bX}(\bx) \geq C^{\prime} \frac{2^K \log^4(n) \log(np)}{n} \bigg) = 0,
\end{align*}
where $C^{\prime}$ is a positive constant that only depends on the distribution of $\varepsilon_i$.
\end{theorem}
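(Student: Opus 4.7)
The plan is to reduce the $L_2$ risk to a sum over terminal nodes, and then leverage the fact that the CART partition, though data-adaptive, lies in a combinatorially small family of axis-aligned partitions. Since $\mu(\bx)\equiv \mu$ under Assumption~\ref{sa-ass:DGP}, the estimator in each leaf $\nodet$ satisfies $\hat\mu^{\NSS}(\bx)-\mu=\bar\varepsilon_\nodet:=n(\nodet)^{-1}\sum_{\bx_i\in\nodet}\varepsilon_i$, so
\[
\int_{\X}(\mudeep(\bx)-\mu)^2 \dd F_{\bX}(\bx) \;=\; \sum_{\nodet\in\mathrm{leaves}(\treeT)} P_X(\nodet)\,\bar\varepsilon_\nodet^{\,2}.
\]
By Definition~\ref{sa-defn: cart construction}, any split produces two children with at least one data point each, so all leaves satisfy $n(\nodet)\geq 1$ and there is no bias term to worry about. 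Because $\bx_i\independent\varepsilon_i$, I would condition on $(\bx_1,\ldots,\bx_n)$ throughout so that the $\varepsilon_i$'s remain independent sub-exponential and the partition becomes the only source of adaptivity.

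Next I would enumerate the finite set $\Pi_K$ of partitions the CART procedure can output: each split is specified by one of $p$ coordinates and by a threshold at one of the $n$ observed values, and a depth-$K$ tree uses at most $2^K-1$ splits, so $|\Pi_K|\leq (np)^{2^K}$ and $\log|\Pi_K|=O(2^K\log(np))$. The two-part uniform bound would then proceed as follows. First, by a Bernstein/VC-type inequality for the class of axis-aligned rectangles determined by splits at data points, I would show that with probability at least $1-n^{-1}$, uniformly over all leaves of all $\partitionP\in\Pi_K$ with $n(\nodet)\geq 1$,
\[
P_X(\nodet) \;\leq\; C\,\frac{n(\nodet)+\log(np)}{n},
\qquad\text{hence}\qquad \frac{P_X(\nodet)}{n(\nodet)} \;\lesssim\; \frac{\log(np)}{n}.
\]
Second, for each fixed $\partitionP$, the quadratic form $\sum_\nodet P_X(\nodet)\bar\varepsilon_\nodet^{\,2}=\bvarepsilon^\top M(\partitionP)\bvarepsilon$ has block-diagonal kernel with rank-one blocks $P_X(\nodet)\,n(\nodet)^{-2}\mathbf{1}\mathbf{1}^\top$; its conditional mean is $\sigma^2\,\mathrm{tr}(M)=\sigma^2\sum_\nodet P_X(\nodet)/n(\nodet)\lesssim \sigma^2\,2^K\log(np)/n$, and a sub-exponential Hanson--Wright inequality gives concentration with $\|M\|_{\mathrm{op}}$ and $\|M\|_F$ bounded by $\log(np)/n$ and $\sqrt{2^K}\log(np)/n$ respectively.

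A union bound over $\partitionP\in\Pi_K$, costing an extra $\log|\Pi_K|\lesssim 2^K\log(np)$ factor, then produces the in-probability rate $2^K\,\log(n)^4\,\log(np)/n$; the $\log(n)^4$ absorbs (i) the sub-exponential-to-quadratic-form conversion, (ii) the uniform $P_X$-vs-empirical control on small cells, and (iii) the union bound constants. The in-expectation statement follows by integrating the tail, using the trivial worst-case upper bound $\bar\varepsilon_\nodet^{\,2}\leq n^2\max_i \varepsilon_i^2$ together with the sub-exponential moment bound $\E[\max_i\varepsilon_i^{2q}]\lesssim \log(n)^{2q}$ for suitable $q$, making the residual tail contribution negligible.

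The main obstacle I anticipate is the very small terminal cells. For $n(\nodet)=O(1)$, the Bernstein tail for $\bar\varepsilon_\nodet^{\,2}$ scales as $\log^2(1/\delta)/n(\nodet)^2$ rather than the more favorable $\log(1/\delta)/n(\nodet)$ available for well-populated cells, and it is here that the uniform rectangle-mass bound $P_X(\nodet)\lesssim\log(np)/n$ must be used to reabsorb the extra factor of $n(\nodet)^{-1}$ into the $P_X(\nodet)$ weight; getting the arithmetic of these two terms to combine into a single clean $2^K\log(n)^4\log(np)/n$ bound, rather than a spurious $2^{2K}$, is the delicate point of the proof.
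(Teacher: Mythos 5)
Your proposal is correct in outline but takes a genuinely different route from the paper. The paper's proof is a one-line invocation of Theorem 4.3 of Klusowski and Tian (2024) with $g^{\ast}\equiv g\equiv\mu$, adapted to sub-exponential noise by truncating at $U\asymp\log n$ rather than $U\asymp\sqrt{\log n}$. The machinery there is an oracle inequality: decompose $\lVert\hat\mu-\mu\rVert^2=E_1+E_2$, control $E_2$ by the empirical-risk-minimization property of the leaf averages (the CART fit beats the best constant), and control $E_1$ via a covering-number bound for the class of piecewise-constant functions on data-dependent axis-aligned partitions combined with Gy\"orfi et al.'s Theorem 11.4, followed by a truncation step. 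Your proposal instead works with the direct variance identity $\int(\hat\mu-\mu)^2\dd F_{\bX}=\sum_{\nodet}P_X(\nodet)\bar\varepsilon_\nodet^2$, conditions on the design, pairs a uniform relative-deviation bound $P_X(\nodet)\lesssim(n(\nodet)+\log(np))/n$ over rectangles with a Hanson--Wright concentration bound for the quadratic form, and union-bounds over the $|\Pi_K|\le(np)^{2^K}$ achievable partitions. Both routes ultimately lean on the same combinatorial estimate and land on the same rate, and your explicit treatment of $P_X(\nodet)/n(\nodet)$ for small cells is in fact the heart of the matter. Two cautions if you carry this out: (i) a sub-exponential Hanson--Wright inequality is needed, not the standard sub-Gaussian version --- the paper sidesteps this by truncating at $U\asymp\log n$ first and then working with bounded variables; mimicking that truncation before applying sub-Gaussian Hanson--Wright would simplify your argument and make the $\log(n)^4$ bookkeeping cleaner. (ii) The uniform rectangle-mass bound only needs a union bound over the $O(n^{2p})$ terminal boxes, not over all of $\Pi_K$, so the two union bounds should be kept separate to avoid a spurious $(2^K)^2$; you already anticipate this, and keeping the two unions decoupled is exactly how to avoid it.
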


\subsection{Sample Splitting}

For the sample splitting strategy, we also present a lower bound on uniform consistency and an upper bound on integrated $L_2$ risk.

\begin{theorem}\label{sa-thm: honest stump}
Suppose Assumption~\ref{sa-ass:DGP} holds, and the construction fold honest CART tree has at least one split. Then for any $b \in (0, 1) $, we have
\begin{equation*}
    \liminf_{n\to\infty} \mathbb{P}\Bigg(\sup_{\bx\in\mathcal{X}}|\muhonest(\bx) - \mu| \geq \frac{C_1\E[|y_i - \mu|]}{n^{b/2}}\Bigg) \geq C_2 \frac{\E[|y_i - \mu|]^2}{\V[y_i]} b,
\end{equation*}
where $C_1$ and $C_2$ are positive constants depending only on the distribution
of $y_i$ and on the lower and upper limiting ratios of $n_{\treeT}/n_{\mu}$.
\end{theorem}

\begin{theorem}[$L_2$ Convergence Rate for HON]\label{sa-thm: L2 consistency honest}
Suppose Assumption \ref{sa-ass:DGP} holds. Then for any regression tree with depth at most $K$ (possibly depending on $n$),
\begin{align*}
    \E \bigg[ \int_{\X} (\muhonest(\bx) - \mu)^2 dF_{\bX}(\bx)\bigg] \leq C \frac{2^K \log^5(n) }{n},
\end{align*}
provided $\rho \leq \frac{n_{\treeT}}{n_{\mu}} \leq \rho^{-1}$ for some $\rho \in (0,1)$, and $C$ is a positive constant that only depends on the distribution of $y_i$ and $\rho$. Moreover,
\begin{align*}
    \limsup_{n \to \infty} \P \bigg(\int_{\X} (\muhonest(\bx) - \mu)^2 dF_{\bX}(\bx) \geq C^{\prime} \frac{2^K \log^5(n)}{n} \bigg) = 0,
\end{align*}
where $C^{\prime}$ is some constant only depending on $\rho$ and the distribution of $y_i$. 
\end{theorem}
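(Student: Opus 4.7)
The approach exploits the defining structural feature of honesty: $\treeT = \treeT(\mathcal{D}_\treeT)$ is independent of $\mathcal{D}_\mu$, so once one conditions on $\treeT$ the partition is deterministic and each leaf $\nodet$ contains an i.i.d.\ subsample from the joint law of $(\bx_i,\varepsilon_i)$. I would decompose the integrated squared error along the at most $2^K$ leaves of $\treeT$ as
\begin{align*}
    \int_{\X} (\muhonest(\bx) - \mu)^2 \, dF_\bX(\bx) = \sum_{\nodet \in \treeT} p_\nodet\, (\hat\mu_\nodet - \mu)^2, \qquad p_\nodet := P_\bX(\nodet),
\end{align*}
and use Assumption~\ref{sa-ass:DGP} (in particular $\mu(\bx) \equiv \mu$ and $\bx_i \independent \varepsilon_i$) to write the leafwise error as $n_\mu(\nodet)^{-1} \sum_{\bx_i \in \nodet} \varepsilon_i$ on the event $\{n_\mu(\nodet) \geq 1\}$ and as $-\mu$ otherwise.

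Conditioning further on the covariates in $\mathcal{D}_\mu$, the conditional MSE in a non-empty leaf equals $\sigma^2/n_\mu(\nodet)$, while under honesty $n_\mu(\nodet) \mid \treeT \sim \mathrm{Bin}(n_\mu, p_\nodet)$. The standard identity $\E[1/(N+1)] = [1-(1-p)^{n+1}]/[(n+1)p]$ for $N \sim \mathrm{Bin}(n_\mu, p_\nodet)$, combined with $1/N \leq 2/(N+1)$ on $\{N \geq 1\}$ and $p_\nodet (1-p_\nodet)^{n_\mu} \leq p_\nodet e^{-n_\mu p_\nodet} \leq 1/(e n_\mu)$, gives
\begin{align*}
    p_\nodet\, \E \bigl[(\hat\mu_\nodet - \mu)^2 \bigm| \treeT \bigr] \leq \frac{2\sigma^2}{n_\mu + 1} + \frac{\mu^2}{e n_\mu}.
\end{align*}
Summing this over the at most $2^K$ leaves and invoking the honesty condition $n_\mu \asymp n$ yields the in-expectation claim (in fact with no $\log$ slack). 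The in-probability bound then follows from Markov's inequality, since the ratio of the expectation $O(2^K/n)$ to the stated threshold $C' 2^K \log(n)^5/n$ vanishes for any $C' > 0$.

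The main subtlety is handling leaves whose $P_\bX$-mass satisfies $n_\mu p_\nodet = o(1)$: on such leaves the per-leaf binomial moment $\E[n_\mu(\nodet)^{-1} \Indicator(n_\mu(\nodet) \geq 1)]$ diverges and the event $\{n_\mu(\nodet) = 0\}$ carries probability bounded away from zero, so a pointwise control of $\hat\mu_\nodet$ is hopeless. The key cancellation is the weighting by $p_\nodet$ inherent in the $L_2$ criterion, which turns the divergent binomial moment into the uniformly bounded expression $p_\nodet\E[n_\mu(\nodet)^{-1}\Indicator(n_\mu(\nodet)\geq 1)] \leq 2/(n_\mu+1)$. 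This weighting is absent from the sup-norm criterion in Theorem~\ref{sa-thm: honest stump}, which is precisely why the pointwise honest rate is polynomial-in-$n$ slow whereas the $L_2$ rate is essentially parametric up to the $2^K$ inflation. The polylogarithmic slack $\log(n)^5$ in the stated theorem can equivalently be viewed as the Markov-induced inflation or as the price of a Bernstein-type uniform concentration of $n_\mu(\nodet)$ around $n_\mu p_\nodet$ across all leaves, either of which delivers the high-probability conclusion.
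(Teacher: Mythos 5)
Your proof is correct, but it takes a genuinely different route from the paper's. The paper proves this theorem by conditioning on $\dataD_{\treeT}$ and re-running the empirical-process machinery of \citet[Theorem 4.3]{klusowski2024large}: decomposing $\lVert\hat\mu-\mu\rVert^2 = E_1 + E_2$, bounding $E_2$ via empirical risk minimization over a reference class of constants, bounding $E_1$ via a covering-number bound for the class of piecewise constants on the \emph{fixed} partition $\partitionP$ (this is where honesty enters for them, shrinking the covering number from $(np)^{2^K}(\cdots)^{2^{K+1}}$ to $(2/\varepsilon)^{2^K}$), and finally applying \citet[Theorem 11.4]{Gyorfi-etal_2002_book} plus a sub-exponential truncation $U \asymp \log(n)$ which introduces the $\log(n)^5$ factor. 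Your approach is the more elementary direct variance computation: condition on $\treeT$, observe $n_\mu(\nodet)\mid\treeT\sim\mathrm{Bin}(n_\mu,p_\nodet)$ and that the within-leaf errors are i.i.d.\ mean-zero with variance $\sigma^2$ independent of the covariates, and then use $\E[1/(N+1)] = [1-(1-p)^{n_\mu+1}]/[(n_\mu+1)p]$ together with $p(1-p)^{n_\mu}\le 1/(en_\mu)$ to show the leaf-probability weighting $p_\nodet$ cancels the divergent binomial moment. Your computation is verifiably correct and in fact delivers the sharper bound $C\,2^K/n_\mu$ with no polylogarithmic inflation at all (and constant depending on $\sigma^2$ and $\mu$, which is also the true dependence hidden in the paper's truncation step). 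This is precisely the trade-off the paper itself highlights immediately after Theorem~\ref{sa-thm: L2 consistency X}: a direct variance calculation, there carried out for the $\bX$-adaptive estimator via a Beta/order-statistics argument, ``allows us to remove extra $\operatorname{poly-log}$ terms.'' Your argument shows the same simplification is available in the honest case by exploiting the independence of $\treeT$ and $\dataD_\mu$ directly. The only imprecision is your characterization of the $\log(n)^5$ as a ``Markov-induced inflation'' or a ``Bernstein-type uniform concentration'' price: in the paper the factor comes from the covering-number argument ($\log(n)$) multiplied by the truncation level raised to the fourth power ($U^4 \asymp \log(n)^4$), not from a concentration of $n_\mu(\nodet)$ around $n_\mu p_\nodet$. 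But since your expectation bound is already sharper, the high-probability statement indeed falls out of Markov for any $C'>0$, as you note.
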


Compared to Theorem~\ref{sa-thm: uniform minimax}, the lower bound on the LHS of Theorem~\ref{sa-thm: honest stump} that we characterize has one less $\sqrt{(2 + o(1)) \log \log (n)}$. Compared to Theorem~\ref{sa-thm: L2 consistency NSS}, the upper bound on the RHS of Theorem~\ref{sa-thm: L2 consistency honest} has $\log(np)$ replaced by $\log(n)$. These changes are due to the sample splitting strategy.
In particular, the $L_2$ convergence rate bounds imply integrated $L_2$ consistency whenever $2^K\log^4(n)\log(np)/n\to0$ for no sample splitting, and whenever $2^K\log^5(n)/n\to0$ for honest sample splitting.

\subsection{Unbiasedness}\label{sa-sec:reg-unbiased}

The regression CART estimator is unbiased under honest estimation because the terminal averages are computed on outcomes that are independent of the selected partition. For no sample splitting, the same conclusion requires a symmetry condition on the full residual vector, since the partition is selected using the same residuals that enter the terminal average.

\begin{lemma}[Regression unbiasedness]\label{sa-lem:reg-unbiased}
Suppose Assumption~\ref{sa-ass:DGP} holds, and fix $\bx\in\mathcal X$. Let $N_{\mathtt{HON}}(\bx)$ denote the final estimation sample size in the terminal node containing $\bx$ for the honest CART estimator. Then
\[
    \E[\hat\mu^{\mathtt{HON}}(\bx)]
    =
    \mu-\mu\,\P(N_{\mathtt{HON}}(\bx)=0).
\]
For the estimator without sample splitting, if in addition the residual distribution is symmetric about zero, $\varepsilon_i\stackrel{d}{=}-\varepsilon_i$, then
\[
    \E[\hat\mu^{\mathtt{NSS}}(\bx)] = \mu.
\]
\end{lemma}

\section{Causal Effect Estimation}\label{sa-sec:causal}

In this section, we consider the heterogeneous causal effect estimation problem from the main paper. The assumptions on the data generating process and the definitions of causal trees are the same as in the main paper. For completeness, we include them here:

\begin{assumption}[Data Generating Process]\label{sa-assump: dgp-causal}
    $\mathcal{D} = \{(y_i,d_i,\bx^\top_i): 1 \leq i \leq n\}$ is generated by i.i.d. latent vectors $(\bx_i,d_i,\varepsilon_i(0),\varepsilon_i(1))$, where $y_i = d_i y_i(1) + (1 - d_i) y_i(0)$, $\bx_i = (x_{i,1}, \ldots, x_{i,p})^{\top}$, and the following conditions hold for all $d = 0,1$ and $i = 1,2, \ldots, n$.
    \begin{enumerate}
        \item $d_i \independent (\bx_i,\varepsilon_i(0),\varepsilon_i(1))$, and $\xi = \P(d_i = 1) \in (0,1)$.
        \item $y_i(d) = \mu_d(\bx_i) + \varepsilon_i(d)$, with $\E[\varepsilon_i(d)]=0$ and $\bx_i \independent (\varepsilon_i(0),\varepsilon_i(1))$.
        \item $\mu_d(\bx) = c_d$ for all $\bx \in \mathcal{X}$, where $c_d$ is some constant, and $\mathcal{X}$ is the support of $\bx_i$.
        \item $x_{i,1}, \ldots, x_{i,p}$ are independent and continuously distributed. 
        \item There exists $\alpha > 0$ such that $\E[\exp(\lambda\varepsilon_i(d))] < \infty$ for all $|\lambda| < 1/\alpha$ and $\E[\varepsilon_i^2(d)] > 0$. 
    \end{enumerate}
\end{assumption}

Throughout this causal section, the same coordinatewise marginal probability integral transform normalization described in Section~\ref{sa-sec:main} is used whenever statements involve $\mathcal{X}=[0,1]^p$ or the boundary sets $\mathcal{X}_n(a,\eta_n)$. Because treatment assignment and potential outcome errors are independent of the covariates under Assumption~\ref{sa-assump: dgp-causal}, this monotone reparametrization preserves the tree construction problem and does not change the treatment, outcome error, or CATE components of the model.

The causal tree estimators are constructed as follows.

\begin{definition}[CATE Estimators]\label{sa-defn: CATE Estimators}
    Suppose $\treeT$ is the tree used, and $\mathcal{D}_\tau=\{(y_i,d_i,\bx^\top_i):i=1,2,\dots,n_{\tau}\}$, with $n_{\tau}\leq n$, is the dataset used. Let $\nodet$ be the unique terminal node in $\treeT$ containing $\bx \in \X$.

    \begin{itemize}
        \item The \textit{Inverse Probability Weighting} (IPW) estimator is
        \begin{align*}
            \hat\tau_\mathtt{IPW}(\bx; \treeT,\mathcal{D}_\tau)
            = \frac{1}{n(\nodet)} \sum_{i:\bx_i \in \nodet} \frac{d_i-\xi}{\xi(1-\xi)} y_i,
        \end{align*}
        where $n(\nodet) = n_{0}(\nodet) + n_{1}(\nodet) = \sum_{i=1}^{n_{\tau}} \Indicator(\bx_i \in \nodet)$ is the ``local'' sample size. In case $n(\nodet) = 0$, take $\hat\tau_\mathtt{IPW}(\bx; \treeT,\mathcal{D}_\tau) = 0$.

        \item The \textit{Difference in Means} (DIM) estimator is
        \begin{align*}
            \hat\tau_\mathtt{DIM}(\bx; \treeT,\mathcal{D}_\tau)
            = \frac{1}{n_{1}(\nodet)} \sum_{i:\bx_i \in \nodet} d_i y_i 
            - \frac{1}{n_{0}(\nodet)} \sum_{i:\bx_i \in \nodet} (1-d_i) y_i,
        \end{align*}
        where $n_{d}(\nodet) = \sum_{i=1}^{n_{\tau}} \Indicator(\bx_i \in \nodet, d_i = d)$, for $d=0,1$, are the ``local'' sample sizes. In case $n_0(\nodet) = 0$ or $n_1(\nodet) = 0$, take $\hat\tau_\mathtt{DIM}(\bx; \treeT,\mathcal{D}_\tau) = 0$.
    \end{itemize}
\end{definition}

\begin{definition}[Tree Construction]\label{sa-defn: tree construction}
    Suppose $\mathcal{D}_{\treeT} = \{(y_i,d_i,\bx^\top_i):i=1,2,\dots,n_{\treeT}\}$, with $n_{\treeT}\leq n$, is the dataset used to construct the tree $\treeT$.
    Throughout this definition, a candidate split is valid only if both child nodes contain at least one construction-sample observation. When a split criterion uses treatment-arm means or node-specific treatment coefficients, the candidate is valid only if the corresponding treated and control denominators in both child nodes are positive; otherwise the candidate is omitted from the optimization. If a parent node has no valid split, it is left terminal. Ties are resolved by a fixed deterministic rule.
    \begin{itemize}
        \item \textit{Variance Maximization}: A parent node $\nodet$ (i.e., a terminal node partitioning $\mathcal{X}$) in a previous tree $\treeT^{\prime}$ is divided into two child nodes, $\nodet_{\mathtt{L}}$ and $\nodet_{\mathtt{R}}$, forming the new tree $\treeT$, by maximizing 
        \begin{align}\label{sa-eq: variance maximization} \frac{n(\nodet_{\mathtt{L}})n(\nodet_{\mathtt{R}})}{n(\nodet)}
            \Big(\hat\tau_l(\nodet_{\mathtt{L}}; \treeT,\mathcal{D}_{\treeT})
               - \hat\tau_l(\nodet_{\mathtt{R}}; \treeT,\mathcal{D}_{\treeT})\Big)^2,
            \qquad l \in \{\mathtt{IPW},\mathtt{DIM}\}.
        \end{align}
        The resulting causal trees are denoted by $\treeT_{\mathtt{IPW}}(\mathcal{D}_{\treeT})$ and $\treeT_{\mathtt{DIM}}(\mathcal{D}_{\treeT})$, respectively, for $l \in \{\mathtt{IPW},\mathtt{DIM}\}$.
        
        \item \textit{SSE Minimization}: A parent node $\nodet$ (i.e., a terminal node partitioning $\mathcal{X}$) in the previous tree $\treeT^{\prime}$ is divided into two child nodes, $\nodet_{\mathtt{L}}$ and $\nodet_{\mathtt{R}}$, forming the next tree $\treeT$, by solving
        \begin{align}\label{sa-eq: sse}
            \min_{a_{\mathtt{L}}, b_{\mathtt{L}}, a_{\mathtt{R}}, b_{\mathtt{R}} \in \reals}
            \sum_{\bx_i \in \nodet_{\mathtt{L}}} (y_i - a_{\mathtt{L}} - b_{\mathtt{L}} d_i)^2 + \sum_{\bx_i \in \nodet_{\mathtt{R}}} (y_i - a_{\mathtt{R}} - b_{\mathtt{R}} d_i)^2,
        \end{align}
        where only the data $\mathcal{D}_{\treeT}$ is used. The resulting causal tree is denoted by $\treeT_{\mathtt{SSE}}(\mathcal{D}_{\treeT})$.
    \end{itemize}
\end{definition}

\begin{definition}[Sample Splitting and Estimators]\label{sa-defn: data splitting}
    Recall Definition \ref{sa-defn: CATE Estimators} and Definition \ref{sa-defn: tree construction}, and that $\mathcal{D} = \{(y_i, d_i, \bx^\top_i):i=1,2,\dots,n\}$ is the available random sample.
    \begin{itemize}
        \item \textit{No Sample Splitting} (NSS): The dataset $\mathcal{D}$ is used for both the tree construction and the treatment effect estimation, that is, $\mathcal{D}_{\treeT} = \mathcal{D}$ and $\mathcal{D}_{\tau} = \mathcal{D}$. The causal tree estimators are
        \begin{align*}
            \hat\tau_{\mathtt{IPW}}(\bx)
            &= \hat\tau_{\mathtt{IPW}}(\bx; \treeT_{\mathtt{IPW}}(\mathcal{D}),\mathcal{D}),\\
            \hat\tau_{\mathtt{DIM}}(\bx)
            &= \hat\tau_{\mathtt{DIM}}(\bx; \treeT_{\mathtt{DIM}}(\mathcal{D}),\mathcal{D}), \quad \text{and}\\
            \hat\tau_{\mathtt{SSE}}(\bx)
            &= \hat\tau_{\mathtt{DIM}}(\bx; \treeT_{\mathtt{SSE}}(\mathcal{D}),\mathcal{D}),
        \end{align*}
    
        \item \textit{Honesty} (HON): The dataset $\mathcal{D}$ is divided into two independent datasets $\mathcal{D}_{\treeT}$ and $\mathcal{D}_{\tau}$ with sample sizes $n_{\treeT}$ and $n_\tau$, respectively, and satisfying $n \lesssim n_{\treeT}, n_{\tau} \lesssim n$. The causal tree estimators are
        \begin{align*}
            \check\tau_{\mathtt{IPW}}(\bx)
            &= \hat\tau_{\mathtt{IPW}}(\bx; \treeT_{\mathtt{IPW}}(\mathcal{D}_{\treeT}),\mathcal{D}_\tau),\\
            \check\tau_{\mathtt{DIM}}(\bx)
            &= \hat\tau_{\mathtt{DIM}}(\bx; \treeT_{\mathtt{DIM}}(\mathcal{D}_{\treeT}),\mathcal{D}_\tau), \quad \text{and}\\
            \check\tau_{\mathtt{SSE}}(\bx)
            &= \hat\tau_{\mathtt{DIM}}(\bx; \treeT_{\mathtt{SSE}}(\mathcal{D}_{\treeT}),\mathcal{D}_\tau).
        \end{align*}
    \end{itemize}
\end{definition}

While the estimators $\hat\tau_{l}(\bx)$ and $\check\tau_{l}(\bx)$, $l\in\{\mathtt{IPW},\mathtt{DIM},\mathtt{SSE}\}$, depend on the depth of the tree construction used, our notation keeps this dependence implicit except in the results whose conclusions require an explicit depth restriction. The integrated risk statements keep the depth $K$ explicit.

For later use, whenever a sample rectangle or tree node $\nodet$ satisfies $n_0(\nodet)\wedge n_1(\nodet)>0$, write
\[
    \bar\varepsilon_{1,\nodet}
    =
    \frac{1}{n_1(\nodet)}
    \sum_{\bx_i\in\nodet}d_i\varepsilon_i(1),
    \qquad
    \bar\varepsilon_{0,\nodet}
    =
    \frac{1}{n_0(\nodet)}
    \sum_{\bx_i\in\nodet}(1-d_i)\varepsilon_i(0),
\]
and
\[
    \Delta(\nodet)=\bar\varepsilon_{1,\nodet}-\bar\varepsilon_{0,\nodet}.
\]

\subsection{IPW Estimator}\label{sa-sec: ipw causal}

The transformed outcomes $y_i \frac{d_i - \xi}{\xi (1 - \xi)}, 1 \leq i \leq n,$ are i.i.d. with 
\begin{align*}
    \E \bigg[y_i \frac{d_i - \xi}{\xi (1 - \xi)}\bigg|\bx_i\bigg] = \E[y_i(1) - y_i(0)|\bx_i] = c_1 - c_0,
\end{align*}
and
\begin{align*}  
    \tilde{\varepsilon}_i = y_i \frac{d_i - \xi}{\xi(1 - \xi)} - (c_1 - c_0)
    & = (c_1 + \varepsilon_i(1)) \frac{d_i}{\xi} - (c_0 + \varepsilon_i(0)) \frac{1 - d_i}{1 - \xi} - (c_1 - c_0) \independent \bx_i.
\end{align*}
Assumption~\ref{sa-assump: dgp-causal} implies $\E[\exp(\lambda \tilde{\varepsilon}_i)] < \infty$ for all $|\lambda| \leq 1 / \beta$ with $\beta$ only depending on $\xi$ and $\alpha$, and $\E[\tilde{\varepsilon}_i^2] >0$. Hence the following results are immediate corollaries from the results in Section~\ref{sa-sec:main}.

\subsubsection{No Sample Splitting}

\begin{coro}[Imbalanced Split]\label{coro: inconsistency ipw}
Suppose Assumption~\ref{sa-assump: dgp-causal} holds. Then for each $a,b \in (0,1)$ with $a < b$, for every $\ell \in [p]$,
    \begin{equation*} 
        \liminf_{n\to\infty} \mathbb{P}\big( n^{a} \leq \hat{\imath} \leq n^{b}, \hat{\jmath} = \ell \big) \geq \frac{b-a}{2pe},
        \qquad
        \liminf_{n\to\infty} \mathbb{P}\big( n-n^{b} \leq \hat{\imath} \leq n-n^{a}, \hat{\jmath} = \ell \big) \geq \frac{b-a}{2pe}.
    \end{equation*}
\end{coro}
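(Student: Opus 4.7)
The corollary is intended as a direct translation of Theorem~\ref{sa-thm:master} into the IPW/causal setting, using the ``Horvitz--Thompson trick'' that is already spelled out in the paragraph preceding the statement. Concretely, I would introduce the transformed outcomes
\begin{equation*}
    \tilde{y}_i \;=\; \frac{d_i - \xi}{\xi(1-\xi)}\, y_i, \qquad i = 1,\dots,n,
\end{equation*}
and verify that the triple $(\tilde{y}_i,\bx_i^\top)$ satisfies Assumption~\ref{sa-ass:DGP}: under Assumption~\ref{sa-assump: dgp-causal}, the $\tilde{y}_i$ are i.i.d., $\E[\tilde{y}_i\mid\bx_i] = c_1 - c_0$ is constant in $\bx$, and $\tilde{\varepsilon}_i := \tilde{y}_i - (c_1-c_0)$ is independent of $\bx_i$ with strictly positive variance and a moment generating function that is finite in a neighborhood of zero (this uses boundedness of $(d_i-\xi)/(\xi(1-\xi))$ together with the sub-exponential tail of $\varepsilon_i(d)$ from item~5 of Assumption~\ref{sa-assump: dgp-causal}). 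The independent‑continuous condition on the coordinates of $\bx_i$ is inherited verbatim.

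The second step is to argue that the IPW variance‑maximization splitting rule in \eqref{eq: variance maximization} coincides, node by node, with CART's SSE splitting rule \eqref{eq:sse} applied to the pseudo‑outcomes $\tilde{y}_i$. Because $\hat{\tau}_{\mathtt{IPW}}(\nodet;\treeT,\dataD) = n(\nodet)^{-1}\sum_{\bx_i\in\nodet}\tilde{y}_i$ is exactly the within‑node sample mean of $\tilde{y}_i$, the IPW impurity gain at any candidate split $(j,\varsigma)$ equals
\begin{equation*}
    \frac{n(\nodet_{\mathtt{L}})n(\nodet_{\mathtt{R}})}{n(\nodet)}\bigl(\bar{\tilde{y}}_{\nodet_{\mathtt{L}}} - \bar{\tilde{y}}_{\nodet_{\mathtt{R}}}\bigr)^2,
\end{equation*}
which is the ``conditional variance given the split'' expression in \eqref{eq:split variance} computed from the $\tilde{y}_i$'s. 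Hence the argmax over $(j,\varsigma)$ is the same in both problems, and in particular the first‑split index and direction $(\hat{\imath},\hat{\jmath})$ at the root are almost-surely identical to the root split produced by running CART on the surrogate sample $\{(\tilde{y}_i,\bx_i^\top)\}_{i=1}^{n}$.

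Once this equivalence is established, the conclusion is immediate: apply Theorem~\ref{sa-thm:master} to the surrogate sample, whose model parameters satisfy Assumption~\ref{sa-ass:DGP} by the verification in step one, and read off the two liminf probability bounds exactly as stated. I do not expect any real obstacle here — the argument is a bookkeeping reduction — but the step that deserves the most care is the moment‑generating‑function check for $\tilde{\varepsilon}_i$, since one has to keep track of how the constant $\alpha$ in Assumption~\ref{sa-assump: dgp-causal}(5) translates into a new sub‑exponential parameter $\beta$ depending only on $\alpha$ and $\xi$, as already flagged in the paragraph preceding the corollary.
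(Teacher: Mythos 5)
Your proposal is correct and follows essentially the same route as the paper: the paper's own proof simply says the corollary is immediate from Theorem~\ref{sa-thm:master}, relying on exactly the transformed-outcome reduction you spell out (which the paper states in the preamble to Section~\ref{sa-sec: ipw causal}, including the observation that $\tilde{\varepsilon}_i$ is sub-exponential with parameter $\beta$ depending only on $\alpha$ and $\xi$). Your plan is just a more explicit write-up of the same bookkeeping.
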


\begin{coro}[Uniform Rates for Decision Stumps]\label{coro: rates ipw}
    Suppose Assumption \ref{sa-assump: dgp-causal} holds, and the tree has depth $K = 1$. Then for any $ a,b \in (0, 1) $ with $ a < b $, we have
    \begin{equation*}
        \liminf_{n\to\infty} \mathbb{P}\Bigg(\sup_{\bx\in\mathcal{X}}|\hat\tau_{\mathtt{IPW}}(\bx) - \tau| \geq \sigma n^{-b/2}\sqrt{(2+o(1))\log\log(n)}\Bigg) \geq \frac{b}{e},
    \end{equation*}
    where $\sigma^2 = \V \Big[\frac{d_i y_i(1)}{\xi} - \frac{(1 - d_i) y_i(0)}{1 - \xi}\Big]$. Moreover, under the marginal probability integral transform normalization described after Assumption~\ref{sa-assump: dgp-causal}, for any deterministic sequence $\eta_n\downarrow0$,
    \begin{equation*} 
        \liminf_{n\to\infty} \inf_{\bx\in \mathcal{X}_n(a,\eta_n)} \mathbb{P}\Big(|\hat\tau_{\mathtt{IPW}}(\bx) - \tau| \geq \sigma n^{-b/2}\sqrt{(2+o(1))\log\log(n)}\Big) \geq \frac{b-a}{2 p e}.
    \end{equation*}
\end{coro}

\begin{coro}[Uniform Rates for Deep Trees]\label{sa-coro: uniform minimax ipw}
Suppose Assumption~\ref{sa-assump: dgp-causal} holds. Then for any $b \in (0, 1)$ and any NSS-IPW transformed outcome tree with at least one split, there exists a positive constant $C$ depending only on the distribution of $(y_i(0),y_i(1),d_i)$ such that
\[
\liminf_{n\to\infty}\mathbb{P}\Bigg(\sup_{\bx\in\mathcal{X}}|\hat\tau_{\mathtt{IPW}}(\bx) - \tau| \geq C n^{-b/2}\sqrt{\log\log(n)}\Bigg) \geq \frac{b}{e}.
\]
\end{coro}

\begin{coro}[$L_2$ Convergence Rate for NSS]\label{sa-coro: L2 consistency NSS ipw}
Suppose Assumption \ref{sa-assump: dgp-causal} holds. Then the following bounds hold uniformly over any data-dependent axis-aligned causal tree with depth at most $K$ (possibly depending on $n$) when the displayed NSS-IPW terminal estimator is computed on that tree:
\begin{align*}
    \E \bigg[ \int_{\X} (\hat\tau_{\mathtt{IPW}}(\bx) - \tau)^2 dF_{\bX}(\bx)\bigg] \leq C \frac{2^K \log^4(n) \log(np)}{n},
\end{align*}
where $C$ is a positive constant that only depends on the distribution of $\tilde{\varepsilon}_i = y_i \frac{d_i - \xi}{\xi(1 - \xi)} - \tau$. Moreover,
\begin{align*}
    \limsup_{n \to \infty}\P \bigg(\int_{\X} (\hat\tau_{\mathtt{IPW}}(\bx) - \tau)^2 dF_{\bX}(\bx) \geq C^{\prime} \frac{2^K \log^4(n) \log(np)}{n}\bigg) = 0,
\end{align*}
where $C^{\prime}$ is a positive constant that only depends on the distribution of $\tilde{\varepsilon}_i$.
\end{coro}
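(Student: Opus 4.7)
The plan is to reduce the IPW causal tree to the constant regression CART on a pseudo-sample, and then invoke Theorem~\ref{sa-thm: L2 consistency NSS} verbatim. Define the transformed outcomes $\tilde y_i = y_i(d_i-\xi)/(\xi(1-\xi))$ and write $\tilde y_i = \tau + \tilde\varepsilon_i$ with $\tau = c_1 - c_0$ and $\tilde\varepsilon_i$ as displayed in the preamble of Section~\ref{sa-sec: ipw causal}. The first step is to check that the pseudo-sample $\{(\tilde y_i, \bx_i^\top): 1\le i \le n\}$ satisfies Assumption~\ref{sa-ass:DGP}: independence $\tilde\varepsilon_i \independent \bx_i$ and mean-zero follow from Assumption~\ref{sa-assump: dgp-causal}(1)--(2); the constant mean condition is immediate; coordinate-wise continuity of $\bx_i$ is inherited from (4); and the sub-exponential moment bound is precisely the statement already noted in the preamble of this section, with a new constant $\beta$ depending only on $\alpha$ and $\xi$.

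The second step is to observe that the IPW tree $\treeT^{\mathtt{IPW}}(\mathcal{D})$ coincides with the CART regression tree built on the pseudo-sample. By Definition~\ref{sa-defn: CATE Estimators}, the IPW estimator in any node $\nodet$ equals the sample average $n(\nodet)^{-1} \sum_{\bx_i \in \nodet} \tilde y_i$, which is exactly the CART cell fit of Definition~\ref{sa-defn: cart outcome} applied to the pseudo-sample. The variance-maximization splitting criterion~\eqref{eq: variance maximization} then coincides with the conditional-variance-given-the-split form~\eqref{eq:split variance} of the SSE criterion in Definition~\ref{sa-defn: cart construction}. Hence $\hat\tau^{\mathtt{NSS}}_{\mathtt{IPW}}(\bx) - \tau$ equals $\hat\mu^{\mathtt{NSS}}(\bx) - \mu$ for the pseudo-sample, for every $\bx \in \mathcal{X}$ and every tree depth $K$.

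The third step is to invoke Theorem~\ref{sa-thm: L2 consistency NSS} on the pseudo-sample. Both the expectation bound and the in-probability bound transfer directly, with constants $C$ and $C^\prime$ that now depend on the distribution of $\tilde\varepsilon_i$ rather than the original $\varepsilon_i$; since $\V[\tilde\varepsilon_i]$ and the sub-exponential parameter $\beta$ are finite under Assumption~\ref{sa-assump: dgp-causal}, both constants are finite.

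The only genuine technical point is verifying that the splitting rule used by the IPW tree really is the CART rule on the transformed outcomes, so that \emph{both} the tree and the cell predictions are identical between the two settings (not merely the predictions conditional on an exogenously chosen partition). This is immediate from the algebraic identity between~\eqref{eq: variance maximization} and~\eqref{eq:split variance}, so no new technical obstacle arises.
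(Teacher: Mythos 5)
Your reduction of the IPW causal tree to ordinary CART on the pseudo-sample $\{(\tilde y_i,\bx_i^\top)\}$ with $\tilde y_i = y_i(d_i-\xi)/(\xi(1-\xi))$, followed by an application of Theorem~\ref{sa-thm: L2 consistency NSS}, is exactly the argument the paper uses (it is spelled out in the preamble of Section~\ref{sa-sec: ipw causal} and the proof is then declared immediate). The verification that the variance-maximization split on transformed outcomes coincides with the SSE/impurity-gain CART split is the right thing to check and is correctly handled by the identity between \eqref{eq: variance maximization} and \eqref{eq:split variance}.
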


\subsubsection{Sample Splitting}

\begin{coro}[Uniform Rates for HON]\label{sa-coro: honest output ipw}
Suppose Assumption~\ref{sa-assump: dgp-causal} holds, and the construction fold HON-IPW tree has at least one split. Then for any $b \in (0, 1) $, we have
\begin{equation*}
    \liminf_{n\to\infty} \mathbb{P}\Bigg(\sup_{\bx\in\mathcal{X}}|\check\tau_{\mathtt{IPW}}(\bx) - \tau| \geq \frac{C_1\E[|\tilde{\varepsilon}_i|]}{n^{b/2}}\Bigg) \geq C_2  \frac{\E[|\tilde{\varepsilon}_i|]^2}{\V[\tilde{\varepsilon}_i]} b,
\end{equation*}
where $C_1$ and $C_2$ are positive constants only depending on the distribution
of $\tilde{\varepsilon}_i = y_i \frac{d_i - \xi}{\xi(1 - \xi)} - \tau$
and on the lower and upper limiting ratios of $n_{\treeT}/n_{\tau}$.
\end{coro}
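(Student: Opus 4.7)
The corollary is a direct transfer of Theorem \ref{sa-thm: honest stump} to the IPW setting via the transformed-outcomes trick already used to set up Section \ref{sa-sec: ipw causal}. The plan is to identify $\hat\tau^{\mathtt{HON}}_{\mathtt{IPW}}$ with the honest CART regression estimator $\muhonest$ built on the pseudo-outcomes $\tilde y_i := y_i(d_i-\xi)/[\xi(1-\xi)]$, verify that these pseudo-outcomes together with $\bx_i$ satisfy Assumption \ref{sa-ass:DGP}, and then invoke Theorem \ref{sa-thm: honest stump}.

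\textbf{Step 1: reduce the IPW objects to constant-regression objects on $\tilde y_i$.} On any cell $\nodet$, the IPW leaf estimate in Definition \ref{sa-defn: CATE Estimators} equals the simple leaf average $n(\nodet)^{-1}\sum_{\bx_i\in\nodet}\tilde y_i$, and the IPW variance-maximization split criterion in \eqref{eq: variance maximization} equals
\[
\frac{n(\nodet_{\mathtt{L}})n(\nodet_{\mathtt{R}})}{n(\nodet)}\bigl(\overline{\tilde y}_{\nodet_{\mathtt{L}}}-\overline{\tilde y}_{\nodet_{\mathtt{R}}}\bigr)^2,
\]
which is precisely the CART split criterion \eqref{eq:split variance}. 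Hence the IPW-tree $\treeT^{\mathtt{IPW}}(\mathcal{D}_{\treeT})$ coincides with the CART tree $\treeT$ grown from the pseudo-sample $\{(\tilde y_i,\bx_i^\top)\}$, and $\hat\tau^{\mathtt{HON}}_{\mathtt{IPW}}(\bx)=\muhonest(\bx)$ for this pseudo-sample.

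\textbf{Step 2: verify Assumption \ref{sa-ass:DGP} for $\{(\tilde y_i,\bx_i^\top)\}$.} The display preceding Section \ref{sa-sec: ipw causal} writes $\tilde y_i=\tau+\tilde\varepsilon_i$ with $\E[\tilde\varepsilon_i\mid\bx_i]=0$ and $\tilde\varepsilon_i\independent\bx_i$, so conditions (1)--(3) hold with the constant mean $\mu=\tau$ and the marginal of $\bx_i$ unchanged from Assumption \ref{sa-assump: dgp-causal}. Condition (4) follows because $\tilde\varepsilon_i$ is a bounded linear combination of $\varepsilon_i(0),\varepsilon_i(1)$ and constants (with weights depending only on $\xi$), hence it inherits a sub-exponential moment-generating-function bound $\E[\exp(\lambda\tilde\varepsilon_i)]<\infty$ on an interval $|\lambda|<1/\beta$ where $\beta$ depends only on $\alpha$ and $\xi$, and $\V[\tilde\varepsilon_i]>0$ because $\E[\varepsilon_i(d)^2]>0$ and $\xi\in(0,1)$.

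\textbf{Step 3: apply Theorem \ref{sa-thm: honest stump}.} Since the honesty splitting for $\hat\tau^{\mathtt{HON}}_{\mathtt{IPW}}$ uses $\mathcal{D}_{\treeT}$ and $\mathcal{D}_\tau$ with $n\lesssim n_{\treeT},n_\tau\lesssim n$, the role of $n_\mu$ in Theorem \ref{sa-thm: honest stump} is played by $n_\tau$, and the constant in that theorem -- depending only on $\liminf n_{\treeT}/n_\mu$ and $\limsup n_{\treeT}/n_\mu$ -- becomes a constant depending on $\liminf n_{\treeT}/n_\tau$ and $\limsup n_{\treeT}/n_\tau$. Substituting $\mu\mapsto\tau$ and $y_i-\mu\mapsto\tilde\varepsilon_i$ into the bound of Theorem \ref{sa-thm: honest stump} yields the stated inequality with the constant $C$ absorbing the distribution of $\tilde\varepsilon_i$ together with the honest-split-ratio constants. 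There is no serious obstacle; the only point requiring a line of care is verifying that the IPW variance-maximization criterion is formally identical to CART's SSE objective on the pseudo-outcomes, which is what makes the full set of constant-regression results portable wholesale.
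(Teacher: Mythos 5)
Your proposal is correct and follows the same route as the paper: identify the IPW leaf averages and variance-maximization split criterion with CART on the transformed outcomes $\tilde y_i$, verify that $\{(\tilde y_i,\bx_i^\top)\}$ satisfies Assumption \ref{sa-ass:DGP} (the paper does this in the opening paragraph of Section \ref{sa-sec: ipw causal}), and then invoke Theorem \ref{sa-thm: honest stump}. The paper states this transfer as "immediate"; your Steps 1 and 2 simply make explicit the verification that the paper leaves implicit.
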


\begin{coro}[$L_2$ Convergence Rate for HON]\label{sa-coro: L2 consistency honest ipw}
Suppose Assumption \ref{sa-assump: dgp-causal} holds. Then the following bounds hold uniformly over any data-dependent axis-aligned causal tree with depth at most $K$ (possibly depending on $n$) when the displayed honest IPW terminal estimator is computed on that tree:
\begin{align*}
    \E \bigg[ \int_{\X} (\check\tau_{\mathtt{IPW}}(\bx) - \tau)^2 dF_{\bX}(\bx)\bigg] \leq C \frac{2^K \log^5(n)}{n},
\end{align*}
provided $\rho \leq \frac{n_{\treeT}}{n_{\tau}} \leq \rho^{-1}$ for some $\rho \in (0,1)$, and $C$ is some constant only depending on the distribution of $\tilde{\varepsilon}_i = y_i \frac{d_i - \xi}{\xi(1 - \xi)} - \tau$ and $\rho$. Moreover, 
\begin{align*}
    \limsup_{n \to \infty} \P \bigg( \int_{\X} (\check\tau_{\mathtt{IPW}}(\bx) - \tau)^2 dF_{\bX}(\bx) \geq C^{\prime} \frac{2^K \log^5(n)}{n} \bigg) = 0,
\end{align*}
where $C^{\prime}$ is some constant only depending on the distribution of $\tilde{\varepsilon}_i$ and $\rho$. 
\end{coro}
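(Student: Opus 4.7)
The plan is to reduce the corollary to Theorem~\ref{sa-thm: L2 consistency honest} (the honest-split $L_2$ consistency for the constant regression model) by working with the transformed outcomes $\tilde y_i := y_i(d_i-\xi)/(\xi(1-\xi))$. The key point, already laid out immediately before Subsection~\ref{sa-sec: ipw causal}, is that $\tilde y_i = \tau + \tilde\varepsilon_i$ with $\tilde\varepsilon_i \independent \bx_i$, $\E[\tilde\varepsilon_i] = 0$, $\E[\tilde\varepsilon_i^2] > 0$, and $\tilde\varepsilon_i$ sub-exponential with parameter $\beta = \beta(\xi,\alpha)$. Combined with the continuous independence of the coordinates of $\bx_i$ imposed by Assumption~\ref{sa-assump: dgp-causal}, this places $\{(\tilde y_i,\bx_i)\}$ squarely inside Assumption~\ref{sa-ass:DGP}.

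Next I would verify that, when written as a function of the transformed data, $\hat\tau^{\mathtt{HON}}_{\mathtt{IPW}}(\bx)$ coincides exactly with the honest CART estimator $\muhonest(\bx)$ of Definition~\ref{sa-defn: cart sample splitting}. Indeed, for any terminal node $\nodet$,
\[
\hat\tau_{\mathtt{IPW}}(\bx;\treeT,\mathcal{D}_\tau) = \frac{1}{n(\nodet)}\sum_{i:\bx_i\in\nodet}\tilde y_i,
\]
which is the CART output in Definition~\ref{sa-defn: cart outcome} applied to the transformed data. Likewise, the IPW variance-maximization criterion \eqref{eq: variance maximization} reads $\tfrac{n(\nodet_{\mathtt{L}})n(\nodet_{\mathtt{R}})}{n(\nodet)}(\bar{\tilde y}_{\nodet_{\mathtt{L}}}-\bar{\tilde y}_{\nodet_{\mathtt{R}}})^2$, which by the identity \eqref{eq:split variance} is equivalent to CART SSE minimization on $\{(\tilde y_i,\bx_i)\}$. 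Consequently, the honest tree $\treeT^{\mathtt{IPW}}(\mathcal{D}_\treeT)$ built from the original data produces the same splits as the CART tree built from $\{(\tilde y_i,\bx_i)\}_{i\in\mathcal{D}_\treeT}$, and evaluating it on the independent estimation subsample $\mathcal{D}_\tau$ gives the same node averages.

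With both the model and the estimator reduced to the setting of Section~\ref{sa-sec:main}, I would invoke Theorem~\ref{sa-thm: L2 consistency honest}, using the balance condition $\rho^{-1}\le n_\treeT/n_\tau \le \rho$, to transfer both the expectation bound $\E\bigl[\int_\X(\muhonest(\bx)-\mu)^2 dF_\bX(\bx)\bigr] \lesssim 2^K\log(n)^5/n$ and its in-probability counterpart directly to $\hat\tau^{\mathtt{HON}}_{\mathtt{IPW}}$ with $\mu$ replaced by $\tau$. The constants $C$ and $C'$ inherit their dependence on the distribution of $\tilde\varepsilon_i$ (through its sub-exponential parameter and variance) and on $\rho$, matching the stated form.

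There is essentially no obstacle beyond careful bookkeeping: the corollary is a rewording of Theorem~\ref{sa-thm: L2 consistency honest} after a data-free measurable transformation of the outcome. The only subtlety worth recording explicitly is the equivalence between the IPW variance-maximization split and the CART SSE split on the transformed data, which is immediate from \eqref{eq:split variance}; once that is noted, the sample-splitting structure of honesty guarantees that the tree partition and the node estimates use independent subsamples exactly as in Section~\ref{sa-sec:main}, so Theorem~\ref{sa-thm: L2 consistency honest} applies without modification.
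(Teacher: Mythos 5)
Your proposal is correct and follows exactly the same route as the paper: the paper's Section~\ref{sa-sec: ipw causal} preamble records that the transformed outcomes $\tilde y_i = y_i(d_i-\xi)/(\xi(1-\xi))$ satisfy Assumption~\ref{sa-ass:DGP}, and the proof of Corollary~\ref{sa-coro: L2 consistency honest ipw} simply declares it an immediate corollary of Theorem~\ref{sa-thm: L2 consistency honest}. You have supplied the bookkeeping (identity of the IPW node average with the CART node average, equivalence of the IPW variance-maximization split with CART SSE minimization via \eqref{eq:split variance}) that the paper leaves implicit.
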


\subsection{DIM Estimator}\label{sa-sec: reg causal}

The $\mathtt{DIM}$ estimator is not itself a CART regression tree on a transformed outcome. We connect it to the $\mathtt{IPW}$ transformed outcome tree by comparing the two split criteria uniformly over split indices and coordinates.

\subsubsection{No Sample Splitting}

\subsubsection*{Approximation Results on Decision Stumps}

Denote by $\pi_{\ell}$ permutation of index $[n]$ such that $x_{\pi_{\ell}(1),\ell} \leq x_{\pi_{\ell}(2),\ell} \leq \cdots \leq x_{\pi_{\ell}(n),\ell}$, $1 \leq \ell \leq p$. Consider the split criterion for the regression and ipw trees when splitting at the root node when $\#\{\bx_{\pi_{\ell}(i)} \in t_L\} = k$: For $1 \leq \ell \leq p$, $1 \leq k \leq n$, consider
\begin{align*}
    \I^{\DIM}(k,\ell) = \frac{k(n-k)}{n}\Big(\hat\tau^{\DIM}_{L}(k,\ell)-\hat\tau^{\DIM}_{R}(k,\ell)\Big)^2,\\
    \bar\I^{\IPW}(k,\ell) = \frac{k(n-k)}{n}\Big(\bar\tau^{\IPW}_{L}(k,\ell)-\bar\tau^{\IPW}_{R}(k,\ell)\Big)^2,
\end{align*}
where
\begin{align*}
    \hat\tau^{\DIM}_{L}(k,\ell) & = \frac{\sum_{i = 1}^k d_{\pi_{\ell}(i)} y_{\pi_{\ell}(i)}}{\sum_{i = 1}^k d_{\pi_{\ell}(i)}} - \frac{\sum_{i = 1}^k (1 - d_{\pi_{\ell}(i)}) y_{\pi_{\ell}(i)}}{\sum_{i = 1}^k (1 - d_{\pi_{\ell}(i)})}, \\
    \hat\tau^{\DIM}_{R}(k,\ell) & = \frac{\sum_{i = k+1}^n d_{\pi_{\ell}(i)} y_{\pi_{\ell}(i)}}{\sum_{i = k+1}^n d_{\pi_{\ell}(i)}} - \frac{\sum_{i = k+1}^n (1 - d_{\pi_{\ell}(i)}) y_{\pi_{\ell}(i)}}{\sum_{i = k+1}^n (1 - d_{\pi_{\ell}(i)})}, \\    
    \bar\tau^{\IPW}_{L}(k,\ell) & = \frac{1}{k}\sum_{i = 1}^k \frac{d_{\pi_{\ell}(i)}}{\xi} \varepsilon_{\pi_{\ell}(i)}(1) - \frac{1}{k}\sum_{i = 1}^k \frac{1 - d_{\pi_{\ell}(i)}}{1 - \xi} \varepsilon_{\pi_{\ell}(i)}(0), \\
    \bar\tau^{\IPW}_{R}(k,\ell) & = \frac{1}{n-k}\sum_{i = k+1}^n \frac{d_{\pi_{\ell}(i)}}{\xi} \varepsilon_{\pi_{\ell}(i)}(1)- \frac{1}{n-k}\sum_{i = k+1}^n \frac{1 - d_{\pi_{\ell}(i)}}{1 - \xi} \varepsilon_{\pi_{\ell}(i)}(0).   
\end{align*}
Replacing $\varepsilon_{\pi_{\ell}(i)}$ by $y_{\pi_{\ell}(i)}$ would give $\hat \tau_L^\IPW$ (or $\hat \tau_R^\IPW$) instead of $\bar \tau_L^\IPW$ (or $\bar \tau_R^\IPW$). Using $\varepsilon_{\pi_\ell(i)}$ here lets us approximate the $\I^\DIM(\cdot, \ell)$ processes.

The optimization objective based on Definition~\ref{sa-defn: tree construction} for the regression-based estimator with variance maximization is equivalent to choosing a splitting coordinate $\hat{\jmath}_{\reg}$, and a splitting index $\hat{\imath}_{\reg}$ such that
\begin{align*}
    \bt_L = \{\bu \in \X: \bu_{\hat{\jmath}_\reg} \leq x_{\pi_{\hat{\jmath}_\reg}(\hat{\imath}_\reg), \hat{\jmath}_\reg}\}, \qquad \bt_R = \{\bu \in \X: \bu_{\hat{\jmath}_\reg} > x_{\pi_{\hat{\jmath}_\reg}(\hat{\imath}_\reg), \hat{\jmath}_\reg}\},
\end{align*}
that maximizes
\begin{align*}
    \frac{n(t_L)n(t_R)}{n(\bt)}\Big(\hat\tau_{\reg}(t_L)-\hat\tau_{\reg}(t_R)\Big)^2,
\end{align*}
that is,
\begin{align*}
    (\hat{\imath}_\reg, \hat{\jmath}_\reg) = \argmax_{k,\ell}\I^{\DIM}(k,\ell).
\end{align*}
A technical aspect is to control for fluctuations of objects of the form $\frac{\sum_{i = 1}^k d_{\pi_{\ell}(i)} y_{\pi_{\ell}(i)}}{\sum_{i = 1}^k d_{\pi_{\ell}(i)}}$, for which we will use a truncation argument that requires $\sum_{i = 1}^k d_{\pi_{\ell}(i)} \geq r_n$ with $r_n \to \infty$. Let $\mathcal{V}_{\mathtt{DIM}}$ denote the set of candidate splits for which all treated and control denominators in the left and right child nodes are positive. In the approximation lemmas below, maxima are taken over this set of valid candidates, equivalently over the candidates retained by Definition~\ref{sa-defn: tree construction}. On the balanced ranges used below, the event that all required denominators are positive has probability tending to one. This gives the following lemma:

\begin{lemma}[Approximation Error]\label{lem: approximation -- balanced region}
Suppose Assumption~\ref{sa-assump: dgp-causal} holds. Let $(r_n)_{n \in \mathbb{N}}$ be a sequence of real numbers such that $r_n \rightarrow \infty$. Then
\begin{align*}  
    \max_{\substack{1 \leq \ell \leq p,\ r_n \leq k < n - r_n\\ (k,\ell)\in\mathcal{V}_{\mathtt{DIM}}}} \Big|\I^{\DIM}(k,\ell) - \bar\I^{\IPW}(k,\ell)\Big| = O_{\P} \bigg(\frac{\log \log (n)}{\sqrt{r_n}}\bigg).
\end{align*}
\end{lemma}

We also control for the truncation error: 

\begin{lemma}[Truncation Error]\label{lem: approximation -- imbalanced region}
Suppose Assumption~\ref{sa-assump: dgp-causal} holds. Let $\rho_n$ be a sequence taking values in $(0,1)$ such that $\rho_n \to 0$ and $\rho_n \log \log(n) \to \infty$, and take $s_n = \exp((\log n)^{\rho_n})$. Then
\begin{align*}  
    \max_{\substack{1 \leq \ell \leq p,\ (k,\ell)\in\mathcal{V}_{\mathtt{DIM}}\\ 1 \leq k \leq s_n\ \text{or } n - s_n \leq k \leq n}} \Big|\I^{\DIM}(k,\ell) - \bar\I^{\IPW}(k,\ell)\Big| = O_\P\bigg(\rho_n \log \log (n) + \frac{s_n}{n - s_n} \log \log(n)\bigg).
\end{align*}
\end{lemma}

\subsubsection*{Rates for Decision Stumps}

The previous two lemmas reduce the $\argmax$ of $\I^\DIM$ to the $\argmax$ of $\bar \I^\IPW$. The latter is the split criterion based on CART with \emph{transformed outcome} $\frac{d_i}{\xi} \varepsilon_i(1) - \frac{1 - d_i}{1 - \xi} \varepsilon_i(0)$, and results from Section~\ref{sa-sec:main} can be applied.

\begin{theorem}[Imbalanced Split]\label{sa-thm: imbalance reg}
Suppose Assumption~\ref{sa-assump: dgp-causal} holds. Then for each $a,b \in (0,1)$ with $a < b$, for every $\ell \in [p]$,
    \begin{equation*}
    \liminf_{n\to\infty} \mathbb{P}\big( n^{a} \leq \hat{\imath}_\DIM \leq n^{b}, \hat{\jmath}_\DIM = \ell \big) \geq \frac{b-a}{2pe},
    \qquad
    \liminf_{n\to\infty} \mathbb{P}\big( n-n^{b} \leq \hat{\imath}_\DIM \leq n-n^{a}, \hat{\jmath}_\DIM = \ell \big) \geq \frac{b-a}{2pe}.
    \end{equation*}
\end{theorem}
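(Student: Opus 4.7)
The plan is to reduce the claim to the already-established imbalanced-split phenomenon for a constant-regression CART tree and then transfer the conclusion via the approximation lemmas.

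First, I observe that $\bar\I^{\IPW}(k,\ell)$ is \emph{exactly} the impurity gain of the CART split at the root node computed on the transformed sample $\{(z_i, \bx_i)\}_{i=1}^{n}$, where $z_i = \frac{d_i}{\xi}\varepsilon_i(1) - \frac{1-d_i}{1-\xi}\varepsilon_i(0)$. Under Assumption~\ref{sa-assump: dgp-causal}, the variables $z_i$ are i.i.d., satisfy $\E[z_i \mid \bx_i] = 0$, are independent of $\bx_i$, have positive variance, and possess a sub-exponential moment generating function with parameter depending only on $(\xi, \alpha)$. Consequently Assumption~\ref{sa-ass:DGP} holds for this auxiliary regression problem with constant mean zero, and Theorem~\ref{sa-thm:master} applies to the $\argmax$ of $\bar\I^{\IPW}$. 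Writing $(\tilde\imath, \tilde\jmath) = \argmax_{k,\ell} \bar\I^{\IPW}(k,\ell)$, this yields
\[
    \liminf_{n\to\infty} \P\bigl(n^a \le \tilde\imath \le n^b,\ \tilde\jmath = \ell \bigr) \;\ge\; \frac{b-a}{2pe},
\]
together with the analogous statement for the symmetric upper-tail region $[n - n^b, n - n^a]$.

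Second, I transfer the conclusion to $(\hat\imath_\DIM, \hat\jmath_\DIM) = \argmax_{k,\ell} \I^{\DIM}(k,\ell)$ by combining Lemma~\ref{lem: approximation -- balanced region} with $r_n = s_n$ and Lemma~\ref{lem: approximation -- imbalanced region} with $s_n = \exp((\log n)^{\rho_n})$ for $\rho_n \to 0$ slowly (so that $\rho_n \log\log n \to 0$, $s_n/n \to 0$ and $1/\sqrt{s_n} \to 0$). The two bounds glue together across $k \le s_n$, $s_n \le k \le n - s_n$, and $k \ge n - s_n$ to give
\[
    \sup_{1 \le \ell \le p}\ \sup_{1 \le k \le n-1} \bigl|\I^{\DIM}(k,\ell) - \bar\I^{\IPW}(k,\ell)\bigr| \;=\; o_\P\bigl(\log\log n\bigr).
\]
Since $\max_{k,\ell} \bar\I^{\IPW}(k,\ell)$ is of order $\log\log n$ via the Darling–Erdős-type limit underlying Theorem~\ref{sa-thm:master} (cf.\ Remark~\ref{sa-remark: darling erdos}), the normalized processes $\I^{\DIM}/\log\log n$ and $\bar\I^{\IPW}/\log\log n$ agree up to $o_\P(1)$ uniformly in $(k,\ell)$. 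A continuous-mapping argument at the $\argmax$ of the (non-degenerate) limiting squared O–U process then shows that $(\hat\imath_\DIM, \hat\jmath_\DIM)$ has the same asymptotic behavior as $(\tilde\imath, \tilde\jmath)$ on the log-scale where the interval $[n^a, n^b]$ lives, so the $(b-a)/(2pe)$ lower bound transfers. The upper-tail statement follows by the evident symmetry $k \leftrightarrow n-k$ of the split-index.

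The main obstacle will be the argmax-transfer step: a uniform $o_\P(\log\log n)$ perturbation does not automatically preserve the argmax at the $n^a$--$n^b$ scale unless one knows that the competing maxima of $\bar\I^{\IPW}$ inside and outside $[n^a, n^b]$ are separated, on the correct event, by a margin that is not asymptotically washed out by the approximation error. Concretely, this amounts to extracting from the proof of Theorem~\ref{sa-thm:master} (applied to the transformed outcomes $z_i$) the joint weak limit of the appropriately time-rescaled process $k \mapsto \bar\I^{\IPW}(k,\ell)/\log\log n$, identifying its $\argmax$ as a continuous functional with non-atomic distribution, and then invoking the continuous mapping theorem along with the uniform approximation above. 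Once this reduction to the limit process is in place, the desired lower bound is inherited from the IPW case and the proof is complete.
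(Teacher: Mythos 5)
Your reduction of $\bar\I^{\IPW}$ to a constant-mean CART problem on the transformed outcome $z_i = \frac{d_i}{\xi}\varepsilon_i(1) - \frac{1-d_i}{1-\xi}\varepsilon_i(0)$ is correct, and that step is indeed how the paper obtains the IPW side. However, your proposed argmax-transfer mechanism is not viable, and you have correctly sensed but not resolved the obstacle. The uniform error bound $\sup_{k,\ell}|\I^{\DIM}(k,\ell) - \bar\I^{\IPW}(k,\ell)| = o_\P(\log\log n)$ that you obtain by gluing the two lemmas with $r_n = s_n$ is far too coarse: the Darling--Erd\H{o}s fluctuation of $\sqrt{\bar\I^{\IPW}}$ around its maximum $\sqrt{2\log\log n}$ is only of order $(\log\log n)^{-1/2}$, so an $o_\P(\log\log n)$ perturbation of the squared process can move the argmax arbitrarily. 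Moreover, the ``continuous mapping at the argmax of the limiting squared O--U process'' you invoke does not exist: $\bar\I^{\IPW}(k,\ell)/\log\log n \to 0$ for each fixed $k$, and the Darling--Erd\H{o}s theorem yields only a Gumbel limit for the normalized maximum, not a functional weak limit of the rescaled process with a well-defined, continuous argmax functional. So there is no limit-process/continuous-mapping shortcut here.

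The paper's resolution of the obstacle is different and is the essential missing idea. First, one shows that the argmax of $\I^{\DIM}$ itself lies in $[s_n, n-s_n]$ with probability tending to one: by Lemma~\ref{lem: approximation -- imbalanced region} and the O--U estimates of \citet[Eqs.\ (A.4.18), (A.4.20)]{csorgo1997limit}, $\max_{k \notin [s_n, n-s_n]} \I^{\DIM}(k,\ell) = o_\P(\log\log n)$ while $\max_{k \in [s_n, n-s_n]} \I^{\DIM}(k,\ell) = (2+o_\P(1))\log\log n$. Having restricted to the balanced range, Lemma~\ref{lem: approximation -- balanced region} with $r_n = s_n$ gives the much finer bound $v_n = O_\P(\log\log n / \sqrt{s_n}) = o_\P((\log\log n)^{-1/2})$ there, which \emph{is} negligible on the Darling--Erd\H{o}s fluctuation scale. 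Finally, instead of transferring argmaxes directly, one inserts a threshold $z$ between the competing maxima (``max inside $[n^a,n^b]$ exceeds $z$; max outside and max on other coordinates are below $z$''), shifts $z$ by $\pm v_n$ to convert the $\I^{\DIM}$-events to $\bar\I^{\IPW}$-events, and then evaluates the threshold probabilities by the Gaussian coupling / Darling--Erd\H{o}s / Gaussian-correlation-inequality machinery already developed for Theorem~\ref{sa-thm:master}, choosing $z$ at the critical level $z_n(u) = (2\log\log n + \tfrac12\log\log\log n + u - \tfrac12\log\pi)/\sqrt{2\log\log n}$. Without this two-scale restriction and the threshold-shift argument, the step you flagged remains an unclosed gap, so your proposal as written does not prove the theorem.
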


The issue of imbalanced cells gives rise to the slow uniform convergence rate.

\begin{theorem}[Uniform Rates for Decision Stumps]\label{sa-thm:rates_reg}
    Suppose Assumption \ref{sa-assump: dgp-causal} holds, and the tree has depth $K = 1$. Then for any $ a,b \in (0, 1) $ with $ a < b $, 
    \begin{equation*}
    \liminf_{n\to\infty} \mathbb{P}\Bigg(\sup_{\bx\in\mathcal{X}}|\hat\tau_{\mathtt{DIM}}(\bx) - \tau| \geq \sigma n^{-b/2}\sqrt{(2+o(1))\log\log(n)}\Bigg) \geq \frac{b}{e},
    \end{equation*}
    where $\sigma^2 = \V[\tilde{\varepsilon}_i]$, with $\tilde{\varepsilon}_i = \frac{d_i}{\xi} \varepsilon_i(1) - \frac{1 - d_i}{1 - \xi} \varepsilon_i(0)$. Under the marginal probability integral transform normalization described after Assumption~\ref{sa-assump: dgp-causal}, for any deterministic sequence $\eta_n\downarrow0$,
    \begin{equation*}
    \liminf_{n\to\infty} \inf_{\bx\in \mathcal{X}_n(a,\eta_n)} \mathbb{P}\Big(|\hat\tau_{\mathtt{DIM}}(\bx) - \tau| \geq \sigma n^{-b/2}\sqrt{(2+o(1))\log\log(n)}\Big) \geq \frac{b-a}{2 p e}.
    \end{equation*}
\end{theorem}

\subsubsection*{Deeper Trees}

We generalize the above results on decision stumps to recursive trees whose depth may vary with $n$, subject to the leaf count condition below.

\begin{theorem}[Uniform Rates for Deep Trees]\label{sa-thm: uniform minimax rates regression}
Suppose Assumption~\ref{sa-assump: dgp-causal} holds. Fix $b \in (0, 1)$, and suppose the NSS-DIM tree has depth at least one and at most $K=K_n$, where
\[
    2^K\log^2 n
    =
    o\!\left(n^{b/4}\sqrt{\log\log n}\right).
\]
Then there exist positive constants $c_{\DIM}$ and $q_{\DIM}$, depending only on the distribution of $(y_i(0),y_i(1),d_i)$ and on the fixed covariate dimension $p$, and not on $b$, $K$, or $n$, such that
\begin{align*}      
    \liminf_{n\to\infty} \mathbb{P}\Bigg(\sup_{\bx\in\mathcal{X}}|\hat\tau_{\mathtt{DIM}}(\bx) - \tau| \geq c_{\DIM} n^{-b/2}\sqrt{\log\log(n)}\Bigg) \geq q_{\DIM} b.
\end{align*}
\end{theorem}

For integrated $L_2$ loss, we also have the following upper bound.

\begin{theorem}[$L_2$ Convergence Rate for NSS]\label{sa-thm: L2 consistency NSS reg}
Suppose Assumption \ref{sa-assump: dgp-causal} holds. Then the following bounds hold uniformly over any data-dependent axis-aligned causal tree with depth at most $K$ (possibly depending on $n$) when the displayed NSS-DIM terminal estimator is computed on that tree:
\begin{align*}
    \E \bigg[ \int_{\X} (\hat\tau_{\mathtt{DIM}}(\bx) - \tau)^2 dF_{\bX}(\bx)\bigg] \leq C \frac{2^K \log^4(n) \log(np)}{n},
\end{align*}
where $C$ is a positive constant that only depends on the distribution of $(y_i(0),y_i(1),d_i)$. Moreover,
\begin{align*}
    \limsup_{n \to \infty} \P \bigg( \int_{\X} (\hat\tau_{\mathtt{DIM}}(\bx) - \tau)^2 dF_{\bX}(\bx) \geq C^{\prime} \frac{2^K \log^4(n) \log(np)}{n} \bigg) = 0,
\end{align*}
where $C^{\prime}$ is a positive constant that only depends on the distribution of $(y_i(0),y_i(1),d_i)$.
\end{theorem}

\subsubsection{Sample Splitting}

For honest sample splitting, the same structure yields a lower bound for uniform accuracy and an upper bound for integrated $L_2$ loss. The rates differ from the case without sample splitting because final estimation is performed on an independent estimation sample.

\begin{theorem}[Uniform Rates for HON]\label{sa-thm: honest output reg}
Suppose Assumption~\ref{sa-assump: dgp-causal} holds, and the construction fold HON-DIM tree has at least one split. Then for any $b \in (0, 1) $,
\begin{equation*}
    \liminf_{n\to\infty} \mathbb{P}\Bigg(\sup_{\bx\in\mathcal{X}}|\check\tau_{\mathtt{DIM}}(\bx) - \tau| \geq C_1 n^{-b/2} \Bigg) \geq C_2 b.
\end{equation*}
where $C_1$ and $C_2$ are positive constants only depending on the distribution
of $(y_i(0),y_i(1),d_i)$ and on the lower and upper limiting ratios of
$n_{\treeT}/n_{\tau}$. In particular, the fixed treatment-arm positivity factor
involving $\xi$ is absorbed into $C_2$.
\end{theorem}

\begin{theorem}[$L_2$ Convergence Rate for HON]\label{sa-thm: L2 consistency honest reg}
Suppose Assumption \ref{sa-assump: dgp-causal} holds. Then the following bounds hold uniformly over any data-dependent axis-aligned causal tree with depth at most $K$ (possibly depending on $n$) when the displayed honest DIM terminal estimator is computed on that tree:
\begin{align*}
    \E \bigg[ \int_{\X} (\check\tau_{\mathtt{DIM}}(\bx) - \tau)^2 dF_{\bX}(\bx)\bigg] \leq C \frac{2^K \log^5(n)}{n},
\end{align*}
provided $\rho \leq \frac{n_{\treeT}}{n_{\tau}} \leq \rho^{-1}$ for some $\rho \in (0,1)$, and $C$ is a positive constant that only depends on $\rho$ and the distribution of $(y_i(0), y_i(1), d_i)$. Moreover,
\begin{align*}
    \limsup_{n \to \infty} \P \bigg( \int_{\X} (\check\tau_{\mathtt{DIM}}(\bx) - \tau)^2 dF_{\bX}(\bx) \geq C^{\prime} \frac{2^K \log^5(n)}{n} \bigg) = 0,
\end{align*}
where $C^{\prime}$ is a positive constant that only depends on $\rho$ and the distribution of $(y_i(0), y_i(1), d_i)$.
\end{theorem}

\subsection{SSE Estimator}\label{sa-sec: sse causal}

Throughout, $\SSE$ names the split selection rule: after the SSE tree is selected, terminal node treatment effects are estimated by the $\DIM$ estimator. Thus, while the CATE estimators given the tree of the $\SSE$ strategy coincide with the $\DIM$ strategy, the tree construction methods differ. The analysis again proceeds by approximating the split criterion. For $\SSE$, the criterion can be approximated by the sum of two transformed outcome regression criteria, one for treated units and one for control units. A careful high-dimensional Gaussian approximation with respect to the geometry of simple convex sets then enables us to characterize the limiting distribution of splitting indices.
 
For a fixed valid candidate split of a parent node $\nodet$ into $\nodet_{\mathtt{L}}$ and $\nodet_{\mathtt{R}}$, the objective in \eqref{sa-eq: sse} can be profiled over the node-specific intercepts and treatment coefficients. Up to terms that do not depend on the candidate split, minimizing the residual sum of squares is equivalent to maximizing the treated and control variance gains
\begin{align*}
    &\frac{n_1(\nodet_{\mathtt{L}})n_1(\nodet_{\mathtt{R}})}{n_1(\nodet)}
            \Big( \frac{1}{n_1(\nodet_{\mathtt{L}})} \sum_{i: \bx_i \in \nodet_{\ttL}} d_i y_i
                - \frac{1}{n_1(\nodet_{\mathtt{R}})} \sum_{i: \bx_i \in \nodet_{\ttR}} d_i y_i \Big)^2 \\
    & \qquad +
    \frac{n_0(\nodet_{\mathtt{L}})n_0(\nodet_{\mathtt{R}})}{n_0(\nodet)}
            \Big( \frac{1}{n_0(\nodet_{\mathtt{L}})} \sum_{i: \bx_i \in \nodet_{\ttL}} (1 - d_i) y_i
                - \frac{1}{n_0(\nodet_{\mathtt{R}})} \sum_{i: \bx_i \in \nodet_{\ttR}} (1 - d_i) y_i \Big)^2.
\end{align*}
Thus the SSE rule can be viewed as selecting the split with the largest combined improvement in separate treated and control outcome fits.

\subsubsection{No Sample Splitting}

\subsubsection*{Decision Stump}

For each variable $ j = 1, 2, \dots, p $, the data $ \{ x_{ij}: \bx_i \in \bt \} $ is relabeled so that $ x_{ij} $ is increasing in the index $ i = 1, 2, \dots, n(\bt) $, where $ n(\bt) = \#\{ \bx_i \in \bt \} $. The fit-based objective is to minimize
\begin{align}\label{eq:fit-based}
    \min_{a_L, b_L, a_R, b_R \in \reals} \sum_{\bx_i \in t_L} (y_i - a_{t_L} - b_{t_L} d_i)^2 + \sum_{\bx_i \in t_R} (y_i - a_{t_R} - b_{t_R} d_i)^2
\end{align}
with respect to the index $ i $ and variable $ j $. Again, the maximizers are denoted by $ (\hat{\imath}_\text{SSE}, \hat{\jmath}_\text{SSE}) $, and the optimal split point $ \hat \varsigma_\text{SSE} $ that maximizes \eqref{eq:fit-based} can be expressed as $ x_{\pi_{\hat{\jmath}_\text{SSE}}(\hat{\imath}_\text{SSE}), \hat{\jmath}_\text{SSE}}$.

All ratios in this subsection are evaluated only on valid candidate splits in the sense of Definition~\ref{sa-defn: tree construction}. To break down the criterion \eqref{eq:fit-based}, denote
\begin{align*}
    & \hat \mu_{L,0}(k,\ell) = \frac{\sum_{i = 1}^k (1 - d_{\pi_{\ell}(i)}) y_{\pi_{\ell}(i)}}{\sum_{i = 1}^k (1 - d_{\pi_{\ell}(i)})}, && \hat \mu_{L,1}(k,\ell)  = \frac{\sum_{i = 1}^k d_{\pi_{\ell}(i)} y_{\pi_{\ell}(i)}}{\sum_{i = 1}^k d_{\pi_{\ell}(i)}}, \\
    & \hat \mu_{R,0}(k,\ell) = \frac{\sum_{i = k+1}^n (1 - d_{\pi_{\ell}(i)}) y_{\pi_{\ell}(i)}}{\sum_{i = k+1}^n (1 - d_{\pi_{\ell}(i)})}, && \hat \mu_{R,1}(k,\ell) = \frac{\sum_{i = k+1}^n d_{\pi_{\ell}(i)} y_{\pi_{\ell}(i)}}{\sum_{i = k+1}^n d_{\pi_{\ell}(i)}}.
\end{align*}
To denote the counts compactly, set $n_0 = \sum_{i = 1}^n (1 - d_i)$, $n_{L,0}(k) = \sum_{i = 1}^k (1 - d_{\pi_{\ell}(i)})$, $n_{R,0}(k) = \sum_{i  = k + 1}^n (1 - d_{\pi_{\ell}(i)})$, and $n_1 = \sum_{i = 1}^n d_i$, $n_{L,1}(k) = \sum_{i = 1}^k d_{\pi_{\ell}(i)}$, $n_{R,1}(k) = \sum_{i  = k + 1}^n d_{\pi_{\ell}(i)}$. Then maximizing Equation~\eqref{eq:fit-based} is equivalent to maximizing
\begin{align*}
    \I^\text{SSE}(k, \ell) = \frac{n_{L,0} n_{R,0}}{n_0} (\hat \mu_{L,0}(k,\ell) - \hat \mu_{R,0}(k,\ell))^2 + \frac{n_{L,1} n_{R,1}}{n_1} (\hat \mu_{L,1}(k,\ell) - \hat \mu_{R,1}(k,\ell))^2.
\end{align*}
Since $\mu_0$ and $\mu_1$ are constant in this subsection, the arm-specific means cancel from all split contrasts within each treatment arm. We therefore approximate this empirical process by the residual centered proxy
\begin{align*}
    \I^\text{prox}(k, \ell) = & (1 - \xi)  \frac{k (n - k)}{n} (\bar \mu_{L,0}(k,\ell) - \bar \mu_{R,0}(k,\ell))^2  + \xi \frac{k (n - k)}{n} (\bar \mu_{L,1}(k,\ell) - \bar \mu_{R,1}(k,\ell))^2,
\end{align*}
with 
\begin{align*}
    & \bar \mu_{L,0}(k,\ell) = \frac{1}{k}\sum_{i \leq k} \frac{1 - d_{\pi_\ell(i)}}{1 - \xi} \varepsilon_{\pi_\ell(i)}(0), && \bar \mu_{L,1}(k,\ell)  =  \frac{1}{k}\sum_{i \leq k} \frac{d_{\pi_\ell(i)}}{\xi} \varepsilon_{\pi_\ell(i)}(1), \\
    & \bar \mu_{R,0}(k,\ell) = \frac{1}{n - k}\sum_{i > k} \frac{1 - d_{\pi_\ell(i)}}{1 - \xi} \varepsilon_{\pi_\ell(i)}(0), && \bar \mu_{R,1}(k,\ell) = \frac{1}{n - k}\sum_{i > k} \frac{d_{\pi_\ell(i)}}{\xi} \varepsilon_{\pi_\ell(i)}(1).
\end{align*}
The latter can be approximated by the squared norm of a bivariate time-transformed O-U process, for fixed coordinate $\ell \in [p]$. Let $\mathcal{V}_{\mathtt{SSE}}$ denote the set of valid candidates for the SSE criterion, where both treatment-arm denominators are positive in both child nodes. Maxima in the following approximation lemmas are taken over $\mathcal{V}_{\mathtt{SSE}}$. The following lemmas formalize the approximation.

\begin{lemma}[Approximation Error]\label{lem: approximation fit based -- balanced region}
Suppose Assumption~\ref{sa-assump: dgp-causal} holds. Let $(r_n)_{n \in \mathbb{N}}$ be a sequence of real numbers such that $r_n \rightarrow \infty$. Then
\begin{align*}  
    \max_{\substack{1 \leq \ell \leq p,\ r_n \leq k < n - r_n\\ (k,\ell)\in\mathcal{V}_{\mathtt{SSE}}}} \Big|\I^{\text{SSE}}(k,\ell) - \I^{\text{prox}}(k,\ell)\Big| = O_{\P} \bigg(\frac{(\log \log n)^{3/2}}{\sqrt{r_n}}\bigg).
\end{align*}
\end{lemma}

\begin{lemma}[Truncation Error]\label{lem: approximation fit based -- imbalanced region}
Suppose Assumption~\ref{sa-assump: dgp-causal} holds. Let $\rho_n$ be a sequence taking values in $(0,1)$ such that $\rho_n \to 0$ and $\rho_n \log \log(n) \to \infty$, and take $s_n = \exp((\log n)^{\rho_n})$. Then
\begin{align*}  
    \max_{\substack{1 \leq \ell \leq p,\ (k,\ell)\in\mathcal{V}_{\mathtt{SSE}}\\ 1 \leq k \leq s_n\ \text{or } n - s_n \leq k \leq n}} \Big|\I^{\text{SSE}}(k,\ell) - \I^{\text{prox}}(k,\ell)\Big| = O_\P\bigg(\rho_n \log \log (n) + \frac{s_n}{n - s_n} \log \log(n)\bigg).
\end{align*}
\end{lemma}

\begin{theorem}[Imbalanced Split for SSE]\label{thm: inconsistency fit}
Suppose Assumption~\ref{sa-assump: dgp-causal} holds. Then for each $a,b \in (0,1)$ with $a < b$, for every $\ell \in [p]$,
    \begin{equation*}
    \liminf_{n\to\infty} \mathbb{P}\big( n^{a} \leq \hat{\imath}_\text{SSE} \leq n^{b}, \hat{\jmath}_\text{SSE} = \ell \big) \geq \frac{b-a}{2pe},
    \qquad
    \liminf_{n\to\infty} \mathbb{P}\big( n-n^{b} \leq \hat{\imath}_\text{SSE} \leq n-n^{a}, \hat{\jmath}_\text{SSE} = \ell \big) \geq \frac{b-a}{2pe}.
    \end{equation*}
\end{theorem}

\begin{remark}
When the treated and control residual variances differ, the bivariate O-U approximation in the proof becomes a weighted quadratic form rather than a squared Euclidean norm. Lemma~\ref{sa-lem:weighted-ou-maximum} supplies the needed extreme value approximation; it reduces to Lemma~\ref{sa-remark: darling erdos} after rescaling when the two variances are equal.
\end{remark}

\begin{lemma}[Directional transfer for SSE stumps]\label{sa-lem:sse-directional-transfer}
Suppose Assumption~\ref{sa-assump: dgp-causal} holds. For every $0<a<b<1$ and every $\ell\in[p]$, there exist positive constants $c$ and $q$, depending only on the distribution of $(y_i(0),y_i(1),d_i)$, such that, with $\nodet_{\mathtt L}$ denoting the selected left root child,
\[
    \liminf_{n\to\infty}
    \mathbb P\!\left(
        n^a\leq \hat{\imath}_{\text{SSE}}\leq n^b,
        \hat{\jmath}_{\text{SSE}}=\ell,
        n(\nodet_{\mathtt L})\Delta(\nodet_{\mathtt L})^2
        \geq
        c^2\log\log n
    \right)
    \geq q(b-a).
\]
The same conclusion holds with $\nodet_{\mathtt R}$ in place of $\nodet_{\mathtt L}$ for the right boundary event $n-n^b\leq \hat{\imath}_{\text{SSE}}\leq n-n^a$. Consequently, the displayed event implies the corresponding depth one CATE error lower bound at rate $n^{-b/2}\sqrt{\log\log n}$.
\end{lemma}

Although the SSE splitting criterion differs from the DIM variance maximization criterion, once a tree is fixed the terminal node CATE estimator is the same. Hence the following results follow from Theorem~\ref{thm: inconsistency fit}, Lemma~\ref{sa-lem:sse-directional-transfer}, and the corresponding arguments for the DIM estimator.

\begin{theorem}[Uniform Rates for Decision Stumps]\label{sa-thm:rates_reg_fit}
    Suppose Assumption \ref{sa-assump: dgp-causal} holds. For any $ a,b \in (0, 1) $ with $ a < b $, we have
    \begin{equation*}
    \liminf_{n\to\infty} \mathbb{P}\Bigg(\sup_{\bx\in\mathcal{X}}|\hat\tau_{\mathtt{SSE}}(\bx) - \tau| \geq C_1 n^{-b/2}\sqrt{\log\log(n)}\Bigg) \geq C_2 b,
    \end{equation*}
    where $C_1$ and $C_2$ are positive constants depending only on the distribution of $(y_i(0),y_i(1),d_i)$. Moreover, under the marginal probability integral transform normalization described after Assumption~\ref{sa-assump: dgp-causal}, for any deterministic sequence $\eta_n\downarrow0$,
    \begin{equation*}
    \liminf_{n\to\infty} \inf_{\bx\in \mathcal{X}_n(a,\eta_n)} \mathbb{P}\Big(|\hat\tau_{\mathtt{SSE}}(\bx) - \tau| \geq C_1 n^{-b/2}\sqrt{\log\log(n)}\Big) \geq C_2 (b-a).
    \end{equation*}
\end{theorem}

\subsubsection*{Deeper Trees}

\begin{theorem}[Uniform Rates for Deep Trees]\label{sa-thm: uniform minimax rates regression fit}
Suppose Assumption~\ref{sa-assump: dgp-causal} holds. Fix $b \in (0, 1)$, and suppose the NSS-SSE tree has depth at most $K=K_n$, where
\[
    2^K\log^2 n
    =
    o\!\left(n^{b/4}\sqrt{\log\log n}\right).
\]
Then there exist positive constants $c_{\SSE}$ and $q_{\SSE}$, depending only on the distribution of $(y_i(0),y_i(1),d_i)$ and on the fixed covariate dimension $p$, and not on $b$, $K$, or $n$, such that
\begin{align*}      
    \liminf_{n\to\infty} \mathbb{P}\Bigg(\sup_{\bx\in\mathcal{X}}|\hat\tau_{\mathtt{SSE}}(\bx) - \tau| \geq c_{\SSE} n^{-b/2}\sqrt{\log\log(n)}\Bigg) \geq q_{\SSE} b.
\end{align*}
\end{theorem}

\begin{theorem}[$L_2$ Convergence Rate for NSS]\label{sa-thm: L2 consistency NSS fit}
Suppose Assumption \ref{sa-assump: dgp-causal} holds. Then for any NSS-SSE tree with depth at most $K$ (possibly depending on $n$),
\begin{align*}
    \E \bigg[ \int_{\X} (\hat\tau_{\mathtt{SSE}}(\bx) - \tau)^2 dF_{\bX}(\bx)\bigg] \leq C \frac{2^K \log^4(n) \log(np)}{n},
\end{align*}
where $C$ is a positive constant that only depends on the distribution of $(y_i(0),y_i(1),d_i)$. Moreover, 
\begin{align*}
    \limsup_{n \to \infty} \P \bigg( \int_{\X} (\hat\tau_{\mathtt{SSE}}(\bx) - \tau)^2 dF_{\bX}(\bx) \geq C^{\prime} \frac{2^K \log^4(n) \log(np)}{n}\bigg) = 0,
\end{align*}
where $C^{\prime}$ is a positive constant that only depends on the distribution of $(y_i(0),y_i(1),d_i)$. 
\end{theorem}

\subsubsection{Sample Splitting}

\begin{theorem}[Uniform Rates for HON]\label{sa-thm: honest output reg fit}
Suppose Assumption~\ref{sa-assump: dgp-causal} holds. Then for any $b \in (0, 1) $ and any HON-SSE tree with at least one split,
\begin{equation*}
    \liminf_{n\to\infty} \mathbb{P}\Bigg(\sup_{\bx\in\mathcal{X}}|\check\tau_{\mathtt{SSE}}(\bx) - \tau| \geq C_1 n^{-b/2} \Bigg) \geq C_2 b.
\end{equation*}
where $C_1$ and $C_2$ are positive constants only depending on the distribution
of $(y_i(0),y_i(1),d_i)$ and on the lower and upper limiting ratios of
$n_{\treeT}/n_{\tau}$. In particular, the fixed treatment-arm positivity factor
involving $\xi$ is absorbed into $C_2$.
\end{theorem}

\begin{theorem}[$L_2$ Convergence Rate for HON]\label{sa-thm: L2 consistency honest fit}
Suppose Assumption \ref{sa-assump: dgp-causal} holds. Then for any HON-SSE tree with depth at most $K$ (possibly depending on $n$),
\begin{align*}
    \E \bigg[ \int_{\X} (\check\tau_{\mathtt{SSE}}(\bx) - \tau)^2 dF_{\bX}(\bx)\bigg] \leq C \frac{2^K \log^5(n)}{n},
\end{align*}
provided $\rho \leq \frac{n_{\treeT}}{n_{\tau}} \leq \rho^{-1}$ for some $\rho \in (0,1)$, and $C$ is a positive constant that only depends on $\rho$ and the distribution of $(y_i(0), y_i(1), d_i)$. Moreover, 
\begin{align*}
    \limsup_{n \to \infty} \P \bigg( \int_{\X} (\check\tau_{\mathtt{SSE}}(\bx) - \tau)^2 dF_{\bX}(\bx) \geq C^{\prime} \frac{2^K \log^5(n)}{n}\bigg) = 0,
\end{align*}
where $C^{\prime}$ is a positive constant that only depends on $\rho$ and the distribution of $(y_i(0), y_i(1), d_i)$. 
\end{theorem}

\subsection{Squared T Statistic Estimators}\label{sa-sec: tstat estimators}

The fourth method proposed by \cite{Athey-Imbens_2016_PNAS} is the squared T statistic tree. Consider the version formed from a scalar pseudo outcome $\tau_i$, with left and right averages $\hat \tau_L(k,\ell)$ and $\hat \tau_R(k,\ell)$ computed after sorting along coordinate $\ell$. At the root node the index and coordinate to split $(\hat{\imath}, \hat{\jmath})$ are chosen so that the \emph{squared T statistics metric} is maximized over nondegenerate candidates, that is,
\begin{align*}
    (\hat{\imath}, \hat{\jmath}) = \argmax_{1\leq k<n,\ \ell \in [p]} n \frac{(\hat \tau_L(k,\ell) - \hat \tau_R(k, \ell))^2}{S^2(k,\ell)/k + S^2(k,\ell)/(n - k)},
\end{align*}
where $S^2(k,\ell)>0$ is the conditional sample variance given the split,
\begin{align*}
    S^2(k,\ell) & = \frac{1}{n - 2} \sum_{i \leq k} (\tau_i - \hat \tau_L(k,\ell))^2 + \frac{1}{n - 2} \sum_{i > k} (\tau_i - \hat \tau_R(k, \ell))^2 \\
    & = \frac{1}{n - 2} \bigg[\sum_{i = 1}^n (\tau_i - n^{-1}\sum_{j = 1}^n \tau_j)^2 - \frac{k (n - k)}{n} (\hat \tau_L(k,\ell) - \hat \tau_R(k, \ell))^2\bigg].
\end{align*}
Let
\[
    A(k,\ell)=\frac{k(n-k)}{n}(\hat\tau_L(k,\ell)-\hat\tau_R(k,\ell))^2,
    \qquad
    T_0=\sum_{i=1}^n(\tau_i-n^{-1}\sum_{j=1}^n\tau_j)^2.
\]
Then, for candidates with $T_0>A(k,\ell)$,
\begin{align*}
    & n \frac{(\hat \tau_L(k,\ell) - \hat \tau_R(k, \ell))^2}{S^2(k,\ell)/k + S^2(k,\ell)/(n - k)} \\
    & =
    \frac{n(n-2)A(k,\ell)}{T_0-A(k,\ell)}.
\end{align*}
The map $A\mapsto n(n-2)A/(T_0-A)$ is strictly increasing on $[0,T_0)$. Hence, on the nondegenerate candidates retained by the rule, the squared T statistic split is the same as the variance maximization split based on the same scalar pseudo outcomes. To avoid ambiguity, the squared T statistic rule is understood to optimize over candidates with positive pooled residual variance, equivalently $T_0>A(k,\ell)$. If no such candidate exists, a fixed deterministic valid split, or no split if none is valid, is chosen. This degenerate fallback is irrelevant for the equivalence above and for the stochastic lower bound arguments, which operate on nondegenerate candidates with probability tending to one.

This equivalence is specific to the pooled scalar pseudo outcome statistic displayed above. A split rule that studentizes treatment and control means separately, uses heteroskedastic or sandwich standard errors, or otherwise changes the denominator specific to the candidate need not induce the same ordering as variance maximization.

\subsection{Unbiasedness}\label{sa-sec: unbiased}

\begin{lemma}[Unbiasedness]\label{sa-lem: unbiased}
Suppose Assumption~\ref{sa-assump: dgp-causal} holds. If $\nodet_{\mathtt{HON}}(\bx)$ denotes the terminal node used for honest final estimation at $\bx$,
\begin{align*}
    & \E[\check\tau_{\IPW}(\bx;K)] = \tau - \tau \P(n(\nodet_{\mathtt{HON}}(\bx)) = 0), \\
    & \E[\check\tau_l(\bx;K)] = \tau - \tau \P(n_0(\nodet_{\mathtt{HON}}(\bx)) = 0 \text{ or } n_1(\nodet_{\mathtt{HON}}(\bx)) = 0), \qquad l \in \{\DIM, \SSE\}.
\end{align*}
For $q=\mathtt{NSS}$, under deterministic tie breaking, the $\mathtt{DIM}$ and $\mathtt{SSE}$ conclusions hold if the potential outcome error vector is jointly centrally symmetric,
\[
    (\varepsilon_i(0),\varepsilon_i(1)) \stackrel{d}{=} -(\varepsilon_i(0),\varepsilon_i(1)).
\]
Because observations are i.i.d. and the potential outcome errors are independent of covariates and treatment assignments, this pairwise central symmetry implies central symmetry of the full vector $(\varepsilon_i(0),\varepsilon_i(1))_{i=1}^n$ conditional on $(\mathbf X,\mathbf d)$.
The $\mathtt{IPW}$ conclusion holds under deterministic tie breaking if, with
\[
    \tilde\varepsilon_i
    =
    y_i\frac{d_i-\xi}{\xi(1-\xi)}-\tau,
\]
the transformed residual vector $(\tilde\varepsilon_1,\ldots,\tilde\varepsilon_n)$ is centrally symmetric conditional on the treatment assignments. The displayed empty cell terms are the finite-sample bias formulas. If, in addition, the relevant terminal node denominators are positive almost surely,
\begin{align*}
    \E[\hat\tau_l(\bx;K)] = \tau,
    \qquad l \in \{\mathtt{DIM},\mathtt{SSE}\},
\end{align*}
under the potential outcome error symmetry condition, and the same exact unbiasedness statement holds for $l=\mathtt{IPW}$ under the transformed residual symmetry condition.
\end{lemma}

\begin{remark}[A simple sufficient condition for IPW symmetry]\label{sa-rem:ipw-unbiased-symmetry}
The transformed residual symmetry condition in Lemma~\ref{sa-lem: unbiased} is implied by a transparent centering condition. If the potential outcome error vector is jointly centrally symmetric and
\[
    (1-\xi)c_1+\xi c_0=0,
\]
then, conditional on the treatment assignments,
\[
    \tilde\varepsilon_i
    =
    \begin{cases}
    \varepsilon_i(1)/\xi, & d_i=1,\\
    -\varepsilon_i(0)/(1-\xi), & d_i=0,
    \end{cases}
\]
and the transformed residual vector is centrally symmetric. In the balanced experiment case $\xi=1/2$, this centering condition reduces to $c_1+c_0=0$.
\end{remark}

\section{Correction to Eicker (1979)}\label{sec: Appendix Correction}

As part of the technical arguments used in this paper, we correct a statement concerning the limiting distribution of the maximum absolute value of an Ornstein--Uhlenbeck (O-U) process in \citet[Theorem 5]{eicker1979asymptotic}, and of the maximum norm in the vector valued version used below. Specifically, the term $\log(c)$ should appear in place of $2\log(c)$, where $c> 0$. This matters because replacing $\log(c)$ by $2\log(c)$ changes the factor $e^{-(z-\log(c))}=ce^{-z}$ to $e^{-(z-2\log(c))}=c^2e^{-z}$, so the limiting exponent depends quadratically rather than linearly on the window length. The problem is that this is incompatible with the Gaussian correlation inequality argument we used for adjacent windows. That is, for two adjacent windows of lengths $c_1\log n$ and $c_2\log n$, the sublevel event for the supremum of the absolute value, or of the norm in the vector case, over the combined window is the intersection of the corresponding symmetric convex sublevel events on the two pieces. The Gaussian correlation inequality implies that the probability for the combined window must be at least the product of the two individual window probabilities. With the correct term $\log(c)$, this is perfectly consistent, because the limiting probability for a window of length $(c_1+c_2)\log n$ is
\[
\exp(-(c_1+c_2)e^{-z})
=
\exp(-c_1e^{-z})\exp(-c_2e^{-z}),
\]
so the dependence on window length adds in exactly the way required by the inequality. If one instead used $2\log(c)$, then the same reasoning would force
\[
\exp(-(c_1+c_2)^2e^{-z})
\ge
\exp(-(c_1^2+c_2^2)e^{-z}),
\]
because the left hand side would be the limiting probability for the combined window. The right hand side would come from the product lower bound for the two adjacent pieces. But this inequality is false, since $(c_1+c_2)^2>c_1^2+c_2^2$ whenever $c_1,c_2>0$. Thus the extra factor $2$ leads to a direct contradiction with the probability inequality implied by the Gaussian correlation argument.
For completeness, we state below a corrected version of the result in a slightly more general form, allowing for the maximum of the norm of a possibly multivariate O-U process. Let $\Gamma(\cdot)$ denote the Euler gamma function.

\begin{lemma}[Vector Valued Markov Type Darling-Erd\H{o}s]\label{sa-remark: darling erdos}
    Let $\{(V_1(t),\ldots,V_d(t)): 0 \leq t < \infty\}$ be $d$ independent identically distributed Ornstein--Uhlenbeck processes with $\E[V_i(t)] = 0$ and $\E[V_i(t) V_i(s)] = \exp(-|t - s|/2)$, $1 \leq i \leq d$. Define
    \begin{align*}
        N(t) = \bigg(\sum_{1 \leq i \leq d} V_i^2(t) \bigg)^{1/2}.
    \end{align*}
    For any $c > 0$, $z \in \reals$,
    \begin{align*}
        \lim_{n \rightarrow \infty}\P \Big(a(\log(n)) \sup_{0 \leq t \leq c \log (n)} N(t) - b_d(\log (n)) \leq z \Big) = \exp \Big(-e^{-(z - \log(c))} \Big),
    \end{align*}
    where $a(t) = (2 \log(t))^{1/2}$ and $b_d(t) = 2 \log(t) + \frac{d}{2} \log \log(t) - \log \Gamma(d/2)$.
\end{lemma}

\subsection*{Proof of Lemma~\ref{sa-remark: darling erdos}}\label{sa-sec: proof of multivariate darling erdos}

Taking $T = c \log(n)$ in \citet[Lemma 2.1]{horvath1993maximum}, we have
\begin{align*}
        \lim_{n \rightarrow \infty}\P \bigg(\sup_{0 \leq t \leq c \log (n)} N(t)  \leq \frac{z + b_d(c\log (n))}{a(c \log(n))} \bigg) = \exp \Big(-e^{-z} \Big).
\end{align*}
Expand the term $\frac{z + b_d(c\log (n))}{a(c \log(n))}$. For notational simplicity, denote
\begin{align*}  
      L = \log\log n, \qquad A = \log c,   \qquad
  L \to \infty \ \ (n \to \infty).
\end{align*}
First, we present some elementary expansions,
\[
\begin{aligned}
\sqrt{2(A+L)}
   &= \sqrt{2L}\,\sqrt{1+\frac{A}{L}}
    = \sqrt{2L}\Bigl(1+\frac{A}{2L}-\frac{A^{2}}{8L^{2}}
      +\bigo{L^{-3}}\Bigr), \\
\frac{1}{\sqrt{2(A+L)}}
   &= \frac{1}{\sqrt{2L}}
      \Bigl(1-\frac{A}{2L}+\frac{3A^{2}}{8L^{2}}
      +\bigo{L^{-3}}\Bigr), \\
    \log(L+A)
    &= \log L + \frac{A}{L} - \frac{A^{2}}{2L^{2}} + \bigo{L^{-3}}.
\end{aligned}
\]
Expanding the numerator $b_d(c \log(n))$ gives
\[
\begin{aligned}
N_{1} &= z + 2A + 2L + \frac{d}{2}\log \bigl(\log(c\log n)\bigr)
        - \log \Gamma(d/2),\\
N_{2} &= z + 2A + 2L + \frac{d}{2}\log L - \log \Gamma(d/2),\\
N_{3} &= z + A  + 2L + \frac{d}{2}\log L - \log \Gamma(d/2).
\end{aligned}
\]
Then
\begin{align*}
    & \frac{z + b_d(c \log n)}{a(c \log(n))} - \frac{z + \log(c) + b_d(\log n)}{a(\log(n))} \\
   & = \frac{N_1}{\sqrt{2(A+L)}} - \frac{N_3}{\sqrt{2L}} \\
    & = N_1 \bigg(\frac{1}{\sqrt{2(A + L)}} - \frac{1}{\sqrt{2 L}}\bigg) + \frac{1}{\sqrt{2 L}}(N_1 - N_3) \\
    & = N_1 \frac{1}{\sqrt{2 L}} \bigg(- \frac{A}{2 L} + \frac{3 A^2}{8 L^2} + O(L^{-3}) \bigg) + \frac{1}{\sqrt{2 L}} \bigg(\frac{d}{2} \Big(\frac{A}{L} - \frac{A^2}{2 L^2} + O(L^{-3})\Big) + A\bigg).
\end{align*}
Since $N_1 = 2L + O(\log L)$, the preceding display gives the bound
\begin{align*}
    \frac{z + b_d(c \log n)}{a(c \log(n))} - \frac{z + \log(c) + b_d(\log n)}{a(\log(n))}
    = O\!\left(\frac{1+\log L}{L^{3/2}}\right)
    = o(L^{-1/2}).
\end{align*}
Since $a(\log(n)) = \Theta(L^{1/2})$, we have
\begin{align*}
    & \P \bigg(\sup_{0 \leq t \leq c \log (n)} N(t)  \leq \frac{z + \log(c) + b_d(\log (n))}{a(\log(n))} \bigg) \\
    & = \P \bigg(\sup_{0 \leq t \leq c \log (n)} N(t)  \leq \frac{z + o(1) + b_d(c\log (n))}{a(c \log(n))} \bigg) \\
    & = \P \bigg(a(c \log(n)) \sup_{0 \leq t \leq c \log (n)} N(t) - b_d(c\log (n)) \leq z + o(1)\bigg)  \rightarrow \exp(-e^{-z}) \text{ as } n \rightarrow \infty,
\end{align*}
where the last line follows from convergence in distribution of $a(c \log(n)) \sup_{0 \leq t \leq c \log (n)} N(t) - b_d(c\log (n))$ to a continuous distribution and Slutsky's Theorem.
Replacing $z$ in the preceding display by $z-\log(c)$ gives the stated normalization with threshold $(z+b_d(\log n))/a(\log n)$ and limiting probability $\exp(-e^{-(z-\log(c))})$.

\section{Proofs}

\subsection{Technical Lemmas}\label{sa-sec:technical-lemmas}

We first record standard external probability tools used below. We then give two auxiliary lemmas tailored to the SSE argument.

\begin{lemma}[High Dimensional CLT over hyperrectangles; \cite{chernozhukov2017central}, Theorem~2.1]\label{sa-lem:hd-clt-hyperrectangles}
Let $X_1,\ldots,X_n$ be independent centered random vectors in $\reals^m$, $m\geq3$, and let $Y_1,\ldots,Y_n$ be independent centered Gaussian random vectors such that $Y_i\sim\mathsf{N}(0,\E[X_iX_i^\top])$. Define
\[
    S_n^X=n^{-1/2}\sum_{i=1}^n X_i,
    \qquad
    S_n^Y=n^{-1/2}\sum_{i=1}^n Y_i,
\]
and, for a class $\mathcal A$ of Borel subsets of $\reals^m$,
\[
    \rho_n(\mathcal A)=
    \sup_{A\in\mathcal A}
    \left|\P(S_n^X\in A)-\P(S_n^Y\in A)\right|.
\]
Let $\mathcal A_m^{\mathrm{re}}$ be the class of all hyperrectangles in $\reals^m$. Set
\[
    L_n=\max_{1\leq j\leq m}\frac{1}{n}\sum_{i=1}^n\E[|X_{ij}|^3],
\]
and, for $\phi\geq1$,
\[
    M_{n,X}(\phi)=
    \frac{1}{n}\sum_{i=1}^n
    \E\!\left[
        \max_{1\leq j\leq m}|X_{ij}|^3
        \Indicator\!\left(
            \max_{1\leq j\leq m}|X_{ij}|
            >
            \frac{\sqrt n}{4\phi\log m}
        \right)
    \right],
\]
with $M_{n,Y}(\phi)$ defined analogously, and let $M_n(\phi)=M_{n,X}(\phi)+M_{n,Y}(\phi)$. If
\[
    \frac{1}{n}\sum_{i=1}^n\E[X_{ij}^2]\geq \underline{\sigma}^2>0,
    \qquad j=1,\ldots,m,
\]
then there exist constants $K_1,K_2>0$, depending only on $\underline{\sigma}$, such that for every $\bar L_n\geq L_n$,
\[
    \rho_n(\mathcal A_m^{\mathrm{re}})
    \leq
    K_1\left[
        \left(\frac{\bar L_n^2\log^7 m}{n}\right)^{1/6}
        +
        \frac{M_n(\phi_n)}{\bar L_n}
    \right],
    \qquad
    \phi_n=K_2\left(\frac{\bar L_n^2\log^4 m}{n}\right)^{-1/6}.
\]
\end{lemma}

\begin{lemma}[High Dimensional CLT over simple convex sets; \cite{chernozhukov2017central}, Proposition~3.1]\label{sa-lem:hd-clt-simple-convex}
Let $X_i$, $Y_i$, $S_n^X$, $S_n^Y$, and $\rho_n(\mathcal A)$ be as in Lemma~\ref{sa-lem:hd-clt-hyperrectangles}. Fix constants $a,d,\underline{\sigma}>0$. For a convex polytope $A^M\subset\reals^m$ generated by at most $M$ halfspaces, write
\[
    A^M=\bigcap_{v\in\mathcal V(A^M)}
    \{w\in\reals^m:v^\top w\leq \mathcal S_{A^M}(v)\},
\]
where $\mathcal V(A^M)$ is the set of outward unit normals to the facets and $\mathcal S_{A^M}$ is the support function. For $\epsilon>0$, define
\[
    A^{M,\epsilon}=\bigcap_{v\in\mathcal V(A^M)}
    \{w\in\reals^m:v^\top w\leq \mathcal S_{A^M}(v)+\epsilon\}.
\]
Suppose that every $A\in\mathcal A$ admits an approximation $A^M\subset A\subset A^{M,a/n}$ with $M\leq(mn)^d$, and that for every such $A$ and every $v\in\mathcal V(A^M)$,
\[
    \frac{1}{n}\sum_{i=1}^n\E[(v^\top X_i)^2]\geq \underline{\sigma}^2,
    \qquad
    \frac{1}{n}\sum_{i=1}^n\E[|v^\top X_i|^{2+k}]\leq B_n^k,\quad k=1,2,
\]
and
\[
    \E[\exp(|v^\top X_i|/B_n)]\leq2,
    \qquad i=1,\ldots,n,
\]
for some $B_n\geq1$. Then
\[
    \rho_n(\mathcal A)
    \leq
    C\left(\frac{B_n^2\log^7(mn)}{n}\right)^{1/6},
\]
where $C$ depends only on $a,d$, and $\underline{\sigma}$.
\end{lemma}

\begin{lemma}[Gaussian to Gaussian comparison; \cite{chernozhukov2022improved}, Proposition~2.1]\label{sa-lem:gaussian-comparison}
Let $Z_1$ and $Z_2$ be centered Gaussian random vectors in $\reals^m$ with covariance matrices $\Sigma^1$ and $\Sigma^2$. Suppose that $\Sigma^2_{jj}\geq \underline{\sigma}^2>0$ for all $j=1,\ldots,m$, and let
\[
    \Delta=\max_{1\leq j,k\leq m}|\Sigma^1_{jk}-\Sigma^2_{jk}|.
\]
Then
\[
    \sup_{y\in\reals^m}
    \left|\P(Z_1\leq y)-\P(Z_2\leq y)\right|
    \leq
    C\left(\Delta\log^2 m\right)^{1/2},
\]
where the inequalities inside the probabilities are componentwise and $C$ depends only on $\underline{\sigma}$.
\end{lemma}

\begin{lemma}[Gaussian maximum anti-concentration; \cite{chernozhukov2017central}, Lemma~A.1]\label{sa-lem:gaussian-max-anti-concentration}
Let $Z=(Z_1,\ldots,Z_m)^\top$ be a centered Gaussian vector with
$\min_{1\leq j\leq m}\V[Z_j]\geq\underline{\sigma}^2>0$. Then, for every
$\epsilon>0$,
\[
    \sup_{t\in\mathbb R}
    \P\!\left(
        \left|\max_{1\leq j\leq m}Z_j-t\right|\leq\epsilon
    \right)
    \leq
    C\epsilon\sqrt{\log m},
\]
where $C$ depends only on $\underline{\sigma}$.
\end{lemma}

\begin{lemma}[Cone restricted maximization on finite Gaussian grids]\label{sa-lem:sse-cone-restricted-max}
Let $\mathcal S$ be a finite index set and let
\[
    Z_s=(G^{(0)}_s,G^{(1)}_s)^\top,\qquad s\in\mathcal S,
\]
where $(G^{(0)}_s:s\in\mathcal S)$ and $(G^{(1)}_s:s\in\mathcal S)$ are independent copies of the same centered Gaussian vector. Let $\hat s$ be any index valued rule, with deterministic tie breaking, that is measurable with respect to the collection of norms $(\|Z_s\|:s\in\mathcal S)$. Let $A$ be any event measurable with respect to the same collection of norms. Then for every nonzero vector $v\in\mathbb R^2$ and every $\gamma\in(0,1)$,
\[
    \P\!\left(
        A,\ |v^\top Z_{\hat s}|\geq \gamma\|v\|\|Z_{\hat s}\|
    \right)
    \geq
    \pi_\gamma \P(A),
    \qquad
    \pi_\gamma=\frac{2\operatorname{arccos}(\gamma)}{\pi}>0.
\]
\end{lemma}

\begin{proof}
Let $\mathcal Z=(Z_s:s\in\mathcal S)$. The joint law of $\mathcal Z$ is invariant under a common orthogonal rotation of all two dimensional vectors, because the two coordinate processes are independent copies. Let $R$ be an independent random rotation, uniform on the unit circle. Since $(RZ_s:s\in\mathcal S)\stackrel{d}{=}\mathcal Z$ and both $A$ and $\hat s$ depend only on the norms, which are unchanged by $R$,
\[
    \P\!\left(
        A,\ |v^\top Z_{\hat s}|\geq \gamma\|v\|\|Z_{\hat s}\|
    \right)
    =
    \E\!\left[
        \Indicator(A)\,
        \P\!\left(
            |v^\top R Z_{\hat s}|\geq \gamma\|v\|\|Z_{\hat s}\|
            \mid \mathcal Z
        \right)
    \right].
\]
On $\{\|Z_{\hat s}\|>0\}$, the direction of $RZ_{\hat s}$ is uniform on the circle, and the conditional probability in the last display is $\pi_\gamma$. On $\{\|Z_{\hat s}\|=0\}$, the cone inequality is automatically satisfied. Hence the last display is at least $\pi_\gamma\P(A)$.
\end{proof}

\begin{lemma}[Weighted bivariate O-U maxima]\label{sa-lem:weighted-ou-maximum}
Let $U_0$ and $U_1$ be independent standard O-U processes with covariance
$\E[U_d(s)U_d(t)]=\exp(-|s-t|/2)$, and let
\[
    Q_\Lambda(t)=\lambda_0U_0(t)^2+\lambda_1U_1(t)^2,
    \qquad
    \Lambda=(\lambda_0,\lambda_1),
\]
where $\lambda_0,\lambda_1>0$. For each fixed $c>0$, there exist thresholds
$w_n(u;\Lambda)$ and a constant $\kappa_\Lambda\in(0,\infty)$ such that,
uniformly for $u$ in compact subsets of $\reals$,
\[
    \P\left(
        \sup_{0\leq t\leq c\log n} Q_\Lambda(t)
        \leq
        w_n(u;\Lambda)
    \right)
    =
    \exp\{-c\kappa_\Lambda e^{-u}\}+o(1),
\]
where, with $\lambda_\star=\max\{\lambda_0,\lambda_1\}$,
\[
    w_n(u;\Lambda)
    =
    2\lambda_\star\log\log n
    +
    O(\log\log\log n+|u|).
\]
If $\lambda_0=\lambda_1$, this reduces to Lemma~\ref{sa-remark: darling erdos}
after rescaling. If $\lambda_1>\lambda_0$, then for every $\rho>0$,
\[
    \P\left(
        \sup_{\substack{0\leq t\leq c\log n\\ |U_0(t)|>\rho |U_1(t)|}}
        Q_\Lambda(t)
        >
        w_n(u;\Lambda)
    \right)
    =
    o(1),
\]
provided $\rho$ is fixed. The analogous statement with the roles of the two
coordinates reversed holds when $\lambda_0>\lambda_1$.
\end{lemma}

\begin{proof}
The equal weight case is exactly Lemma~\ref{sa-remark: darling erdos} applied
to $\lambda_0^{-1/2}Q_\Lambda^{1/2}$. Suppose next that
$\lambda_1>\lambda_0$. The fixed window tail expansion for stationary Gaussian
quadratic forms in \citet[Theorem~1]{zhdanov2022high}, applied with
$d=2$, $\alpha=1$, $C_0=I_2/2$, and
$D=\operatorname{diag}(\lambda_0,\lambda_1)$, gives a positive intensity for
exceedances of $Q_\Lambda$ over any fixed interval. The standard O-U
exponential mixing/blocking argument then extends the fixed window expansion
to intervals of length $c\log n$ and yields the displayed Gumbel limit with
some $\kappa_\Lambda\in(0,\infty)$. This is the same blocking step underlying
the long-window Darling--Erdos statement above; only the fixed-window tail
constant changes.

It remains to justify the displayed cone assertion. On the cone
$|z_0|>\rho |z_1|$,
\[
    \frac{\lambda_0z_0^2+\lambda_1z_1^2}{z_0^2+z_1^2}
    \leq
    \lambda_{\mathrm{bad}}(\rho)
    =
    \frac{\lambda_1+\lambda_0\rho^2}{1+\rho^2}
    <
    \lambda_1 .
\]
Thus, on this cone, $Q_\Lambda(t)>w_n(u;\Lambda)$ implies
$U_0(t)^2+U_1(t)^2>w_n(u;\Lambda)/\lambda_{\mathrm{bad}}(\rho)$.
Since $w_n(u;\Lambda)/\lambda_{\mathrm{bad}}(\rho)$ has leading term
$2\{\lambda_1/\lambda_{\mathrm{bad}}(\rho)\}\log\log n$ with coefficient
strictly larger than $2$, Lemma~\ref{sa-remark: darling erdos} for $d=2$
implies that the probability of such an exceedance over $[0,c\log n]$ tends
to zero. The case $\lambda_0>\lambda_1$ is symmetric.
\end{proof}

\subsection{Proof of Theorem~\ref{sa-thm:master}}

First, we introduce some notation. Recall that for $\ell \in [p]$, $\pi_\ell$ denotes the permutation such that $(x_{\pi_\ell(i)}: 1 \leq i \leq n)$ is nondecreasing. Define the sample means at the left and right leaves at index $k \in [n]$ based on coordinate $\ell \in [p]$ by
\begin{align*}
    \hat\mu_{L}(k,\ell) = \frac{1}{k} \sum_{i = 1}^k y_{\pi_{\ell}(i)}, \qquad \hat \mu_{R}(k, \ell) = \frac{1}{n - k} \sum_{i = k+1}^n y_{\pi_{\ell}(i)}, \qquad k \in [n], \quad \ell \in [p].
\end{align*}
Minimizing the \emph{sum of squares} criterion in Equation~\eqref{eq:sse} is equivalent to maximizing the split criterion
\begin{align*}
    (\hat{\imath}, \hat{\jmath}) = \argmax_{(i,j) \in [n] \times [p]} \I(i,j).
\end{align*}
where
\begin{align*}
    \I(k,\ell) & = \frac{k(n-k)}{n}\Big(\hat\mu_{L}(k,\ell)-\hat\mu_{R}(k,\ell)\Big)^2, \qquad k \in [n], \quad \ell \in [p].
\end{align*}
The endpoint $k=n$ is never a valid split; throughout this proof, maxima over $k$ are understood to range over valid split indices $1\leq k<n$, or over the corresponding truncated subset. Since multiplying all errors by a positive constant does not change the maximizing split, we normalize $\V[\varepsilon_i]=1$ in this proof.
Moreover, under the constant conditional mean assumption, Assumption~\ref{sa-ass:DGP} (2), we have that $\hat\mu_{L}(k,\ell) - \hat \mu_{R}(k, \ell) = \frac{1}{k} \sum_{i = 1}^k \varepsilon_{\pi_{\ell}(i)} - \frac{1}{n - k} \sum_{i = k+1}^n \varepsilon_{\pi_{\ell}(i)}$. Hence we may w.l.o.g. replace $y_i$ by $\varepsilon_i$ in the definition of $\hat \mu_L$ and $\hat \mu_R$, that is,
\begin{align*}
    \hat\mu_{L}(k,\ell) = \frac{1}{k} \sum_{i = 1}^k \varepsilon_{\pi_{\ell}(i)}, \qquad \hat \mu_{R}(k, \ell) = \frac{1}{n - k} \sum_{i = k+1}^n \varepsilon_{\pi_{\ell}(i)}, \qquad k \in [n], \quad \ell \in [p].
\end{align*}
The rest of the proof is organized as follows. In Section~\ref{sec:thm-master-uni}, we prove the results under $p = 1$, showing a strong approximation of the split criterion $(\I(k,1): k \in [n])$ by the square of a time-transformed Ornstein--Uhlenbeck (O-U) process, and studying the argmax of the split criterion through the argmax of the O-U process. In Section~\ref{sec:thm-master-multi}, we generalize to allow for $p \geq 1$. We show that the split criteria over different coordinates, that is, $(\I(k,\ell): k \in [n])$ for different $\ell$'s, are asymptotically independent. This reduces our problem to one-dimensional calculations, and the same technique of approximation by an O-U process from Section~\ref{sec:thm-master-uni} can be used.

\subsubsection*{Univariate Case} \label{sec:thm-master-uni}

This is the case when $p = 1$. For notational simplicity, define partial sums by
\begin{align*}
    S_k = \sum_{i = 1}^k \varepsilon_{\pi_1(i)}, \qquad k \in [n].
\end{align*}
By \citet[Equation A.4.37]{csorgo1997limit}, there exists a sequence of Brownian bridges $ \{ B_n(t) : 0 \leq t \leq 1 \} $ on a suitable probability space such that
\begin{equation} \label{eq:error_full}
\bigg|\max_{1 \leq k < n}  \sqrt{\I(k,1)} - \sup_{1/n \leq t \leq 1-1/n}\frac{|B_n(t)|}{\sqrt{t(1-t)}} \bigg| = 
\bigg|\max_{1 \leq k < n} \frac{\Big|\frac{1}{\sqrt{n}}S_k - \frac{k}{n}\frac{1}{\sqrt{n}}S_n \Big|}{\sqrt{(k/n)(1-k/n)}} - \sup_{1/n \leq t \leq 1-1/n}\frac{|B_n(t)|}{\sqrt{t(1-t)}} \bigg| = \epsilon_n,
\end{equation}
where $ \epsilon_n = o_{\mathbb{P}}\big((\log\log(n))^{-1/2}\big) $.
Although \citet[Equation A.4.37]{csorgo1997limit} bounds the approximation error of the maximum over the full range $ 1 \leq k < n $ as in \eqref{eq:error_full}, the same coupling controls the supremum over any deterministic subset of the split indices. Applying that uniform coupling to the subset $1 \leq k < n^{a}$ or $n^{b} < k < n$ gives
\begin{equation*}
\bigg|\max_{\substack{1 \leq k < n^{a}\\ \text{or } n^{b} < k < n}} \frac{\Big|\frac{1}{\sqrt{n}}S_k - \frac{k}{n}\frac{1}{\sqrt{n}}S_n \Big|}{\sqrt{(k/n)(1-k/n)}}- \sup_{\substack{1/n \leq t < n^{a-1}\\ \text{or } n^{b-1} < t \leq 1-1/n}}\frac{|B_n(t)|}{\sqrt{t(1-t)}} \bigg|  = \epsilon_n.
\end{equation*}

The standardized Brownian bridge $ \big\{ B_n(t)/\sqrt{t(1-t)} : 0 < t < 1 \big\} $ is distributionally equivalent to a time-transformed Ornstein--Uhlenbeck (O-U) process $ \big\{ U(\log(t/(1-t))) : 0 < t < 1\big\} $, where $ \big\{U(t): t \in \mathbb{R}\big\}$ is an O-U process with mean $ \mathbb{E}[U(t)] =0 $ and covariance $\mathbb{E}[U(s)U(t)] = e^{-|s-t|/2}$ \citep[Section 1.9]{csorgo1981strong}. Define
\[
    A_n=\log\!\left(\frac{n^{a-1}(n-1)}{1-n^{a-1}}\right),\qquad
    B_n=\log\!\left(\frac{n^{b-1}(n-1)}{1-n^{b-1}}\right),\qquad
    C_n=2\log(n-1).
\]
Then $A_n=a\log n+o(\log n)$, $C_n-B_n=(2-b)\log n+o(\log n)$, and $C_n=2\log n+o(\log n)$. By stationarity of $|U(t)|$,
\begin{align}\label{eq:prob_orn}
    \nonumber & \mathbb{P}\bigg( \sup_{ 1/n \leq t \leq 1-1/n} \frac{|B_n(t)|}{\sqrt{t(1-t)}} > \sup_{\substack{1/n \leq t < n^{a-1}\\ \text{or } n^{b-1} < t \leq 1-1/n}} \frac{|B_n(t)|}{\sqrt{t(1-t)}} + 2\epsilon_n\bigg) \\
    & =  \mathbb{P}\bigg( \sup_{0 \leq t \leq C_n} |U(t)| > \sup_{\substack{0 \leq t < A_n\\ \text{or } B_n < t \leq C_n}} |U(t)| + 2\epsilon_n \bigg).
\end{align}
Continuing from \eqref{eq:prob_orn}, for any sequence $u_n$, we have
\begin{equation} 
\begin{aligned} \label{eq:prob_lower}
& \mathbb{P}\bigg( \sup_{0 \leq t \leq C_n} |U(t)| > \sup_{\substack{0 \leq t < A_n\\ \text{or } B_n < t \leq C_n}} |U(t)| + 2\epsilon_n \bigg) \\
& \quad \geq
\mathbb{P}\bigg(\sup_{\substack{0 \leq t < A_n\\ \text{or } B_n < t \leq C_n}} |U(t)| < u_n - 2\epsilon_n \bigg)
- \mathbb{P}\bigg( \sup_{0 \leq t \leq C_n} |U(t)| < u_n \bigg).
\end{aligned}
\end{equation}

Since $ U(t) $ is a continuous, mean-zero Gaussian process, it induces a centered Gaussian measure on the space of all continuous functions on $\big[0,\,C_n\big] $ equipped with the supremum norm (a separable Banach space). Thus, by the Gaussian correlation inequality \citep[Remark 3 (i)]{latala2017royen}, we have that
\begin{align} \label{eq:prob_lower_prod}
    \nonumber & \mathbb{P}\bigg(\sup_{\substack{0 \leq t < A_n\\ \text{or } B_n < t \leq C_n}} |U(t)| < u_n - 2\epsilon_n \bigg) \\
    \nonumber &\quad \geq
    \mathbb{P}\bigg(\sup_{0 \leq t < A_n} |U(t)| < u_n - 2\epsilon_n \bigg)\cdot\mathbb{P}\bigg(\sup_{B_n < t \leq C_n} |U(t)| < u_n - 2\epsilon_n \bigg) \\
    & \quad =
    \mathbb{P}\bigg(\sup_{0 \leq t < A_n} |U(t)| < u_n - 2\epsilon_n \bigg)\cdot\mathbb{P}\bigg(\sup_{0 < t \leq C_n-B_n} |U(t)| < u_n - 2\epsilon_n \bigg),
\end{align}
where the last equality follows from stationarity.

\begin{remark}\label{remark: Eicker (1979) error}
    The next step of our proof relies on a precise characterization of weak convergence for the suprema of a standardized empirical process, as studied in \citep{eicker1979asymptotic}. However, 
    \citet[Theorem 5]{eicker1979asymptotic} is incorrectly stated: the $2\log(c)$ term appearing in the limiting probability should be $\log(c)$. This correction has important implications in our proof.
\end{remark}

By the Darling--Erd\H{o}s Limit Theorem for the O-U process \citep[Theorem 1.9.1]{csorgo1981strong} and \citep[Theorem 2.2 and the correct version of Theorem 5]{eicker1979asymptotic}, for all $ c > 0 $ and $ z \in \mathbb{R} $, we have
\begin{align}\label{eq:darling-erdos}
    \nonumber
    & \lim_{n\to\infty} \mathbb{P}\bigg(
        \sup_{0 \leq t \leq (c+o(1))\log(n)} |U(t)|
        <
        \frac{
            2\log\log(n) + (1/2)\log\log\log(n)
            + z - (1/2)\log(\pi)
        }{\sqrt{2\log\log(n)}}
    \bigg) \\
    & \qquad = \exp\Big(-e^{-(z-\log(c))}\Big).
\end{align}
For a detailed proof of a generalized result on a multidimensional O-U process, see Section~\ref{sa-sec: proof of multivariate darling erdos}.

Let $ z^* $ maximize $ z \mapsto \exp\big(-e^{-(z-\log(2-(b-a)))}\big) - \exp\big(-e^{-(z-\log(2))}\big) $, %Simple calculus yields $$ z^* = \log\bigg(\frac{v(4-v)}{\log(4/(2-v)^2)}\bigg), \qquad v = b-a. $$ 
and set
\[
    u_n =
    \frac{
        2\log\log(n) + (1/2)\log\log\log(n)
        + z^* - (1/2)\log(\pi)
    }{\sqrt{2\log\log(n)}}.
\]
Because $\epsilon_n=o_{\P}((\log\log n)^{-1/2})$, there is a deterministic sequence $\delta_n\downarrow0$ such that $\P(|\epsilon_n|\leq \delta_n/\sqrt{\log\log n})\to1$. Hence replacing $u_n$ by $u_n\pm 2\epsilon_n$ only changes the Darling--Erd\H{o}s centering by an $o(1)$ perturbation in the $z$ scale.
We combine \eqref{eq:prob_orn}, \eqref{eq:prob_lower}, and \eqref{eq:prob_lower_prod}, and employ \eqref{eq:darling-erdos} three times with $ c = 2 $, $ c = 2-b $, and $ c = a $, using the preceding perturbation observation. We have that 
{\small
\begin{align}
    \nonumber & \liminf_{n\rightarrow\infty}\mathbb{P}\bigg( \sup_{ 1/n \leq t \leq 1-1/n} \frac{|B_n(t)|}{\sqrt{t(1-t)}} > \sup_{\substack{1/n \leq t < n^{a-1}\\ \text{or } n^{b-1} < t \leq 1-1/n}} \frac{|B_n(t)|}{\sqrt{t(1-t)}} + 2\epsilon_n\bigg) \\
    \nonumber & \qquad \geq \exp\Big(-e^{-(z^*-\log(a))}\Big) \cdot \exp\Big(-e^{-(z^*-\log(2-b))}\Big) - \exp\Big(-e^{-(z^*-\log(2))}\Big) 
    \\ & \qquad  =
    \nonumber \exp\Big(-e^{-(z^*-\log(2-(b-a)))}\Big) - \exp\Big(-e^{-(z^*-\log(2))}\Big) \\ 
    \nonumber & \qquad = \frac{b-a}{2}\bigg(1-\frac{b-a}{2}\bigg)^{\frac{2}{b-a}-1}
    \\ & \qquad \geq \frac{b-a}{2e}.
\end{align}
}

\subsubsection*{Multivariate Case}\label{sec:thm-master-multi}

For the general case $p \geq 1$, we show that the split criteria over different coordinates, that is, $(\I(k,\ell): k \in [n])$ for different $\ell$'s, are asymptotically independent after truncating away endpoint candidates with negligible probability. This reduces the lower bound calculation to the one-dimensional O-U argument in Section~\ref{sec:thm-master-uni}.

To show the split criteria over different coordinates are asymptotically independent, we divide the argument into two steps. In the first step, we show the partial sum process for $n$ indices and $p$ coordinates can be approximated by another partial sum process with Gaussian increments and the same covariance structure. In the second step, we show the covariance between the split criteria over any two different coordinates and any indices is vanishing. Together with Gaussianity, this implies asymptotic independence over different coordinates on the truncated candidate range.

\begin{center}
    \textbf{Step 1: Non Gaussian to Gaussian Coupling.}
\end{center}
For $1 \leq \ell \leq p$, denote by $H_n^\ell(\frac{k}{n})$ the scaled partial sum for the $\ell$-th coordinate evaluated at \textit{time} $\frac{k}{n}$, that is,
\begin{align*}
    H_n^\ell\bigg(\frac{k}{n}\bigg) & = \sqrt{\frac{n}{k(n-k)}} \bigg\{\sum_{i = 1}^k \varepsilon_{\pi^\ell(i)} - \frac{k}{n} \sum_{i = 1}^n \varepsilon_{\pi^\ell(i)} \bigg\} \\
    & = \sqrt{\frac{n}{k(n-k)}} \sum_{i=1}^n \Big(\Indicator(\#\pi^{\ell}(i) \leq k) - \frac{k}{n}\Big) \varepsilon_i,
\end{align*}
where $\# \pi^{\ell}:[n] \rightarrow [n]$ is the inverse mapping of $\pi^{\ell}$. 

We use a truncation argument for the proof. Fix $\varepsilon \in (0,1)$. Take $r_n = \exp((\log n)^{\varepsilon})$. And consider
\begin{align*}
    \bC_i = \sqrt{n}\Big(\Big(\sqrt{\frac{n}{k(n-k)}} (\Indicator(\#\pi^{\ell}(i) \leq k) - \frac{k}{n}): r_n \leq k \leq n - r_n\Big)^\top: 1 \leq \ell \leq p\Big)^\top \varepsilon_i,
\end{align*}
where $\# \pi^{\ell}$ denotes the inverse mapping of $\pi^{\ell}$. The factor $\sqrt{n}$ standardizes the vector. Let $\ttp$ denote the number of displayed coordinates, writing $\ttp=p(n-2r_n)$ below with the harmless rounding convention for split index cutoffs. Conditional on $\mathscr{B}$, the $\sigma$-algebra generated by the $p$ permutations $\pi^1, \cdots, \pi^p$, the vectors $\bC_i$ are independent, and for all $1 \leq j \leq \ttp$, $1 \leq \ell \leq p$, we have
\begin{align*}
    n^{-1}\sum_{i = 1}^n \E[C_{ij}^2|\mathscr{B}] 
    & =  \frac{n}{k (n - k)} \bigg[k \Big(\frac{n - k}{n}\Big)^2 + (n - k) \Big(\frac{k}{n}\Big)^2 \bigg] = 1,
\end{align*}
where we assume row $j$ in $\bC_i$ corresponds to $\sqrt{n} \sqrt{\frac{n}{k(n-k)}} (\Indicator(\#\pi^{\ell}(i) \leq k) - \frac{k}{n})$. 
We apply Lemma~\ref{sa-lem:hd-clt-hyperrectangles} conditionally on $\mathscr{B}$ with the lemma notation
\[
    m=\ttp,\qquad X_i=\bC_i,\qquad Y_i=\bD_i,\qquad \mathcal A=\mathcal A_{\ttp}^{\mathrm{re}}.
\]
Since $r_n=o(n)$ and $p\geq1$, $\ttp\geq3$ for all large $n$; the finitely many smaller values of $n$ are irrelevant for the asymptotic bound. We bound the quantities in the lemma. Suppose $K_1$ and $K_2$ are the universal constants given in Lemma~\ref{sa-lem:hd-clt-hyperrectangles},
\begin{align}\label{eq:third moment coupling}
    \nonumber L_n & = \max_{1 \leq j \leq \ttp} \sum_{i = 1}^n \E[|C_{ij}|^3|\mathscr{B}]/n \\
    \nonumber & = \max_{1 \leq \ell \leq p} \max_{r_n \leq k \leq n - r_n}
    \bigg[k \Big(\frac{n-k}{k}\Big)^{3/2}
    +(n-k)\Big(\frac{k}{n-k}\Big)^{3/2}\bigg]\E[|\varepsilon_i|^3]/n \\
    \nonumber & \lesssim \max_{1 \leq \ell \leq p} \max_{r_n \leq k \leq n - r_n}
    \frac{1}{n}\bigg(\frac{(n-k)^{3/2}}{k^{1/2}}+\frac{k^{3/2}}{(n-k)^{1/2}}\bigg)  \\
    & \lesssim \sqrt{n/r_n}.
\end{align}
Take $\bar{L}_n = L_n$, then
\begin{align*}
    \phi_n = K_2 \bigg(\frac{\bar{L}_n^2 \log^4(\ttp)}{n}\bigg)^{-1/6}
    = K_2 \bigg(\frac{r_n}{\log^4(\ttp)}\bigg)^{1/6}.
\end{align*} 
The definition of $\bC_i$ implies $C_{ij}$ is $\sqrt{n/r_n}$-exponential. Hence
\begin{align*}
    M_{n,X}(\phi_n) & = n^{-1} \sum_{i = 1}^n \E \bigg[\max_{1 \leq j \leq \ttp} |C_{ij}|^3 \Indicator\Big(\max_{1 \leq j \leq \ttp} |C_{ij}| > \sqrt{n}/(4 \phi_n \log(\ttp))\Big) \bigg| \mathscr{B}\bigg] \\
    & \leq n^{-1} \sum_{i = 1}^n \E \bigg[\max_{1 \leq j \leq \ttp} |C_{ij}|^6 \bigg| \mathscr{B} \bigg]^{1/2} \P \bigg[\max_{1 \leq j \leq \ttp} |C_{ij}| > \sqrt{n}/(4 \phi_n \log(\ttp))\bigg| \mathscr{B} \bigg]^{1/2} \\
    & \leq n^{-1} \sum_{i = 1}^n \bigg[\sum_{1 \leq j \leq \ttp} \E[C_{ij}^6| \mathscr{B}]  \bigg]^{1/2} \bigg[\sum_{1 \leq j \leq \ttp} \P \Big(|C_{ij}| > \sqrt{n}/(4 \phi_n \log(\ttp))\Big| \mathscr{B}\Big)\bigg]^{1/2} \\
    & \lesssim n^{-1} \sum_{i = 1}^n (\ttp (n/r_n)^3)^{1/2} \bigg[\ttp \exp \Big( -\frac{\sqrt{n}/(4 \phi_n \log(\ttp))}{\sqrt{n/r_n}}\Big) \bigg]^{1/2} \\
    & \lesssim \ttp (n/r_n^3)^{1/2} \exp \Big(- \frac{1}{4}\Big(\frac{r_n}{\log \ttp}\Big)^{1/3} \Big) \\
    & \lesssim n^{-2},
\end{align*}
since $r_n = \exp((\log n)^{\varepsilon})$ and $\varepsilon, p$ are fixed. Conditional on $\mathscr{B}$, let $\bD_i, 1 \leq i \leq n$ be independent mean-zero Gaussian random vectors such that
\begin{align*}
    \bD_i \sim N(\mathbf{0}, \E[\bC_i \bC_i^{\top}|\mathscr{B}]), \qquad \text{conditional on } \mathscr{B}.
\end{align*}
Then for each $1 \leq j \leq \ttp$, $1 \leq i \leq n$, we have $D_{ij}$ is $\sqrt{n/r_n}$-subGaussian. Hence the same argument implies
\begin{align*}
    M_{n,Y}(\phi_n) \lesssim n^{-2}.
\end{align*}
Lemma~\ref{sa-lem:hd-clt-hyperrectangles} then implies
\begin{align}\label{eq: clt retangles}
    \nonumber \sup_{A \in \mathcal{A}^{re}} \bigg|\P \Big(n^{-1/2}\sum_{i= 1}^n \bC_i \in A \Big| \mathscr{B} \Big) - \P \Big(n^{-1/2}\sum_{i=1}^n \bD_i \in A \Big| \mathscr{B} \Big)\bigg|
    & \leq K_1 \bigg[\Big(\frac{\bar{L}_n^2 \log^7(\ttp)}{n}\Big)^{1/6} 
    + \frac{M_{n,X}(\phi_n) + M_{n,Y}(\phi_n)}{\bar{L}_n}\bigg] \\
    \nonumber & \lesssim \bigg(\frac{\log^7(\ttp)}{r_n}\bigg)^{1/6} + \sqrt{\frac{r_n}{n}} \frac{1}{n^2} \\
    & \lesssim \bigg(\frac{\log^7(n)}{r_n}\bigg)^{1/6},
\end{align}
where $\mathcal{A}^\text{re}$ is the class of all rectangles $A$ of the form
\begin{align*}
    A = \{\bu \in \reals^{\ttp}: a_j \leq u_j \leq b_j, \forall j = 1, 2, \cdots, \ttp\},
\end{align*}
for some $- \infty \leq a_j \leq b_j \leq \infty$, $j = 1,2, \cdots, \ttp$. In particular, under the variance normalization above, suppose $u_i, 1 \leq i \leq n$ are i.i.d. $N(0,1)$ random variables. Then $\bD_i$ can be taken such that 
\begin{align*}
    \bD_i = \sqrt{n}\Big(\Big(\sqrt{\frac{n}{k(n-k)}} (\Indicator(\#\pi^{\ell}(i) \leq k) - \frac{k}{n}): r_n \leq k \leq n - r_n\Big)^\top: 1 \leq \ell \leq p\Big)^\top u_i.
\end{align*}
The above result shows if we define
\begin{align*}
    G_n^\ell\bigg(\frac{k}{n}\bigg) = \sqrt{\frac{n}{k(n-k)}} \bigg\{\sum_{i = 1}^k u_{\pi^\ell(i)} - \frac{k}{n} \sum_{i = 1}^n u_{\pi^\ell(i)} \bigg\}, 
\end{align*}
then Equation~\eqref{eq: clt retangles} and unconditioning on $\mathscr{B}$, we get
\begin{align*}
    \sup_{t_1, \cdots t_p \in \reals}\Big|\P\Big(\max_{r_n \leq k \leq n - r_n} |H_n^\ell(k/n)| \leq t_\ell, 1 \leq \ell \leq p\Big) - \P\Big(\max_{r_n \leq k \leq n - r_n} 
    |G_n^\ell(k/n)| \leq t_\ell, 1 \leq \ell \leq p\Big)\Big| 
    \lesssim  \bigg(\frac{\log^7(n)}{r_n}\bigg)^{1/6}.
\end{align*}

\begin{center}
    \textbf{Step 2: Gaussian to Gaussian Coupling.}
\end{center}
For $1 \leq \ell \leq p$, denote by $G_n^\ell(\frac{k}{n})$ the partial sum for the $\ell$-th coordinate evaluated at \textit{time} $\frac{k}{n}$, that is,
\begin{align*}
    G_n^\ell\bigg(\frac{k}{n}\bigg) = \sqrt{\frac{n}{k(n-k)}} \bigg\{\sum_{i = 1}^k u_{\pi^\ell(i)} - \frac{k}{n} \sum_{i = 1}^n u_{\pi^\ell(i)} \bigg\}.
\end{align*}
Let $\bG_n = ((G_n^1(k/n):1\leq k<n)^\top, \cdots, (G_n^p(k/n):1\leq k<n)^\top)^\top$. Then $\bG_n$ is a $(n-1)p$-dimensional Gaussian random vector; denote its covariance matrix by $\bSigma_n$. It remains to show that $\bSigma_n$ is close to one with covariance between different coordinates zero.

Consider two different coordinates, $\ell_1, \ell_2 \in [p]$. Assume w.l.o.g. that $\ell_1 = 1$ and $\ell_2 = 2$. Let $k,j \in [n]$. Denote by $\bsigma$ the sigma-algebra generated by $\pi_1, \cdots, \pi_p$. Then
\begin{align*}
    & \operatorname{Cov} \bigg[G_n^1\bigg(\frac{k}{n}\bigg), G_n^2\bigg(\frac{j}{n}\bigg)\bigg| \bsigma \bigg] \\
    = & \sqrt{\frac{n}{k(n-k)} \frac{n}{j(n-j)}} \bigg\{\sum_{i = 1}^k \sum_{i^\prime = 1}^j \E[u_{\pi_1(i)} u_{\pi_2(i^{\prime})}|\bsigma] - \frac{j}{n} \sum_{i = 1}^k \sum_{i^{\prime} = 1}^n \E[u_{\pi_1(i)} u_{\pi_2(i^{\prime})}|\bsigma]  \\
    & \qquad \qquad \qquad \qquad - \frac{k}{n}\sum_{i = 1}^n\sum_{i^{\prime} = 1}^j \E[u_{\pi_1(i)} u_{\pi_2(i^{\prime})}|\bsigma] + \frac{kj}{n^2} \sum_{i =1}^n \sum_{i^{\prime} = 1}^n \E[u_{\pi_1(i)} u_{\pi_2(i^{\prime})}|\bsigma]\bigg\} \\
    = & \sqrt{\frac{n}{k(n-k)} \frac{n}{j(n-j)}} \frac{jk}{n}\bigg\{\frac{n}{jk}\sum_{i = 1}^k \sum_{i^\prime = 1}^j \E[u_{\pi_1(i)} u_{\pi_2(i^{\prime})}|\bsigma] - 1\bigg\}.
\end{align*}
To calculate $\sum_{i = 1}^k \sum_{i^\prime = 1}^j \E[u_{\pi_1(i)} u_{\pi_2(i^{\prime})}|\bsigma]$, first condition on $\pi_1$, and let $\mathcal{I} = \{\pi_1(i): 1 \leq i \leq k\}$. Then $\sum_{i = 1}^k \sum_{i^\prime = 1}^j \E[u_{\pi_1(i)} u_{\pi_2(i^{\prime})}|\bsigma] = |\{i^{\prime} \in [j]: \pi_2(i^{\prime}) \in \mathcal{I}\}|$. Consider $$f(\pi) = \frac{n}{jk}|\{i \in [j]: \pi(i) \in \mathcal{I}\}|,$$ where $\pi$ is a random permutation of $[n]$. Changing the order of the first $j$ values of $\pi$ does not change the value of $f(\pi)$, and $|f(\pi) - f(\pi^{s,t})| \leq \frac{n}{jk}$ for all $\pi$, $s \in \{1,\cdots,j\}$, $t \in \{j+1,\cdots,n\}$, where the permutation $\pi^{s,t}$ is obtained from $\pi$ by transposition of its $s$th and $t$th coordinates. Below we reduce to $j,k \leq \lceil n/2 \rceil$. Then by Lemma 2 from \cite{el2009transductive}, for any $t \geq 0$,
\begin{align*}
    & \P\bigg(\bigg|\frac{n}{jk}\sum_{i = 1}^k \sum_{i^\prime = 1}^j \E[u_{\pi_1(i)} u_{\pi_2(i^{\prime})}|\bsigma] - 1 \bigg| \geq t \bigg|\pi_1\bigg) \\
    = & \P(|f(\pi_2) - \E[f(\pi_2)]| \geq t|\pi_1) \\
    \leq & 2 \exp \bigg(- \frac{2 t^2}{j (\frac{n}{jk})^2} \frac{n - 1/2}{n - j}(1 - \frac{1}{2 \max(j,n-j)})\bigg).
\end{align*}
Since $\frac{n - 1/2}{n - j}(1 - \frac{1}{2 \max(j,n-j)}) \geq 1 - \frac{1}{n}$, a union bound over all coordinate pairs and all $r_n \leq j,k \leq n-r_n$ gives, with probability tending to one, a positive constant $C$ such that
\begin{align*}
    \max_{\ell_1\neq \ell_2}
    \max_{r_n \leq j,k \leq n-r_n}
    \bigg|\frac{n}{jk}\sum_{i = 1}^k \sum_{i^\prime = 1}^j \E[u_{\pi_{\ell_1}(i)} u_{\pi_{\ell_2}(i^{\prime})}|\bsigma] - 1\bigg|
    \leq C\sqrt{\log n}\frac{n}{\sqrt{j}k}.
\end{align*}
which implies
\begin{align}\label{eq: cov}
    \max_{\ell_1\neq \ell_2}
    \max_{r_n \leq j,k \leq n-r_n}
    |\operatorname{Cov} [G_n^{\ell_1}(\frac{k}{n}), G_n^{\ell_2}(\frac{j}{n})\mid\bsigma]|
    \leq C\sqrt{\frac{\log n}{r_n}}
\end{align}
The reduction to $j,k \leq \lceil n/2 \rceil$ is because 
\begin{align*}
    G_n^\ell\bigg(\frac{k}{n}\bigg) & = \sqrt{\frac{n}{k(n-k)}} \bigg\{\sum_{i = 1}^k u_{\pi^\ell(i)} - \frac{k}{n} \sum_{i = 1}^n u_{\pi^\ell(i)} \bigg\} \\
    & = - \sqrt{\frac{n}{k(n-k)}} \bigg\{\sum_{i = k+1}^n u_{\pi^\ell(i)} - \frac{n-k}{n} \sum_{i = 1}^n u_{\pi^\ell(i)} \bigg\}.
\end{align*}
Consider a $(n-1)p$-dimensional mean-zero Gaussian random vector $$\bZ_n = ((Z_n^1(k/n):1\leq k<n)^\top, \cdots, (Z_n^p(k/n):1\leq k<n)^\top)^\top,$$ where for each $1 \leq \ell \leq p$, $(Z_n^\ell(k/n):1\leq k<n)^\top$ has the same joint distribution as the partial sum random vector $(G_n^\ell(k/n):1\leq k<n)^\top$, and for any $\ell \neq \ell^{\prime}$ and any valid split indices $j,k$,
\begin{align*}  
    \operatorname{Cov}[Z_n^\ell(j/n),Z_n^{\ell^{\prime}}(k/n)] = 0.
\end{align*}
Denote by $\bGamma_n$ the covariance matrix of $\bZ_n$. It remains to show $\bGamma_n$ is close to $\bSigma_n$. For a tight control on the rate of convergence, consider the truncated random vector,
\begin{align*}
    T_{r_n}(\bG_n) = ((G_n^\ell(k/n): r_n \leq k \leq n - r_n)^\top: 1 \leq \ell \leq p)^\top, \\
    T_{r_n}(\bZ_n) = ((Z_n^\ell(k/n): r_n \leq k \leq n - r_n)^\top: 1 \leq \ell \leq p)^\top.
\end{align*}
Also by an abuse of notations, denote by $T_{r_n}(\bSigma_n)$ and $T_{r_n}(\bGamma_n)$ the covariance matrix of $T_{r_n}(\bG_n)$ and $T_{r_n}(\bZ_n)$, respectively. Then Equation~\eqref{eq: cov} implies, with probability tending to one,
\begin{align}\label{eq: matrix comparison}
    \lVert {T_{r_n}(\bSigma_n) - T_{r_n}(\bGamma_n)}\rVert_{\operatorname{max}} = O\bigg(\sqrt{\frac{\log n}{r_n}}\bigg).
\end{align}
Additionally, the variance of each item of $T_{r_n}(\mathbf{Z}_n)$ admits the following conditioning lower bound: condition on the permutations $\pi_\ell$, $1 \leq \ell \leq p$, then
\begin{align*}
    \V[\mathbf{Z}_n^\ell(k/n)|\pi_\ell, 1 \leq \ell \leq p]
    & = \V[\mathbf{G}_n^\ell(k/n)|\pi_\ell, 1 \leq \ell \leq p] \\
    & = \V\bigg[\sqrt{\frac{n}{k(n - k)}} \Big(\sum_{i = 1}^k u_{\pi^\ell(i)} - \frac{k}{n} \sum_{i = 1}^n u_{\pi^\ell(i)}\Big)\bigg|\pi_\ell, 1 \leq \ell \leq p\bigg] \\
    & = \V\bigg[\sqrt{\frac{n}{k(n - k)}} \Big(\sum_{i = 1}^k u_i - \frac{k}{n} \sum_{i = 1}^n u_i\Big)\bigg] \\
    & = 1, \qquad 1 \leq k < n, 1 \leq \ell \leq p,
\end{align*}
where in the third line, we have used the fact that conditional on $\pi_\ell, 1 \leq \ell \leq p$, $(u_{\pi^\ell(i)})_{i \in [n]}$'s are i.i.d. $\mathsf{N}(0,1)$. Let $\mathcal{G}_n$ be the permutation event on which \eqref{eq: matrix comparison} holds. Since $\P(\mathcal{G}_n)\to1$, the following Gaussian comparison is applied conditionally on $\mathscr{B}$ and $\mathcal{G}_n$, and then unconditioned; the complement $\mathcal{G}_n^c$ contributes only $o(1)$. In the notation of Lemma~\ref{sa-lem:gaussian-comparison}, take
\[
    Z_1=T_{r_n}(\bG_n),\qquad
    Z_2=T_{r_n}(\bZ_n),\qquad
    m=pT(n),
\]
where $T(n) = \lceil n - r_n \rceil - \lfloor r_n \rfloor$. The covariance matrices are $T_{r_n}(\bSigma_n)$ and $T_{r_n}(\bGamma_n)$. The variance lower bound above gives $\underline{\sigma}=1$, $m\geq3$ for all large $n$, and
\[
    \Delta_n=\lVert {T_{r_n}(\bSigma_n)-T_{r_n}(\bGamma_n)}\rVert_{\operatorname{max}}.
\]
Therefore Lemma~\ref{sa-lem:gaussian-comparison} gives
\begin{align*}
    \sup_{\mathbf{y} \in \reals^{p T(n)}} |\P(T_{r_n}(\bG_n) \leq \mathbf{y}) - \P(T_{r_n}(\bZ_n) \leq \mathbf{y})|
    \lesssim (\Delta_n\log^2 m)^{1/2}+o(1),
\end{align*}
Combining with Equation~\eqref{eq: matrix comparison} and applying the comparison to the rectangle with coordinatewise bounds $-t_\ell \leq G_n^\ell(k/n)\leq t_\ell$ for each $\ell$, we get
\begin{align}\label{eq: approax}
    \nonumber & \sup_{t_1, \cdots t_p \in \reals}\Big|\P\Big(\max_{r_n \leq k \leq n - r_n} |G_n^\ell(k/n)| \leq t_\ell, 1 \leq \ell \leq p\Big) - \P\Big(\max_{r_n \leq k \leq n - r_n} 
    |Z_n^\ell(k/n)| \leq t_\ell, 1 \leq \ell \leq p\Big)\Big| \\
    & = O\!\left(\log(n)\left(\frac{\log n}{r_n}\right)^{1/4}\right)+o(1)=o(1).
\end{align}

\begin{center}
    \textbf{Step 3: Reduction of calculations for a one-dimensional O-U process}
\end{center}
As in the previous two sections, fix $\varepsilon > 0$, and take $r_n = \exp((\log n)^{\varepsilon})$. Let $\mathcal{E} = \{\exists \ell \in [p]: \argmax_k \I(k,\ell) < r_n \text{ or } \argmax_k \I(k,\ell) > n - r_n\}$. Then by \cite[proof of Theorem A.4.2]{csorgo1997limit}, and a union bound argument, we have
\begin{align*}
    \P(\mathcal{E}) \leq \sum_{\ell = 1}^p \P(\argmax_k \I(k,\ell) < r_n \text{ or } \argmax_k \I(k,\ell) > n - r_n) = o(1).
\end{align*}
Hence we may effectively restrict the candidates of $\argmax$ to $[r_n, n - r_n]$. Because $\log r_n=o(\log n)$, replacing the full endpoint range by this truncated range changes the corresponding O-U time intervals only by $o(\log n)$ and does not alter the Darling--Erd\"os constants below. W.l.o.g., consider coordinate $\ell = 1$, and
\begin{align*}
    & \liminf_{n\to\infty} \mathbb{P}\big( n^{a} \leq \hat{\imath} \leq n^{b}, \hat{\jmath} = \ell \big) \\
    & =  \liminf_{n\to\infty}\mathbb{P}\Big(\max_{k \in [n]}\I(k,1) > \max_{k,j\neq 1}\I(k,j), \; \max_{k \in [n]}\I(k,1) > \max_{k \notin [n^a,n^b]}\I(k,1) \Big) \\
    & \geq \liminf_{n\to\infty}\mathbb{P}\Big(\max_{k \in [n]}\I(k,1) > \max_{k,j\neq 1}\I(k,j), \; \max_{k \in [n]}\I(k,1) > \max_{k \notin [n^a,n^b]}\I(k,1),\mathcal{E}^c \Big) - \limsup_{n\to\infty}\P(\mathcal{E})\\
    & \geq \liminf_{n\to\infty}\mathbb{P}\Big(\max_{k \in [r_n, n - r_n]}\I(k,1) > \max_{k,j\neq 1}\I(k,j), \; \max_{k \in [r_n, n - r_n]}\I(k,1) > \max_{k \notin [n^a,n^b]}\I(k,1)\Big).
\end{align*}
The \textit{coupling result} developed previously applies. With our notation, $\I(k,\ell) = (H_n^\ell(k/n))^2$. Hence
\begin{align*}
    & \mathbb{P}\Big(\max_{k \in [r_n, n - r_n]}\I(k,1) > \max_{j\neq 1, k \in [r_n, n - r_n]}\I(k,j), \; \max_{k \in [r_n, n - r_n]}\I(k,1) > \max_{k \notin [n^a,n^b]}\I(k,1) \Big) \\
    & = \P \Big(\max_{k \in [r_n, n - r_n]} |H_n^1\Big(\frac{k}{n}\Big)| > \max_{\ell \neq 1, k \in [r_n, n - r_n]}|H_n^\ell\Big(\frac{k}{n}\Big)|, \; \max_{k \in [r_n, n - r_n]} |H_n^1\Big(\frac{k}{n}\Big)| > \max_{k \notin [n^a,n^b]} |H_n^1\Big(\frac{k}{n}\Big)| \Big) \\
    & \geq \sup_{z \in \reals}\P \Big(\max_{k \in [r_n, n - r_n]} |H_n^1\Big(\frac{k}{n}\Big)| > z > \max_{\ell \neq 1, k \in [r_n, n - r_n]}|H_n^\ell\Big(\frac{k}{n}\Big)|, \\
    & \qquad\qquad\qquad\quad \max_{k \in [r_n, n - r_n]} |H_n^1\Big(\frac{k}{n}\Big)| > z > \max_{\substack{k \in [r_n,n-r_n]\\ k \notin [n^a,n^b]}} |H_n^1\Big(\frac{k}{n}\Big)| \Big) - o(1) \\
    & \geq \sup_{z \in \reals}\P \Big(\max_{k \in [r_n, n - r_n]} |Z_n^1\Big(\frac{k}{n}\Big)| > z > \max_{\ell \neq 1, k \in [r_n, n - r_n]}|Z_n^\ell\Big(\frac{k}{n}\Big)|, \\
    & \qquad\qquad\qquad\quad \max_{k \in [r_n, n - r_n]} |Z_n^1\Big(\frac{k}{n}\Big)| > z > \max_{\substack{k \in [r_n,n-r_n]\\ k \notin [n^a,n^b]}} |Z_n^1\Big(\frac{k}{n}\Big)| \Big) - o(1),
\end{align*}
where the first $o(1)$ accounts for removing endpoint candidates from the outside window maximum before the threshold comparison; the same O-U endpoint bound as above applies because $\log r_n=o(\log n)$. We have also used the rectangle approximation in \eqref{eq: clt retangles}, the Gaussian comparison in \eqref{eq: approax}, and a deterministic buffer tending to zero; the strict inequalities are handled by replacing $z$ with $z\pm\eta_n$ and then taking $\eta_n\downarrow0$. For the extreme value limit, restrict the preceding supremum to deterministic thresholds
\[
    z_n(u)=\frac{2 \log \log (n) + 1/2 \log \log \log (n) + u - 1/2 \log(\pi)}{\sqrt{2 \log \log (n)}},
    \qquad u\in\reals .
\]
The comparison error is uniform over rectangles, so restricting to this threshold family gives a valid lower bound after taking the supremum over $u$. Since we choose $r_n = \exp((\log n)^{\varepsilon})$, both $\log^{7/6}(n) r_n^{-1/6}=o(1)$ and $\log(n)(\log n/r_n)^{1/4}=o(1)$. It then follows from independence and symmetry between $Z_n^\ell$'s across different $\ell$'s that
\begin{align*}
    & \liminf_{n \rightarrow \infty} \sup_{u \in \reals} \P \Big(\max_{k \in [r_n, n - r_n]} |Z_n^1\Big(\frac{k}{n}\Big)| > z_n(u) > \max_{\ell \neq 1, k \in [r_n, n - r_n]}|Z_n^\ell\Big(\frac{k}{n}\Big)|, \\
    & \qquad\qquad\qquad\qquad\qquad \max_{k \in [r_n, n - r_n]} |Z_n^1\Big(\frac{k}{n}\Big)| > z_n(u) > \max_{\substack{k \in [r_n,n-r_n]\\ k \notin [n^a,n^b]}} |Z_n^1\Big(\frac{k}{n}\Big)| \Big) \\
    & \geq \liminf_{n \rightarrow \infty} \sup_{u \in \reals} \P(\max_{k \in [r_n, n - r_n]}|Z_n^1\Big(\frac{k}{n}\Big)| < z_n(u))^{p-1} \\
    & \qquad\qquad\qquad\qquad \cdot \P(\max_{k \in [r_n, n - r_n]}|Z_n^1\Big(\frac{k}{n}\Big)| > z_n(u) > \max_{\substack{k \in [r_n, n - r_n]\\ k \notin [n^a, n^b]}}|Z_n^1\Big(\frac{k}{n}\Big)| )\\
    & \geq \sup_{u \in \reals} \exp\Big(-(p-1)e^{-(u-\log(2))}\Big)\Big( \exp\Big(-e^{-(u-\log(2-(b-a)))}\Big) - \exp\Big(-e^{-(u-\log(2))}\Big)\Big) \\
    & =  \frac{b-a}{2p}\bigg(1-\frac{b-a}{2p}\bigg)^{\frac{2p}{b-a}-1} \\
    & \geq \frac{b-a}{2pe},
\end{align*}
where the third line is by a similar calculation as in Section~\ref{sec:thm-master-uni}. Putting these bounds together, we have
\begin{align*}
    \liminf_{n\to\infty} \mathbb{P}\big( n^{a} \leq \hat{\imath} \leq n^{b}, \hat{\jmath} = \ell \big) \geq \frac{b - a}{2 p e},
\end{align*}
and by symmetry, we have
\begin{align*}
    \liminf_{n\to\infty} \mathbb{P}\big( n - n^{b} \leq \hat{\imath} \leq n  -n^{a}, \hat{\jmath} = \ell \big) \geq \frac{b - a}{2 p e}.
\end{align*}

\subsection{Proof of Theorem~\ref{sa-thm:rates}}

For simplicity, we denote $\mustump(\bx)$ by $\hat{\mu}(\bx)$. The split index arguments use the variance normalization from Theorem~\ref{sa-thm:master}; for a general error variance $\sigma^2$, apply the same proof to $\varepsilon_i/\sigma$ and multiply the final estimation error threshold by $\sigma$. We divide the proofs into two parts, one for uniform estimation and one for pointwise results near the boundary.
\begin{center}
\textbf{Part 1: Inconsistency for Uniform Estimation Rates}
\end{center}
For notational simplicity, introduce the \emph{partial sum based on ordering for the $\ell$'s coordinate},
\begin{align*}
    S(k,\ell) = \sum_{i = 1}^{k} \varepsilon_{\pi_{\ell}(i)}, \qquad k \in [n], \quad \ell \in [p],
\end{align*}
and define the optimal index for splitting based on the $\ell$'s coordinate by
\begin{align*}
    \imath_{\ell} = \argmax_{k \in [n]} \I(k,\ell), \qquad \ell \in [p].
\end{align*}
Consider the event
\begin{align*}
    \mathtt{Imbalance}_{\ell} & = \{\hat{\jmath} = \ell, \hat{\imath} < n^b \text{ or } \hat{\imath} > n - n^b\} \\
    & = \{\max_k\I(k,\ell) > \max_{k,j\neq \ell}\I(k,j), \; \max_{k}\I(k,\ell) > \max_{k \in [n^b, n - n^b]}\I(k,\ell)\}, \qquad \ell \in [p].
\end{align*}
Consider the case $\hat{\imath} < n^b$ on $\mathtt{Imbalance}_{\ell}$. The other case where $\hat{\imath} > n - n^b$ can be dealt with by symmetry. Then
\begin{align*}
    & \sup_{\bx \in \X} |\hat{\mu}(\bx) - \mu|^2 \\
    & \geq \frac{S^2(\imath_{\ell}, \ell)}{\imath_{\ell}^2} \\
    & \geq \frac{1}{\imath_{\ell}} \bigg[\frac{S^2(\imath_{\ell}, \ell)}{\imath_{\ell}} + \frac{(S(n,\ell) - S(\imath_{\ell}, \ell))^2}{n - \imath_{\ell}} - \frac{(S(n,\ell) - S(\imath_{\ell}, \ell))^2}{n - \imath_{\ell}}\bigg] \\
    & \geq \frac{1}{\min\{\imath_{\ell}, n - \imath_{\ell}\}} \bigg(\max_{1 \leq k < n} \Big(\frac{S^2(k, \ell)}{k} + \frac{(S(n,\ell) - S(k, \ell))^2}{n - k} \Big) \\
    & \qquad\qquad\qquad\qquad - \max_{\lfloor n/2 \rfloor \leq k \leq n}\frac{S^2(k, \ell)}{k} - \max_{1 \leq k \leq \lceil n/2 \rceil}\frac{(S(n,\ell) - S(k, \ell))^2}{n - k} \bigg).
\end{align*}
where the last line is because $\imath_{\ell}$ is the index that maximizes the split criterion based on the $\ell$'s coordinate, i.e.,
\begin{align*}
    \imath_{\ell} 
    & = \argmax_{1 \leq k < n} \sum_{i = 1}^n (y_i - \bar{y})^2 - \sum_{i = 1}^k (y_{\pi_{\ell}(i)} - S(k,\ell)/k)^2 - \sum_{i = k + 1}^n (y_i - (S(n,\ell) - S(k,\ell))/(n - k))^2 \\
    & = \argmax_{1 \leq k < n} \frac{S^2(k,\ell)}{k} + \frac{(S(n,\ell) - S(k,\ell))^2}{n - k}.
\end{align*}
Fix $\epsilon > 0$. Consider the events
\begin{align*}
    A_{\ell}^{\epsilon} & = \bigg\{\max_{1 \leq k < n} \Big(\frac{S^2(k, \ell)}{k} + \frac{(S(n,\ell) - S(k, \ell))^2}{n - k}\Big) \geq (2 - \epsilon) \log \log(n)\bigg\}, \\
    B_{\ell}^{\epsilon} & = \bigg\{\max_{\lfloor n/2 \rfloor \leq k \leq n}\frac{S^2(k, \ell)}{k} + \max_{1 \leq k \leq \lceil n/2 \rceil}\frac{(S(n,\ell) - S(k, \ell))^2}{n - k} \leq 2 \epsilon  \log \log (n)\bigg\}.
\end{align*}
By \cite[Theorem A.4.1]{csorgo1997limit} $\liminf_{n \rightarrow \infty} \P(A^{\varepsilon}_{\ell}) = \liminf_{n \rightarrow \infty} \P(B^{\varepsilon}_{\ell}) = 1$. Hence for any $\epsilon > 0$, 
\begin{align*}
    \P \bigg(\sup_{\bx \in \X} |\hat{\mu}(\bx) - \mu|^2 \geq \frac{(2 - 3 \epsilon) \log \log (n)}{n^b}\bigg) \geq \sum_{\ell = 1}^{p}\P(\mathtt{Imbalance}_{\ell} \cap A_{\ell}^{\epsilon} \cap B_{\ell}^{\epsilon}) \geq \frac{b}{e} + o(1),
\end{align*}
where we have used the fact that $\mathtt{Imbalance}_{\ell}$'s are disjoint for different $\ell$'s and Theorem~\ref{sa-thm:master}. Equation~\eqref{eq:master_rate_constant} then follows.

\begin{center}
\textbf{Part 2: Inconsistency for Points Near the Boundary}
\end{center}
Consider the event
\begin{align*}
    \mathtt{Off}_{\ell} & = \{\hat{\jmath} = \ell, \hat{\imath} \in [n^a, n^b]\} \\
    & = \{\max_k\I(k,\ell) > \max_{k,j\neq \ell}\I(k,j), \; \max_{k}\I(k,\ell) > \max_{k \notin [n^a, n^b]}\I(k,\ell)\}, \qquad \ell \in [p].
\end{align*}
On $\mathtt{Off}_{\ell}$, $\imath_\ell\geq n^a$, so
$x_{\pi_\ell(\imath_\ell),\ell}\geq x_{\pi_\ell(\lceil n^a\rceil),\ell}$.
For any deterministic sequence $\eta_n\downarrow0$, the marginal probability integral transform gives
\begin{align*}
    \P\!\left(x_{\pi_\ell(\lceil n^a\rceil),\ell} \geq \eta_n n^{a-1}\right) \to 1.
\end{align*}
Together with Theorem~\ref{sa-thm:master},
\begin{align*}
    \P\!\left(\mathtt{Off}_{\ell}, x_{\pi_\ell(\imath_\ell),\ell} \geq \eta_n n^{a-1}\right) \geq \frac{b - a}{2 p e} + o(1).
\end{align*}
Then on the event $\mathtt{Off}_{\ell}$ and $x_{\pi_\ell(\imath_\ell),\ell} \geq \eta_n n^{a-1}$, for any $\bx \in [0,1]^p$ such that $x_{\ell} \leq \eta_n n^{a-1}$, we have $x_{\ell} \leq x_{\pi_\ell(\imath_\ell),\ell}$, and
\begin{align*}
    |\hat\mu(\bx) - \mu|^2 
    & = \frac{S^2(\imath_{\ell},\ell)}{\imath_{\ell}^2} \\
    & = \frac{1}{\imath_{\ell}} \bigg(\frac{S^2(\imath_{\ell},\ell)}{\imath_{\ell}} + \frac{(S(n,\ell) - S(\imath_{\ell},\ell))^2}{n - \imath_{\ell}} - \frac{(S(n,\ell) - S(\imath_{\ell},\ell))^2}{n - \imath_{\ell}}\bigg) \\
    & \geq \frac{1}{\imath_{\ell}} \bigg(\max_{1 \leq k < n}\Big(\frac{S^2(k,\ell)}{k} + \frac{(S(n,\ell) - S(k,\ell))^2}{n - k}\Big) - \max_{1 \leq k \leq n^b} \frac{(S(n, \ell) - S(k,\ell))^2}{n - k}\bigg).
\end{align*}
The last step uses the same boundary transfer as Part 1, but we record the two ingredients explicitly. On the event where coordinate $\ell$ is selected with $n^a\leq\imath_\ell\leq n^b$, the selected split attains the full CART split maximum along that coordinate. The first part of this proof and the preceding comparison imply that this full maximum is $(2+o_\P(1))\log\log(n)$ on the selected imbalanced window event. In contrast,
\[
    \max_{1\leq k\leq n^b}\frac{(S(n,\ell)-S(k,\ell))^2}{n-k}
    =
    O_\P(1)+O_\P\!\left(\frac{n^b\log\log(n)}{n}\right)
    =
    o_\P(\log\log(n)),
\]
because $n-k\asymp n$ over $k\leq n^b$, $S^2(n,\ell)/n=O_\P(1)$, and the maximal partial sum bound gives $\max_{k\leq n^b}S^2(k,\ell)=O_\P(n^b\log\log(n))$. Therefore
\begin{align*}
    \liminf_{n \rightarrow \infty} \inf_{\bx \in \mathcal{X}_n(a,\eta_n)}\P \bigg(|\hat{\mu}(\bx) - \mu|^2 \geq \frac{(2 + o(1))\log \log(n)}{n^b}\bigg) \geq \frac{b - a}{2 p e},
\end{align*}
which is Equation~\eqref{eq:master_rate_constant2}.

\subsection{Proof of Theorem~\ref{sa-thm: uniform minimax}}

Let
\[
    \bar\varepsilon(\nodet)
    =
    \frac{1}{n(\nodet)}
    \sum_{\bx_i\in\nodet}\varepsilon_i
\]
denote the centered average over a node. By Part 1 in the proof of Theorem~\ref{sa-thm:rates}, with probability at least $b/e+o(1)$ the root split creates a child node $\nodet$ such that
\[
    |\bar\varepsilon(\nodet)|
    \geq
    \sigma n^{-b/2}\sqrt{(2+o(1))\log\log n}.
\]
Any later recursive splitting only refines this root child. If $\mathcal P(\nodet)$ denotes the collection of terminal descendants of $\nodet$ in the final tree, with $\mathcal P(\nodet)=\{\nodet\}$ if the child is never split again, then
\[
    \bar\varepsilon(\nodet)
    =
    \sum_{\nodet'\in\mathcal P(\nodet)}
    \frac{n(\nodet')}{n(\nodet)}
    \bar\varepsilon(\nodet').
\]
Thus at least one terminal descendant $\nodet'\in\mathcal P(\nodet)$ satisfies $|\bar\varepsilon(\nodet')|\geq|\bar\varepsilon(\nodet)|$. Since $y_i=\mu+\varepsilon_i$, the terminal node estimator equals $\mu+\bar\varepsilon(\nodet')$ on that descendant. Taking the supremum over $\bx\in\mathcal X$ and using the root split event proves the theorem.

\subsection{Proof of Theorem~\ref{sa-thm: L2 consistency NSS}}

This follows directly from \citet[Theorem 4.3]{klusowski2024large}, choosing $g^{\ast} \equiv \mu$ and $g \equiv \mu$, and changing the subGaussian rate to the subexponential rate by choosing $U \asymp \log(n)$ instead of $U \asymp \sqrt{\log(n)}$ in the truncation argument step. The last statement follows from the proof of \citet[Theorem 4.3]{klusowski2024large}.

\subsection{Proof of Theorem~\ref{sa-thm: honest stump}}

Throughout the proof, abbreviate the honest tree $\hat{\mu}^{\honest}(\bx)$ by $\check{\mu}(\bx)$. Use $(y_i,\bx_i^{\top})_{i=1}^M$ to denote the construction sample $\dataD_{\honest,1}$ and $(\tilde{y}_i,\tilde{\bx}_i^{\top})_{i=1}^N$ to denote the estimation sample $\dataD_{\honest,2}$, with $n \lesssim M,N \lesssim n$. Let $(\hat{\imath},\hat{\jmath})$ be the root splitting index and coordinate selected from the construction sample, and let
\[
    \nodet_{\mathtt L}
    =
    \{\bx\in\mathcal X:x_{\hat{\jmath}}\leq x_{\pi_{\hat{\jmath}}(\hat{\imath}),\hat{\jmath}}\}
\]
be the corresponding left root child. If the target tree has depth larger than one, later splits only refine $\nodet_{\mathtt L}$. Let $\mathcal P(\nodet_{\mathtt L})$ denote the terminal descendants of this child, with $\mathcal P(\nodet_{\mathtt L})=\{\nodet_{\mathtt L}\}$ for a stump. Define the number of estimation fold observations falling in the root child by
\begin{align*}
    \tilde{\imath} = \sum_{i = 1}^N \Indicator(\tilde{x}_{i,\hat{\jmath}} \leq x_{\pi_{\hat{\jmath}}(\hat{\imath}), \hat{\jmath}}),
    \qquad
    \mathcal A_{\tilde{\imath}}=\{\tilde{\imath}>0\}.
\end{align*}
On $\mathcal A_{\tilde{\imath}}$, choose a terminal descendant $\nodet_\star\in\mathcal P(\nodet_{\mathtt L})$ with positive estimation fold count, using any fixed deterministic rule, and let $\tilde{\imath}_\star=n_\mu(\nodet_\star)$. This rule depends only on the construction tree and the estimation fold covariates, not on the estimation fold outcomes. Hence, conditional on the selected descendant and its estimation fold count, the selected estimation errors remain independent draws from the original error law. Then $1\leq \tilde{\imath}_\star\leq\tilde{\imath}$, and for any $\bx_\star\in\nodet_\star$,
\[
    \sup_{\bx\in\mathcal X}|\check{\mu}(\bx)-\mu|
    \geq
    |\check{\mu}(\bx_\star)-\mu|
    =
    \left|
    \frac{1}{\tilde{\imath}_\star}
    \sum_{i:\tilde{\bx}_i\in\nodet_\star}(\tilde y_i-\mu)
    \right|.
\]
Conditional on the tree, the estimation fold covariates, and $\tilde{\imath}_\star$, the errors in $\nodet_\star$ are i.i.d. and independent of the selected partition. Let $\sigma_\mu^2=\V[y_i]>0$. The exponential moment assumption implies a finite fourth moment, so Rosenthal's inequality and Paley--Zygmund applied to the square of the displayed average give
\[
    \P\left(
    |\check{\mu}(\bx_\star)-\mu|
    \geq
    c_\mu \frac{\E[|y_i-\mu|]}{\sqrt{\tilde{\imath}_\star}}
    \biggm|
    \treeT,\tilde{\bX},\tilde{\imath}_\star
    \right)
    \geq
    c_\mu \frac{\E[|y_i-\mu|]^2}{\V[y_i]},
\]
for a positive constant $c_\mu$ depending only on the distribution of $y_i$.
It remains to obtain a high probability upper bound on $\tilde \imath$ given $\imath$. Work on the left boundary event and fix a selected coordinate $j$ and split index $k$. Let $F_j$ be the marginal distribution function of $x_{ij}$, and set $U_{(k),j}=F_j(x_{\pi_j(k),j})$. Because the CART criterion in this constant regression model depends on the rank ordering and errors, but not on the order statistic spacings, conditional on $(\hat{\jmath},\hat{\imath})=(j,k)$ the variable $U_{(k),j}$ has the $\mathsf{Beta}(k,M-k+1)$ distribution. Suppose $1 \leq k \leq M/2$. By a Bernstein bound for Beta variables \cite[Theorem 1]{skorski_2023}, we have for all $\epsilon > 0$,
\begin{align*}
    \P(U_{(k),j} > k/M + \epsilon) \leq \exp \Big( - \frac{\epsilon^2}{2 v + \frac{c \epsilon}{3}}\Big),
\end{align*}
where for large enough $n$,
\begin{align*}
    v & = \frac{k (M - k + 1)}{(M + 1)^2 (M + 2)} \leq 2\frac{k}{M^2}, \\
    c & = \frac{2 (M - 2k + 1)}{M (M + 2)} \leq \frac{2}{M}.
\end{align*}
Hence with probability at least $1 - M^{-1}$, 
\begin{align*}
    U_{(k),j} \leq k/M + 2\frac{\sqrt{\log (M) k}}{M} + 3 \frac{\log (M)}{M}.
\end{align*}
Conditional on the construction sample and $(\hat{\jmath},\hat{\imath})=(j,k)$, the indicators $\Indicator(\tilde{x}_{i,j} \leq x_{\pi_j(k),j})$ are i.i.d.\ $\mathsf{Bernoulli}(U_{(k),j})$. Hence, conditional on the construction sample, with probability at least $1 - N^{-1}$,
\begin{align*}
    \tilde{\imath}/N
    \leq U_{(\hat{\imath}),\hat{\jmath}}
    + 2 \sqrt{\frac{\log(N) U_{(\hat{\imath}),\hat{\jmath}}}{N}}.
\end{align*}
Hence, conditional on the event $\hat{\imath} \leq M^b$, we have with probability at least $1 - 2 N^{-1}$,
\begin{align*}
    \tilde{\imath} \leq C n^b,
\end{align*}
where $C$ is a constant depending only on the lower and upper limiting
construction to estimation sample size ratios.
In addition, the beta binomial identity for the honest fold count gives a constant $c_0>0$, depending only on the limiting ratio of $M$ and $N$, such that uniformly over $1\leq k\leq M^b$ and $j\in[p]$,
\begin{align*}
    \P(\mathcal A_{\tilde{\imath}}\mid \hat{\imath}=k,\hat{\jmath}=j)\geq c_0+o(1).
\end{align*}
Therefore the event $\mathcal B_n=\{\mathcal A_{\tilde{\imath}},\,\tilde{\imath}\leq C n^b\}$ has conditional probability bounded away from zero on $\{\hat{\imath}\leq M^b\}$, up to the same $O(n^{-1})$ error; the constant is absorbed below.

Theorem~\ref{sa-thm:master} gives the corresponding root split lower bound. Using the left boundary event,
\begin{align*}
    \liminf_{M \rightarrow \infty}\P(\hat{\imath} \leq M^b) \geq \frac{b}{2e}.
\end{align*}
The right boundary event only improves the constant. On $\mathcal B_n$, $\tilde{\imath}_\star\leq\tilde{\imath}\leq Cn^b$, so the conditional lower bound above implies
\[
    \liminf_{n\to\infty}
    \P \bigg(\sup_{\bx\in\mathcal X}|\check{\mu}(\bx)-\mu|  \geq \frac{C_1\E[|y_i - \mu|]}{n^{b/2}} \bigg) 
    \geq
    C_2 \frac{\E[|y_i - \mu|]^2}{\V[y_i]} b,
\]
after absorbing fixed constants.
This proves the conclusion. 

\subsection{Proof of Theorem~\ref{sa-thm: L2 consistency honest}}

For notational simplicity, we use $\treeT$ to denote the data-driven decision tree. We follow the proof strategy from \citet[Theorem 4.3]{klusowski2024large}, conditioning on $\dataD_{\treeT}$. Denote by $\G_0$ the class of constant functions. Decompose $\lVert \hat{\mu}(\treeT) - \mu \rVert^2 = E_1 + E_2$, where
\begin{align*}
    E_1 = \lVert \hat{\mu}(\treeT) - \mu \rVert^2 - 2 (\lVert y - \hat{\mu}(\treeT)\rVert^2_{\dataD_{\mu}} - \lVert y - \mu\rVert^2_{\dataD_{\mu}}) - \alpha - \beta,
\end{align*}
and
\begin{align*}
    E_2 = 2 (\lVert y - \hat{\mu}(\treeT)\rVert^2_{\dataD_{\mu}} - \lVert y - \mu\rVert^2_{\dataD_{\mu}}) + \alpha + \beta.
\end{align*}
Denote the partition for $\treeT$ by $\partitionP$. Since $\partitionP$ is independent of $\dataD_{\mu}$, the bound (E.27) from \citet{klusowski2024large} does not apply automatically. Instead, we consider $\G_0$ as the reference class. Given the partitions of $\treeT$, the values of leaf nodes are obtained by least squares projection using $\dataD_{\mu}$. This immediately implies
\begin{align*}
    \lVert y - \hat{\mu}(\treeT)\rVert^2_{\dataD_{\mu}} \leq \lVert y - \bar{y}\rVert^2_{\dataD_{\mu}} \leq
    \lVert y - g \rVert^2_{\dataD_{\mu}},
\end{align*}
for any constant function $g \in \G_0$. Hence for all $g \in \G_0$, 
\begin{align*}
    \E_{\dataD_{\mu}}[E_2|\dataD_{\treeT}] & \leq 2 \E_{\dataD_{\mu}}[\lVert y - g\rVert^2_{\dataD_{\mu}} - \lVert y - \mu\rVert^2_{\dataD_{\mu}}|\dataD_{\treeT}] + \alpha + \beta \\
    & = 2 \lVert g - \mu \rVert^2 + \alpha + \beta.
\end{align*}
For the term $E_1$, first assume $|y_i| \leq U$. Conditional on $\dataD_{\treeT}$, $\hat{\mu}(\treeT)$ is still a member of the class $\G_{n_{\treeT}}[\partitionP]$, which is the collection of all piecewise constant functions (bounded by $U$) on the partition $\partitionP$. Since for any $\varepsilon \in (0,1)$,
\begin{align*}
    N(\varepsilon U, \G_{n_{\treeT}}[\partitionP], \lVert \cdot \rVert_{P_{X^{n_{\mu}}}}) 
    \leq N(\varepsilon U, \G_{n_{\treeT}}[\partitionP], \lVert \cdot \rVert_{\infty})
    \leq \bigg(\frac{2}{\varepsilon}\bigg)^{2^K},
\end{align*}
\citet[Theorem 11.4]{Gyorfi-etal_2002_book} and the same argument from Equation (B.30) to (B.33) in \citet{klusowski2024large} still give
\begin{align*}
    \P_{\dataD_{\mu}}(E_1 \geq 0 \;|\;\dataD_{\treeT}) \leq 14 \bigg(\frac{2 U^2}{\beta}\bigg)^{2^{K}} \exp \bigg(- \frac{\alpha n_{\mu}}{2568 U^4} \bigg).
\end{align*}
Choosing $\alpha \asymp U^4 2^K \log(n_{\mu})/n_{\mu}$ and $\beta \asymp U^2/n_{\mu}$ makes the preceding conditional probability bound polynomially small and gives the corresponding conditional expectation bound on the bounded outcome event. Taking $U=C_0\log(n_{\mu})$ and using the same truncation argument over the subexponential $\varepsilon_i$'s yields both the expectation and high probability conclusions after unconditioning on $\dataD_{\treeT}$. The sample size balance condition then replaces $n_{\mu}$ by $n$ up to constants depending only on $\rho$.

\subsection{Proof of Lemma~\ref{sa-lem:reg-unbiased}}

For the honest estimator, condition on the selected tree partition and on the covariates in the final estimation sample. If the terminal node containing $\bx$ has positive final estimation count, the conditional expectation of the terminal average is $\mu$; if the count is zero, the estimator is defined to be zero. Taking expectations gives the displayed empty cell expression.

For the estimator without sample splitting, condition on the covariates and write $\bvarepsilon=(\varepsilon_1,\ldots,\varepsilon_n)$. Assumption~\ref{sa-ass:DGP} makes $\bvarepsilon$ independent of the covariates, and the additional residual symmetry implies $\bvarepsilon\stackrel{d}{=}-\bvarepsilon$ conditional on the covariates. Under the constant mean model, every CART split criterion is a function of squared differences of residual averages, and is therefore invariant under the global sign flip $\bvarepsilon\mapsto-\bvarepsilon$. The valid candidate set depends only on the covariates and the deterministic tie breaking rule is fixed, so the selected recursive partition is also invariant under this sign flip. The terminal node containing $\bx$ is nonempty because the NSS construction and final estimation use the same sample, and the centered terminal average $\hat\mu^{\mathtt{NSS}}(\bx)-\mu$ is an odd function of $\bvarepsilon$. Conditional central symmetry of $\bvarepsilon$ therefore gives conditional expectation zero, and the result follows by iterated expectations.

\subsection{Proof of Corollary~\ref{coro: inconsistency ipw}}
Let $\tilde y_i=y_i(d_i-\xi)/\{\xi(1-\xi)\}$ and $\tilde\varepsilon_i=\tilde y_i-\tau$. Under Assumption~\ref{sa-assump: dgp-causal}, $\E[\tilde y_i\mid\bx_i]=\tau$, $\tilde\varepsilon_i$ is independent of $\bx_i$, and $\tilde\varepsilon_i$ satisfies the same subexponential and nondegeneracy conditions needed in Assumption~\ref{sa-ass:DGP}. The NSS-IPW split criterion is exactly the CART split criterion applied to $\tilde y_i$. Therefore Theorem~\ref{sa-thm:master} applied to the transformed outcome regression problem gives the stated split index result.

\subsection{Proof of Corollary~\ref{coro: rates ipw}}
With the same transformed outcome $\tilde y_i=y_i(d_i-\xi)/\{\xi(1-\xi)\}$, the IPW terminal node estimate in a stump is the corresponding left or right node average of $\tilde y_i$. Hence Theorem~\ref{sa-thm:rates} applied to $\tilde y_i$ gives both the uniform stump lower bound and the boundary lower bound, with $\sigma^2=\V(\tilde\varepsilon_i)$. The boundary statement follows after the same marginal probability integral transform normalization used in Theorem~\ref{sa-thm:rates}.

\subsection{Proof of Corollary~\ref{sa-coro: uniform minimax ipw}}

Let
\[
    \tilde y_i = y_i\frac{d_i-\xi}{\xi(1-\xi)},
    \qquad
    \tilde\varepsilon_i = \tilde y_i-\tau .
\]
Under Assumption~\ref{sa-assump: dgp-causal}, $\E[\tilde y_i\mid \bx_i]=\tau$ and $\tilde\varepsilon_i$ is independent of $\bx_i$. The NSS-IPW tree is therefore the CART tree constructed from the transformed outcomes $\tilde y_i$, and each terminal node IPW estimate is exactly the corresponding terminal node average of $\tilde y_i$.

By Part 1 in the proof of Theorem~\ref{sa-thm:rates} applied to $\tilde y_i$, with probability at least $b/e+o(1)$ the root split creates a child node $\nodet$ with $n(\nodet)\leq n^b$ and
\[
    \left|
    \frac{1}{n(\nodet)}\sum_{\bx_i\in\nodet}(\tilde y_i-\tau)
    \right|
    \geq
    c\,n^{-b/2}\sqrt{\log\log n}
\]
for a positive constant $c$ depending only on the distribution of $\tilde\varepsilon_i$. Any later recursive splitting only refines $\nodet$. If $\mathcal P(\nodet)$ denotes the terminal descendants of $\nodet$, then the root child average is the convex combination
\[
    \frac{1}{n(\nodet)}\sum_{\bx_i\in\nodet}(\tilde y_i-\tau)
    =
    \sum_{\nodet'\in\mathcal P(\nodet)}
    \frac{n(\nodet')}{n(\nodet)}
    \left(
    \frac{1}{n(\nodet')}\sum_{\bx_i\in\nodet'}(\tilde y_i-\tau)
    \right).
\]
Hence at least one terminal descendant has absolute IPW estimation error at least as large as the root child transformed outcome average. Taking the supremum over terminal nodes proves the result, after renaming the positive constant.

\subsection{Proof of Corollary~\ref{sa-coro: L2 consistency NSS ipw}}

The NSS-IPW estimator is the CART regression estimator based on the transformed outcome $\tilde y_i$. Applying Theorem~\ref{sa-thm: L2 consistency NSS} to this transformed outcome regression problem gives the expectation and probability bounds, with constants depending on the distribution of $\tilde\varepsilon_i$.

\subsection{Proof of Corollary~\ref{sa-coro: honest output ipw}}

Conditional on the construction fold, the honest IPW estimator averages $\tilde y_i$ over the independent estimation fold in the selected terminal node. Thus Theorem~\ref{sa-thm: honest stump} applies to the transformed outcome regression problem. The constants $C_1$ and $C_2$ in Corollary~\ref{sa-coro: honest output ipw} are the transformed outcome versions of the constants in Theorem~\ref{sa-thm: honest stump}, with fixed factors absorbed into their definitions.

\subsection{Proof of Corollary~\ref{sa-coro: L2 consistency honest ipw}}

Apply Theorem~\ref{sa-thm: L2 consistency honest} to the transformed outcome regression problem. The sample splitting ratio condition is the same, with $n_\mu$ in the regression statement replaced by the causal estimation fold size $n_\tau$.

\subsection{Proof of Lemma~\ref{lem: approximation -- balanced region}}

Since the number of coordinates $p$ is fixed, a union bound over the approximation error for the $p$ coordinates lets us assume w.l.o.g. that $p = 1$ and drop the second index on the coordinate $\ell$ from $\I^{\DIM}(k,\ell)$ and $\bar \I^{\IPW}(k,\ell)$ everywhere. Throughout, we assume the data is already sorted so that
\begin{align*}
    x_1 \leq x_2 \leq \cdots \leq x_n.
\end{align*}
All ratios below are evaluated only for valid candidates in $\mathcal{V}_{\mathtt{DIM}}$; equivalently, every maximum over an index range is taken over the intersection of that range with the valid candidate set. Displays containing denominators such as $b_i=\sum_{\ell\leq i}d_\ell$ are read only on indices where the denominator is positive; invalid zero denominator indices are omitted by the valid candidate convention. On the balanced ranges considered in this lemma, the omitted invalid candidate event has probability tending to zero.

Expand the square. Define
\begin{align*}
    R_1(k)
    & =
    \hat\tau^{\DIM}_{\bt_L}(k)-\hat\tau^{\DIM}_{\bt_R}(k)
    + \bar\tau^{\IPW}_{\bt_L}(k)-\bar\tau^{\IPW}_{\bt_R}(k),\\
    R_2(k)
    & =
    \hat\tau^{\DIM}_{\bt_L}(k)-\hat\tau^{\DIM}_{\bt_R}(k)
    - \bar\tau^{\IPW}_{\bt_L}(k)+\bar\tau^{\IPW}_{\bt_R}(k).
\end{align*}
Then, for any $k = 1,2,\cdots,n$,
\begin{align}\label{eq:expand square}
    \I^{\DIM}(k) - \bar \I^{\IPW}(k)
    & = \frac{k (n - k)}{n} R_1(k) R_2(k).
\end{align}
We focus on the case where $1 \leq k \leq \frac{n}{2}$, the other case where $\frac{n}{2} < k \leq n$ follow from symmetry. Consider the term $R_2(k)$. First, consider the term corresponding to $i$ from $1$ to $k$. The other term corresponding to $i$ from $k + 1$ to $n$ can be handled similarly. Breaking down $y_i(1) = \mu_1(x_i) + \varepsilon_i(1)$ and $y_i(0) = \mu_0(x_i) + \varepsilon_i(0)$, we have
\begin{align}\label{eq: decomp}
    \nonumber |R_2(k)| & = \bigg|\frac{\sum_{i = 1}^k d_i y_i(1)}{\sum_{i = 1}^k d_i} - \frac{1}{k}\sum_{i = 1}^k \frac{d_i}{\xi} \varepsilon_i(1) - \frac{\sum_{i = 1}^k (1 - d_i) y_i(0)}{\sum_{i = 1}^k (1 - d_i)} + \frac{1}{k}\sum_{i = 1}^k \frac{1 - d_i}{1 - \xi} \varepsilon_i(0) + \text{counterpart for $t_R$}\bigg| \\
    \nonumber & \leq \bigg|\frac{\sum_{i = 1}^k d_i \varepsilon_i(1)}{\sum_{i = 1}^k d_i}\bigg| \cdot \bigg|\frac{1}{k}\sum_{i = 1}^k (\frac{d_i}{\xi} - 1) \bigg| + \bigg|\frac{\sum_{i = 1}^k (1 - d_i) \varepsilon_i(0)}{\sum_{i = 1}^k (1-d_i)}\bigg| \cdot \bigg|\frac{1}{k}\sum_{i = 1}^k (\frac{1 -d_i}{1 -\xi} - 1) \bigg| \\
    \nonumber & \quad +\bigg|\frac{\sum_{i = k+1}^n d_i \varepsilon_i(1)}{\sum_{i = k+1}^n d_i}\bigg| \cdot \bigg|\frac{1}{n - k}\sum_{i = k+1}^n (\frac{d_i}{\xi} - 1) \bigg| +  \bigg|\frac{\sum_{i = k+1}^n (1 - d_i) \varepsilon_i(0)}{\sum_{i = k+1}^n (1-d_i)}\bigg| \cdot \bigg|\frac{1}{n - k}\sum_{i = k+1}^n (\frac{1 -d_i}{1 -\xi} - 1) \bigg| \\
    & \quad + \bigg|\frac{\sum_{i =1}^k d_i \mu_1(x_i)}{\sum_{i =1}^k d_i} - \frac{\sum_{i = 1}^k (1-d_i) \mu_0(x_i)}{\sum_{i = 1}^k (1 -d_i) } - \frac{\sum_{i =k+1}^n d_i \mu_1(x_i)}{\sum_{i = k+1}^n d_i} + \frac{\sum_{i = k+1}^n (1-d_i) \mu_0(x_i)}{\sum_{i = k+1}^n (1 -d_i) }\bigg|. 
\end{align}
Assumption~\ref{sa-assump: dgp-causal} (iii) implies that the last term is zero. Since $x_i \independent d_i$, even though the data is ordered according to $x_i$, $\{d_i/\xi - 1: 1 \leq i \leq n\}$ are i.i.d. mean-zero with bounded second moment. By Theorem A.4.1 in \cite{csorgo1997limit},
\begin{align*}
    \max_{r_n \leq k < n - r_n} \sqrt{k} \cdot \bigg|\frac{1}{k}\sum_{i = 1}^k (\frac{d_i}{\xi} - 1)\bigg|
     = O_{\P} (\sqrt{\log \log(n)}).
\end{align*}

Take $b_i = \sum_{1 \leq \ell \leq i} d_\ell$. By Equation (8) from \cite{shorack1976inequalities}, for any $\lambda > 0$,
\begin{align*}
    \P \bigg(\max_{r_n \leq k \leq n - r_n} \Big|\frac{\sum_{i = 1}^k d_i \varepsilon_i(1)}{\sum_{i =1}^k d_i}\Big| \geq \lambda \bigg| (d_i)_{1 \leq i \leq n}\bigg) & \leq 16 \sum_{r_n \leq i \leq n - r_n} \frac{d_i\mathbb{V}[\varepsilon_i(1)]}{b_i^2} \lambda^{-2} \\
    & \leq 16 \sum_{i \geq b_{r_n}}\frac{1}{i^2} \lambda^{-2} \V[\varepsilon_i(1)] \\
    & \leq \frac{8}{3}\pi^2 \lambda^{-2} \V[\varepsilon_i(1)] \frac{1}{b_{r_n}},
\end{align*}
Since $r_n\rightarrow\infty$ and $b_{r_n}/r_n\rightarrow \xi$ in probability,
\begin{align*}
    (b_{r_n})^{-1} = r_n^{-1} \Big(\xi + \frac{1}{r_n}\sum_{i = 1}^{r_n}(d_i - \xi)\Big)^{-1} = O_\P(r_n^{-1}).
\end{align*}
Unconditioning on $(d_i)_{1 \leq i \leq n}$ gives
\begin{align}\label{eq: delayed partial sum}
    &\max_{r_n \leq k \leq n - r_n}
    \Bigg|
        \frac{\sum_{i = 1}^k d_i \varepsilon_i(1)}
             {\sum_{i =1}^k d_i}
    \Bigg|
    = O_\P(r_n^{-1/2}).
\end{align}
Hence
\begin{align*}
    \max_{r_n \leq k < n - r_n} \sqrt{k} \cdot \bigg|\frac{\sum_{i = 1}^k d_i \varepsilon_i(1)}{\sum_{i = 1}^k d_i}\bigg| \cdot \bigg|\frac{1}{k}\sum_{i = 1}^k (\frac{d_i}{\xi} - 1) \bigg| = O_{\P} \bigg(\sqrt{\frac{\log \log (n)}{r_n}}\bigg).
\end{align*}
The same calculation applies to the control left ratio, the treated right ratio, and the control right ratio, namely to
\[
\frac{\sum_{i=1}^k(1-d_i)\varepsilon_i(0)}{\sum_{i=1}^k(1-d_i)},\qquad
\frac{\sum_{i=k+1}^n d_i\varepsilon_i(1)}{\sum_{i=k+1}^n d_i},\qquad
\frac{\sum_{i=k+1}^n(1-d_i)\varepsilon_i(0)}{\sum_{i=k+1}^n(1-d_i)},
\]
with the corresponding empirical treatment share multiplier on the same left or right child. The valid candidate convention ensures that the displayed denominators are positive, and the delayed partial sum and treatment count bounds are unchanged after replacing $(d_i,\xi,\varepsilon_i(1))$ by $(1-d_i,1-\xi,\varepsilon_i(0))$ or by the complementary index set $\{k+1,\ldots,n\}$. Hence
\begin{align*}
    \max_{r_n \leq k < n - r_n} \sqrt{k}|R_2(k)| = O_\P \bigg( \sqrt{\frac{\log \log (n)}{r_n}}\bigg).
\end{align*}
Under the assumption that $\mu_0 \equiv c_0$ and $\mu_1 \equiv c_1$, we have
\begin{align*}
\nonumber R_1(k) & = \bigg|\frac{\sum_{i = 1}^k d_i y_i}{\sum_{i = 1}^k d_i} + \frac{1}{k}\sum_{i = 1}^k \frac{d_i}{\xi} \varepsilon_i(1) - \frac{\sum_{i = 1}^k (1 - d_i) y_i}{\sum_{i = 1}^k (1 - d_i)} - \frac{1}{k}\sum_{i = 1}^k \frac{1 - d_i}{1 - \xi} \varepsilon_i(0) + \text{counterpart for $t_R$}\bigg| \\
& = \bigg|\frac{\sum_{i = 1}^k d_i \varepsilon_i(1)}{\sum_{i = 1}^k d_i} + \frac{1}{k}\sum_{i = 1}^k \frac{d_i}{\xi} \varepsilon_i(1) - \frac{\sum_{i = 1}^k (1 - d_i) \varepsilon_i(0)}{\sum_{i = 1}^k (1 - d_i)} - \frac{1}{k}\sum_{i = 1}^k \frac{1 - d_i}{1 - \xi} \varepsilon_i(0) + \text{counterpart for $t_R$}\bigg|. 
\end{align*}
By Equation~\eqref{eq: delayed partial sum} and Theorem A.4.1 in \cite{csorgo1997limit} for the terms $k^{-1}\sum_{i = 1}^k \xi^{-1} d_i \varepsilon_i(1)$, $k^{-1} \sum_{i = 1}^k (1 - \xi)^{-1} (1 - d_i) \varepsilon_i(0)$ and the counterparts for $t_R$, we have
\begin{align*}
\max_{r_n \leq k < n - r_n}\sqrt{k}|R_1(k)| = O_\P \bigg(\sqrt{\log \log (n)}\bigg).
\end{align*}
Putting together the parts for $R_1$ and $R_2$, we have
\begin{align*}  
    \max_{r_n \leq k < n - r_n}|\I^{\DIM}(k) - \bar \I^{\IPW}(k)| = O_\P \bigg( \frac{\log \log (n)}{r_n^{1/2}}\bigg).
\end{align*}

\subsection{Proof of Lemma~\ref{lem: approximation -- imbalanced region}}

Since the number of coordinates $p$ is fixed, a union bound over the approximation error for the $p$ coordinates lets us assume w.l.o.g. that $p = 1$ and drop the second index on the coordinate $\ell$ from $\I^{\DIM}(k,\ell)$ and $\bar \I^{\IPW}(k,\ell)$ everywhere. Throughout, we assume the data is already sorted so that
\begin{align*}
    x_1 \leq x_2 \leq \cdots \leq x_n.
\end{align*}
All ratios and maxima below are understood over valid candidates in $\mathcal{V}_{\mathtt{DIM}}$. This convention is the same as the one used by the tree construction rule, which omits candidates with zero treated or control denominators in either child. In particular, displays involving $b_i^{-1}$ or the corresponding control denominator are interpreted after omitting zero denominator indices.

For $1 \leq k \leq s_n$ and $n - s_n \leq k \leq n$, Equations~\eqref{eq:expand square} and \eqref{eq: decomp} still hold.  W.l.o.g., assume $1 \leq k \leq s_n$. First, we upper bound the IPW terms. The definition of $s_n$ and Equation (A.4.3) in \cite{csorgo1997limit} imply
\begin{align}
    \nonumber
    \max_{1 \leq k \leq s_n} \bigg|\frac{1}{\sqrt{k}}\sum_{i = 1}^k \frac{d_i}{\xi}\varepsilon_i(1)\bigg|
    & = O_\P (u_n), \\
    \max_{1 \leq k \leq s_n} \bigg| \frac{1}{\sqrt{k}} \sum_{i =1}^k \frac{1 - d_i}{1 - \xi} \varepsilon_i(0)\bigg|
    & = O_\P (u_n). \label{eq: imbalanced ipw partial sum}
\end{align}
with $u_n = \sqrt{\rho_n \log \log(n)}$. Also, Equation (A.4.2) in \cite{csorgo1997limit} implies
\begin{align}
    \nonumber
    \max_{1 \leq k \leq s_n} \sqrt{k} \cdot
    \bigg|\frac{1}{n-k}\sum_{i = k+1}^n \frac{d_i}{\xi}\varepsilon_i(1)\bigg|
    & = O_\P(v_n), \\
    \max_{1 \leq k \leq s_n} \sqrt{k} \cdot
    \bigg| \frac{1}{n-k} \sum_{i = k+1}^n \frac{1 - d_i}{1 - \xi} \varepsilon_i(0)\bigg|
    & = O_\P(v_n). \label{eq: imbalanced ipw partial sum large}
\end{align}
where $v_n = \sqrt{\frac{s_n}{n - s_n} \log \log (n)}$. Again, Equation (A.4.3) from \cite{csorgo1997limit} implies that
\begin{align*}
    \max_{1 \leq k \leq s_n} \bigg|\frac{1}{\sqrt{k}} \sum_{i = 1}^k \Big(\frac{d_i}{\xi} - 1 \Big) \bigg| = O_\P(u_n).
\end{align*}
Take $b_i = \sum_{1 \leq \ell \leq i} d_\ell$. By Equation (8) from \cite{shorack1976inequalities}, for any $\lambda > 0$, 
\begin{align*}
    \P \bigg(\max_{1 \leq k \leq s_n} \Big|\frac{\sum_{i = 1}^k d_i \varepsilon_i(1)}{\sum_{i =1}^k d_i}\Big| \geq \lambda \bigg| (d_i)_{1 \leq i \leq n}\bigg) & \leq 16 \sum_{1 \leq i \leq s_n} \frac{d_i\mathbb{V}[\varepsilon_i(1)]}{b_i^2} \lambda^{-2} \\
    & \leq 16 \sum_{1 \leq i \leq s_n}\frac{1}{i^2} \lambda^{-2} \V[\varepsilon_i(1)] \\
    & \leq \frac{8}{3}\pi^2 \lambda^{-2} \V[\varepsilon_i(1)],
\end{align*}
Unconditioning on $(d_i)_{1 \leq i \leq n}$ gives
\begin{align*}
    \max_{1 \leq k \leq s_n} \Big|\frac{\sum_{i = 1}^k d_i \varepsilon_i(1)}{\sum_{i =1}^k d_i}\Big| = O_\P(1).
\end{align*}
Therefore
\begin{align}\label{eq: bound}
    \max_{1 \leq k \leq s_n} \sqrt{k} \cdot\bigg|\frac{\sum_{i = 1}^k d_i \varepsilon_i(1)}{\sum_{i = 1}^k d_i} - \frac{1}{k}\sum_{i = 1}^k \frac{d_i}{\xi} \varepsilon_i(1)\bigg| 
    = \max_{1 \leq k \leq s_n} \bigg|\frac{1}{\sqrt{k}} \sum_{i = 1}^k \Big(\frac{d_i}{\xi} - 1 \Big) \cdot \frac{\sum_{i = 1}^k d_i \varepsilon_i(1)}{\sum_{i = 1}^k d_i} \bigg| = O_\P (u_n).
\end{align}
The control left term is identical after replacing $(d_i,\xi,\varepsilon_i(1),b_i)$ by $(1-d_i,1-\xi,\varepsilon_i(0),\sum_{\ell\leq i}(1-d_\ell))$. Thus Equation~\eqref{eq: bound}, together with Equation~\eqref{eq: imbalanced ipw partial sum}, gives both treatment arms explicitly:
\begin{align}
    \nonumber
    \max_{1 \leq k \leq s_n} \sqrt{k} \cdot
    \bigg|\frac{\sum_{i = 1}^k d_i \varepsilon_i(1)}{\sum_{i = 1}^k d_i}\bigg|
    & = O_\P(u_n), \\
    \max_{1 \leq k \leq s_n} \sqrt{k} \cdot
    \bigg|\frac{\sum_{i = 1}^k (1 - d_i) \varepsilon_i(0)}{\sum_{i = 1}^k (1 - d_i)}\bigg|
    & = O_\P(u_n). \label{eq: imbalanced reg partial sum}
\end{align}
Apply Equation (A.4.2) in \cite{csorgo1997limit} for the partial sum with at least $n - s_n$ terms and using $\max_{1 \leq k \leq s_n}|\frac{1}{n - k}\sum_{i = k+1}^n (d_i - \xi)| = o_\P(1)$, we have
\begin{align}
    \nonumber \max_{1 \leq k \leq s_n} \sqrt{k} \cdot \bigg|\frac{\sum_{i = k+1}^n d_i\varepsilon_i(1)}{\sum_{i = k+1}^n d_i}\bigg|
    & =  \max_{1 \leq k \leq s_n} \sqrt{k} \cdot \bigg|\frac{n - k}{\sum_{i = k+1}^n d_i} \bigg|\cdot \bigg|\frac{1}{n - k} \sum_{i = k+1}^n d_i \varepsilon_i(1)\bigg| \\
    \nonumber & \leq \sqrt{\frac{s_n}{n - s_n}}
    \bigg(\xi + \min_{1 \leq k \leq s_n}\frac{1}{n - k}
    \sum_{i = k+1}^n (d_i - \xi)\bigg)^{-1} \\
    \nonumber & \qquad \times
    \max_{1 \leq k \leq s_n}
    \bigg|\frac{1}{\sqrt{n - k}} \sum_{i = k+1}^n d_i \varepsilon_i(1)\bigg| \\
    & = O_\P (v_n). \label{eq: imbalanced reg partial sum large}
\end{align}
The control right term follows from the same display after replacing $d_i$ by $1-d_i$ and $\xi$ by $1-\xi$; the denominator is bounded away from zero uniformly because $n-k\geq n-s_n$ and the treatment assignments are i.i.d. Bernoulli. Putting together Equations~\eqref{eq: imbalanced ipw partial sum},\eqref{eq: imbalanced ipw partial sum large}, \eqref{eq: imbalanced reg partial sum}, \eqref{eq: imbalanced reg partial sum large}, we have
\begin{align*}
    \max_{r = 1,2}\max_{1 \leq k \leq s_n} \sqrt{k} |R_r(k)| = O_\P (u_n + v_n).
\end{align*}
From the decomposition in Equation~\eqref{eq: decomp} and the symmetry for $k \in [1,s_n]$ and $k \in [n - s_n,n]$, the conclusion follows.

\subsection{Proof of Theorem~\ref{sa-thm: imbalance reg}}
We break down the proofs into two steps.
\begin{center}
    \textbf{Step 1: Approximation of reg score by IPW score}
\end{center}
Let $0 < a < b < 1$. Let $\rho_n$ be a sequence of real numbers taking values in $(0,1)$ to be determined, and take $s_n = \exp((\log n)^{\rho_n})$. Then for large enough $n$, we have $s_n \leq n^a \leq n^b \leq n - s_n$. Consider the event
\[
    A_n = \left\{\exists \ell \in [p]: \max_{\substack{k \in [n]\\ k \notin [s_n,n - s_n]}} \I^\DIM(k,\ell) \geq \max_{s_n \leq k \leq n - s_n} \I^\DIM(k,\ell)\right\}.
\]
By Equation (A.4.18) from \cite{csorgo1997limit},
\begin{align*}
    \max_{\substack{1 \leq k \leq s_n\\ \text{or } n - s_n \leq k \leq n}} \sqrt{\bar\I^{\IPW}(k,\ell)} = O_\P(\sqrt{\rho_n \log \log(n)}).
\end{align*}
Then controlling the difference between $\bar\I^{\IPW}(k,\ell)$ and $\I^{\DIM}(k,\ell)$ by Lemma~\ref{lem: approximation -- imbalanced region},
\begin{align}\label{eq: reg small}  
    \max_{\substack{1 \leq k \leq s_n\\ \text{or } n - s_n \leq k \leq n}}\I^{\DIM}(k,\ell) =  O_\P\bigg(\rho_n \log \log (n) + \frac{s_n}{n - s_n} \log \log(n)\bigg)
\end{align}
By Lemma~\ref{lem: approximation -- balanced region} with the choice $r_n = s_n$,
\begin{align*}
    \max_{s_n < k < n - s_n}\sqrt{\I^{\DIM}(k,\ell)} & = \max_{s_n < k < n - s_n} \sqrt{\bar\I^{\IPW}(k,\ell)} + O_\P \bigg(\frac{\sqrt{\log \log (n)}}{s_n^{1/4}}\bigg) \\
    & \geq \max_{1 \leq k \leq n } \sqrt{\bar\I^{\IPW}(k,\ell)} - \max_{\substack{1 \leq k \leq s_n\\ \text{or } n - s_n \leq k \leq n}} \sqrt{\bar\I^{\IPW}(k,\ell)} - O_\P \bigg(\frac{\sqrt{\log \log (n)}}{s_n^{1/4}}\bigg).
\end{align*}
Equation (A.4.20) in \cite{csorgo1997limit} implies that $(2 \log \log (n))^{-1/2} \max_{1 \leq k \leq n} \sqrt{\bar\I^{\IPW}(k,\ell)} = 1 +o_{\P}(1)$ and $(2 \log \log (n))^{-1/2}\max_{\substack{1 \leq k \leq s_n\\ \text{or } n - s_n \leq k \leq n}} \sqrt{\bar\I^{\IPW}(k,\ell)} = \sqrt{\rho_n}(1 + o_\P(1))$. Hence
\begin{align}\label{eq: reg large}
    \max_{s_n < k < n - s_n}\sqrt{\I^{\DIM}(k,\ell)}
    \geq \sqrt{2 \log \log (n)}(1 + o_\P(1))
\end{align}
Choose $\log \log \log \log(n)/\log \log(n) \ll \rho_n \ll 1$, then by Equation~\eqref{eq: reg small} and \eqref{eq: reg large}, 
\begin{align*}
    \max_{\substack{1 \leq k \leq s_n\\ \text{or } n - s_n \leq k \leq n}}\I^{\DIM}(k,\ell) = o_\P(\log \log (n)), \text{ and } \max_{s_n \leq k \leq n - s_n}\I^{\DIM}(k,\ell) \geq 2 \log \log (n)(1 + o_\P(1)).
\end{align*}
Hence
\begin{align*}
    \max_{\substack{1 \leq k \leq s_n\\ \text{or } n - s_n \leq k \leq n}}\I^{\DIM}(k,\ell) = o_\P \bigg(\max_{s_n \leq k \leq n - s_n}\I^{\DIM}(k,\ell)\bigg), \qquad \ell \in [p],
\end{align*}
which by a union bound implies
\begin{align*}
    \limsup_{n \rightarrow \infty} \P(A_n) = 0.
\end{align*}
On the event $A_n^c$, the argmax for $\I^{\DIM}$ lies inside $[s_n, n - s_n]$. Hence
\begin{align*}
    & \mathbb{P}\Big(\exists \ell \in [p]: \max_k\I^{\DIM}(k,\ell) > \max_{k,j\neq \ell}\I^{\DIM}(k,j), \; \max_{k}\I^{\DIM}(k,\ell) > \max_{k \notin [n^a,n^b]}\I^{\DIM}(k,\ell) \Big) \\
    & \geq \mathbb{P}\Big(\exists \ell \in [p]: \max_k\I^{\DIM}(k,\ell) > \max_{k,j\neq \ell}\I^{\DIM}(k,j), \; \max_{k}\I^{\DIM}(k,\ell) > \max_{k \notin [n^a,n^b]}\I^{\DIM}(k,\ell) \text{ and } A_n^c \Big) - \P(A_n) \\
    & \geq \mathbb{P}\Big(\exists \ell \in [p]: \max_{k \in [s_n, n - s_n]}\I^{\DIM}(k,\ell) > \max_{\substack{j\neq \ell \\ k \in [s_n, n - s_n]}}\I^{\DIM}(k,j), \\
    & \qquad \qquad \qquad \max_{k \in [s_n,n-s_n]}\I^{\DIM}(k,\ell) > \max_{k \notin [n^a,n^b], k \in [s_n, n - s_n]}\I^{\DIM}(k,\ell) \Big) - 2 \P(A_n).
\end{align*}
Focus on the first term. By symmetry in the $p$ coordinates,
\begin{align*}
    & \mathbb{P}\Big(\exists \ell \in [p]: \max_{k \in [s_n, n - s_n]}\I^{\DIM}(k,\ell) > \max_{\substack{j\neq \ell \\ k \in [s_n, n - s_n]}}\I^{\DIM}(k,j),  \max_{k \in [s_n,n-s_n]}\I^{\DIM}(k,\ell) > \max_{\substack{k \notin [n^a,n^b] \\ k \in [s_n, n - s_n]}}\I^{\DIM}(k,\ell) \Big) \\
    & = p \mathbb{P}\Big(\max_{k \in [s_n, n - s_n]}\I^{\DIM}(k,1) > \max_{\substack{j\neq 1 \\ k \in [s_n, n - s_n]}}\I^{\DIM}(k,j),  \max_{k \in [s_n,n-s_n]}\I^{\DIM}(k,1) > \max_{\substack{k \notin [n^a,n^b] \\ k \in [s_n, n - s_n]}}\I^{\DIM}(k,1) \Big) \\
    & \geq p \sup_{z \in \reals} \mathbb{P}\Big(\max_{\substack{j\neq 1 \\ k \in [s_n, n - s_n]}}\sqrt{\I^{\DIM}(k,j)} < z,  \max_{k \in [s_n,n-s_n]}\sqrt{\I^{\DIM}(k,1)} > z > \max_{\substack{k \notin [n^a,n^b] \\ k \in [s_n, n - s_n]}}\sqrt{\I^{\DIM}(k,1)} \Big) \\
    & \geq p \sup_{z \in \reals}\Bigg\{
    \mathbb{P}\Big(\max_{\substack{j\neq 1 \\ k \in [s_n, n - s_n]}}\sqrt{\I^{\DIM}(k,j)} < z, \max_{\substack{k \notin [n^a,n^b] \\ k \in [s_n, n - s_n]}}\sqrt{\I^{\DIM}(k,1)} < z\Big) \\
    & \qquad \qquad - \mathbb{P}\Big(\max_{\substack{j\neq 1 \\ k \in [s_n, n - s_n]}}\sqrt{\I^{\DIM}(k,j)} < z, \max_{k \in [s_n, n - s_n]}\sqrt{\I^{\DIM}(k,1)} < z\Big)\Bigg\}.
\end{align*}
Then using the fact that $\sqrt{\bar \I^\IPW(k,\ell)}$ approximates $\sqrt{\I^\DIM(k,\ell)}$ from Lemma~\ref{lem: approximation -- balanced region}, we have
\begin{align*}
    & \mathbb{P}\Big(\exists \ell \in [p]: \max_{k \in [s_n, n - s_n]}\I^{\DIM}(k,\ell) > \max_{\substack{j\neq \ell \\ k \in [s_n, n - s_n]}}\I^{\DIM}(k,j),  \max_{k \in [s_n,n-s_n]}\I^{\DIM}(k,\ell) > \max_{\substack{k \notin [n^a,n^b] \\ k \in [s_n, n - s_n]}}\I^{\DIM}(k,\ell) \Big) \\
    & \geq p \sup_{z \in \reals}\Bigg\{ \mathbb{P}\Big(\max_{\substack{j\neq 1 \\ k \in [s_n, n - s_n]}}\sqrt{\bar \I^{\IPW}(k,j)} < z - v_n, \max_{\substack{k \notin [n^a,n^b] \\ k \in [s_n, n - s_n]}}\sqrt{\bar \I^{\IPW}(k,1)} < z - v_n\Big) \\
    & \qquad \qquad - \mathbb{P}\Big(\max_{\substack{j\neq 1 \\ k \in [s_n, n - s_n]}}\sqrt{\bar \I^{\IPW}(k,j)} < z + v_n, \max_{k \in [s_n, n - s_n]}\sqrt{\bar \I^{\IPW}(k,1)} < z + v_n\Big)\Bigg\},
\end{align*}
where $v_n = O_\P((\log \log (n))^{1/2} s_n^{-1/4})$. Here $v_n$ denotes a nonnegative random envelope for the square root approximation error. Since the chosen $s_n$ makes $v_n=o_\P((\log\log n)^{-1/2})$, there is a deterministic envelope $\bar v_n=o((\log\log n)^{-1/2})$ with $\P(v_n\leq \bar v_n)\to1$. Replacing $v_n$ by $\bar v_n$ in the threshold displays loses only $o(1)$ probability; the next display records this replacement through an arbitrary fixed $\epsilon>0$.

\begin{center}
    \textbf{Step 2: IPW score approximation by Gaussian approximation}
\end{center}
The choice $s_n = \exp(\log^{\rho_n}(n))$ for $\log \log  \log \log(n)/\log \log(n) \ll \rho_n \ll 1$ implies $v_n = o_{\P}((\log \log (n))^{-1/2})$. Let $\epsilon > 0$. Then
\begin{align*}
    & \sup_{z \in \reals}\Bigg\{ \mathbb{P}\Big(\max_{\substack{j\neq 1 \\ k \in [s_n, n - s_n]}}\sqrt{\bar \I^{\IPW}(k,j)} < z - v_n, \max_{\substack{k \notin [n^a,n^b] \\ k \in [s_n, n - s_n]}}\sqrt{\bar \I^{\IPW}(k,1)} < z - v_n\Big) \\
    & \qquad - \mathbb{P}\Big(\max_{\substack{j\neq 1 \\ k \in [s_n, n - s_n]}} \sqrt{\bar \I^{\IPW}(k,j)} < z + v_n, \max_{k \in [s_n, n - s_n]}\sqrt{\bar \I^{\IPW}(k,1)} < z + v_n \Big)\Bigg\} \\
    & \geq \sup_{z \in \reals}\Bigg\{ \mathbb{P}\Big(\max_{\substack{j\neq 1 \\ k \in [s_n, n - s_n]}}\sqrt{\bar \I^{\IPW}(k,j)} < z - \frac{\epsilon}{\sqrt{2 \log \log(n)}}, \max_{\substack{k \notin [n^a,n^b] \\ k \in [s_n, n - s_n]}}\sqrt{\bar \I^{\IPW}(k,1)} < z - \frac{\epsilon}{\sqrt{2\log \log (n)}} \Big)\\
    & \qquad - \mathbb{P}\Big(\max_{\substack{j\neq 1 \\ k \in [s_n, n - s_n]}} \sqrt{\bar \I^{\IPW}(k,j)} < z + \frac{\epsilon}{\sqrt{2 \log \log(n)}}, \max_{k \in [s_n, n - s_n]}\sqrt{\bar \I^{\IPW}(k,1)} < z + \frac{\epsilon}{\sqrt{2\log \log (n)}} \Big)\Bigg\} \\
    & \qquad - \P(|v_n| > \frac{\epsilon}{\sqrt{2\log \log n}}).
\end{align*}
The last probability term is $o(1)$ and is omitted in the following liminf calculation.
Choosing $z_n(u) = \frac{2 \log \log (n) + 1/2 \log \log \log (n) + u - 1/2 \log(\pi)}{\sqrt{2 \log \log (n)}}$, and using the proof of Theorem~\ref{sa-thm:master}, we have
\begin{align*}
    & \liminf_{n \rightarrow \infty }\sup_{z \in \reals}\Bigg\{\mathbb{P}\Big(\max_{\substack{j\neq 1 \\ k \in [s_n, n - s_n]}}\sqrt{\bar \I^{\IPW}(k,j)} < z - \frac{\epsilon}{\sqrt{2 \log \log n}}, \max_{\substack{k \notin [n^a,n^b] \\ k \in [s_n, n - s_n]}}\sqrt{\bar \I^{\IPW}(k,1)} < z - \frac{\epsilon}{\sqrt{2 \log \log n}} \Big)\\
    & \qquad \qquad - \mathbb{P}\Big(\max_{\substack{j\neq 1 \\ k \in [s_n, n - s_n]}} \sqrt{\bar \I^{\IPW}(k,j)} < z + \frac{\epsilon}{\sqrt{2 \log \log n}}, \max_{k \in [s_n, n - s_n]}\sqrt{\bar \I^{\IPW}(k,1)} < z + \frac{\epsilon}{\sqrt{2 \log \log n}} \Big)\Bigg\} \\
    & \qquad \qquad - \P(|v_n| > \frac{\epsilon}{\sqrt{2 \log \log n}}) \\
    & \geq \liminf_{n \rightarrow \infty } \sup_{u \in \reals}\Bigg\{
        \mathbb{P}\Big(\max_{k \in [s_n, n - s_n]}\sqrt{\bar \I^{\IPW}(k,1)}
        < z_n(u) - \frac{\epsilon}{\sqrt{2 \log \log n}} \Big)^{p-1} \\
    & \qquad \qquad \times
        \P \Big(\max_{\substack{k \notin [n^a,n^b] \\ k \in [s_n, n - s_n]}}
        \sqrt{\bar \I^{\IPW}(k,1)}
        < z_n(u) - \frac{\epsilon}{\sqrt{2 \log \log n}} \Big)\\
    & \qquad \qquad -
        \mathbb{P}\Big(\max_{k \in [s_n, n - s_n]} \sqrt{\bar \I^{\IPW}(k,1)}
        < z_n(u) + \frac{\epsilon}{\sqrt{2 \log \log n}}\Big)^{p-1} \\
    & \qquad \qquad \times
        \P \Big(\max_{k \in [s_n, n - s_n]}\sqrt{\bar \I^{\IPW}(k,1)}
        < z_n(u) + \frac{\epsilon}{\sqrt{2 \log \log n}} \Big)\Bigg\} \\
    & \geq \sup_{u \in \reals}\Bigg\{\exp\Big(-(p-1)e^{-(u - \epsilon -\log(2))}\Big) \exp\Big(-e^{-(u - \epsilon -\log(2-(b-a)))}\Big) \\
    & \qquad \qquad - \exp\Big(-(p-1)e^{-(u + \epsilon -\log(2))}\Big) \exp\Big(-e^{-(u + \epsilon -\log(2))}\Big)\Bigg\}.
\end{align*}
Let $\epsilon \downarrow 0$. Combining the previous steps gives
\begin{align*}
    & \liminf_{n \rightarrow \infty} \mathbb{P}\Big(\exists \ell \in [p]: \max_k\I^{\DIM}(k,\ell) > \max_{k,j\neq \ell}\I^{\DIM}(k,j), \; \max_{k}\I^{\DIM}(k,\ell) > \max_{k \notin [n^a,n^b]}\I^{\DIM}(k,\ell) \Big) \\
    & \geq p\sup_{u \in \reals} \exp\Big(-(p-1)e^{-(u -\log(2))}\Big) \Big(\exp\Big(-e^{-(u -\log(2-(b-a)))}\Big) -
     \exp\Big(-e^{-(u -\log(2))}\Big)\Big) \\
    & \geq \frac{b - a}{2 e}.
\end{align*}

The last display is an existence statement over coordinates. The corresponding events are disjoint across split coordinates once the deterministic tie breaking rule is applied, and coordinate symmetry makes their probabilities equal asymptotically. Therefore, for each fixed $\ell\in[p]$,
\[
    \liminf_{n \rightarrow \infty} \mathbb{P}\Big( n^{a} \leq \hat{\imath}_{\DIM} \leq n^{b}, \hat{\jmath}_{\DIM}=\ell \Big) \geq \frac{b-a}{2pe}.
\]
Applying the same argument after reversing the ordering along coordinate $\ell$ gives the analogous lower bound for $n-n^b \leq \hat{\imath}_{\DIM} \leq n-n^a$.

\subsection{Proof of Theorem~\ref{sa-thm:rates_reg}}

The proofs follow essentially the same logic as the proof for Theorem~\ref{sa-thm:rates}, with some tricks for the random numerator in $\frac{\sum_{1 \leq i \leq k} d_i \varepsilon_i(1)}{\sum_{1 \leq i \leq k} d_i}$. The split-index comparison is invariant to multiplying all transformed residuals by a positive constant. Thus, in the displays below, we use the standardized transformed residual with variance one; multiplying back by $\sigma^2=\V[\tilde\varepsilon_i]$ restores the $\sigma$ factor in Theorem~\ref{sa-thm:rates_reg}.

\begin{center}
    \textbf{Part 1: Inconsistency for Uniform Estimation Rates}
\end{center}

Denote the optimal index for splitting based on the $\ell$'s coordinate by
\begin{align*}
    \hat{\imath}_{\reg,\ell} = \argmax_{k \in [n]} \I^{\DIM}(k,\ell), \qquad \ell \in [p].
\end{align*}
For notational simplicity, denote
\begin{align*}
    \bar \tau^\DIM_L(k, \ell) & = \tau^\DIM_L(k, \ell) - \tau = \frac{\sum_{1 \leq i \leq k}d_{\pi_{\ell}(i)} \varepsilon_{\pi_{\ell}(i)}(1)}{\sum_{1 \leq i \leq k}d_{\pi_{\ell}(i)}} - \frac{\sum_{1 \leq i \leq k}(1 - d_{\pi_{\ell}(i)}) \varepsilon_{\pi_{\ell}(i)}(0)}{\sum_{1 \leq i \leq k}(1 - d_{\pi_{\ell}(i)})}, \\
    \bar \tau_L^\DIM(\ell) & = \bar \tau_L^\DIM(\hat{\imath}_{\reg,\ell},\ell),\\
    \bar \tau^\DIM_R(k,\ell) & = \tau^\DIM_R(k,\ell) - \tau = \frac{\sum_{k < i \leq n}d_{\pi_{\ell}(i)} \varepsilon_{\pi_{\ell}(i)}(1)}{\sum_{k < i \leq n}d_{\pi_{\ell}(i)}} - \frac{\sum_{k < i \leq n}(1 - d_{\pi_{\ell}(i)}) \varepsilon_{\pi_{\ell}(i)}(0)}{\sum_{k < i \leq n}(1 - d_{\pi_{\ell}(i)})}, \\
    \bar \tau_R^\DIM(\ell) & = \bar \tau_R^\DIM(\hat{\imath}_{\reg,\ell},\ell),
\end{align*}
and consider the event
\begin{align*}
    \mathtt{Imbalance}_{\ell}^\DIM
    =
    \{\hat{\jmath}_\reg=\ell,\ \hat{\imath}_{\reg,\ell}< n^b
    \text{ or } \hat{\imath}_{\reg,\ell}> n-n^b\},
    \qquad \ell \in [p].
\end{align*}
Since we assume $\mu_0 \equiv c_0$ and $\mu_1 \equiv c_1$ with $c_1 - c_0 = \tau$, we have on $\mathtt{Imbalance}_{\ell}^\DIM \cap \{\hat{\imath}_{\reg,\ell} \leq n/2\}$,
\begin{align*}
    & \sup_{x \in \X}|\hat\tau(x) - \tau|^2
     \geq \bar \tau^\DIM_L(\ell)^2 \\
    & \geq \frac{1}{\min\{\hat{\imath}_{\reg,\ell}, n - \hat{\imath}_{\reg,\ell}\}} \bigg(\hat{\imath}_{\reg,\ell} \bar{\tau}_L^{\DIM}(\ell)^2 + (n - \hat{\imath}_{\reg,\ell}) \bar{\tau}_R^{\DIM}(\ell)^2 - (n - \hat{\imath}_{\reg,\ell}) \bar{\tau}_R^{\DIM}(\ell)^2 \Indicator(\hat{\imath}_{\reg,\ell} \leq n/2) \bigg).
\end{align*}
Take $\bar\tau^{\DIM} = \frac{\hat{\imath}_{\reg,\ell}}{n} \bar\tau^\DIM_L(\ell) + \frac{n - \hat{\imath}_{\reg,\ell}}{n} \bar\tau^\DIM_R(\ell)$. Then
\begin{align*}
    \hat{\imath}_{\reg,\ell} \bar{\tau}_L^{\DIM}(\ell)^2 + (n - \hat{\imath}_{\reg,\ell}) \bar{\tau}_R^{\DIM}(\ell)^2 
    & \geq \hat{\imath}_{\reg,\ell} \bar{\tau}_L^{\DIM}(\ell)^2 + (n - \hat{\imath}_{\reg,\ell}) \bar{\tau}_R^{\DIM}(\ell)^2 - n(\bar\tau^\DIM)^2 \\
    & = \frac{\hat{\imath}_{\reg,\ell}(n - \hat{\imath}_{\reg,\ell})}{n}\bigg(\bar\tau_L^\DIM(\ell) - \bar \tau_R^\DIM(\ell)\bigg)^2.
\end{align*}
By Lemma~\ref{lem: approximation -- balanced region} and Lemma~\ref{lem: approximation -- imbalanced region} with $r_n = s_n = \exp((\log n)^{\rho_n})$ for $\log \log \log \log (n)/\log \log(n) \ll \rho_n \ll 1$, 
\begin{align*}
    \frac{\hat{\imath}_{\reg,\ell}(n - \hat{\imath}_{\reg,\ell})}{n}\bigg(\bar\tau_L^\DIM(\ell) - \bar \tau_R^\DIM(\ell)\bigg)^2  
    & = \I^{\DIM}(\hat{\imath}_{\reg,\ell},\ell) \\
    & = \max_{1 \leq k < n}\I^{\DIM}(k,\ell) \\
    & \geq \max_{1 \leq k < n}\bar \I^\IPW(k,\ell) + o_\P(\log \log n).
\end{align*}
By Theorem A.4.1 in \cite{csorgo1997limit}, under this standardization, $\max_{1 \leq k < n} \bar \I^\IPW(k,\ell) = 2 \log \log (n) (1 + o_\P(1))$. 
Moreover,
\begin{align*}
    \hat{\imath}_{\reg,\ell} \bar{\tau}_L^{\DIM}(\ell)^2 \Indicator(\hat{\imath}_{\reg,\ell} > n/2)
    & \leq \max_{k > n/2} k \cdot \bigg(
    \frac{\sum_{1 \leq i \leq k}d_{\pi_{\ell}(i)} \varepsilon_{\pi_{\ell}(i)}(1)}
    {\sum_{1 \leq i \leq k}d_{\pi_{\ell}(i)}} \\
    & \qquad \qquad
    - \frac{\sum_{1 \leq i \leq k}(1 - d_{\pi_{\ell}(i)}) \varepsilon_{\pi_{\ell}(i)}(0)}
    {\sum_{1 \leq i \leq k}(1 - d_{\pi_{\ell}(i)})} \bigg)^2 \\
    & \leq 2\max_{k > n/2} k \cdot
    \bigg(\frac{\sum_{1 \leq i \leq k}d_{\pi_{\ell}(i)} \varepsilon_{\pi_{\ell}(i)}(1)}
    {\sum_{1 \leq i \leq k}d_{\pi_{\ell}(i)}}\bigg)^2 \\
    & \qquad + 2\max_{k > n/2} k \cdot
    \bigg(\frac{\sum_{1 \leq i \leq k}(1 - d_{\pi_{\ell}(i)}) \varepsilon_{\pi_{\ell}(i)}(0)}
    {\sum_{1 \leq i \leq k}(1 - d_{\pi_{\ell}(i)})} \bigg)^2.
\end{align*}
For simplicity in showing the upper bound for $\hat{\imath}_{\reg,\ell} \bar{\tau}_L^{\DIM}(\ell)^2 \Indicator(\hat{\imath}_{\reg,\ell} > n/2) $, we assume $\pi$ is the identity permutation. Take $b_i = \sum_{1 \leq j \leq i} d_j$. By Equation (8) from \cite{shorack1976inequalities}, for any $\lambda > 0$,
\begin{align*}
    \P \bigg(\max_{k > n/2} \Big|\frac{\sum_{i = 1}^k d_i \varepsilon_i(1)}{\sum_{i =1}^k d_i}\Big| \geq \lambda \bigg| (d_i)_{1 \leq i \leq n}\bigg) & \leq 16 \sum_{i > n/2} \frac{d_i\mathbb{V}[\varepsilon_i(1)]}{b_i^2} \lambda^{-2} \\
    & \leq 16 \sum_{i > b_{n/2}}\frac{1}{i^2} \lambda^{-2} \V[\varepsilon_i(1)] \\
    & \leq \frac{8}{3}\pi^2 \lambda^{-2} \V[\varepsilon_i(1)] \frac{1}{b_{n/2}},
\end{align*}
And since $d_i$'s are i.i.d with $\E[d_i] = \xi > 0$, we have
\begin{align*}
    (b_{n/2})^{-1} = (n/2)^{-1} \Big(\xi + \frac{2}{n}\sum_{i = 1}^{n/2}(d_i - \xi)\Big)^{-1} = O_\P(n^{-1}).
\end{align*}
Unconditioning on $(d_i)_{1 \leq i \leq n}$ gives
\begin{align*}
    \max_{k \geq n/2} k \cdot \bigg(\frac{\sum_{i = 1}^k d_i \varepsilon_i(1)}{\sum_{i =1}^k d_i}\bigg)^2 = O_\P(1) = o_\P(\log \log (n)).
\end{align*}
By a similar term for control, and a symmetric argument for the right node, 
\begin{align*}
    \hat{\imath}_{\reg,\ell} \bar{\tau}_L^{\DIM}(\ell)^2 \Indicator(\hat{\imath}_{\reg,\ell} > n/2) + (n - \hat{\imath}_{\reg,\ell}) \bar{\tau}_R^{\DIM}(\ell)^2 \Indicator(\hat{\imath}_{\reg,\ell} \leq n/2) = o_\P(\log \log (n)).
\end{align*}
Fix $\epsilon > 0$. Consider the events
\begin{align*}
    A_{\ell}^{\epsilon} & = \bigg\{\hat{\imath}_{\reg,\ell} \bar{\tau}_L^{\DIM}(\ell)^2 + (n - \hat{\imath}_{\reg,\ell}) \bar{\tau}_R^{\DIM}(\ell)^2 \geq (2 - \epsilon) \log \log(n)\bigg\}, \\
    B_{\ell}^{\epsilon} & = \bigg\{\hat{\imath}_{\reg,\ell} \bar{\tau}_L^{\DIM}(\ell)^2 \Indicator(\hat{\imath}_{\reg,\ell} > n/2) + (n - \hat{\imath}_{\reg,\ell}) \bar{\tau}_R^{\DIM}(\ell)^2 \Indicator(\hat{\imath}_{\reg,\ell} \leq n/2)  \leq 2 \epsilon  \log \log (n)\bigg\}.
\end{align*}
The above arguments show that $\liminf_{n \rightarrow \infty} \P(A_{\ell}^{\epsilon}) = \liminf_{n \rightarrow \infty} \P(B_{\ell}^{\epsilon}) = 1$. From Theorem~\ref{sa-thm: imbalance reg}, after summing the left and right boundary events and letting the lower boundary exponent tend to zero, $\liminf_{n\to\infty}\P(\mathtt{Imbalance}_{\ell}^\DIM) \geq b/(pe)$. It then follows from a union bound argument that
\begin{equation*}
    \liminf_{n\to\infty} \mathbb{P}\bigg(\sup_{\bx\in\mathcal{X}}|\hat\tau_{\mathtt{DIM}}(\bx) - \tau| \geq \sigma n^{-b/2}\sqrt{(2+o(1))\log\log(n)}\bigg) \geq \frac{b}{e}.
\end{equation*}

\begin{center}
\textbf{Part 2: Inconsistency for Points near the Boundary}
\end{center}

Define the left boundary split event
\[
    \mathtt{Off}_{\ell}^{\DIM}
    =
    \left\{n^a\leq\hat{\imath}_{\reg,\ell}\leq n^b,\ \hat{\jmath}_\reg=\ell\right\}.
\]
By Theorem~\ref{sa-thm: imbalance reg}, $\P(\mathtt{Off}_{\ell}^{\DIM})\geq (b-a)/(2pe)+o(1)$. On this event,
\[
    x_{\pi_\ell(\hat{\imath}_{\reg,\ell}),\ell}
    \geq
    x_{\pi_\ell(\lceil n^a\rceil),\ell}.
\]
For any deterministic sequence $\eta_n\downarrow0$, the marginal probability integral transform gives
\[
    \P\!\left(x_{\pi_\ell(\lceil n^a\rceil),\ell}\geq \eta_n n^{a-1}\right)\to1.
\]
Thus, with probability at least $(b-a)/(2pe)+o(1)$, $\mathtt{Off}_{\ell}^{\DIM}$ occurs and the selected left child contains every $\bz \in \mathcal{X}_n(a,\eta_n)$ such that $z_{\ell} \leq \eta_n n^{a - 1}$. On this event,
\begin{align*}
    |\hat\tau_{\mathtt{DIM}}(\bz) - \tau|^2 
    = \bar \tau_L^\DIM(\ell)^2
    & \geq \frac{1}{\hat{\imath}_{\reg,\ell}} \bigg(\hat{\imath}_{\reg,\ell} \bar \tau_L^\DIM(\ell)^2 + (n - \hat{\imath}_{\reg,\ell}) \bar \tau_R^\DIM(\ell)^2 - (n - \hat{\imath}_{\reg,\ell})\bar \tau_R^\DIM(\ell)^2\bigg) \\
    & \geq \frac{1}{\hat{\imath}_{\reg,\ell}} \bigg(\max_{1 \leq k < n}\Big(k \bar \tau_L^\DIM(k,\ell)^2 + (n - k) \bar \tau_R^\DIM(k,\ell)^2\Big) - \max_{k \leq n^b}(n - k) \bar \tau_R^\DIM(k,\ell)^2\bigg) \\
    & \geq \frac{(2 + o_\P(1)) \log \log (n)}{\hat{\imath}_{\reg,\ell}} \\
    & \geq \frac{(2 + o_\P(1)) \log \log (n)}{n^b},
\end{align*}
The penultimate inequality follows from the same explicit boundary transfer used in Part 1. The full DIM quadratic split maximum along the selected coordinate is $(2+o_\P(1))\log\log(n)$ on the imbalanced window event, while the long child remainder is negligible:
\[
    \max_{k\leq n^b}(n-k)\bar\tau_R^\DIM(k,\ell)^2=o_\P(\log\log(n)).
\]
Indeed, after writing the DIM contrast in terms of the transformed residual partial sums, the condition $k\leq n^b$ implies $n-k\asymp n$; the terminal long child average is therefore bounded by the full sample transformed residual average plus the first $n^b$ partial sum maximum, giving the same $O_\P(1)+O_\P(n^b\log\log(n)/n)$ bound as above. By a symmetry argument for the event $\{n - n^b \leq \hat{\imath}_{\reg,\ell} \leq n - n^a\}$, we have
\begin{equation*}
    \liminf_{n\to\infty} \inf_{\bx\in \mathcal{X}_n(a,\eta_n)} \mathbb{P}\Big(|\hat\tau_{\mathtt{DIM}}(\bx) - \tau| \geq \sigma n^{-b/2}\sqrt{(2+o(1))\log\log(n)}\Big) \geq \frac{b-a}{2 p e},
\end{equation*}
for any deterministic sequence $\eta_n\downarrow0$, where $\sigma^2 = \V[\frac{d_i \varepsilon_i(1)}{\xi} - \frac{(1 - d_i) \varepsilon_i(0)}{1 - \xi}]$. This completes the proof of Theorem~\ref{sa-thm:rates_reg}.

\subsection{Proof of Theorem~\ref{sa-thm: uniform minimax rates regression}}

Recall the centered DIM contrast $\Delta(\nodet)$ defined above. Under Assumption~\ref{sa-assump: dgp-causal}, $\mu_1(\bx)\equiv c_1$, $\mu_0(\bx)\equiv c_0$, and $\tau=c_1-c_0$. Hence, for every valid terminal node $\nodet$,
\[
    \hat\tau_{\DIM}(\nodet;\treeT,\dataD)-\tau
    =
    \Delta(\nodet).
\]

\begin{lemma}[Uniform rectangle control]\label{sa-lem:uniform-rectangle-control-arm-transfer}
Suppose Assumption~\ref{sa-assump: dgp-causal} holds and $p$ is fixed. Let $\mathcal R_n$ be the finite collection of all axis-aligned sample rectangles whose coordinate endpoints are chosen from the observed coordinate values and the support boundaries. Then
\[
    |\mathcal R_n|\leq (n+2)^{2p}\leq 3^{2p}n^{2p}.
\]
Let $\underline\xi=\xi\wedge(1-\xi)$. There exist positive constants
$C_0,C_{\max},C_{\xi}$, and $C_{\bar\varepsilon}$, depending only on
$p,\xi$ and the exponential moment constants in
Assumption~\ref{sa-assump: dgp-causal}, such that the following holds. Set
$h_n=C_0\log n$. Then there is an event $\mathcal G_n$ satisfying
$\P(\mathcal G_n)\to1$ such that, on $\mathcal G_n$, the following bounds hold
simultaneously for all $\nodet\in\mathcal R_n$:
\[
    \max_{\substack{1\leq i\leq n\\ d\in\{0,1\}}}
    |\varepsilon_i(d)|
    \leq C_{\max}\log n,
\]
and, whenever $n(\nodet)\geq h_n$,
\[
    \left|
        \frac{n_1(\nodet)}{n(\nodet)}-\xi
    \right|
    \leq
    C_{\xi}\sqrt{\frac{\log n}{n(\nodet)}},
\]
and $n_0(\nodet)\wedge n_1(\nodet)>0$. Moreover,
\[
    |\bar\varepsilon_{d,\nodet}|
    \leq
    C_{\bar\varepsilon}\sqrt{\frac{\log n}{n(\nodet)}},
    \qquad d\in\{0,1\}.
\]
In particular, on $\mathcal G_n$, whenever $n(\nodet)\geq h_n$,
\[
    \frac{\xi}{2}
    \leq
    \frac{n_1(\nodet)}{n(\nodet)}
    \leq
    1-\frac{1-\xi}{2}
\]
for all sufficiently large $n$.
\end{lemma}

\begin{proof}
The cardinality bound follows because each coordinate interval is determined by an ordered pair of endpoints chosen from at most $n+2$ possibilities. Thus there are at most $(n+2)^2$ choices in each coordinate and at most $(n+2)^{2p}$ rectangles. Since $n+2\leq 3n$ for $n\geq1$, this gives the displayed bound.

Let $\mathcal G_{n,\max}$ be the maximum error event. The exponential moment
condition in Assumption~\ref{sa-assump: dgp-causal} implies that there are
positive constants $A_{\varepsilon}$ and $a_{\varepsilon}$ such that
\[
    \P(|\varepsilon_i(d)|>t)
    \leq
    A_{\varepsilon}\exp(-a_{\varepsilon}t),
    \qquad t\geq0,\quad d\in\{0,1\}.
\]
Choose $C_{\max}$ so that $a_{\varepsilon}C_{\max}>2$. Then
\[
    \P\left(
        \max_{\substack{1\leq i\leq n\\ d\in\{0,1\}}}
        |\varepsilon_i(d)|>C_{\max}\log n
    \right)
    \leq
    2A_{\varepsilon}n^{1-a_{\varepsilon}C_{\max}}
    =
    o(1).
\]
Thus $\P(\mathcal G_{n,\max})\to1$.

Next condition on $\bx_1,\ldots,\bx_n$. For any fixed $\nodet\in\mathcal R_n$,
\[
    n_1(\nodet)\mid \bx_1,\ldots,\bx_n
    \sim
    \mathrm{Binomial}(n(\nodet),\xi).
\]
Hoeffding's inequality gives, for any $A>0$,
\[
    \P\left(
        |n_1(\nodet)-\xi n(\nodet)|
        >
        A\sqrt{n(\nodet)\log n}
        \,\middle|\,
        \bx_1,\ldots,\bx_n
    \right)
    \leq
    2n^{-2A^2}.
\]
Choose $C_{\xi}$ so that $2C_{\xi}^2>2p+2$. Since
$|\mathcal R_n|\leq 3^{2p}n^{2p}$, a union bound gives an event
$\mathcal G_{n,\xi}$ satisfying $\P(\mathcal G_{n,\xi})\to1$ on which
\[
    \left|
        \frac{n_1(\nodet)}{n(\nodet)}-\xi
    \right|
    \leq
    C_{\xi}\sqrt{\frac{\log n}{n(\nodet)}}
\]
for every $\nodet\in\mathcal R_n$ with $n(\nodet)>0$. We shall choose
$C_0$ large enough that
\[
    C_0\geq \frac{4C_{\xi}^2}{\underline\xi^2}.
\]
Then, on $\mathcal G_{n,\xi}$, if $n(\nodet)\geq h_n=C_0\log n$, the treatment
share error is at most $\underline\xi/2$. Hence
\[
    \frac{\xi}{2}
    \leq
    \frac{n_1(\nodet)}{n(\nodet)}
    \leq
    1-\frac{1-\xi}{2},
\]
and $n_0(\nodet)\wedge n_1(\nodet)>0$.

Finally fix $d\in\{0,1\}$. Conditional on the covariates and treatment
assignments, for any fixed $\nodet\in\mathcal R_n$,
\[
    S_{d,\nodet}
    =
    \sum_{i=1}^n
    \Indicator(\bx_i\in\nodet)\Indicator(d_i=d)\varepsilon_i(d)
\]
is a sum of independent mean-zero subexponential random variables. We apply Bernstein's inequality only to rectangles satisfying $n(\nodet)\geq h_n$. For $u=A\sqrt{n(\nodet)\log n}$,
\[
    \P\left(
        |S_{d,\nodet}|>u
        \,\middle|\,
        \bx_1,\ldots,\bx_n,d_1,\ldots,d_n
    \right)
    \leq
    2\exp\left[
        -c\min\left\{
            \frac{A^2 n(\nodet)\log n}{n_d(\nodet)\vee1},
            A\sqrt{n(\nodet)\log n}
        \right\}
    \right].
\]
Since $n_d(\nodet)\leq n(\nodet)$ and $n(\nodet)\geq h_n=C_0\log n$, the right
hand side is bounded by
\[
    2n^{-c\min\{A^2,A\sqrt{C_0}\}}.
\]
Choose $A=A_{\bar\varepsilon}$ and then increase $C_0$, if necessary, so that
$c\min\{A_{\bar\varepsilon}^2,A_{\bar\varepsilon}\sqrt{C_0}\}>2p+2$. A union
bound over $d\in\{0,1\}$ and over all $\nodet\in\mathcal R_n$ with
$n(\nodet)\geq h_n$ gives an event $\mathcal G_{n,\bar\varepsilon}$ satisfying
$\P(\mathcal G_{n,\bar\varepsilon})\to1$ on which
\[
    |S_{d,\nodet}|
    \leq
    A_{\bar\varepsilon}\sqrt{n(\nodet)\log n}
\]
simultaneously for all such $d$ and $\nodet$. On
$\mathcal G_{n,\xi}\cap\mathcal G_{n,\bar\varepsilon}$,
$n_d(\nodet)\geq \underline\xi n(\nodet)/2$. Therefore
\[
    |\bar\varepsilon_{d,\nodet}|
    =
    \frac{|S_{d,\nodet}|}{n_d(\nodet)}
    \leq
    \frac{2A_{\bar\varepsilon}}{\underline\xi}
    \sqrt{\frac{\log n}{n(\nodet)}} ,
\]
so the displayed residual average bound holds with
$C_{\bar\varepsilon}=2A_{\bar\varepsilon}/\underline\xi$. The event
\[
    \mathcal G_n
    =
    \mathcal G_{n,\max}\cap\mathcal G_{n,\xi}\cap
    \mathcal G_{n,\bar\varepsilon}
\]
has probability tending to one and satisfies all claimed conclusions.
\end{proof}

\begin{lemma}[Refinement transfer with random arm weights]\label{sa-lem:refinement-transfer-causal}
Suppose Assumption~\ref{sa-assump: dgp-causal} holds. Fix $0<a<b<1$. Let $\nodet_n$ be a possibly random sample rectangle satisfying
\[
    M_n=n(\nodet_n)\in[n^a,n^b],
    \qquad
    n_0(\nodet_n)\wedge n_1(\nodet_n)>0.
\]
Let $\mathcal P(\nodet_n)$ be a possibly random valid recursive refinement of $\nodet_n$ into terminal descendants. Assume that every $\nodet'\in\mathcal P(\nodet_n)$ belongs to $\mathcal R_n$, that the descendants partition $\nodet_n$, that $n_0(\nodet')\wedge n_1(\nodet')>0$ for every $\nodet'$, and that $|\mathcal P(\nodet_n)|\leq J_n$ for a deterministic sequence $J_n$. If
\[
    J_n\log^2 n
    =
    o\!\left(n^{a/2}\sqrt{\log\log n}\right),
\]
then
\[
    \Delta(\nodet_n)
    =
    \sum_{\nodet'\in\mathcal P(\nodet_n)}
    \frac{n_1(\nodet')}{n_1(\nodet_n)}
    \Delta(\nodet')
    +
    o_\P\!\left(M_n^{-1/2}\sqrt{\log\log n}\right).
\]
\end{lemma}

\begin{proof}
Work on the event $\mathcal G_n$ from Lemma~\ref{sa-lem:uniform-rectangle-control-arm-transfer}. Since $M_n\geq n^a$, we have $M_n\geq h_n$ for all sufficiently large $n$. Therefore, on $\mathcal G_n$,
\[
    \hat\xi_{\nodet_n}
    =
    \frac{n_1(\nodet_n)}{M_n}
\]
is bounded away from zero and one. More precisely, with $\underline\xi=\xi\wedge(1-\xi)$,
\[
    n_1(\nodet_n)\geq \frac{\xi}{2}M_n,
    \qquad
    n_0(\nodet_n)\geq \frac{1-\xi}{2}M_n,
    \qquad
    \hat\xi_{\nodet_n}(1-\hat\xi_{\nodet_n})
    \geq
    \frac{\xi(1-\xi)}{4}.
\]
Let $C_\star=\max\{C_{\max},C_{\xi},C_{\bar\varepsilon}\}$, where
$C_{\max},C_{\xi}$, and $C_{\bar\varepsilon}$ are the constants from
Lemma~\ref{sa-lem:uniform-rectangle-control-arm-transfer}.
For any descendant $\nodet'$ with
$r_{\nodet'}=n(\nodet')>0$, write
\[
    \hat\xi_{\nodet'}
    =
    \frac{n_1(\nodet')}{r_{\nodet'}}.
\]

For each $\nodet'\in\mathcal P(\nodet_n)$, define
\[
    \alpha_{\nodet'}
    =
    \frac{n_1(\nodet')}{n_1(\nodet_n)},
    \qquad
    \beta_{\nodet'}
    =
    \frac{n_0(\nodet')}{n_0(\nodet_n)}.
\]
These are the treated and control weights with which the descendant averages
aggregate back to the parent. Because the terminal descendants partition
$\nodet_n$,
\[
    \bar\varepsilon_{1,\nodet_n}
    =
    \sum_{\nodet'\in\mathcal P(\nodet_n)}
    \alpha_{\nodet'}\bar\varepsilon_{1,\nodet'},
    \qquad
    \bar\varepsilon_{0,\nodet_n}
    =
    \sum_{\nodet'\in\mathcal P(\nodet_n)}
    \beta_{\nodet'}\bar\varepsilon_{0,\nodet'}.
\]
Subtracting gives the exact identity
\[
    \Delta(\nodet_n)
    =
    \sum_{\nodet'\in\mathcal P(\nodet_n)}
    \alpha_{\nodet'}\Delta(\nodet')
    +
    R_n,
\]
where
\[
    R_n
    =
    \sum_{\nodet'\in\mathcal P(\nodet_n)}
    (\alpha_{\nodet'}-\beta_{\nodet'})
    \bar\varepsilon_{0,\nodet'}.
\]
It remains to show that $R_n$ is of smaller order than the root child fluctuation.

For each $\nodet'\in\mathcal P(\nodet_n)$,
\[
    \alpha_{\nodet'}-\beta_{\nodet'}
    =
    \frac{r_{\nodet'}}{M_n}
    \frac{\hat\xi_{\nodet'}-\hat\xi_{\nodet_n}}
         {\hat\xi_{\nodet_n}(1-\hat\xi_{\nodet_n})}.
\]
Thus, on $\mathcal G_n$,
\[
    |\alpha_{\nodet'}-\beta_{\nodet'}|
    \leq
    \frac{4}{\xi(1-\xi)}
    \frac{r_{\nodet'}}{M_n}
    |\hat\xi_{\nodet'}-\hat\xi_{\nodet_n}|.
\]

We split the sum in $R_n$ according to whether the descendant is large enough
for the uniform rectangle bounds to apply. First suppose $r_{\nodet'}\geq h_n$.
On $\mathcal G_n$,
\[
    |\hat\xi_{\nodet'}-\hat\xi_{\nodet_n}|
    \leq
    C_\star
    \sqrt{\frac{\log n}{r_{\nodet'}}}
    +
    C_\star
    \sqrt{\frac{\log n}{M_n}},
    \qquad
    |\bar\varepsilon_{0,\nodet'}|
    \leq
    C_\star
    \sqrt{\frac{\log n}{r_{\nodet'}}}.
\]
Thus
\[
\begin{aligned}
    |(\alpha_{\nodet'}-\beta_{\nodet'})
        \bar\varepsilon_{0,\nodet'}|
    &\leq
    \frac{4C_\star^2}{\xi(1-\xi)}
    \frac{r_{\nodet'}}{M_n}
    \left(
        \sqrt{\frac{\log n}{r_{\nodet'}}}
        +
        \sqrt{\frac{\log n}{M_n}}
    \right)
    \sqrt{\frac{\log n}{r_{\nodet'}}}  \\
    &\leq
    \frac{8C_\star^2}{\xi(1-\xi)}
    \frac{\log n}{M_n}
    \leq
    C_{\mathrm{large}}\frac{\log n}{M_n},
\end{aligned}
\]
where $C_{\mathrm{large}}=8C_\star^2/\{\xi(1-\xi)\}$ and the last inequality uses $r_{\nodet'}\leq M_n$. Summing over at most $J_n$ such descendants gives a contribution bounded by $C_{\mathrm{large}}J_n\log n/M_n$.

Now suppose $r_{\nodet'}<h_n$. For such small descendants, the treatment share
bound need not be useful, so we use only the validity of the parent and the
maximum error bound. First,
\[
    |\alpha_{\nodet'}-\beta_{\nodet'}|
    \leq
    \alpha_{\nodet'}+\beta_{\nodet'}
    \leq
    \left(\frac{2}{\xi}+\frac{2}{1-\xi}\right)
    \frac{r_{\nodet'}}{M_n},
\]
because $n_1(\nodet_n)\geq \xi M_n/2$ and $n_0(\nodet_n)\geq (1-\xi)M_n/2$ on $\mathcal G_n$. Also, on $\mathcal G_n$,
\[
    |\bar\varepsilon_{0,\nodet'}|
    \leq
    \max_{\substack{1\leq i\leq n\\ d\in\{0,1\}}}
    |\varepsilon_i(d)|
    \leq
    C_\star
    \log n.
\]
Hence
\[
    |(\alpha_{\nodet'}-\beta_{\nodet'})
        \bar\varepsilon_{0,\nodet'}|
    \leq
    C_{\mathrm{small},0}
    \frac{r_{\nodet'}\log n}{M_n}
    \leq
    C_{\mathrm{small}}
    \frac{\log^2 n}{M_n},
\]
where $C_{\mathrm{small},0}=C_\star\{2/\xi+2/(1-\xi)\}$ and $C_{\mathrm{small}}=C_{\mathrm{small},0}C_0$. Summing over at most $J_n$ small descendants gives a contribution bounded by $C_{\mathrm{small}}J_n\log^2 n/M_n$. Combining the two parts, with $C_{\mathrm{rem}}=C_{\mathrm{large}}+C_{\mathrm{small}}$,
\[
    |R_n|
    \leq
    C_{\mathrm{rem}}
    \frac{J_n\log^2 n}{M_n}
\]
on $\mathcal G_n$. Since $\P(\mathcal G_n)\to1$,
\[
    R_n
    =
    O_\P\!\left(\frac{J_n\log^2 n}{M_n}\right)
    =
    o_\P\!\left(M_n^{-1/2}\sqrt{\log\log n}\right),
\]
because
\[
    \frac{J_n\log^2 n/M_n}
         {M_n^{-1/2}\sqrt{\log\log n}}
    =
    \frac{J_n\log^2 n}{\sqrt{M_n\log\log n}}
    \leq
    \frac{J_n\log^2 n}{n^{a/2}\sqrt{\log\log n}}
    \to0.
\]
Substituting this bound into the exact identity above proves the lemma.
\end{proof}

\begin{coro}[Terminal inheritance from a root child with random size]\label{sa-coro:terminal-inheritance-arm-transfer}
Under the conditions of Lemma~\ref{sa-lem:refinement-transfer-causal}, suppose that $\mathcal E_n$ is any event on which
\[
    M_n=n(\nodet_n)\in[n^a,n^b],
    \qquad
    |\Delta(\nodet_n)|
    \geq
    c_0M_n^{-1/2}\sqrt{\log\log n}
\]
for some constant $c_0>0$. Then
\[
    \P\left(
        \mathcal E_n,\
        \max_{\nodet'\in\mathcal P(\nodet_n)}
        |\Delta(\nodet')|
        \geq
        \frac{c_0}{2}M_n^{-1/2}\sqrt{\log\log n}
    \right)
    \geq
    \P(\mathcal E_n)-o(1).
\]
Consequently,
\[
    \P\left(
        \mathcal E_n,\
        \max_{\nodet'\in\mathcal P(\nodet_n)}
        |\Delta(\nodet')|
        \geq
        \frac{c_0}{2}n^{-b/2}\sqrt{\log\log n}
    \right)
    \geq
    \P(\mathcal E_n)-o(1).
\]
\end{coro}

\begin{proof}
By Lemma~\ref{sa-lem:refinement-transfer-causal},
\[
    \Delta(\nodet_n)
    =
    \sum_{\nodet'\in\mathcal P(\nodet_n)}
    \alpha_{\nodet'}\Delta(\nodet')
    +
    \rho_n,
    \qquad
    \alpha_{\nodet'}=\frac{n_1(\nodet')}{n_1(\nodet_n)}
\]
where $\rho_n=o_\P(M_n^{-1/2}\sqrt{\log\log n})$.
Since $\alpha_{\nodet'}\geq0$ and $\sum_{\nodet'}\alpha_{\nodet'}=1$,
\[
    \left|
        \sum_{\nodet'\in\mathcal P(\nodet_n)}
        \alpha_{\nodet'}\Delta(\nodet')
    \right|
    \leq
    \max_{\nodet'\in\mathcal P(\nodet_n)}
    |\Delta(\nodet')|.
\]
Set
\[
    \lambda_n=M_n^{-1/2}\sqrt{\log\log n}.
\]
Then $\rho_n/\lambda_n=o_\P(1)$, and therefore
\[
    \P\left(
        \mathcal E_n,\
        |\rho_n|>\frac{c_0}{2}\lambda_n
    \right)
    \to0.
\]
On $\mathcal E_n$, outside the event in the preceding display,
\[
    \max_{\nodet'\in\mathcal P(\nodet_n)}
    |\Delta(\nodet')|
    \geq
    |\Delta(\nodet_n)|
    -
    |\rho_n|
    \geq
    \frac{c_0}{2}M_n^{-1/2}\sqrt{\log\log n}.
\]
Since $M_n\leq n^b$, we also have
$M_n^{-1/2}\sqrt{\log\log n}\geq n^{-b/2}\sqrt{\log\log n}$. This gives the second display.
\end{proof}

We can now complete the proof of Theorem~\ref{sa-thm: uniform minimax rates regression}. Set $a=b/2$. The proof of Theorem~\ref{sa-thm:rates_reg} gives more than the final depth one supremum bound. Before replacing the selected child size by the deterministic upper bound $n^b$, it gives a root child event. Combining the left boundary event with its symmetric right boundary counterpart, there exist constants $c_{\mathrm{root}},q_{\mathrm{root}}>0$, depending only on the distributional constants in Assumption~\ref{sa-assump: dgp-causal} and on the fixed value of $p$, and an event $\mathcal E_{\DIM,n}$ with
\[
    \liminf_{n\to\infty}\P(\mathcal E_{\DIM,n})
    \geq
    q_{\mathrm{root}}(b-a),
\]
such that, on $\mathcal E_{\DIM,n}$, the DIM root split creates a child $\nodet_n$ satisfying
\[
    M_n=n(\nodet_n)\in[n^a,n^b],
    \qquad
    n_0(\nodet_n)\wedge n_1(\nodet_n)>0,
    \qquad
    |\Delta(\nodet_n)|
    \geq
    c_{\mathrm{root}}M_n^{-1/2}\sqrt{\log\log n}.
\]
Here $\nodet_n$ is the left root child on the left boundary event and the right
root child on the reflected event. The validity condition in the tree
construction ensures that both treatment-arm denominators in $\nodet_n$ are
positive.

Let $\mathcal P(\nodet_n)$ be the terminal descendants of this root child in
the final NSS-DIM tree. Later splits only refine $\nodet_n$ into sample
rectangles, and valid recursive splitting preserves positive treated and
control counts in every terminal descendant. Since the tree has depth at most
$K$, $|\mathcal P(\nodet_n)|\leq 2^K$. The assumed depth condition implies the
leaf count condition in Lemma~\ref{sa-lem:refinement-transfer-causal} with
$a=b/2$. Applying Corollary~\ref{sa-coro:terminal-inheritance-arm-transfer}
with $c_0=c_{\mathrm{root}}$ gives
\[
    \P\left(
        \mathcal E_{\DIM,n},\
        \max_{\nodet'\in\mathcal P(\nodet_n)}
        |\Delta(\nodet')|
        \geq
        \frac{c_{\mathrm{root}}}{2}n^{-b/2}\sqrt{\log\log n}
    \right)
    \geq
    \P(\mathcal E_{\DIM,n})-o(1).
\]
For every terminal descendant $\nodet'$ and every $\bx\in\nodet'$,
\[
    \hat\tau_{\DIM}(\bx)-\tau
    =
    \Delta(\nodet').
\]
Thus the supremum over $\bx\in\X$ is at least the maximum over these terminal
descendants. Consequently,
\[
    \liminf_{n\to\infty}
    \P\left(
        \sup_{\bx\in\X}
        |\hat\tau_{\DIM}(\bx)-\tau|
        \geq
        \frac{c_{\mathrm{root}}}{2}n^{-b/2}\sqrt{\log\log n}
    \right)
    \geq
    q_{\mathrm{root}}(b-a)
    =
    \frac{q_{\mathrm{root}}}{2}b.
\]
Thus the theorem holds with $c_{\DIM}=c_{\mathrm{root}}/2$ and $q_{\DIM}=q_{\mathrm{root}}/2$.

\subsection{Proof of Theorem~\ref{sa-thm: L2 consistency NSS reg}}

For notational simplicity, denote $\hat\tau_{\mathtt{DIM}}$ by $\hat{\tau}$ and the data-driven partition by $\partitionP$.

\subsubsection*{Reduction to Least Squares Prediction Error}

The leaf node values coincide with the least squares projection given $\partitionP$: for $\nodet \in \partitionP$, we have $\hat{\tau}(\nodet) = \hat{b}_{\nodet}$, where
\begin{align*}
    \hat{a}_{\nodet}, \hat{b}_{\nodet} = 
    \begin{cases}
        \argmin_{a, b} \sum_{i = 1}^n \Indicator(\bx_i \in \nodet) (y_i - a - b \; d_i)^2 & \text{if} \sum_{i = 1}^n \Indicator(\bx_i \in \nodet) > 0, \\
        0, 0 & \text{otherwise}.
    \end{cases}
\end{align*}
If a nonempty terminal node contains only one treatment arm, the least squares minimizer is not unique; throughout this proof we choose the minimizer with $\hat b_{\nodet}=0$. This agrees with the convention in Definition~\ref{sa-defn: CATE Estimators}, which sets the DIM estimate to zero when one arm is absent, and it preserves the empirical risk minimization inequality below.
Consider the outcome prediction model based on partition $\partitionP$:
\begin{align}\label{sa-eq: dim least square representation}
    \nonumber \hat{g}(\bx, d) & = \sum_{\nodet \in \partitionP} \Indicator(\bx \in \nodet) (\hat{a}_{\nodet} + \hat{b}_{\nodet} d) \\
    & = \hat{A}(\bx) + \hat{B}(\bx) \; d,
\end{align}
where
\begin{align*}
    \hat{A}(\bx) = \sum_{\nodet \in \partitionP} \Indicator(\bx \in \nodet) \hat{a}_{\nodet}, \qquad 
    \hat{B}(\bx) = \sum_{\nodet \in \partitionP} \Indicator(\bx \in \nodet) \hat{b}_{\nodet}.
\end{align*}
First, we show that for integrated $L_2$ convergence rates for treatment effect estimation, it is enough to look at the $L_2$ loss for outcome prediction. Denote by $P_{X,d}$ the joint distribution of $(\bx_i, d_i)$. Since we assumed $\bx_i$ and $d_i$ are independent, we have $P_{X,d} = P_X \times P_d$, where $P_X$ and $P_d$ are the marginal distributions of $\bx_i$ and $d_i$. Given Assumption~\ref{sa-assump: dgp-causal}, the target outcome prediction model is 
\begin{align*}
    g^{\ast}(\bx_i, d_i) = \E[y_i|\bx_i, d_i] = \mu + \tau \; d_i, \qquad \mu = \E[y_i(0)], \quad \tau = \E[y_i(1) - y_i(0)].
\end{align*}
Hence
\begin{align}\label{sa-eq: tau hat to g hat}
    \nonumber & \E [\lVert \hat{g} - g^{\ast} \rVert^2] \\
    \nonumber & = \E \bigg[\int_{\X \times \{0,1\}} (\hat{g}(\bx,d) - \mu - \tau d)^2 d P_{X,d}(\bx,d)\bigg] \\
    \nonumber & = \E \bigg[\int_{\X \times \{0,1\}} (\hat{A}(\bx) + \hat{B}(\bx) d - \mu - \tau d)^2 d P_X(\bx) \times P_d(d)\bigg] \\
    \nonumber & = \E \bigg[\int_{\X \times \{0,1\}} ( d \;(\hat{A}(\bx) + \hat{B}(\bx) - \mu - \tau) + (1 - d) \; (\hat{A}(\bx) - \mu))^2 d P_X(\bx) \times P_d(d)\bigg] \\
    \nonumber & = \E \bigg[\int_{\X \times \{0,1\}} d \; (\hat{A}(\bx) + \hat{B}(\bx) - \mu - \tau)^2 + (1 - d) \; (\hat{A}(\bx) - \mu)^2 d P_X(\bx) \times P_d(d)\bigg] \\
    \nonumber & = \E \bigg[ \xi \int_{\X}  \; (\hat{A}(\bx) + \hat{B}(\bx) - \mu - \tau)^2 d P_X(\bx) + (1 - \xi) \int_{\X} \; (\hat{A}(\bx) - \mu)^2 d P_X(\bx) \bigg] \\
    & = \xi \E[\lVert \hat{A} + \hat{B} - \mu - \tau \rVert^2] + (1 - \xi) \E[\lVert \hat{A} - \mu \rVert^2].
\end{align}
For each realization of $(\hat A,\hat B)$,
\begin{align*}
    \lVert \hat B-\tau\rVert^2
    &\leq 2\lVert \hat A+\hat B-\mu-\tau\rVert^2
        +2\lVert \hat A-\mu\rVert^2.
\end{align*}
Taking expectations and using \eqref{sa-eq: tau hat to g hat},
\begin{align*}
    & \E [\lVert \hat{\tau} - \tau \rVert^2] 
    = \E [\lVert \hat{B} - \tau \rVert^2 ] 
    \leq \frac{4}{\min\{\xi, 1 - \xi\}} \E [\lVert \hat{g} - g^{\ast} \rVert^2].
\end{align*}

\subsubsection*{Error Bound for Least Squares Prediction}

We bound the least squares error $\E[\lVert \hat{g} - g^{\ast}\rVert^2]$ following the strategy for \cite[Theorem 4.3]{klusowski2024large}. First, assume $|y_i(t)| \leq U$, $i = 1,2, \cdots, n$, $t = 0,1$, for some $U \geq 0$. Decompose by
\begin{align*}
    \lVert \hat{g} - g^{\ast}\rVert^2 = E_1 + E_2,
\end{align*}
where
\begin{align*}
    E_1 & = \lVert \hat{g} - g^{\ast}\rVert^2 - 2 (\lVert y - \hat{g} \rVert_{\dataD}^2 - \lVert y - g^{\ast}\rVert_{\dataD}^2) - \alpha - \beta, \\
    E_2 & =  2 (\lVert y - \hat{g} \rVert_{\dataD}^2 - \lVert y - g^{\ast}\rVert_{\dataD}^2) + \alpha + \beta.
\end{align*}
The least squares representation \eqref{sa-eq: dim least square representation} implies that
\begin{align}\label{sa-eq: empirical risk minimization}
    \lVert y - \hat{g}\rVert_{\dataD}^2 
    \leq \frac{1}{n}\min_{a \in \reals, b \in \reals} \sum_{i = 1}^n (y_i - a - b\; d_i)^2 
    \leq \frac{1}{n}\sum_{i = 1}^n (y_i-\mu-\tau d_i)^2
    = \lVert y - g^{\ast}\rVert_{\dataD}^2,
\end{align}
which implies
\begin{align*}
    E_2 \leq \alpha + \beta.
\end{align*}
We control $E_1$ using uniform law of large number arguments. On the bounded event, $\hat{g}$ is one member of the class
\[
    \G_{n}
    =
    \{(1-d)G_0(\bx)+dG_1(\bx):G_0,G_1\in\scrH_n\},
\]
where $\scrH_n$ is the class of piecewise constant functions, bounded by $U$, on partitions $\P \in \Pi_n$. Here 
\begin{align*}
    \Pi_n = \{\partitionP(\{(\bx_1,d_1,y_1), \cdots, (\bx_n, d_n, y_n)\}): (\bx_i, d_i, y_i) \in \reals^p \times \reals \times \reals\},
\end{align*}
is the family of all achievable partitions $\partitionP$ by growing a depth $K$ binary tree on $n$ points by iteratively splitting in $\bx$-space based on any criterion. By \cite[Equation B.33]{klusowski2024large}, 
\begin{align*}
    N \bigg(\frac{\beta}{40 U}, \scrH_n, \lVert \cdot \rVert_{P_{X^n}, 1} \bigg) \leq (n p)^{2^K} \bigg(\frac{417 e U^2}{\beta}\bigg)^{2^{K+1}}.
\end{align*}
A product covering argument then gives, for a universal constant $C$,
\begin{align*}
    N \bigg(\frac{\beta}{80 U}, \G_n, \lVert \cdot \rVert_{P_{(X,d)^n}, 1} \bigg)
    \leq (n p)^{2^{K+1}} \bigg(\frac{C U^2}{\beta}\bigg)^{2^{K+2}},
\end{align*}
Here $P_{X^n}$ is the empirical measure of $X_1,\ldots,X_n$, and
$P_{(X,d)^n}$ is the empirical measure of $(X_i,d_i)_{i=1}^n$. Since
$\hat{g} \in \G_n$, \cite[Theorem 11.4]{Gyorfi-etal_2002_book} gives
\begin{align*}
    \P(E_1 \geq 0) & \leq \P(\exists g \in \G_n: \lVert \hat{g} - g^{\ast}\rVert^2 \geq 2 (\lVert y - \hat{g} \rVert_{\dataD}^2 - \lVert y - g^{\ast}\rVert_{\dataD}^2) + \alpha + \beta) \\
    & \leq 14 \sup_{(X,d)^n} N \bigg(\frac{\beta}{80 U}, \G_n, \lVert \cdot \rVert_{P_{(X,d)^n}, 1} \bigg) \exp \bigg(- \frac{\alpha n}{2568 U^4}\bigg) \\
    & \leq 14 (n p)^{2^{K+1}} \bigg(\frac{C U^2}{\beta}\bigg)^{2^{K+2}} \exp \bigg(- \frac{\alpha n}{2568 U^4}\bigg).
\end{align*}
Choosing $\alpha \propto \frac{U^4 2^K \log(np)}{n}$ and $\beta \propto \frac{U^2}{n}$ gives
\begin{align*}
    \E[ \lVert \hat{g} - g^{\ast}\rVert^2] \leq C \bigg(\frac{U^4 2^K \log(np)}{n} + \frac{U^2}{n} \bigg),
\end{align*}
where $C$ is a positive universal constant.

Relax the condition that $|y_i(t)| \leq U$. Take $A = \{|y_i(t)| \leq U, \forall i = 1, \cdots, n, t = 0,1\}$. Then
\begin{align}\label{sa-eq: truncation}
    \nonumber \E[ \lVert \hat{g} - g^{\ast}\rVert^2] 
    & = \E[ \lVert \hat{g} - g^{\ast}\rVert^2 \Indicator(A)] + \E[ \lVert \hat{g} - g^{\ast}\rVert^2 \Indicator(A^c)] \\
    & \leq C \bigg(\frac{U^4 2^K \log(np)}{n} + \frac{U^2}{n} \bigg) +  \E[ \lVert \hat{g} - g^{\ast}\rVert^2 \Indicator(A^c)].
\end{align}
A union bound and subexponentiality give constants $C,c>0$ such that
\begin{align*}
    \P(A^c) 
    & \leq n \P(|y_i(0)| \geq U) + n \P(|y_i(1)| \geq U) \\
    & \leq C n \exp(-cU).
\end{align*}
For the truncation remainder, use the deterministic bound that every fitted leaf value is either zero or an average of observed outcomes in one treatment arm. Hence, for all samples,
\[
    \lVert \hat g-g^\ast\rVert^2
    \leq
    C\left(1+\max_{1\leq i\leq n,\ t\in\{0,1\}}|y_i(t)|^2\right),
\]
where $C$ depends only on the fixed constants $c_0,c_1$. The exponential moment assumption then gives
\[
    \E[\lVert \hat g-g^\ast\rVert^4]
    \leq
    C\,\E\left[1+\max_{i,t}|y_i(t)|^4\right]
    \leq C\log^4(n)
    \leq Cn .
\]
Using this bound and Cauchy--Schwarz,
\begin{align*}
    \E[ \lVert \hat{g} - g^{\ast}\rVert^2 \Indicator(A^c)]
    & \leq \sqrt{\E[ \lVert \hat{g} - g^{\ast}\rVert^4]\P(A^c)} \\
    & \leq C n \exp(-cU/2).
\end{align*}
Choosing $U = C_0 \log(n)$ with $C_0$ sufficiently large, we have 
\begin{align*}
    \E[ \lVert \hat{g} - g^{\ast}\rVert^2 \Indicator(A^c)] \leq \frac{C}{n},
\end{align*}
for some absolute constant $C$. Putting this bound into Equation~\eqref{sa-eq: truncation} gives the desired expectation conclusion.

For the high probability bound, the deterministic version of the comparison in Equation~\eqref{sa-eq: tau hat to g hat} gives
\begin{align*}
    \lVert \hat{\tau} - \tau \rVert^2
    = \lVert \hat{B} - \tau \rVert^2 
    \leq \frac{4}{\min\{\xi, 1 - \xi\}} \lVert \hat{g} - g^{\ast} \rVert^2.
\end{align*}
On the bounded event $A$, the preceding empirical process bound, with the same choices of $\alpha$ and $\beta$, implies
\begin{align*}
    \P\bigg(\lVert \hat{g} - g^{\ast}\rVert^2
    > C_1 \bigg(\frac{U^4 2^K \log(np)}{n} + \frac{U^2}{n} \bigg), A\bigg)
    \leq n^{-C_2},
\end{align*}
for positive constants $C_1$ and $C_2$. Taking again $U=C_0\log(n)$ with $C_0$ sufficiently large gives $\P(A^c)\leq n^{-C_3}$ for some $C_3>0$. Combining the two probability bounds with the deterministic comparison proves the stated high probability conclusion.
Indeed, for a sufficiently large constant $C$,
\[
    \P\!\left(
    \lVert \hat{\tau}-\tau\rVert^2
    >
    C\frac{2^K\log^4(n)\log(np)}{n}
    \right)
    \leq n^{-C_2}+\P(A^c)
    \leq n^{-c},
\]
after increasing $C$ and decreasing $c>0$ if necessary; the smaller $U^2/n$ term is absorbed by the displayed rate.

\subsection{Proof of Theorem~\ref{sa-thm: honest output reg}}

The honest proof uses sample splitting, so terminal node estimation can be handled conditionally on the constructed tree. The construction fold first selects, with probability proportional to $b$, an imbalanced root child of size at most order $n^b$. Conditional on the resulting tree, the independent estimation fold places a comparable number of observations in this root child, and an occupancy argument shows that at least one terminal descendant receives two or more estimation fold observations with probability tending to one. That descendant is chosen using only the construction tree and estimation fold covariates, so its treatment assignments and outcomes remain independent of the selection step. Lemma~\ref{sa-lem:honest-finite-arm-anti-concentration} then supplies the fixed conditional probability that the honest terminal contrast has size at least a constant multiple of $N_\star^{-1/2}$, which is at least order $n^{-b/2}$ on the event constructed below.

\begin{lemma}[Independent finite-arm anti-concentration]\label{sa-lem:honest-finite-arm-anti-concentration}
Suppose Assumption~\ref{sa-assump: dgp-causal} holds. Let $\nodet_\star$ be a terminal node selected using only a construction fold tree and estimation fold covariates, not estimation fold treatment assignments or outcomes. Conditional on $\nodet_\star$ and on treatment counts $N_{1,\star},N_{0,\star}$ satisfying $N_{1,\star}>0$ and $N_{0,\star}>0$, the honest DIM terminal contrast obeys
\[
    \mathbb P\!\left(
        |\hat\tau_{\mathtt{DIM}}(\nodet_\star)-\tau|
        \geq c N_\star^{-1/2}
        \,\middle|\,
        \nodet_\star,N_{1,\star},N_{0,\star}
    \right)
    \geq q,
    \qquad
    N_\star=N_{1,\star}+N_{0,\star},
\]
for constants $c,q>0$ depending only on the distribution of $(y_i(0),y_i(1),d_i)$.
\end{lemma}

\begin{proof}
Conditional on $\nodet_\star$ and the treatment counts, the estimation fold errors in $\nodet_\star$ are independent draws from their original treatment-arm error laws. The centered terminal error is
\[
    \frac{1}{N_{1,\star}}\sum_{i=1}^{N_{1,\star}}\varepsilon_i(1)
    -
    \frac{1}{N_{0,\star}}\sum_{i=1}^{N_{0,\star}}\varepsilon_i(0).
\]
Its conditional variance is bounded below by
$c_\varepsilon(N_{1,\star}^{-1}+N_{0,\star}^{-1})\geq c_\varepsilon N_\star^{-1}$, while the exponential moment assumption gives a fourth moment bounded above by
$C_\varepsilon(N_{1,\star}^{-1}+N_{0,\star}^{-1})^2$. Paley--Zygmund applied to the square of the display gives the stated anti-concentration bound after decreasing $c$ if necessary. The constants are uniform over all positive treatment counts.
\end{proof}

Let $M=n_{\treeT}$ and $N=n_{\tau}$ denote the construction and estimation fold sample sizes, and write $\check{\tau}(\bx)=\check\tau_{\mathtt{DIM}}(\bx)$. Fix $b\in(0,1)$ and set $a=b/2$. Let $(\hat{\imath},\hat{\jmath})$ be the root splitting index and coordinate selected by the construction fold, and consider the root event
\[
    \mathcal E_n
    =
    \{\exists \ell\in[p]: M^a\leq\hat{\imath}\leq M^b,\ \hat{\jmath}=\ell\}.
\]
By Theorem~\ref{sa-thm: imbalance reg} and the fact that $p$ is fixed, $\liminf_{n\to\infty}\P(\mathcal E_n)\geq c b$ for a positive constant $c$. On $\mathcal E_n$, let $\nodet_{\mathtt L}$ be the smaller root child selected by the event, and put $m=n_{\treeT}(\nodet_{\mathtt L})=\hat{\imath}$.

Let $\mathcal P(\nodet_{\mathtt L})=\{\nodet_1,\ldots,\nodet_J\}$ be the terminal descendants of $\nodet_{\mathtt L}$ in the final tree. Because each valid causal split requires positive treated and control construction-sample denominators in both child nodes, every terminal descendant of $\nodet_{\mathtt L}$ contains at least one treated and one control construction observation. Hence
\[
    J\leq m/2.
\]
Let $R$ be the number of estimation fold observations that fall in $\nodet_{\mathtt L}$. Conditional on the construction fold, $R$ is binomial with success probability equal to the population mass of $\nodet_{\mathtt L}$. Since this mass is the order statistic $F_{\hat{\jmath}}(x_{\pi_{\hat{\jmath}}(m),\hat{\jmath}})$, the beta concentration bound for order statistics used above, the honest splitting condition $n\lesssim n_{\treeT},n_{\tau}\lesssim n$, and a binomial Chernoff bound give constants $0<c_R<C_R<\infty$ such that, conditional on $\mathcal E_n$,
\[
    \P(c_R m\leq R\leq C_R m\mid \mathcal E_n)\to1.
\]
In particular, on this event $R\leq C_R M^b\leq C n^b$.

Conditional on the construction fold and on $R$, the $R$ estimation fold observations in $\nodet_{\mathtt L}$ are allocated among the $J$ terminal descendants according to their conditional probabilities $q_1,\ldots,q_J$. If no terminal descendant receives two estimation fold observations, then the $R$ draws are all assigned to distinct descendants. For $R\leq J$, this probability is $R!e_R(q_1,\ldots,q_J)$, where $e_R$ is the $R$th elementary symmetric polynomial; for $R>J$ it is zero. By Maclaurin's inequality, $e_R(q_1,\ldots,q_J)\leq \binom{J}{R}J^{-R}$. Therefore
\[
    \P\!\left(\max_{1\leq r\leq J} n_{\tau}(\nodet_r)\leq1\mid R,J\right)
    \leq
    \prod_{s=0}^{R-1}\left(1-\frac{s}{J}\right)
    \leq \exp\!\left(-\frac{R(R-1)}{2J}\right)
\]
with the product interpreted as zero when $R>J$. Since $J\leq m/2$ and $R\geq c_Rm$ with probability tending to one conditional on $\mathcal E_n$, the last display implies that, conditional on $\mathcal E_n$, with probability tending to one at least one terminal descendant receives at least two estimation fold observations. Choose one such terminal descendant by a fixed deterministic rule and denote it by $\nodet_\star$. This selection uses only the construction fold tree and the estimation fold covariates through the descendant occupancy counts; it does not use estimation fold treatment assignments or outcomes. Consequently, conditional on $\nodet_\star$ and its occupancy count, the treatment indicators and potential outcome errors inside $\nodet_\star$ retain their original independence and Bernoulli/error laws. Let
\[
    N_\star=n_{\tau}(\nodet_\star),\qquad
    N_{1,\star}=n_{1,\tau}(\nodet_\star),\qquad
    N_{0,\star}=n_{0,\tau}(\nodet_\star).
\]
Then $2\leq N_\star\leq R\leq C n^b$ on the event just described.

Because the estimation fold treatment assignments are independent of the tree and of the estimation fold covariates,
\[
    \P(N_{1,\star}>0,\ N_{0,\star}>0\mid N_\star)\geq 2\xi(1-\xi).
\]
This is the common treatment-arm positivity factor used in the honest causal lower bounds. It is the positive denominator event for the terminal node DIM estimator; on its complement the estimator is defined to be zero by Definition~\ref{sa-defn: CATE Estimators}. We therefore restrict the lower bound calculation to this event.
On this event, for any $\bx_\star\in\nodet_\star$,
\[
    |\check{\tau}(\bx_\star)-\tau|
    =
    \left|
        \frac{1}{N_{1,\star}}\sum_{\tilde{\bx}_i\in\nodet_\star}\tilde d_i\tilde\varepsilon_i(1)
        -
        \frac{1}{N_{0,\star}}\sum_{\tilde{\bx}_i\in\nodet_\star}(1-\tilde d_i)\tilde\varepsilon_i(0)
    \right|.
\]
Lemma~\ref{sa-lem:honest-finite-arm-anti-concentration}, applied conditionally on the independent estimation fold, implies
\[
    \P\!\left(
        |\check{\tau}(\bx_\star)-\tau|\geq c N_\star^{-1/2}
        \mid \nodet_\star,N_{1,\star},N_{0,\star}
    \right)
    \geq c
\]
whenever $N_{1,\star}>0$ and $N_{0,\star}>0$. Combining the previous displays and using $N_\star\leq C n^b$ gives, after renaming constants,
\[
    \liminf_{n\to\infty}
    \P\!\left(\sup_{\bx\in\mathcal X}|\check{\tau}(\bx)-\tau|\geq C_1 n^{-b/2}\right)
    \geq C_2 b.
\]
The fixed factor $2\xi(1-\xi)$ and the constants from the root event probability, allocation argument, and Paley--Zygmund bound are absorbed into $C_2$, which is allowed because $C_2$ depends on the distribution of $(y_i(0),y_i(1),d_i)$ and on the sample splitting ratios.

\subsection{Proof of Theorem~\ref{sa-thm: L2 consistency honest reg}}

For simplicity, denote $\check\tau_{\mathtt{DIM}}$ by $\check{\tau}$. Conditional on the construction fold $\dataD_{\treeT}$, the partition $\partitionP$ is fixed and is independent of the estimation fold $\dataD_{\tau}$. The least squares representation in Equation~\eqref{sa-eq: dim least square representation} continues to hold with empirical risk computed only over $\dataD_{\tau}$. Therefore the deterministic comparison in Equation~\eqref{sa-eq: tau hat to g hat} reduces the problem to bounding the conditional prediction risk of $\hat g$.

Assume first that $|y_i(t)|\leq U$ for $t=0,1$. Conditional on $\dataD_{\treeT}$, define the fixed partition prediction class
\[
    \mathcal{G}(\partitionP)
    =
    \left\{
        (\bx,d)\mapsto A(\bx)+dB(\bx):
        A,B\text{ are piecewise constant on }\partitionP,\ |A|,|B|\leq CU
    \right\},
\]
where the constant $C$ is universal. Since $\partitionP$ has at most $2^K$ leaves,
\begin{align*}
    N(\varepsilon U,\mathcal{G}(\partitionP),\lVert \cdot \rVert_{P_{X^{n_{\tau}}},1})
    \leq
    \bigg(\frac{C}{\varepsilon}\bigg)^{2^{K+1}},
    \qquad \varepsilon\in(0,1).
\end{align*}
Thus the empirical process argument used in the proof of Theorem~\ref{sa-thm: L2 consistency NSS reg}, now conditional on $\dataD_{\treeT}$ and with sample size $n_{\tau}$, gives
\begin{align*}
    \E_{\dataD_{\tau}}\!\left[\lVert \hat{g}-g^\ast\rVert^2\mid\dataD_{\treeT}\right]
    &\leq
    C\left(
        \frac{U^4 2^K\log(n_{\tau})}{n_{\tau}}
        +
        \frac{U^2}{n_{\tau}}
    \right),\\
    \P_{\dataD_{\tau}}\!\left(
        \lVert \hat{g}-g^\ast\rVert^2
        >
        C\left(
            \frac{U^4 2^K\log(n_{\tau})}{n_{\tau}}
            +
            \frac{U^2}{n_{\tau}}
        \right)
        \,\middle|\,\dataD_{\treeT}
    \right)
    &\leq n_{\tau}^{-c},
\end{align*}
for constants $C,c>0$. The factor $2^{K+1}$ in the covering exponent reflects that the causal prediction class has two leafwise components, $A$ and $B$; it only changes constants and hence keeps the same $2^K\log(n_{\tau})$ complexity term.

Taking $U=C_0\log(n_{\tau})$ and repeating the truncation argument from the proof of Theorem~\ref{sa-thm: L2 consistency NSS reg} gives the conditional expectation and high probability bounds with rate $2^K\log^5(n_{\tau})/n_{\tau}$. Applying the deterministic comparison between $\check \tau$ and $\hat g$, unconditioning over $\dataD_{\treeT}$, and using $\rho \leq n_{\treeT}/n_{\tau} \leq \rho^{-1}$ completes the proof.
 
\subsection{Proof of Lemma~\ref{lem: approximation fit based -- balanced region}}

All maxima below involving $\I^{\text{SSE}}$ are taken over the intersection of the displayed index range with the valid candidate set $\mathcal{V}_{\mathtt{SSE}}$. The corresponding proxy process maxima may be taken over the larger displayed range, which only weakens upper bounds.

Write $\omega_k=k(n-k)/n$. From the proof of Lemma~\ref{lem: approximation -- balanced region},
\begin{align*} 
    \max_{1\leq \ell\leq p}\sup_{r_n \leq k < n - r_n} \omega_k \bigg|(\hat \mu_{L,0}(k,\ell) - \hat \mu_{R,0}(k,\ell))^2 - (\bar \mu_{L,0}(k,\ell) - \bar \mu_{R,0}(k,\ell))^2\bigg| = O_\P \bigg(\frac{\log \log n}{\sqrt{r_n}}\bigg).
\end{align*}
Moreover, the proof of the term $R_1$ in Lemma~\ref{lem: approximation -- balanced region} implies the weighted contrast bound
\begin{align*}
    \max_{1\leq \ell\leq p}\sup_{r_n \leq k < n - r_n} \omega_k \bigg\{(\hat \mu_{L,0}(k,\ell) - \hat \mu_{R,0}(k,\ell))^2 + (\bar \mu_{L,0}(k,\ell) - \bar \mu_{R,0}(k,\ell))^2\bigg\} = O_\P(\log \log n).
\end{align*}
Consider the randomness induced by $n_0, n_{L,0}, n_{R,0}$. By Theorem A.4.1 in \cite{csorgo1997limit}, applied to the left and reversed right partial sums and then unioned over the fixed set of coordinates,
\begin{align*}
    \max_{1\leq \ell\leq p}\max_{r_n \leq k < n - r_n}
    \bigg[
    \sqrt{k}\bigg|\frac{n_{L,0}(k)}{(1-\xi)k}-1\bigg|
    +\sqrt{n-k}\bigg|\frac{n_{R,0}(k)}{(1-\xi)(n-k)}-1\bigg|
    \bigg]
     = O_{\P} (\sqrt{\log \log n}),
\end{align*}
and $n_0/((1-\xi)n)-1=O_\P(\sqrt{\log\log n/n})$. Hence
\begin{align*}
    \max_{1\leq \ell\leq p}\sup_{r_n \leq k < n - r_n}
    \frac{1}{\sqrt{\omega_k}}
    \bigg|\frac{n_{L,0}(k) n_{R,0}(k)}{n_0} - (1 - \xi)\omega_k\bigg|
    = O_\P(\sqrt{\log \log n}).
\end{align*}
Combining this weighted count bound with the preceding weighted contrast bound and the fact that $\omega_k\gtrsim r_n$ on the displayed range gives
\begin{align*}
    \max_{1\leq \ell\leq p}\sup_{r_n \leq k < n - r_n}
    \bigg|\frac{n_{L,0}(k)n_{R,0}(k)}{n_0}-(1-\xi)\omega_k\bigg|
    (\bar \mu_{L,0}(k,\ell)-\bar\mu_{R,0}(k,\ell))^2
    =O_\P\bigg(\frac{(\log\log n)^{3/2}}{\sqrt{r_n}}\bigg).
\end{align*}
The same bounds hold for the treated arm. Also,
$n_{L,d}(k)n_{R,d}(k)/n_d\leq \min\{k,n-k\}\leq 2\omega_k$ for $d=0,1$, so the empirical to proxy squared contrast error is controlled by the first display. Putting these pieces together, the triangle inequality implies
\begin{align*}  
    \max_{1 \leq \ell \leq p}\max_{r_n \leq k < n - r_n} \Big|\I^{\text{SSE}}(k,\ell) - \I^{\text{prox}}(k,\ell)\Big| = O_{\P} \bigg(\frac{(\log \log n)^{3/2}}{r_n^{1/2}}\bigg).
\end{align*}

\subsection{Proof of Lemma~\ref{lem: approximation fit based -- imbalanced region}}
All maxima involving $\I^{\text{SSE}}$ are again over valid candidates in $\mathcal{V}_{\mathtt{SSE}}$.

The proof of Lemma~\ref{lem: approximation -- imbalanced region} implies that
\begin{align*}
    \max_{1 \leq \ell \leq p} \max_{1 \leq k \leq s_n} k\bigg|(\hat \mu_{L,0}(k,\ell) - \hat \mu_{R,0}(k,\ell))^2 - (\bar \mu_{L,0}(k,\ell) - \bar \mu_{R,0}(k,\ell))^2\bigg| = O_\P(\alpha_n),
\end{align*}
where $\alpha_n = \rho_n \log \log n + \frac{s_n}{n - s_n} \log \log n$, and \begin{align*}
    \max_{1 \leq \ell \leq p} \max_{1 \leq k \leq s_n} k (\bar \mu_{L,0}(k,\ell) - \bar \mu_{R,0}(k,\ell))^2  = O_\P(\rho_n \log \log n).
\end{align*}
Hence it also follows that 
\begin{align*}
    \max_{1 \leq \ell \leq p} \max_{1 \leq k \leq s_n} k (\hat \mu_{L,0}(k,\ell) - \hat \mu_{R,0}(k,\ell))^2  = O_\P \Big(\frac{s_n}{n - s_n} \log \log n + \alpha_n \Big) = O_\P(\alpha_n).
\end{align*}
When $1 \leq k \leq s_n$, we have $\frac{n_{L,0}(k) n_{R,0}(k)}{n_0} \leq n_{L,0}(k) \leq k$. The right edge range follows by applying the same argument to the reversed ordering. The conclusion then follows.

\subsection{Proof of Lemma~\ref{sa-lem:sse-directional-transfer}}

The proof of Theorem~\ref{thm: inconsistency fit} gives a joint Gaussian approximation to the treated and control components of the SSE criterion. After the normalization used in that proof, the imbalanced window criterion is approximated on a finite grid by
\[
    Q_\Lambda(Z_{k,\ell})
    =
    \sigma_0^2 Z_{0,k,\ell}^2
    +
    \sigma_1^2 Z_{1,k,\ell}^2,
    \qquad
    Z_{k,\ell}=(Z_{0,k,\ell},Z_{1,k,\ell})^\top,
\]
where $\sigma_d^2=\V[\varepsilon_i(d)]$, each local vector $Z_{k,\ell}$ is centered Gaussian with standardized marginal variances, and the collection has the covariance structure described in the Gaussian comparison step of Theorem~\ref{thm: inconsistency fit}. For the corresponding small child $S_{k,\ell}$, write $m=m_{k,\ell}=n(S_{k,\ell})$ and $m_d=n_d(S_{k,\ell})$. The centered terminal CATE contrast satisfies
\[
    \hat\tau(S_{k,\ell})-\tau
    =
    \frac{1}{m_1}\sum_{i\in S_{k,\ell}}d_i\varepsilon_i(1)
    -
    \frac{1}{m_0}\sum_{i\in S_{k,\ell}}(1-d_i)\varepsilon_i(0).
\]
On the balanced treatment count event, $m_1/m=\xi+o_\P(1)$ and $m_0/m=1-\xi+o_\P(1)$ uniformly over the selected valid candidates. Hence, with $v=(\xi^{-1/2},-(1-\xi)^{-1/2})^\top$ and with $Z_{k,\ell}$ denoting the corresponding standardized treated/control Gaussian fluctuation,
\[
    \hat\tau(S_{k,\ell})-\tau
    =
    m_{k,\ell}^{-1/2}v_\Lambda^\top Z_{k,\ell}
    +
    o_\P\!\left(n^{-b/2}\sqrt{\log\log n}\right),
\]
where $v_\Lambda$ has both coordinates nonzero; the fixed variance scale constants are absorbed into $v_\Lambda$. Because the selected imbalanced child has $m_{k,\ell}\leq n^b$, a projection of order $\sqrt{\log\log n}$ in the $v_\Lambda$ direction yields a terminal CATE error of order at least $n^{-b/2}\sqrt{\log\log n}$. Thus the only additional point, beyond the split index theorem, is to ensure that the selected bivariate fluctuation is not asymptotically concentrated in the line orthogonal to $v_\Lambda$.

If $\sigma_0^2=\sigma_1^2$, we insert a fixed cone restriction into the same finite-grid Gaussian threshold comparison used in Theorem~\ref{thm: inconsistency fit}. On that finite grid, the event that the selected norm maximizer lies in the imbalanced window for coordinate $\ell$ and beats the competing coordinates and locations is measurable only through the Gaussian norms $\|Z_{k,j}\|$. Lemma~\ref{sa-lem:sse-cone-restricted-max} therefore applies to the selected Gaussian vector itself, not merely to a fixed vector: after imposing
\[
    |v_\Lambda^\top Z_{\hat k,\hat{\jmath}}|
    \geq
    \gamma\|v_\Lambda\|\|Z_{\hat k,\hat{\jmath}}\|
\]
the finite-grid Gaussian probability is reduced by at most a fixed positive factor $\pi_\gamma$.

If the variances are unequal, suppose for concreteness that $\sigma_1^2>\sigma_0^2$; the other case is symmetric. Choose $\rho>0$ so small that
$|v_{\Lambda,0}|\rho\leq |v_{\Lambda,1}|/2$. By the dominance part of Lemma~\ref{sa-lem:weighted-ou-maximum}, the probability that the selected weighted extreme lies in the cone $|Z_0|>\rho |Z_1|$ is $o(1)$ in the threshold comparison used to prove Theorem~\ref{thm: inconsistency fit}. On the complementary cone,
\[
    |v_\Lambda^\top Z|
    \geq
    (|v_{\Lambda,1}|-|v_{\Lambda,0}|\rho)|Z_1|
    \geq
    c_\Lambda \|Z\|,
\]
for a positive constant $c_\Lambda$. Since $Q_\Lambda(Z)$ is of order
$\log\log n$ on the selected threshold event, this gives
$|v_\Lambda^\top Z|$ of order $\sqrt{\log\log n}$.

The cone events just described are finite unions of angular sectors; approximating those sectors by polygonal cones lets the same simple convex set CLT and Gaussian comparison bounds used in Theorem~\ref{thm: inconsistency fit} apply with the same $o(1)$ error. The Gaussian correlation and O-U stationarity steps then give the same imbalanced window probability lower bound, multiplied only by a fixed cone factor in the equal variance case and with only an $o(1)$ loss in the unequal variance case.

On this restricted event, the selected small child has $m\in[n^a,n^b]$ observations, the selected weighted criterion is of order $\log\log n$, and the cone or dominance restriction gives a CATE direction projection of order $\sqrt{\log\log n}$. Returning to the terminal average normalization yields
\[
    m\Delta(S_{\hat k,\ell})^2
    \geq
    c^2\log\log n
\]
after decreasing $c>0$ if necessary. Validity of the selected SSE split gives positive treated and control denominators in both children. Absorbing the cone probability, treatment probability constants, and approximation constants into $c$ and $q$ proves the left boundary statement. The right boundary statement follows by reversing the ordering along coordinate $\ell$.

\subsection{Proof of Theorem~\ref{thm: inconsistency fit}}
The proof is similar to the proof of Theorem~\ref{sa-thm: imbalance reg}. The difference is that Theorem~\ref{sa-thm: imbalance reg} approximates the split criterion by a time-transformed O-U process; here, we approximate the split criterion by a weighted quadratic form of a bivariate time-transformed O-U process. We divide the proof into four steps.

Throughout this proof, every maximum involving $\I^{\text{SSE}}$ is taken over the relevant valid candidates retained by Definition~\ref{sa-defn: tree construction}; if the displayed index range contains no valid candidate, the maximum is interpreted as $-\infty$. On the balanced ranges used below, all treatment-arm denominators are positive with probability tending to one, so this convention does not affect the Gaussian comparison step.
Let $\sigma_d^2=\V[\varepsilon_i(d)]$ for $d\in\{0,1\}$ and
$\lambda_\star=\max\{\sigma_0^2,\sigma_1^2\}$.

\begin{center}
    \textbf{Step 1: Approximation of fit-based processes by IPW-based processes}
\end{center}

Let $0 < a < b < 1$. Let $\rho_n$ be a sequence of real numbers taking values in $(0,1)$ to be determined, and take $s_n = \exp((\log n)^{\rho_n})$. Then for large enough $n$, we have $s_n \leq n^a \leq n^b \leq n - s_n$. Consider the event 
\begin{align*}
    A_n = \left\{\exists \ell \in [p]: \max_{\substack{k \in [n]\\ k \notin [s_n,n - s_n]}} \I^\text{SSE}(k,\ell) \geq \max_{s_n \leq k \leq n - s_n} \I^\text{SSE}(k,\ell)\right\}.
\end{align*}
Equation (A.4.18) and (A.4.20) imply that for each $\ell \in [p]$,
\begin{gather*}
    \max_{\substack{1 \leq k \leq s_n\\ \text{or } n - s_n \leq k \leq n}} \I^{\text{prox}}(k,\ell) = O_\P(\rho_n \log \log(n)), \\
    \max_{s_n \leq k \leq n - s_n}\I^{\text{prox}}(k,\ell) = 2\lambda_\star \log \log (n)(1 + o_\P(1)).
\end{gather*}
Hence
\begin{align*}
    \max_{\substack{1 \leq k \leq s_n\\ \text{or } n - s_n \leq k \leq n}}\I^{\text{prox}}(k,\ell) = o_\P \bigg(\max_{s_n \leq k \leq n - s_n}\I^{\text{prox}}(k,\ell)\bigg), \qquad \ell \in [p],
\end{align*}
The approximation errors from Lemma~\ref{lem: approximation fit based -- balanced region} (taking $r_n=s_n$) and Lemma~\ref{lem: approximation fit based -- imbalanced region} are both $o_\P(\log\log n)$ under
$\log \log \log \log(n)/\log \log(n) \ll \rho_n \ll 1$. Therefore the imbalanced edge maximum of $\I^{\text{SSE}}$ remains $o_\P(\log\log n)$ while the balanced maximum remains $2\log\log(n)(1+o_\P(1))$, exactly as in the separation step for Theorem~\ref{sa-thm: imbalance reg}. Hence
\begin{align*}
    \max_{\substack{1 \leq k \leq s_n\\ \text{or } n - s_n \leq k \leq n}}\I^{\text{SSE}}(k,\ell) = o_\P \bigg(\max_{s_n \leq k \leq n - s_n}\I^{\text{SSE}}(k,\ell)\bigg), \qquad \ell \in [p].
\end{align*}
Using a union bound, $\P(A_n) \rightarrow 0$ as $n \rightarrow \infty$. On the event $A_n^c$, the argmax for $\I^{\text{SSE}}$ lies inside $[s_n, n - s_n]$. Hence
\begin{align*}
    & \mathbb{P}\Big(\exists \ell \in [p]: \max_k\I^{\text{SSE}}(k,\ell) > \max_{k,j\neq \ell}\I^{\text{SSE}}(k,j), \; \max_{k}\I^{\text{SSE}}(k,\ell) > \max_{k \notin [n^a,n^b]}\I^{\text{SSE}}(k,\ell) \Big) \\
    & \geq \mathbb{P}\Big(\exists \ell \in [p]: \max_k\I^{\text{SSE}}(k,\ell) > \max_{k,j\neq \ell}\I^{\text{SSE}}(k,j), \; \max_{k}\I^{\text{SSE}}(k,\ell) > \max_{k \notin [n^a,n^b]}\I^{\text{SSE}}(k,\ell) \text{ and } A_n^c \Big) - \P(A_n) \\
    & \geq \mathbb{P}\Big(\exists \ell \in [p]: \max_{k \in [s_n, n - s_n]}\I^{\text{SSE}}(k,\ell) > \max_{\substack{j\neq \ell \\ k \in [s_n, n - s_n]}}\I^{\text{SSE}}(k,j), \\
    & \qquad \qquad \qquad \max_{k \in [s_n,n-s_n]}\I^{\text{SSE}}(k,\ell) > \max_{\substack{k \notin [n^a,n^b] \\ k \in [s_n, n - s_n]}}\I^{\text{SSE}}(k,\ell) \Big) - 2 \P(A_n).
\end{align*}
Focus on the first term. By symmetry in the $p$ coordinates,
\begin{align*}
    & \mathbb{P}\Big(\exists \ell \in [p]: \max_{k \in [s_n, n - s_n]}\I^{\text{SSE}}(k,\ell) > \max_{\substack{j\neq \ell \\ k \in [s_n, n - s_n]}}\I^{\text{SSE}}(k,j),  \max_{k \in [s_n,n-s_n]}\I^{\text{SSE}}(k,\ell) > \max_{\substack{k \notin [n^a,n^b] \\ k \in [s_n, n - s_n]}}\I^{\text{SSE}}(k,\ell) \Big) \\
    & = p \mathbb{P}\Big(\max_{k \in [s_n, n - s_n]}\I^{\text{SSE}}(k,1) > \max_{\substack{j\neq 1 \\ k \in [s_n, n - s_n]}}\I^{\text{SSE}}(k,j),  \max_{k \in [s_n,n-s_n]}\I^{\text{SSE}}(k,1) > \max_{\substack{k \notin [n^a,n^b] \\ k \in [s_n, n - s_n]}}\I^{\text{SSE}}(k,1) \Big) \\
    & \geq p \sup_{z \in \reals} \mathbb{P}\Big(\max_{\substack{j\neq 1 \\ k \in [s_n, n - s_n]}}\I^{\text{SSE}}(k,j) < z,  \max_{k \in [s_n,n-s_n]}\I^{\text{SSE}}(k,1) > z > \max_{\substack{k \notin [n^a,n^b] \\ k \in [s_n, n - s_n]}}\I^{\text{SSE}}(k,1) \Big) \\
    & \geq p \sup_{z \in \reals}\Bigg\{
    \mathbb{P}\Big(\max_{\substack{j\neq 1 \\ k \in [s_n, n - s_n]}}\I^{\text{SSE}}(k,j) < z, \max_{\substack{k \notin [n^a,n^b] \\ k \in [s_n, n - s_n]}}\I^{\text{SSE}}(k,1) < z\Big) \\
    & \qquad \qquad - \mathbb{P}\Big(\max_{\substack{j\neq 1 \\ k \in [s_n, n - s_n]}}\I^{\text{SSE}}(k,j) < z, \max_{k \in [s_n, n - s_n]}\I^{\text{SSE}}(k,1) < z\Big)\Bigg\}.
\end{align*}
Then using the fact that $\I^\text{prox}(k,\ell)$ approximates $\I^\text{SSE}(k,\ell)$ from Lemma~\ref{lem: approximation fit based -- balanced region}, we have
\begin{align}\label{eq:prox-prob}
    \nonumber & \mathbb{P}\Big(\exists \ell \in [p]: \max_{k \in [s_n, n - s_n]}\I^{\text{SSE}}(k,\ell) > \max_{\substack{j\neq \ell \\ k \in [s_n, n - s_n]}}\I^{\text{SSE}}(k,j),  \max_{k \in [s_n,n-s_n]}\I^{\text{SSE}}(k,\ell) > \max_{\substack{k \notin [n^a,n^b] \\ k \in [s_n, n - s_n]}}\I^{\text{SSE}}(k,\ell) \Big) \\
    \nonumber & \geq p \sup_{z \in \reals} \mathbb{P}\Big(\max_{\substack{j\neq 1 \\ k \in [s_n, n - s_n]}}\I^{\text{prox}}(k,j) < z - v_n, \max_{\substack{k \notin [n^a,n^b] \\ k \in [s_n, n - s_n]}} \I^{\text{prox}}(k,1) < z - v_n\Big) \\
    & \qquad \qquad - p \mathbb{P}\Big(\max_{\substack{j\neq 1 \\ k \in [s_n, n - s_n]}}\I^{\text{prox}}(k,j) < z + v_n, \max_{k \in [s_n, n - s_n]} \I^{\text{prox}}(k,1) < z + v_n\Big),
\end{align}
where $v_n = O_\P((\log \log (n))^{3/2} s_n^{-1/2})$. Here $v_n$ denotes a nonnegative random envelope for the criterion approximation error. Under the chosen $s_n$, $v_n=o_\P(1)$, so there is a deterministic envelope $\bar v_n\downarrow0$ with $\P(v_n\leq\bar v_n)\to1$. In the following threshold comparisons we use this deterministic envelope, still denoted by $v_n$, losing only an $o(1)$ probability term.

\begin{center}
    \textbf{Step 2: Gaussian approximation of IPW partial sums}
\end{center}

Recall that 
\begin{align}\label{eq:prox}
    \I^\text{prox}(k, \ell)
    &= (1 - \xi)  \frac{k (n - k)}{n}
        (\bar \mu_{L,0}(k,\ell) - \bar \mu_{R,0}(k,\ell))^2 \nonumber\\
    &\quad + \xi \frac{k (n - k)}{n}
        (\bar \mu_{L,1}(k,\ell) - \bar \mu_{R,1}(k,\ell))^2.
\end{align}
We will show that the high-dimensional random vector $\Xi$ formed by
concatenating
\[
    \bigg(
        \sqrt{(1 - \xi) \frac{k (n - k)}{n}}
        \big(\bar \mu_{L,0}(k,\ell) - \bar \mu_{R,0}(k,\ell)\big):
        k \in [n], \ell \in [p]
    \bigg)
\]
and
\[
    \bigg(
        \sqrt{\xi \frac{k (n - k)}{n}}
        \big(\bar \mu_{L,1}(k,\ell) - \bar \mu_{R,1}(k,\ell)\big):
        k \in [n], \ell \in [p]
    \bigg)
\]
can be approximated by a Gaussian random vector with the same covariance
structure. The proof will still be based on writing
$\Xi$ as $\frac{1}{\sqrt{n}}\sum_{i = 1}^n \bC_i$. Let
\[
    \mathbf{a}_i
    =
    \sqrt{n}\Bigg(
        \bigg(
            \sqrt{\frac{n}{k(n-k)}}
            \Big(\Indicator(\#\pi^{\ell}(i) \leq k) - \frac{k}{n}\Big):
            r_n \leq k \leq n - r_n
        \bigg)^\top:
        1 \leq \ell \leq p
    \Bigg)^\top.
\]
Then
\[
    \bC_i =
    \begin{pmatrix}
        \mathbf{a}_i \sqrt{1-\xi}\frac{1 - d_i}{1 - \xi}\varepsilon_i(0) \\
        \mathbf{a}_i \sqrt{\xi}\frac{d_i}{\xi}\varepsilon_i(1)
    \end{pmatrix},
\]
where $\# \pi^{\ell}$ denotes the inverse mapping of $\pi^{\ell}$, as in the proof of Theorem~\ref{sa-thm:master}.

The random vectors are $2 n p$ dimensional. For notational simplicity, denote by $\be_{t,k,\ell}$ the indicator of the position corresponding to $\sqrt{w_t \frac{k (n - k)}{n}}(\bar \mu_{L,t}(k,\ell) - \bar \mu_{R,t}(k,\ell))$, where $w_0=1-\xi$ and $w_1=\xi$, $t = 0,1$, $k \in [n]$, $\ell \in [p]$.

However, the format of Equation~\eqref{eq:prox} induces a different geometry when approximating probabilities in Equation~\eqref{eq:prox-prob}. Instead of a high-dimensional CLT for hyperrectangles, we consider the class of simple convex sets \cite[Section 3.1]{chernozhukov2017central}.

Let $\mathcal{J}$ be a subset of $[n] \times [p]$. Consider the class of closed convex sets $\mathcal{A}$ containing sets of the form
\begin{align}\label{eq:clt-sets}
    A = \{\bu \in \reals^{2 n p}: (\be_{0,k,\ell}^\top \bu, \be_{1,k,\ell}^\top \bu) \in B_2(s_{k,\ell}), s_{k,\ell} \in (0,n], (k, \ell) \in \mathcal{J}\},
\end{align}
where $B_2(r)$ denotes the Euclidean ball centered at $\mathbf{0}$ with radius $r$ in $\reals^2$. That is, the class $\mathcal{A}$ contains intersections of cylinders $\{\bu \in \reals^{2np}: \lVert {(\be_{j_1}^\top \bu, \be_{j_2}^\top \bu)} \rVert_2 \leq s\}$. For $z \in (0, n]$, the event in Equation~\eqref{eq:prox} (inside $\sup z$) can be characterized as the high-dimensional vector $\Xi$ lying in a set in $\mathcal{A}$.

For each $A \in \mathcal{A}$, we consider its approximation by simple convex sets. For each $B_2(r)$, denote by $B_2^{\text{in},n}(r)$ and $B_2^{\text{out},n}(r)$ its inscribed and circumscribed regular $n^2$-gon. Take $M = n^2 |\mathcal J|$. Then for each $A \in \mathcal{A}$ of the form \eqref{eq:clt-sets}, take
\begin{align*}
    A^M = \{\bu \in \reals^{2 n p}: (\be_{0,k,\ell}^\top \bu, \be_{1,k,\ell}^\top \bu) \in B_2^{\text{in,n}}(s_{k,\ell}), s_{k,\ell} \in (0,n], (k, \ell) \in \mathcal{J}\},
\end{align*}
and
\begin{align*}
    A^{M,\epsilon} = \{\bu \in \reals^{2 n p}: (\be_{0,k,\ell}^\top \bu, \be_{1,k,\ell}^\top \bu) \in B_2^{\text{out,n}}(s_{k,\ell}), s_{k,\ell} \in (0,n], (k, \ell) \in \mathcal{J}\}.
\end{align*}
Then $A^M \subseteq A \subseteq A^{M,\epsilon}$. Moreover, denote by  $\mathcal{V}(A^M)$ the set consisting of $M$ unit vectors that are outward normal to the facets of $A^M$. Then $A^M$ can be alternatively characterized by
\begin{align*}
    A^M = \cap_{\mathbf{v} \in \mathcal{V}(A^M)} \{\mathbf{w} \in \reals^{2 n p}: \mathbf{w}^\top \mathbf{v} \leq S_A(\mathbf{v})\}, \qquad S_A(\mathbf{v}) = \sup \{\mathbf{w}^\top \mathbf{v}: \mathbf{w} \in A^M\}.
\end{align*}
Analogously, characterize $A^{M,\epsilon}$ by
\begin{align*}
    A^{M,\epsilon} = \cap_{\mathbf{v} \in \mathcal{V}(A^M)} \{\mathbf{w} \in \reals^{2 n p}: \mathbf{w}^\top \mathbf{v} \leq S_A(\mathbf{v}) + \epsilon_{\mathbf{v}}\}, \qquad S_A(\mathbf{v}) = \sup \{\mathbf{w}^\top \mathbf{v}: \mathbf{w} \in A^M\},
\end{align*}
where $\epsilon_{\mathbf{v}} \leq n^{-1}$ for large enough $n$. Thus our class $\mathcal{A}$ is a subclass of $\mathcal{A}^\text{si}(1, 3)$ (see \cite[Section 3.1]{chernozhukov2017central}). In the notation of Lemma~\ref{sa-lem:hd-clt-simple-convex}, the ambient dimension is $m_0=2np$, the approximating polytopes have at most $M=n^2|\mathcal J|\leq(m_0 n)^3$ facets, $a=1$, and $d=3$. Since $m_0\geq3$ for all large $n$, the lemma's dimension condition is harmless. We check its conditions (M.1'), (M.2') and (E.1'). Let $\mathbf{v} \in \mathcal{V}(A^M)$. The definition of $A^M$ implies $\mathbf{v} = v_{0,k,\ell} \be_{0,k,\ell} + v_{1,k,\ell} \be_{1,k,\ell}$ for some $(k,\ell) \in \mathcal{J}$, and $v_{0,k,\ell}^2 + v_{1,k,\ell}^2 = 1$. Let $\bv \in \mathcal{V}(A^M)$.
\begin{align*}
   & \frac{1}{n}\sum_{i = 1}^n \E[|\bv^\top \bC_i|^2] \\
    & = \frac{1}{n} \sum_{i = 1}^n \E \bigg[\bigg(v_{0,k,\ell} \frac{n}{\sqrt{k (n - k)}}(\Indicator(\#\pi^{\ell}(i) \leq k) - \frac{k}{n})\sqrt{1-\xi}\frac{1 - d_i}{1 - \xi} \varepsilon_i(0) \\
    & \qquad \qquad  \qquad \qquad \qquad \qquad \qquad\qquad+ v_{1,k,\ell} \frac{n}{\sqrt{k (n - k)}}(\Indicator(\#\pi^{\ell}(i) \leq k) - \frac{k}{n})\sqrt{\xi}\frac{d_i}{\xi} \varepsilon_i(1)\bigg)^2\bigg] \\
    & = \frac{1}{n}\bigg(\frac{n}{\sqrt{k (n - k)}}\bigg)^2\sum_{i =1}^n \bigg\{v_{0,k,\ell}^2 \E \bigg[\bigg((\Indicator(\#\pi^{\ell}(i) \leq k) - \frac{k}{n})\sqrt{1-\xi}\frac{1 - d_i}{1 - \xi} \varepsilon_i(0) \bigg)^2 \bigg] \\
    & \qquad \qquad \qquad \qquad \qquad \qquad \qquad \qquad + v_{1,k,\ell}^2 \E \bigg[\bigg((\Indicator(\#\pi^{\ell}(i) \leq k) - \frac{k}{n})\sqrt{\xi}\frac{d_i}{\xi} \varepsilon_i(1) \bigg)^2 \bigg] \bigg\} \\
    & \geq \min \{\V[\varepsilon_i(0)], \V[\varepsilon_i(1)]\},
\end{align*}
which verifies (M.1'). The fact that only two entries of $\mathbf{v}$ are nonzero and $v_{0,k,\ell}^2 + v_{1,k,\ell}^2 = 1$ implies that
\begin{align*}
    n^{-1}\sum_{i = 1}^n \E[|\bv^\top \bC_i|^3] \leq 4 n^{-1}\sum_{i = 1}^n \E[|\be_{0,k,\ell}^\top \bC_i|^3] + 4 n^{-1}\sum_{i = 1}^n \E[|\be_{1,k,\ell}^\top \bC_i|^3] \lesssim \sqrt{n / r_n},
\end{align*}
where the last inequality is from the calculation in Equation~\eqref{eq:third moment coupling}, and this verifies (M.2') for the third moment. Moreover,
\begin{align*}
    n^{-1}\sum_{i = 1}^n \E[|\bv^\top \bC_i|^4] 
    & \leq 8 n^{-1}\sum_{i = 1}^n \E[|\be_{0,k,\ell}^\top \bC_i|^4] + 8 n^{-1}\sum_{i = 1}^n \E[|\be_{1,k,\ell}^\top \bC_i|^4] \\
    & \leq \sqrt{n / r_n} 8 n^{-1}\sum_{i = 1}^n \E[|\be_{0,k,\ell}^\top \bC_i|^3] + \sqrt{n / r_n} 8 n^{-1}\sum_{i = 1}^n \E[|\be_{1,k,\ell}^\top \bC_i|^3] \\
    & \lesssim n / r_n.
\end{align*}
The same logic shows that $\E[\exp(|\mathbf{v}^\top \bC_i|/(K\sqrt{n / r_n}))] \leq 2$, where $K$ is an absolute constant. Putting these bounds together, we verify conditions (M.2') and (E.1') with $B_n = \sqrt{n / r_n}$. As in the proof of Theorem~\ref{sa-thm:master}, Lemma~\ref{sa-lem:hd-clt-simple-convex} is applied conditional on the covariate orderings with $X_i=\bC_i$, with Gaussian analogues $Y_i$ denoted by $\bD_i$, and with the set class $\mathcal A$ above; the bound is then unconditioned. Hence there exist mean-zero random vectors $\bD_i \sim N(\mathbf{0}, \E[\bC_i \bC_i^\top])$ such that
\begin{align}\label{eq:fit-clt}
    \sup_{A \in \mathcal{A}} |\P(n^{-1/2}\sum_{i = 1}^n \bC_i \in A) - \P(n^{-1/2} \sum_{i =1}^n \bD_i \in A)| \lesssim \bigg(\frac{\log^7(n)}{r_n}\bigg)^{1/6}.
\end{align}

\begin{center}
    \textbf{Step 3: Gaussian to Gaussian Approximation}
\end{center}

For any $k_1, k_2 \in [n], \ell_1, \ell_2 \in [p]$, we have $\operatorname{Cov}[\be_{0,k_1,\ell_1}^\top \bC_i, \be_{1, k_2, \ell_2}^\top \bC_i] = 0$. The same calculation as \emph{Multivariate Case Step 2} for the proof of Theorem~\ref{sa-thm:master} allows us to replace $\bD_i$ by another mean-zero Gaussian random vector $\bZ_i$ such that
\begin{align*}
    \operatorname{Cov}[\be_{t_1, k_1, \ell_1}^\top \bZ_i, \be_{t_2, k_2, \ell_2}^\top \bZ_i] & = \begin{cases}
        \operatorname{Cov}[\be_{t_1, k_1, \ell_1}^\top \bD_i, \be_{t_2, k_2, \ell_2}^\top \bD_i], & \text{ if } \ell_1 = \ell_2, \\
        0, & \text{ otherwise}.
    \end{cases}
\end{align*}
It remains to show $\frac{1}{\sqrt{n}}\sum_{i = 1}^n \bZ_i$ is close to $\frac{1}{\sqrt{n}}\sum_{i = 1}^n \bD_i$, measured by probabilities of the sets in $\mathcal{A}$ defined at Equation~\eqref{eq:clt-sets}. Fix $A\in\mathcal{A}$ and use the polygonal approximation above. Since $A^M$ is an intersection of $M$ halfspaces, membership in $A^M$ is equivalent to
\[
    \Big(\bv^\top n^{-1/2}\sum_{i=1}^n \bZ_i\Big)_{\bv\in\mathcal V(A^M)}
    \leq \mathbf t_A
\]
for a deterministic vector $\mathbf t_A\in\mathbb R^M$, and analogously for $\bD_i$. Hence it is enough to show
\begin{align*} 
    \sup_{\mathbf{t} \in \reals^M} \bigg|\P((\bv^\top (\frac{1}{\sqrt{n}}\sum_{i = 1}^n \bZ_i ))_{\bv \in \mathcal{V}(A^M)} \leq \mathbf{t}) - \P((\bv^\top (\frac{1}{\sqrt{n}}\sum_{i = 1}^n \bD_i ))_{\bv \in \mathcal{V}(A^M)} \leq \mathbf{t})
    \bigg| = o(1).
\end{align*}
The definition of $\mathcal{A}$ in Equation~\eqref{eq:clt-sets} implies that for any $A \in \mathcal{A}$ and $\bv \in \mathcal{V}(A^M)$, there are $(k,\ell)\in\mathcal J$ and coefficients $v_{0,k,\ell}^2+v_{1,k,\ell}^2=1$ such that $\bv=v_{0,k,\ell}\be_{0,k,\ell}+v_{1,k,\ell}\be_{1,k,\ell}$. The treated and control coordinates are uncorrelated, so
\begin{align*}
    \operatorname{Cov} \bigg[\be_{0,k,\ell}^\top \Big(\frac{1}{\sqrt{n}}\sum_{i = 1}^n \bZ_i \Big), \be_{1,k,\ell}^\top \Big(\frac{1}{\sqrt{n}}\sum_{i = 1}^n \bZ_i \Big)\bigg]
    =
    \operatorname{Cov} \bigg[\be_{0,k,\ell}^\top \Big(\frac{1}{\sqrt{n}}\sum_{i = 1}^n \bD_i \Big), \be_{1,k,\ell}^\top \Big(\frac{1}{\sqrt{n}}\sum_{i = 1}^n \bD_i \Big)\bigg]
    =0,
\end{align*}
and hence
\begin{align*}
    \min_{\bv \in \mathcal{V}(A^M)}
    \V \bigg[\bv^\top \Big(\frac{1}{\sqrt{n}}\sum_{i = 1}^n \bZ_i \Big)\bigg]
    \gtrsim 1.
\end{align*}
Together with the same covariance comparison argument as Equation~\eqref{eq: matrix comparison}, we know
\begin{align*}
    &\max_{\bv_1, \bv_2 \in \mathcal{V}(A^M)}
    \bigg|
        \operatorname{Cov} \bigg[
            \bv_1^\top \bigg(\frac{1}{\sqrt{n}}\sum_{i = 1}^n \bZ_i \bigg),
            \bv_2^\top \bigg(\frac{1}{\sqrt{n}}\sum_{i = 1}^n \bZ_i \bigg)
        \bigg] \\
    &\quad -
        \operatorname{Cov} \bigg[
            \bv_1^\top \bigg(\frac{1}{\sqrt{n}}\sum_{i = 1}^n \bD_i \bigg),
            \bv_2^\top \bigg(\frac{1}{\sqrt{n}}\sum_{i = 1}^n \bD_i \bigg)
        \bigg]
    \bigg|
    =
    O\!\left(\sqrt{\frac{\log n}{r_n}}\right).
\end{align*}
This covariance bound holds on a permutation event with probability tending to one. On that event, Lemma~\ref{sa-lem:gaussian-comparison} is applied to the projected Gaussian vectors
\[
    Z_1=\Big(\bv^\top n^{-1/2}\sum_{i=1}^n \bZ_i\Big)_{\bv\in\mathcal V(A^M)},
    \qquad
    Z_2=\Big(\bv^\top n^{-1/2}\sum_{i=1}^n \bD_i\Big)_{\bv\in\mathcal V(A^M)}
\]
in dimension $M=n^2|\mathcal J|$. For nonempty $\mathcal J$, $M\geq3$ for all large $n$; the empty case is trivial. The projected variances are bounded below by a positive constant, the maximal covariance discrepancy is $\Delta_n=O(\sqrt{\log n/r_n})$, and the polygonal approximation changes each halfspace threshold by at most $\epsilon_{\bv}\leq n^{-1}$. Applying the Gaussian to Gaussian comparison conditionally on the permutation event and then unconditioning, the complement contributes only $o(1)$. Applied to the $M$ projected coordinates, Lemma~\ref{sa-lem:gaussian-comparison} gives a bound of order $(\Delta_n\log^2 M)^{1/2}=O(\log(M)(\log n/r_n)^{1/4})$ for the difference over $A^M$. The same bound applies to $A^{M,\epsilon}$, and Nazarov's inequality controls the probability of the $\epsilon$ enlargement shell by a constant multiple of $\epsilon\sqrt{\log M}$ because of the variance lower bound. Since $M\lesssim n^3$ and $\epsilon\leq n^{-1}$, this shell term is $o(1)$. Therefore
\begin{align}\label{eq:fit-gaussian-comparison}
    \sup_{A \in \mathcal{A}} |\P(n^{-1/2}\sum_{i = 1}^n \bZ_i \in A) - \P(n^{-1/2} \sum_{i =1}^n \bD_i \in A)| = O\!\left(\frac{\log^{5/4}(n)}{r_n^{1/4}}\right).
\end{align}

\begin{center}
    \textbf{Step 4: Ornstein--Uhlenbeck Process Calculations}
\end{center}

Returning to Equation~\eqref{eq:prox-prob}, consider
\begin{align*}
    \I^\text{Gauss}(k, \ell) = & (1 - \xi)  \frac{k (n - k)}{n} (\tilde \mu_{L,0}(k,\ell) - \tilde \mu_{R,0}(k,\ell))^2  + \xi \frac{k (n - k)}{n} (\tilde \mu_{L,1}(k,\ell) -  \tilde \mu_{R,1}(k,\ell))^2,
\end{align*}
with 
\begin{align*}
    & \tilde \mu_{L,0}(k,\ell) = \frac{1}{k}\sum_{i \leq k} u_{\pi_{\ell}(i)}, && \tilde \mu_{L,1}(k,\ell)  =  \frac{1}{k}\sum_{i \leq k} v_{\pi_{\ell}(i)}, \\
    & \tilde \mu_{R,0}(k,\ell) = \frac{1}{n - k}\sum_{i > k} u_{\pi_{\ell}(i)}, && \tilde \mu_{R,1}(k,\ell) = \frac{1}{n - k}\sum_{i > k} v_{\pi_{\ell}(i)}.
\end{align*}
Here $(u_i)$ and $(v_i)$ are independent Gaussian sequences with variances
$\sigma_0^2/(1-\xi)$ and $\sigma_1^2/\xi$, respectively, where
$\sigma_d^2=\V[\varepsilon_i(d)]$. Equivalently, the scaled control and
treated bridge coordinates appearing in $\I^\text{Gauss}$ have marginal
variances $\sigma_0^2$ and $\sigma_1^2$.
The restriction of the threshold supremum to $z\in[-n,n]$ is without loss up to $o(1)$. The criteria are nonnegative, so thresholds $z<0$ contribute nothing to the displayed lower bound. Moreover the Darling--Erd\"os normalization below implies that all relevant Gaussian maxima are $O_\P(\log\log n)$, and hence the probability that any displayed maximum exceeds $n$ is $o(1)$.
With the deterministic buffer convention just described, Equations~\eqref{eq:fit-clt} and \eqref{eq:fit-gaussian-comparison} imply that
\begin{align*}
    & \sup_{z \in [-n,n]} \mathbb{P}\Big(\max_{\substack{j\neq 1 \\ k \in [s_n, n - s_n]}}\I^{\text{prox}}(k,j) < z - v_n, \max_{\substack{k \notin [n^a,n^b] \\ k \in [s_n, n - s_n]}} \I^{\text{prox}}(k,1) < z - v_n\Big) \\
    & \qquad \qquad - \mathbb{P}\Big(\max_{\substack{j\neq 1 \\ k \in [s_n, n - s_n]}}\I^{\text{prox}}(k,j) < z + v_n, \max_{k \in [s_n, n - s_n]} \I^{\text{prox}}(k,1) < z + v_n\Big) \\
    & = \sup_{z \in [-n, n]}\mathbb{P}\Big(\max_{k \in [s_n, n - s_n]}\I^{\text{Gauss}}(k,1) < z - v_n \Big)^{p-1} \P \Big( \max_{\substack{k \notin [n^a,n^b] \\ k \in [s_n, n - s_n]}} \I^{\text{Gauss}}(k,1) < z - v_n\Big) \\
    & \qquad \qquad - \mathbb{P}\Big(\max_{k \in [s_n, n - s_n]}\I^{\text{Gauss}}(k,1) < z + v_n \Big)^{p-1} \P \Big(\max_{k \in [s_n, n - s_n]} \I^{\text{Gauss}}(k,1) < z + v_n\Big) + o(1).
\end{align*}
The same argument as \cite[(A.4.25) to (A.4.37)]{csorgo1997limit} shows that there exist two independent standard Brownian bridges over $[0,1]$, $B_{n,L}$ and $B_{n,R}$, for each $n$, such that 
\begin{gather*}
    \bigg|\max_{k  \in [s_n, n - s_n]} \I^\text{Gauss}(k,1) - \sup_{t \in [s_n/n, 1 - s_n/n]} \left\{\sigma_0^2\frac{B_{n,L}^2}{t (1 - t)}+\sigma_1^2\frac{B_{n,R}^2}{t (1 - t)}\right\}\bigg| = \epsilon_n, \\
    \bigg|\max_{k  \in [s_n, n - s_n] \setminus [n^a, n^b]} \I^\text{Gauss}(k,1) - \sup_{t \in [s_n/n, 1 - s_n/n] \setminus [n^{a-1}, n^{b-1}]} \left\{\sigma_0^2\frac{B_{n,L}^2}{t (1 - t)}+\sigma_1^2\frac{B_{n,R}^2}{t (1 - t)}\right\}\bigg| = \epsilon_n,
\end{gather*}
with $\epsilon_n=o_{\P}(1)$. Let $\{U_L(t): t \in \reals \}$ and $\{U_R(t): t \in \reals\}$ be two independent O-U processes with $\E[U_j(t)] = 0$ and $\E[U_j(s) U_j(t)] = e^{-|s-t|/2}$, $j = L,R$. Then
\begin{align*}
    \bigg\{\bigg(\frac{B_{n,L}}{\sqrt{t(1 - t)}}, \frac{B_{n,R}}{\sqrt{t (1 - t)}}\bigg): t \in [0,1]\bigg\} \stackrel{d}{=} \{(U_L(\log(t/(1-t))), U_R(\log(t/(1 - t)))): t \in [0,1]\}.
\end{align*}
Take
\[
    Q_\Lambda(t)=\sigma_0^2U_L(t)^2+\sigma_1^2U_R(t)^2,
    \qquad
    \Lambda=(\sigma_0^2,\sigma_1^2).
\]
Then a time change and stationarity of the O-U process implies
\begin{align*}
    & \P \bigg(\sup_{t \in [1/n, 1 - 1/n] \setminus [n^{a-1}, n^{b-1}]} \left\{\sigma_0^2\frac{B_{n,L}^2}{t (1 - t)}+\sigma_1^2\frac{B_{n,R}^2}{t (1 - t)}\right\} \leq y \bigg) \\
    & = \P \bigg(\sup_{\substack{-\log(n - 1) \leq s < \log\frac{n^{a-1}}{1 - n^{a-1}}\\
    \text{or }\ \log\frac{n^{b-1}}{1-n^{b-1}} < s \leq \log(n-1)}} Q_\Lambda(s) \leq y \bigg) \\
    & = \P \bigg(\sup_{\substack{0 \leq s < \log\frac{n^{a-1}(n - 1)}{1 - n^{a-1}}\\
    \text{or }\ \log\frac{n^{b-1}(n-1)}{1-n^{b-1}} < s \leq 2\log(n-1)}} Q_\Lambda(s) \leq y \bigg),
\end{align*}
and
\begin{align*}
    \P \bigg(\sup_{t \in [1/n, 1 - 1/n]} \left\{\sigma_0^2\frac{B_{n,L}^2}{t (1 - t)}+\sigma_1^2\frac{B_{n,R}^2}{t (1 - t)}\right\} \leq y \bigg)
    = \P \bigg(\sup_{0 \leq s < 2\log(n-1)} Q_\Lambda(s) \leq y \bigg).
\end{align*}
Lemma~\ref{sa-lem:weighted-ou-maximum} gives thresholds $w_n(u;\Lambda)$ and
$\kappa_\Lambda>0$ such that, for each fixed $c>0$ and bounded $u$,
\[
    \P \bigg(\sup_{0 \leq t < c \log (n)} Q_\Lambda(t)
    \leq
    w_n(u;\Lambda) + \epsilon_n
    \bigg)
    =
    \exp\{-c\kappa_\Lambda e^{-u}\}+o(1).
\]
For finite grids, the events $\{\sup_{s\in I}Q_\Lambda(s)\leq y\}$ are symmetric convex events in the underlying Gaussian vector; the continuous-time statement follows by monotone approximation over dense grids. Therefore, Gaussian correlation inequality \citep[Remark 3 (i)]{latala2017royen} and stationarity of the O-U process imply
\begin{align*} 
    & \mathbb{P}\bigg(\sup_{\substack{0 \leq s < \log\frac{n^{a-1}(n-1)}{1-n^{a-1}}\\
    \text{or }\ \log\frac{n^{b-1}(n-1)}{1-n^{b-1}} < s \leq 2\log(n-1)}} Q_\Lambda(s) < w_n(u;\Lambda)+\epsilon_n\bigg) \\
    & \geq
    \mathbb{P}\bigg(\sup_{0 \leq s < \log\frac{n^{a-1}(n-1)}{1-n^{a-1}}} Q_\Lambda(s) < w_n(u;\Lambda)+\epsilon_n \bigg)\\
    & \qquad \qquad \cdot \mathbb{P}\bigg(\sup_{0  < s \leq \log(n^{1-b}(n-1)(1-n^{b-1}))} Q_\Lambda(s) < w_n(u;\Lambda)+\epsilon_n \bigg) \\
    & = \exp\{-(2-(b-a))\kappa_\Lambda e^{-u}\} + o(1).
\end{align*}
Because $v_n=o(1)$, replacing $w_n(u;\Lambda)$ by $w_n(u;\Lambda)\pm v_n$ changes the $u$ scale threshold by $o(1)$ uniformly over bounded $u$ intervals, and the limiting distribution in Lemma~\ref{sa-lem:weighted-ou-maximum} is continuous in $u$. Let $\alpha=(b-a)/2$. After the change of variable $s=2\kappa_\Lambda e^{-u}\in(0,\infty)$, the limiting objective below is
\[
    e^{-(p-\alpha)s}-e^{-ps}.
\]
It is maximized at $s^\ast=\alpha^{-1}\log\{p/(p-\alpha)\}$ and equals
\[
    \frac{\alpha}{p}\left(1-\frac{\alpha}{p}\right)^{p/\alpha-1}
    =
    \frac{b-a}{2p}\left(1-\frac{b-a}{2p}\right)^{\frac{2p}{b-a}-1}.
\]
Choosing the corresponding $u^\ast$ in the bounded threshold window, we obtain
\begin{align*}
    & \liminf_{n\to\infty}\sup_{z \in [-n, n]}\Bigg\{\mathbb{P}\Big(\max_{k \in [s_n, n - s_n]}\I^{\text{Gauss}}(k,1) < z - v_n \Big)^{p-1} \P \Big( \max_{\substack{k \notin [n^a,n^b] \\ k \in [s_n, n - s_n]}} \I^{\text{Gauss}}(k,1) < z - v_n\Big) \\
    & \qquad \qquad - \mathbb{P}\Big(\max_{k \in [s_n, n - s_n]}\I^{\text{Gauss}}(k,1) < z + v_n \Big)^{p-1} \P \Big(\max_{k \in [s_n, n - s_n]} \I^{\text{Gauss}}(k,1) < z + v_n\Big)\Bigg\} \\
    & \geq \sup_{u\in\reals} \exp\{-2(p-1)\kappa_\Lambda e^{-u}\}\Big( \exp\{-(2-(b-a))\kappa_\Lambda e^{-u}\} - \exp\{-2\kappa_\Lambda e^{-u}\}\Big) \\
    & =  \frac{b-a}{2p}\bigg(1-\frac{b-a}{2p}\bigg)^{\frac{2p}{b-a}-1} \\ & \geq \frac{b-a}{2pe}.
\end{align*}
The same argument applied to the reflected ordering gives the right boundary counterpart. Thus, for any $0 < a < b < 1$ and $\ell \in [p]$, we have
\begin{equation*}
    \liminf_{n\to\infty} \mathbb{P}\big( n^{a} \leq \hat{\imath}_\text{SSE} \leq n^{b}, \hat{\jmath}_\text{SSE} = \ell \big) \geq \frac{b-a}{2pe},
    \qquad
    \liminf_{n\to\infty} \mathbb{P}\big( n-n^{b} \leq \hat{\imath}_\text{SSE} \leq n-n^{a}, \hat{\jmath}_\text{SSE} = \ell \big) \geq \frac{b-a}{2pe}.
\end{equation*}

\subsection{Proof of Theorem~\ref{sa-thm:rates_reg_fit}}

Lemma~\ref{sa-lem:sse-directional-transfer}, summed over the left and right boundary events and over coordinates, gives the uniform statement. The boundary statement follows from the same order statistic step as in the DIM proof, and the constants absorb the directional transfer probability, the split coordinate factor, and the treatment probability factors.

\subsection{Proof of Theorem~\ref{sa-thm: uniform minimax rates regression fit}}

Set $a=b/2$. The only additional step relative to the DIM proof is to obtain
the same type of root child event for the tree grown by the $\SSE$ rule.
Lemma~\ref{sa-lem:sse-directional-transfer}, summed over the two boundary
sides and over coordinates, gives constants $c_{\mathrm{root}},q_{\mathrm{root}}>0$ and an event
$\mathcal E_{\SSE,n}$ with
\[
    \liminf_{n\to\infty}\P(\mathcal E_{\SSE,n})
    \geq
    q_{\mathrm{root}}(b-a),
\]
such that, on $\mathcal E_{\SSE,n}$, the root split selected by the $\SSE$ rule has a child $\nodet_n$ satisfying
\[
    M_n=n(\nodet_n)\in[n^a,n^b],
    \qquad
    n_0(\nodet_n)\wedge n_1(\nodet_n)>0,
    \qquad
    M_n\Delta(\nodet_n)^2
    \geq
    c_{\mathrm{root}}^2\log\log n.
\]
The event records both the imbalanced size of the selected root child and a
large CATE contrast on that child. The latter conclusion is the directional
transfer from the bivariate $\SSE$ split criterion to the scalar contrast
$\Delta(\nodet_n)$.

Let $\mathcal P(\nodet_n)$ be the terminal descendants of $\nodet_n$ in the
final tree grown by the $\SSE$ rule. The descendants are again sample
rectangles, they partition $\nodet_n$, and validity gives positive treated and
control counts in every terminal node. Since the tree has depth at most $K$,
$|\mathcal P(\nodet_n)|\leq 2^K$. The assumed depth condition implies the leaf
count condition in Lemma~\ref{sa-lem:refinement-transfer-causal} with
$a=b/2$. Applying Corollary~\ref{sa-coro:terminal-inheritance-arm-transfer}
with $c_0=c_{\mathrm{root}}$ gives
\[
    \P\left(
        \mathcal E_{\SSE,n},\
        \max_{\nodet'\in\mathcal P(\nodet_n)}
        |\Delta(\nodet')|
        \geq
        \frac{c_{\mathrm{root}}}{2}n^{-b/2}\sqrt{\log\log n}
    \right)
    \geq
    \P(\mathcal E_{\SSE,n})-o(1).
\]
For every terminal descendant $\nodet'$ and every $\bx\in\nodet'$,
\[
    \hat\tau_{\SSE}(\bx)-\tau
    =
    \Delta(\nodet'),
\]
because Definition~\ref{sa-defn: data splitting} computes terminal treatment
effects on a tree grown by the $\SSE$ rule using the $\DIM$ estimator. Taking
the supremum over $\bx\in\X$ and taking the liminf gives
\[
    \liminf_{n\to\infty}
    \P\left(
        \sup_{\bx\in\X}
        |\hat\tau_{\SSE}(\bx)-\tau|
        \geq
        \frac{c_{\mathrm{root}}}{2}n^{-b/2}\sqrt{\log\log n}
    \right)
    \geq
    q_{\mathrm{root}}(b-a)
    =
    \frac{q_{\mathrm{root}}}{2}b.
\]
Thus the theorem holds with $c_{\SSE}=c_{\mathrm{root}}/2$ and
$q_{\SSE}=q_{\mathrm{root}}/2$.

\subsection{Proof of Theorem~\ref{sa-thm: L2 consistency NSS fit}}

The empirical risk minimization property \eqref{sa-eq: empirical risk minimization} still holds for the fitted partition. Conditional on that partition, the terminal estimator is the same leafwise least squares treatment effect coefficient used in the DIM proof. The deterministic comparison with the causal prediction error and the entropy bound from Theorem~\ref{sa-thm: L2 consistency NSS reg} therefore yield the displayed $2^K\log^4(n)\log(np)/n$ rate.

\subsection{Proof of Theorem~\ref{sa-thm: honest output reg fit}}

The proof of Theorem~\ref{sa-thm: honest output reg} applies after replacing Theorem~\ref{sa-thm: imbalance reg} by Theorem~\ref{thm: inconsistency fit}. The treatment-arm positivity factor is absorbed into the constant $C_2$.

\subsection{Proof of Theorem~\ref{sa-thm: L2 consistency honest fit}}

Conditional on the SSE construction fold, the honest estimation fold is independent of the selected partition and the terminal estimator is again the leafwise least squares treatment effect coefficient. Thus the conditional fixed partition argument from Theorem~\ref{sa-thm: L2 consistency honest reg} applies without modification: the causal prediction class has two constants per leaf, the number of leaves is bounded by $2^K$, and the truncation step is carried out on the independent estimation fold. This gives the same $2^K\log^5(n)/n$ expectation and probability bounds after unconditioning.

\subsection{Proof of Lemma~\ref{sa-lem: unbiased}}

First, consider the honest estimator. This part uses only independence of the estimation fold from the constructed partition; the joint central symmetry assumption is not needed until the NSS argument below. Conditional on the tree partition and on the estimation fold covariates, the treatment assignments and potential outcome errors used for final estimation are independent of the tree construction. Let $\nodet_{\mathtt{HON}}(\bx)$ be the terminal node containing $\bx$. For IPW, the estimator is set to zero when $n(\nodet_{\mathtt{HON}}(\bx))=0$, and otherwise has conditional expectation $\tau$. Therefore
\begin{align*}
    \E[\check{\tau}_{\IPW}(\bx;K)]
    = \tau \P(n(\nodet_{\mathtt{HON}}(\bx))>0)
    = \tau-\tau\P(n(\nodet_{\mathtt{HON}}(\bx))=0).
\end{align*}
For $\DIM$ and $\SSE$, the terminal node estimator is set to zero when either treatment arm is absent, and otherwise has conditional expectation $\tau$. Thus
\begin{align*}
    \E[\check{\tau}_{l}(\bx;K)]
    = \tau \P(n_0(\nodet_{\mathtt{HON}}(\bx))>0,\;n_1(\nodet_{\mathtt{HON}}(\bx))>0),
    \qquad l\in\{\DIM,\SSE\},
\end{align*}
which gives the stated empty cell bias expression.

Finally, we consider $\NSS$ under Assumption~\ref{sa-assump: dgp-causal}. We first treat $\DIM$ and $\SSE$ under the joint central symmetry assumption on $(\varepsilon_i(0),\varepsilon_i(1))$. We will use an induction assumption.

\textit{Base case: $K = 1$.} Because $\mu_0$ and $\mu_1$ are constant, the $\DIM$ and $\SSE$ splitting criteria from Definition~\ref{sa-defn: tree construction} can be written as
\begin{align}\label{sa-eq: split dim}
    \nonumber  \text{DIM} : \qquad \frac{n(\nodet_{\mathtt{L}})n(\nodet_{\mathtt{R}})}{n(\nodet)}
                \Big(& \frac{1}{n_{1}(\nodet_L)} \sum_{i:\bx_i \in \nodet_L} d_i \varepsilon_i(1) 
            - \frac{1}{n_{0}(\nodet_L)} \sum_{i:\bx_i \in \nodet_L} (1-d_i) \varepsilon_i(0)
            \\
    & - \frac{1}{n_{1}(\nodet_R)} \sum_{i:\bx_i \in \nodet_R} d_i \varepsilon_i(1) 
            + \frac{1}{n_{0}(\nodet_R)} \sum_{i:\bx_i \in \nodet_R} (1-d_i) \varepsilon_i(0))\Big)^2, 
\end{align}
and
\begin{align}\label{sa-eq: split sse}
    \nonumber & \text{SSE}: \qquad \frac{n_1(\nodet_{\mathtt{L}})n_1(\nodet_{\mathtt{R}})}{n_1(\nodet)}
                \Big( \frac{1}{n_1(\nodet_{\mathtt{L}})} \sum_{i: \bx_i \in \nodet_{\ttL}} d_i \varepsilon_i(1)
                    - \frac{1}{n_1(\nodet_{\mathtt{R}})} \sum_{i: \bx_i \in \nodet_{\ttR}} d_i \varepsilon_i(1) \Big)^2 \\
        & \qquad \qquad \qquad +
        \frac{n_0(\nodet_{\mathtt{L}})n_0(\nodet_{\mathtt{R}})}{n_0(\nodet)}
                \Big( \frac{1}{n_0(\nodet_{\mathtt{L}})} \sum_{i: \bx_i \in \nodet_{\ttL}} (1 - d_i) \varepsilon_i(0)
                    - \frac{1}{n_0(\nodet_{\mathtt{R}})} \sum_{i: \bx_i \in \nodet_{\ttR}} (1 - d_i) \varepsilon_i(0) \Big)^2.
\end{align}

Denote the vector $\bvarepsilon = (\varepsilon_1(0), \varepsilon_1(1), \cdots, \varepsilon_n(0), \varepsilon_n(1))$. Joint central symmetry and independence from covariates and treatment assignments imply $\bvarepsilon\stackrel{d}{=}-\bvarepsilon$ conditional on $(\mathbf X,\mathbf d)$. For the $\DIM$ and $\SSE$ criteria, for any fixed covariate array $\mathbf X$, treatment assignment $\mathbf{d} = (d_1, \cdots, d_n)$, and valid candidate split $\nodet_L, \nodet_R$, $\bvarepsilon = \bu$ and $\bvarepsilon = - \bu$ give the same criterion value. The valid candidate set itself depends only on covariates and treatment counts, so it is unchanged by the sign flip. With deterministic tie breaking, the selected split is therefore invariant to the global sign flip $\bvarepsilon\mapsto-\bvarepsilon$. More formally, every event generated by the selected split is sign invariant conditional on $(\mathbf X,\mathbf d)$; hence for every Borel set $B$ and every such event $A$ with positive probability,
\[
    \P(\bvarepsilon\in B\mid A,\mathbf X,\mathbf d)
    =
    \P(-\bvarepsilon\in B\mid A,\mathbf X,\mathbf d).
\]
Thus, conditional on $(\mathbf X,\mathbf d)$ and on the selected split, $\bvarepsilon$ remains symmetrically distributed around zero. On the event that the terminal node denominator required by the $\DIM$ or $\SSE$ terminal estimator is positive, the centered terminal node estimator is odd in $\bvarepsilon$ and has conditional expectation zero; on the complementary event the estimator is defined to be zero, which is also sign invariant. This gives the stated empty cell expression for $\DIM$ and $\SSE$, and exact unbiasedness when the relevant denominators are positive almost surely.

\textit{Induction step: $K \geq 2$.} Each leaf node $\nodet$ in layer $K-1$ is further partitioned into $\nodet_L$ and $\nodet_R$ such that Equations~\eqref{sa-eq: split dim} and \eqref{sa-eq: split sse} are maximized over valid candidates. The induction hypothesis is that, conditional on $\mathbf X$, $\mathbf{d}$, and all leaves in layer $K-1$, $\bvarepsilon$ is symmetrically distributed around zero. The same sign invariance and deterministic tie breaking argument applies within each leaf, so conditioning on the layer $K$ partition preserves central symmetry of $\bvarepsilon$. The centered terminal node estimator is odd on the event that its denominators are positive and is set to zero otherwise, which proves the same empty cell expression and the exact unbiasedness statement for $\DIM$ and $\SSE$ under positive terminal denominators.

For $\IPW$, put $\tilde\varepsilon_i=y_i(d_i-\xi)/\{\xi(1-\xi)\}-\tau$. The $\IPW$ split criterion is the CART variance maximization criterion applied to the pseudo outcomes $\tau+\tilde\varepsilon_i$, so the constant $\tau$ cancels from every candidate split. If $(\tilde\varepsilon_1,\ldots,\tilde\varepsilon_n)$ is centrally symmetric conditional on the treatment assignments, then, because the transformed residuals are independent of the covariates conditional on the treatment assignments under the stated symmetry condition, the same deterministic tie breaking and sign invariance argument applies to the $\IPW$ partition, now with $\tilde\varepsilon$ in place of $\bvarepsilon$. Conditional on the selected partition, the centered terminal IPW estimator is the average of $\tilde\varepsilon_i$ over the selected terminal node, and is set to zero if that node is empty. Hence the displayed IPW empty cell expression and the exact unbiasedness statement under positive terminal counts follow.

\bibliographystyle{plainnat}
\bibliography{bib}